\newtheorem{theorem}{Theorem}[section]
\newtheorem{corollary}[theorem]{Corollary}
\newtheorem{lemma}[theorem]{Lemma}
\newtheorem{proposition}[theorem]{Proposition}
\theoremstyle{definition}
\newtheorem{example}[theorem]{Example}
\numberwithin{equation}{section}
\renewcommand{\leq}{\leqslant}
\renewcommand{\geq}{\geqslant}
\begin{document}
\title[flag-transitive $2$-designs of affine type]{Classification of the non-trivial $2$-$(k^{2},k,\lambda )$ designs,
with $\lambda \mid k$, admitting a flag-transitive automorphism group of affine type}

\author[]{ Alessandro Montinaro}

\address{Alessandro Montinaro, Dipartimento di Matematica e Fisica “E. De Giorgi”, University of Salento, Lecce, Italy}
\email{alessandro.montinaro@unisalento.it}

\subjclass[MSC 2020:]{05B05; 05B25; 20B25}%
\keywords{ $2$-design; automorphism group; flag-transitive}
\date{\today}%

\begin{abstract}
The pairs $(\mathcal{D},G)$, where $\mathcal{D}$ is a non-trivial $2$-$(k^{2},k,\lambda )$ design, with $\lambda \mid k$, and $G$ is a flag-transitive automorphism group of $\mathcal{D}$ of affine type such that $G \nleq A \Gamma L_{1}(k^{2})$, are classified.
\end{abstract}

\maketitle

\section{Introduction, Main Result and Examples}

A $2$-$(v,k,\lambda )$ \emph{design} $\mathcal{D}$ is a pair $(\mathcal{P},%
\mathcal{B})$ with a set $\mathcal{P}$ of $v$ points and a set $\mathcal{B}$
of blocks such that each block is a $k$-subset of $\mathcal{P}$ and each two
distinct points are contained in $\lambda $ blocks. We say $\mathcal{D}$ is 
\emph{non-trivial} if $2<k<v$. All $2$-$(v,k,\lambda )$ designs in this paper
are assumed to be non-trivial.

An automorphism of $\mathcal{D}$ is a
permutation of the point set which preserves the block set. The set of all
automorphisms of $\mathcal{D}$ with the composition of permutations forms a
group, denoted by $\mathrm{Aut(\mathcal{D})}$. For a subgroup $G$ of $%
\mathrm{Aut(\mathcal{D})}$, $G$ is said to be \emph{point-primitive} if $G$
acts primitively on $\mathcal{P}$, and said to be \emph{point-imprimitive}
otherwise. A \emph{flag} of $\mathcal{D}$ is a pair $(x,B)$ where $x$ is a point and $B$
is a block containing $x$. If $G\leq \mathrm{Aut(\mathcal{D})}$ acts
transitively on the set of flags of $\mathcal{D}$, then we say that $G$ is 
\emph{flag-transitive} and that $\mathcal{D}$ is a \emph{flag-transitive
design}.\ 

The $2$-$(v,k,\lambda )$ designs $\mathcal{D}$ admitting a flag-transitive
automorphism group $G$ have been widely studied by several authors. In 1990,
a classification of those with $\lambda =1$ and $G\nleq A\Gamma L_{1}(v)$
was announced by Buekenhout, Delandtsheer, Doyen, Kleidman, Liebeck and Saxl
in \cite{BDDKLS} and proven in \cite{BDD}, \cite{Da}, \cite{De0}, \cite{De}, 
\cite{Kle}, \cite{LiebF} and \cite{Saxl}. Since then a special attention was
given to the case $\lambda >1$. A classification of the flag-transitive $2$%
-designs with $\gcd (r,\lambda )=1$, $\lambda >1$ and $G\nleq A\Gamma
L_{1}(v)$, where $r$ is the replication number of $\mathcal{D}$, has been
announced by Alavi, Biliotti, Daneshkakh, Montinaro, Zhou and their
collaborators in \cite{glob} and proven in \cite{A}, \cite{A1}, \cite{ABD0}, 
\cite{ABD1}, \cite{ABD2},\cite{ABD}, \cite{BM}, \cite{BMR}, \cite{MBF}, \cite%
{TZ}, \cite{Zie}, \cite{ZD}, \cite{ZZ0}, \cite{ZZ1}, \cite{ZZ2}, \cite{ZW}
and \cite{ZGZ}. Moreover, recently the flag-transitive $2$-designs with $%
\lambda =2$ have been investigated by Devillers, Liang, Praeger and Xia in 
\cite{DLPX}, where it is shown that apart from the two known symmetric $2$-$%
(16,6,2)$ designs, $G$ is primitive of affine or almost simple type.
A classification is also provided when the socle of $G$ is isomorphic
to $PSL_{n}(q)$ with $n\geq 3$.

The investigation of the flag-transitive $2$-$(k^{2},k,\lambda )$ designs,
with $\lambda \mid k$, has been recently started in \cite{MF}. The reason of studying such $2$-designs is that they represent a natural generalization of the affine planes in
terms of parameters, and also because, it is shown in \cite{Monty} that, the blocks of imprimitivity of a family of flag-transitive, point-imprimitive symmetric $2$-designs investigated in \cite{PZ} have the structure of the $2$-designs analyzed here.
In \cite{MF} it is shown that, apart from the smallest Ree group, a
flag-transitive automorphism group $G$ of a $2$-$(k^{2},k,\lambda )$ design $%
\mathcal{D}$, with $\lambda \mid k$, is either an affine group or an almost
simple classical group. Moreover, a complete classification of $(\mathcal{D},G)$ is achieved in \cite{Mo1} when $G$ is an almost simple
classical group. The result contained in the present paper is a complete classification of $(\mathcal{D},G)$ when $G$ is
affine type and $G \nleq A \Gamma L_{1}(k^{2})$. More precisely, the following result is obtained: 

\medskip 

\begin{theorem}\label{main}
Let $\mathcal{D}$ be a $2$-$(k^{2},k,\lambda )$ design, with $\lambda \mid k$,
admitting a flag-transitive automorphism group $G=T:G_{0}$. Then one
of the following holds:

\begin{enumerate}
\item $\mathcal{D}$ is a $2$-$(q^{2},q,\lambda )$ design, the blocks are
subspaces of $AG_{2m}(p)$ and $G_{0}\leq \Gamma L_{1}(q)$.

\item $\mathcal{D}$ is the Desarguesian plane of order $q$ and $SL_{2}(q)\trianglelefteq
G_{0}\leq \Gamma L_{2}(q)$.

\item $\mathcal{D}$ is the L\"{u}neburg plane of order $2^{2h}$, $h$ odd,
and $Sz(2^{h})\trianglelefteq G_{0}\leq Sz(2^{h}):Z_{h}$.

\item $\mathcal{D}$ is the Hall plane of order $3^{2}$ and $%
SL_{2}(5)\leq G_{0}\leq (D_{8}\circ Q_{8}).S_{5}$.

\item $\mathcal{D}$ is the Hering plane of order $3^{3}$ and $%
G_{0}\cong SL_{2}(13)$.

\item $\mathcal{D}$ is a $2$-$(q^{2},q,q^{1/2})$ design, $q=p^{h}$ and $h$ even, and $SL_{2}(q)\trianglelefteq G_{0}\leq (Z_{q^{1/2}-1}\circ SL_{2}(q)).Z_{h}$.

\item $\mathcal{D}=(V,B^{G})$ is a $2$-$(q^{3},q^{3/2},q)$ design, $q=p^{h}$, $p$
odd and $h$ even, and $SU_{3}(q^{1/2})\trianglelefteq G_{0}\leq (Z_{q^{1/2}-1}\times
SU_{3}(q^{1/2})).Z_{h}$.

\item $\mathcal{D}$ is a $2$-$(q^{4},q^{2},q^{2})$ design, $q=p^{h}$, and $Sp_{4}(q)%
\trianglelefteq G_{0}\leq \Gamma Sp_{4}(q)$.

\item $\mathcal{D}$ is one of the three $2$-$(q^{4},q^{2},\lambda)$ designs, with $\lambda=q,q^{2}/2,q^{2}$ respectively, where $q=2^{h}$ and $h$ odd, $Sz(q)\trianglelefteq G_{0}\leq \left( Z_{q-1}\times
Sz(q)\right) .Z_{h}$.

\item $\mathcal{D}$ is a $2$-$(q^{6},q^{3},q^{3})$ design, $q=2^{h}$, and $G_{2}(q)\trianglelefteq G_{0}\leq (Z_{q-1}\times
G_{2}(q)):Z_{h}$.

\item $\mathcal{D}$ is a $2$-$(3^{4},3^{2},3)$-design and $SL_{2}(5)\trianglelefteq G_{0}\leq \left(\left\langle -1\right\rangle .S_{5}^{-}\right) :Z_{2}$.

\item $\mathcal{D}$ is one of the two $2$-$(2^{6},2^{3},2^{2})$-designs and $G_{0}$ is either one of the groups $3^{1+2}:Q_{8}$, $%
3^{1+2}:Z_{8}$ or $3^{1+2}:SD_{16}$, or
 $3^{1+2}:Z_{8} \leq G_{0} \leq PSU_{3}(3)$.
\item $\mathcal{D}$ is a $2$-$(2^{6},2^{3},2^{3})$-design and $G_{0}$ is one of the groups $3^{1+2}:Q_{8}$, $%
3^{1+2}:C_{8}$, $3^{1+2}:SD_{16}$, $\left( 3^{1+2}:Q_{8}\right) :3:2$.
\end{enumerate}

Moreover, (6)--(13) do occur and they are isomorphic to the $2$-designs constructed in Examples %
\ref{Ex4}--\ref{Ex6}.
\end{theorem}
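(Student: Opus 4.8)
The overall strategy is the classification of flag-transitive affine groups via the structure of $G_0 \leq \GL_{2m}(p)$, where $T \cong (\Zbb_p)^{2m}$ is the translation subgroup, $k^2 = p^{2m} = q^{2m/d}$ in the natural notation, and we are assuming $G_0 \nleq \GaL_1(k^2)$. By a theorem of Liebeck (building on the classification of finite simple groups), a flag-transitive affine group $G = T{:}G_0$ with $G_0$ not contained in $\GaL_1$ has $G_0$ lying in one of a small number of Aschbacher-type classes: (i) $G_0$ normalizes a classical group in its natural action, (ii) $G_0$ is of extraspecial-normalizer (class $\mathcal{C}_6$) type, or (iii) $G_0$ is one of finitely many exceptional cases (the so-called "extraspecial'' or "exceptional'' list, e.g.\ $\SL_2(5)$, $\SL_2(13)$, and a handful of others). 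The plan is to run through each of these cases in turn, imposing the two arithmetic constraints coming from the design structure: first, $|\mathcal{B}| = \lambda \frac{v(v-1)}{k(k-1)}$ must be an integer and, since $G$ is flag-transitive, $r = \lambda \frac{v-1}{k-1}$ divides $|G_0|$ and $k \mid |G_0|$ as well (flag-transitivity forces $k \mid r \cdot \gcd(\ldots)$-type divisibility and in fact $r \mid |G_x|$ gives strong control); and second, the hypothesis $\lambda \mid k$ together with $v = k^2$ gives $r = \lambda(k+1)$, so $r$ is essentially pinned down by $k$, and the flag-transitivity condition becomes $\lambda(k+1) \mid |G_{0}|$ while the point-stabilizer $G_0$ must act with a suborbit structure compatible with a block of size $k = \sqrt{v}$.

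The key steps, in order: (1) Invoke the reduction of \cite{MF} to know $G$ is affine (the almost simple classical case being handled in \cite{Mo1}), then invoke Liebeck's classification to split into the Aschbacher-type classes for $G_0$. (2) In the geometric/classical case, $G_0$ normalizes $\SL_n(q_0)$, $\Sp_n(q_0)$, $\SU_n(q_0)$, $\Om^\pm_n(q_0)$ or $G_2(q_0)'$, $\Sz(q_0)$, acting on $V = \Fbb_{q_0}^n$; here I would use the known flag-transitive linear spaces / designs classification together with the divisibility $\lambda(k+1)\mid |G_0|$ and $k = q_0^{n/2}$ (so $k$ is a square power of $q_0$ when $n$ is even, forcing $n$ even in most subcases) to extract exactly the families (2)--(10): the Desarguesian and Hall and Hering and L\"uneburg planes come from the $n=2$, $\lambda = 1$ subcase (these are precisely the non-Desarguesian flag-transitive affine planes of \cite{Kan} / Hering's theorem), and the higher-$\lambda$ families $2$-$(q^2,q,q^{1/2})$, $2$-$(q^3,q^{3/2},q)$, $2$-$(q^4,q^2,q^2)$, the Suzuki and $G_2$ designs arise from $\SL_2$, $\SU_3$, $\Sp_4$, $\Sz$, $G_2$ respectively, matching Examples \ref{Ex4}--\ref{Ex6}. (3) In the extraspecial-normalizer case $G_0 \leq \Nbf_{\GL_{2m}(p)}(R)$ with $R$ extraspecial of order $r^{1+2a}$, one has $p^{2m} = r^{2a}$, so $k = r^a$; the relevant small cases are $r = 2$ (giving $k$ a power of $2$) and $r = 3$ (giving $k$ a power of $3$), and here one gets the sporadic designs in (11)--(13), with $G_0$ among $3^{1+2}{:}Q_8$, $3^{1+2}{:}Z_8$, $3^{1+2}{:}SD_{16}$ and their extensions inside $\PSU_3(3)$ or $\GaL_2(9)$-type overgroups — this is where a finite, computer-assisted (GAP) check is carried out to determine which of these genuinely support a $2$-$(k^2,k,\lambda)$ design with $\lambda \mid k$ and to identify the designs up to isomorphism. (4) Finally, for the occurring cases (6)--(13), exhibit the explicit constructions (Examples \ref{Ex4}--\ref{Ex6}) and verify the design axioms and the claimed flag-transitive action, establishing the "moreover'' clause.

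The main obstacle I anticipate is case (2)–(3) bookkeeping for the classical groups in their natural module: one must carefully determine, for each $G_0$ with socle a classical group over $\Fbb_{q_0}$ of dimension $n$ with $p^{2m} = q_0^n$, the precise suborbits of $G_0$ on $V \setminus \{0\}$ and decide which unions of suborbits (together with $0$) form a block $B$ of size $q_0^{n/2}$ whose $G$-orbit is a $2$-design with $\lambda \mid k$ — this requires knowing the orbit structure of $\Sp_4(q)$, $\SU_3(q^{1/2})$, $G_2(q)$, $\Sz(q)$ on nonzero vectors and the subdegrees, and then checking the defining equation $\lambda(v-1) = r(k-1)$ forces $\lambda$ into the short list $\{q, q^{1/2}, q^2, q^2/2, q^3\}$ appearing in the statement. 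A secondary difficulty is ruling out spurious candidate parameter sets: even when $\lambda(k+1) \mid |G_0|$ holds, one must show no block design actually exists, which in the borderline extraspecial cases is exactly what the GAP computation settles. The non-existence arguments — showing that, outside the listed families, the combination "$G_0$ has a subgroup of the right index acting on a putative block of size $k$ with the $2$-design intersection numbers'' is impossible — will be the longest part of the write-up, proceeding class by class through Liebeck's list.
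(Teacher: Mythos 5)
Your plan hinges on a reduction that does not exist in the form you invoke it. There is no theorem of Liebeck classifying flag-transitive \emph{affine} groups of $2$-designs with $\lambda>1$ into Aschbacher-type classes; Liebeck's classification applies to linear spaces ($\lambda=1$), and in the paper it is used only in that subcase, after Wagner's theorem shows that a flag-transitive $2$-$(k^{2},k,1)$ design is a translation plane (this is how cases (2)--(5) arise). For $\lambda\geq 2$ the actual engine is arithmetic: flag-transitivity gives $r=p^{f}(p^{m}+1)\mid |G_{0}|$, hence the primitive part $\Phi_{2m}^{\ast}(p)$ of $p^{2m}-1$ divides $|G_{0}\cap GL_{n}(q)|$ (Lemma \ref{alternative}), and then the Bamberg--Penttila theorem on subgroups of $\Gamma L_{n}(q)$ whose order is divisible by this primitive part yields the six possibilities for $G_{0}$ (classical, symplectic, unitary, $\Omega^{-}$, the $(D_{8}\circ Q_{8}).S_{5}$ case, or nearly simple). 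Note that $G_{0}$ need not be transitive on $V\setminus\{0\}$, so Hering-type transitive linear group classifications are also unavailable; without the primitive-divisor argument your case division has no justification. Moreover, a substantial portion of the work (the whole of Section 3) consists of eliminating or identifying the nearly simple candidates with alternating, sporadic and cross-characteristic socles, which your plan compresses into ``finitely many exceptional cases'' without a mechanism for handling them.

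A second concrete gap: the primitive-divisor argument fails exactly when $m=1$ or $(p,2m)=(2,6)$, and these cases must be (and in the paper are) treated separately in the final section. They are not vacuous: the $2$-$(2^{6},2^{3},2^{2})$ and $2$-$(2^{6},2^{3},2^{3})$ designs of conclusions (12)--(13) live precisely here, with $G_{0}\leq \Gamma U_{3}(2)$ or inside $G_{2}(2)$, and cannot be reached through your extraspecial-normalizer branch: since $6$ is not a prime power, $GL_{6}(2)$ has no $\mathcal{C}_{6}$-subgroups, so assigning (12)--(13) to that class is incorrect. Only conclusion (11), over $V_{4}(3)$ with $G_{0}\leq (D_{8}\circ Q_{8}).S_{5}$, genuinely belongs to the extraspecial case. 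As written, your argument would therefore miss (or misplace) two of the thirteen conclusions and leave the $q=2^{3}$ instances of case (1) untreated.
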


\bigskip

The proof of Theorem \ref{main} is outlined as follows. The semilinear $1$-dimensional case is analyzed in Lemma \ref{jedan}, whereas in almost all the remaining cases it shown that the order of $G_{0}$ is divisible by the primitive part of $p^{2m}-1$ and hence $G_{0}$ is determined in \cite[Theorem 3.1]{BP}. Each group is then analyzed singularly, and the structure of the stabilizer of a flag is mostly determined by combining the Aschbacher's theorem \cite {KL} together with the constraints on the replication number of $\mathcal{D}$ and on the blocks of this one provided in Section \ref{S2}. 

\bigskip

An immediate consequence of Theorem \ref{main} is the following corollary which determines the flag-transitive $2$-$(2^{2m},2^{m},2)$ designs of affine type and hence it provides a small contribution to the result contained in \cite{DLPX}. 

\begin{corollary}
Let $\mathcal{D}$ be a $2$-$(2^{2m},2^{m},2)$ design
admitting a flag-transitive automorphism group $G$ of affine type. If $G\nleq A\Gamma L_{1}(2^{m})$, then $\mathcal{D}$ is a $2$-$(16,4,2)$ design isomorphic to that constructed in Example \ref{Ex4} for $q=4$.
\end{corollary}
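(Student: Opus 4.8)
The plan is to derive the corollary directly from Theorem \ref{main} by specializing the parameters to $(v,k,\lambda)=(2^{2m},2^m,2)$ and checking, case by case, which of the thirteen conclusions of the theorem can actually host these parameters. First I would observe that here $k=2^m$ is a $2$-power, $\lambda=2$, and the divisibility hypothesis $\lambda\mid k$ forces $m\geq 1$; moreover non-triviality gives $k>2$, so $m\geq 2$. Since $G$ is of affine type with $G\nleq A\Gamma L_1(2^m)$, Theorem \ref{main} applies and produces one of the listed $(\mathcal{D},G_0)$.

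Next I would run through the thirteen cases. Case (1) is excluded because there $G_0\leq \Gamma L_1(q)$ with $q^2=v=2^{2m}$, i.e. $G_0\leq \Gamma L_1(2^m)$, contrary to hypothesis; this is where the one-dimensional semilinear assumption does the work. Cases (2), (3), (4), (5) are projective planes, so $\lambda=1\neq 2$; excluded. Case (6) has $\lambda=q^{1/2}$ with $v=q^2$, so $q=2^m$ and $\lambda=2^{m/2}$; $\lambda=2$ forces $m=2$, $q=4$, $v=16$, $k=4$, and one checks this is exactly Example \ref{Ex4} with $q=4$ — the target conclusion. Cases (7), (10) have $p$ odd or involve parameters $q^3,q^{3/2}$ incompatible with $(2^{2m},2^m)$; cases (8), (9), (13) have $\lambda\in\{q^2,q^2/2,q,q^3\}$ which for a $2$-power $q=2^h\geq 2$ never equals $2$ except possibly $\lambda=q$ with $q=2$, but then $k=q^2=4$ forces $v=k^2=16$ while the stated design has $v=q^4=16$ and $k=q^2=4$, $\lambda=q=2$: I must check whether case (9) with $q=2$ is genuinely possible, but $q=2^h$ with $h$ odd and $Sz(q)$ requires $h\geq 3$, so $q\geq 8$, excluding it; similarly case (8) with $q=2$ gives $k=q^2=4$, $v=16$, $\lambda=q^2=4\neq 2$. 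Cases (11), (12) are the sporadic designs on $3^4$ and $2^6$ points with $\lambda=3$ or $\lambda=2^2$, neither equal to $2$; excluded. Case (7) also needs $p$ odd, so is out. Hence the only surviving possibility is case (6) with $q=4$, giving the $2$-$(16,4,2)$ design of Example \ref{Ex4}.

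I would then close by noting that conversely this design does occur (guaranteed by the ``(6)--(13) do occur'' clause of Theorem \ref{main}) and that any such $\mathcal{D}$ is isomorphic to Example \ref{Ex4} with $q=4$, again by the isomorphism statement in Theorem \ref{main}; this yields both the existence and uniqueness asserted in the corollary.

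The main obstacle is purely bookkeeping: one must be careful with the mixed parametrizations ($v=q^2$ in some cases, $v=q^3,q^4,q^6$ in others) and with the side conditions ($p$ odd, $h$ odd, $h$ even) attached to several cases, since a careless reading could wrongly admit or exclude the borderline small values $q=2,4,8$. No deep argument is needed beyond a disciplined elimination; the substance is entirely carried by Theorem \ref{main}.
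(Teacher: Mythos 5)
Your proposal is correct and is essentially the paper's own argument: the paper offers no separate proof of the corollary, treating it exactly as you do, namely as an immediate specialization of Theorem \ref{main} with $p=2$, $\lambda=2$, followed by elimination of the cases whose $\lambda$ or side conditions ($\lambda=1$ in (2)--(5), $p$ odd in (7) and (11), $h$ odd with $Sz(q)$ forcing $q\geq 8$ in (9), $\lambda\in\{q^{2},q^{2}/2,2^{2},2^{3}\}$ elsewhere) are incompatible, leaving only case (6) with $q=4$, and then quoting the ``(6)--(13) do occur and are isomorphic to Examples \ref{Ex4}--\ref{Ex6}'' clause for the identification with Example \ref{Ex4}.

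One small correction to your bookkeeping: case (10) is \emph{not} excluded because its parameters are ``incompatible with $(2^{2m},2^{m})$'' --- there $v=q^{6}=(q^{3})^{2}$ and $k=q^{3}$, so the parameters are of the required shape with $m=3h$; the exclusion rests solely on $\lambda=q^{3}\geq 8\neq 2$. (Your grouped list of admissible $\lambda$'s does contain $q^{3}$, so the conclusion is unaffected, but the reason you attach to case (10) as stated would not stand on its own.) Note also that the hypothesis is most naturally read as $G\nleq A\Gamma L_{1}(2^{2m})$, i.e.\ $A\Gamma L_{1}(k^{2})$ as in the abstract, which is what genuinely excludes case (1); your use of it is consistent with that reading.
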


\bigskip

The remainder of this section is dedicated to construction of the following examples.

\bigskip

\begin{example}
\label{Ex4}Let $q=p^{h}$, with $h$ even and $h>0$, let $B=\left\langle
e_{1},e_{2}\right\rangle _{GF(q^{1/2})}$, where $\left\{ e_{1},e_{2}\right\} 
$ is a basis of $V_{2}(q)$, and let $G=TG_{0}$, where $SL_{2}(q)%
\trianglelefteq G_{0}\leq (Z_{q^{1/2}-1}\circ SL_{2}(q)).Z_{h}$. \ Then $%
\mathcal{D}=(V_{2}(q),B^{G})$ is a $2$-$(q^{2},q,q^{1/2})$ design admitting $%
G$ as a point-$2$-transitive, flag-transitive automorphism group.
\end{example}

\begin{proof}
Let $q$, $B$ and $G$ as in the statement. Then $G_{B}=T_{B}:G_{0,B}$, where $%
SL_{2}(q^{1/2})\trianglelefteq G_{0,B} = G_{0} \cap \Gamma L_{2}(q)_{B}$ and $%
\Gamma L_{2}(q)_{B}\cong (Z_{q^{1/2}-1}\circ SL_{2}(q^{1/2})).Z_{h}$ by \cite[Table 8.1]{BHRD}. Hence $%
\mathcal{D}=(V,B^{G})$ is a $2$-$(q^{2},q,\lambda )$ design, since $G$ acts $%
2$-transitively on $V$. Also, $G$ acts flag-transitively on $\mathcal{D}$,
since $T_{B}$ acts regularly on $B$ and $G$ is block-transitive on $\mathcal{%
D}$. Moreover $r=\left[ G_{0}:G_{0,B}\right]=[SL_{2}(q):SL_{2}(q^{1/2})]=q^{1/2}(q+1)$, since $SL_{2}(q)$ contains $(2,q-1)$
conjugacy classes of subgroups isomorphic to $SL_{2}(q^{1/2})$, and when $q$ is odd these are fused in $\Gamma L_{2}(q)$ but not in $(Z_{q^{1/2}-1}\circ SL_{2}(q)).Z_{h}$ by \cite[Table 8.1]{BHRD}. Thus $\lambda =q^{1/2}$.
\end{proof}

\bigskip

\begin{example}
\label{Ex5}Let $V$ be a $3$-dimensional $GF(q)$-space, with $q=p^{h}$, $p$
odd and $h$ even, let $\mathcal{H}$ be a
Hermitian unital of order $q$ of $PG_{2}(V)$ and let $\pi $ be a Baer
subplane of $PG_{2}(q)$ such that $\pi \cap \mathcal{H}$ is a non-degenerate
conic of $\pi $. If $B$ is any $3$-dimensional $GF(q^{1/2})$-subspace of $V$
inducing $\pi $, and $G=TG_{0}$ where $%
SU_{3}(q^{1/2})\trianglelefteq G_{0}\leq (Z_{q^{1/2}-1}\times
SU_{3}(q^{1/2})).Z_{h}$ induces a group on $PG_{2}(V)$ that preserves $%
\mathcal{H}$, then $\mathcal{D}=(V,B^{G})$ is a $2$-$(q^{3},q^{3/2},q)$
design admitting $G$ as a flag-transitive automorphism group.
\end{example}

\begin{proof}
Let $V$, $G$, $B$ be as in the statement, and let $x_{1}^{G_{0}}$ and $%
x_{2}^{G_{0}}$ be the two orbits consisting of the non-zero, isotropic and
non-isotropic vectors of $V$ respectively. Then $\left\vert x_{1}^{G_{0}}\right\vert = (q^{3/2}+1)(q-1)$ and $\left\vert x_{2}^{G_{0}}\right\vert = q(q^{3/2}+1)(q^{1/2}-1)$. Moreover, $(x_{i}^{G_{0}},B^{G_{0}})$ is a tactical configuration for each $i=1,2$ by \cite[1.2.6]{Demb}.

Set $X_{0}$ the (normal) copy of $SU_{3}(q^{1/2})$ inside $G_{0}$. It results from \cite[Table 8.5]{BHRD} that $X_{0,B}%
\trianglelefteq G_{0,B}=G_{0} \cap \Gamma U_{3}(q^{1/2})_{B}$, where $X_{0,B} \cong SO_{3}(q^{1/2})$ and $\Gamma U_{3}(q^{1/2})_{B} \cong  \Gamma O_{3}(q^{1/2})$. Moreover, $X_{0}$ contains $(3,q^{1/2}+1)$ conjugacy classes of subgroups isomorphic to $X_{0,B}$ and these are fused in $\Gamma U_{3}(q^{1/2})$ but not in $(Z_{q^{1/2}-1}\times
SU_{3}(q^{1/2})).Z_{h}$ and hence neither in $G_{0}$. Thus $\left\vert B^{G_{0}}\right\vert =\left[ G_{0}:G_{0,B}\right] =\left[X_{0}:X_{0,B}\right] =q(q^{3/2}+1)$. 

Since $G_{0,B}$ acts $2$-transitively on the non-degenerate conic $\pi \cap \mathcal{H}$ of $\pi$ and since the stabilizer in $G_{0,B}$ of a point $\left\langle c \right\rangle_{GF(q)}$ of  $\pi \cap \mathcal{H}$ splits the set of the non-zero vectors of $\left\langle c \right\rangle_{GF(q)}$ into $q^{1/2}+1$ orbits each of length $q^{1/2}-1$, it follows that $\left\vert B\cap x_{1}^{G_{0}}\right\vert
=q-1$. Consequently, we have that $\left\vert B\cap x_{2}^{G_{0}}\right\vert
=\allowbreak q\left( q^{1/2}-1\right)$ as $\left\vert B\backslash \left\{ (0,0,0)\right\} \right\vert =q^{3/2}-1$. Therefore the parameters of $(x_{i}^{G_{0}},B^{G_{0}})$ are 
\[
(v_{i},k_{i},b_{i},r_{i})=\left\{ 
\begin{array}{ll}
((q^{3/2}+1)(q-1),q-1,q(q^{3/2}+1),q)& \text{ for }i=1 \\ 
(q(q^{3/2}+1)(q^{1/2}-1),q\left( q^{1/2}-1\right),q(q^{3/2}+1),q)& \text{ for }i=2 \text{.}\\
\end{array}%
\right. 
\]%
Thus $%
r_{1}=r_{2}=q$ and hence the number of elements in $B^{G_{0}}$ incident with 
$0$ and with any non-zero vector of $V$ is $q$. Therefore the
flag-transitive incidence structure $\mathcal{D}=(V,B^{G})$ is a $2$-$%
(q^{3},q^{3/2},q)$ design.
\end{proof}

\begin{example}
\label{Ex0}Let $q=p^{h}$, $h\geq 1$, and let $B$ a non-degenerate $2$%
-dimensional subspace of $V_{4}(q)$ with respect to a symplectic form. Then $%
\mathcal{D}=(V_{4}(q),B^{G})$, where $G=TG_{0}$ and $Sp_{4}(q)%
\trianglelefteq G_{0}\leq \Gamma Sp_{4}(q)$, is a point-$2$%
-transitive, flag-transitive $2$-$(q^{4},q^{2},q^{2})$ design.
\end{example}

\begin{proof}
Let $B$ a non-degenerate $2$-dimensional subspace of $V_{4}(q)$ with respect
to a symplectic form, then $V_{4}(q)=B\oplus B^{\perp }$. Hence $\mathcal{D}%
=(V_{4}(q),B^{G})$ is a $2$-$(q^{4},q^{2},\lambda )$ design, since $G$ acts
point-$2$-transitively on $V_{4}(q)$. Also, $G$ acts flag-transitively on $%
\mathcal{D}$ and $Sp_{2}(q) \times Sp_{2}(q)\trianglelefteq G_{0,B}$ by \cite[Table 8.12]{BHRD}. Moreover $$r=\left[ G_{0}:G_{0,B}\right] =\left[
Sp_{4}(q):Sp_{2}(q)^{2}\right] =q^{2}(q^{2}+1),$$
since the stabilizers of non-degenerate $2$-dimensional subspaces of $V$ lies in a unique conjugacy $Sp_{4}(q)$-class, hence $\lambda =q^{2}$.
\end{proof}

\begin{example}
\label{Ex2} Let $(x_{0},y_{0},z_{0},t_{0})$ be a non-zero vector of $V=V_{4}(q)$, $q=2^{h}$, $%
h=2e+1$ and $e \geq 1$, let $\sigma \in \mathrm{Aut}(GF(q))$ defined by $\sigma: w \rightarrow w^{2^{e}}$, and let
\begin{equation}\label{BgigB}
B=\left\lbrace (m_{1}^{\sigma+2}x_{0},m_{1}^{\sigma}y_{0},m_{2}^{-\sigma}z_{0},m_{2}^{-\sigma -2}t_{0})\right\rbrace
\end{equation}
be a $GF(2)$-subspace of $V$ (e.g. see \cite[Lemma IV.2.1]{Lu}). Then $\mathcal{D}=(V,B^{G})$, where $%
G=TG_{0}$ and $Sz(q)\trianglelefteq G_{0}$, is a flag-transitive $2$-$(q^{4},q^{2},\lambda)$ design provided that one of the following conditions is fulfilled:
\begin{description}
\item [Family 1] $x_{0}=z_{0}=1$ and $y_{0}=t_{0}=0$. In this case
\begin{itemize}
\item $B$ is a $2$-dimensional $GF(q)$-subspace of $V$ inducing a line of $PG_{3}(q)$ tangent to the Tits Ovoid and not lying the L\"{u}neburg spread.
\item $\lambda=q$.
\item $G_{0}\leq \left( Z\times Sz(q) \right).Z_{h}$, where $Z$ denotes the center of $GL_{4}(q)$.
\item $F_{q(q-1)} \trianglelefteq G_{0,B}=G_{0}\cap (\left( Z\times Sz(q) \right).Z_{h})_{B}$, where $(\left( Z\times Sz(q) \right).Z_{h})_{B} \cong (Z \times F_{q(q-1)}).Z_{h}$.
\end{itemize} 
\item [Family 2] $x_{0},y_{0},z_{0},t_{0} \neq 0$, $t_{0}y_{0}^{\sigma +1} = z_{0}^{\sigma +1}x_{0}$ with  $(z_{0},t_{0})=(y_{0},x_{0})$. Here
\begin{itemize}
\item $\lambda =q^{2}/2$. 
\item $G_{0}\leq \mathrm{Aut}(Sz(q)) \cong Sz(q).Z_{h}$.
\item $D_{2(q-1)} \trianglelefteq G_{0,B}=G_{0}\cap \mathrm{Aut}(Sz(q))_{B}$, where  $\mathrm{Aut}(Sz(q))_{B}\cong D_{2(q-1)}.Z_{h}$.
\end{itemize}
\item[Family 3] $x_{0},y_{0},z_{0},t_{0} \neq 0$, $t_{0}y_{0}^{\sigma +1} = z_{0}^{\sigma +1}x_{0}$ with  $(z_{0},t_{0})\neq (y_{0},x_{0})$. In this case
\begin{itemize}
\item $\lambda =q^{2}$.
\item $G_{0}\leq \mathrm{Aut}(Sz(q))$.
\item $Z_{q-1} \trianglelefteq G_{0,B}= G_{0} \cap \mathrm{Aut}(Sz(q))_{B}$, where $\mathrm{Aut}(Sz(q))_{B}\cong  Z_{q-1}.Z_{h}$.
\end{itemize}
\item [Family 4] $x_{0},y_{0},z_{0},t_{0} \neq 0$ and $t_{0}y_{0}^{\sigma +1} \neq z_{0}^{\sigma +1}x_{0}$ and the map $$\zeta
:X\rightarrow \frac{(y_{0}/t_{0})^{\sigma }X^{\sigma }+(z_{0}/t_{0})^{\sigma
+2}}{(x_{0}/t_{0})X^{\sigma }+(z_{0}/t_{0})(y_{0}/t_{0})}$$ of $P\Gamma
L_{2}(q)$ fixes exactly one point in $PG_{1}(q) \setminus \{0, \infty\}$. Here
\begin{itemize}
\item $\lambda =q^{2}$.
\item $G_{0}\leq \mathrm{Aut}(Sz(q))$.
\item $Z_{q-1} \trianglelefteq G_{0,B}= G_{0} \cap \mathrm{Aut}(Sz(q))_{B}$, where $\mathrm{Aut}(Sz(q))_{B}\cong  Z_{q-1}.Z_{h}$.
\end{itemize}  
\end{description}
\end{example}

\begin{proof}
It is well known that $G_{0}$ partitions the set of non-zero vectors of $%
V_{4}(q)$ in two orbits, say $x_{1}^{G_{0}}$ and $x_{2}^{G_{0}}$, of length $%
(q^{2}+1)(q-1)$ and $(q^{2}+1)q(q-1)$ respectively, and these induce the Tits Ovoid and its complementary set in $PG_{3}(q)$ correspondingly. Let $B$ be as in (\ref{BgigB}).  Then $(x_{i}^{G_{0}},B^{G_{0}})$, $%
i=1,2 $, are tactical configurations by \cite[1.2.6]{Demb}. Set $H=T:H_{0}$, where $H_{0}$ is the subgroup of $G_{0}$ isomorphic to $Sz(q)$. One can easily see that $\left\vert B^{H_{0}}\right\vert= \lambda(q^{2}+1)$, where $\lambda$ is as in Families (1)--(4) by using (\ref{Sugr}). Moreover $B^{H_{0}}=B^{G_{0}}$, since $H_{0}$ has a unique conjugacy class of subgroups isomorphic to $F_{q(q-1)},Z_{q-1}$ or $D_{2(q-1)}$ respectively, and it also is a conjugacy $G_{0}$-class (see \cite[Table 8.16]{BHRD}). Therefore, the parameters $(v_{i},k_{i},b_{i},r_{i})$ of $(x_{i}^{G_{0}},B^{G_{0}})$, $i=1,2$, are $((q^{2}+1)(q-1),k_{1},\lambda(q^{2}+1),r_{1})$ and $((q^{2}+1)q(q-1),k_{2},\lambda(q^{2}+1),r_{2})$ respectively. Hence $r_{1}(q-1)=\lambda k_{1}$ and $r_{2}q(q-1)=\lambda k_{2}$. Then the flag-transitive incidence structure $\mathcal{D}=(V,B^{G})$ is a $2$-$(q^{4},q^{2},\lambda)$ design, where $\lambda$ is as in Families (1)--(4) respectively if, and only if, $r_{1}=r_{2}=\lambda$. This is equivalent to show that $k_{1}=q-1$ in (1)--(4), since $k_{1}+k_{2}=q^{2}-1$.     

It is immediate to see that $k_{1}=q-1$ in Family (1) by using (\ref{Ovoid}), hence $\mathcal{D}$ is a $2$-$(q^{4},q^{2},q)$ design in this case. Also $B$ induces a line of $PG_{3}(q)$ tangent to $\mathcal{O}$ and not lying the L\"{u}neburg spread by \cite[Theorem IV.23.2]{Lu}.
The fact that $k_{1}=q-1$ actually corresponds to the conditions on $x_{0},y_{0},z_{0},t_{0}$ for the  Families (2)-(4) is shown in Proposition \ref{IntOrb} and Corollary \ref{DiheD}. Thus in these cases $\mathcal{D}$ is a to $2$-$(q^{4},q^{2},\lambda)$ designs with $\lambda=q^{2}/2,q^{2},q^{2}$ respectively. Although it is immediate to see that there are examples belonging to any of the Families (2)-(3) for each $q$, it is difficult to prove the existence of examples belonging to Family (4) for any $q$. However, some computations made with the aid of \textsf{GAP} \cite{GAP} show that examples occur for $q=2^{3}$ or $2^{5}$.    
\end{proof}

\begin{example}
\label{Ex1}Let $q=2^{h}$, $h\geq 1$, and let $B$ be any totally isotropic $3$%
-dimensional subspace of $V_{6}(q)$ with respect to the $G_{2}(q)$-invariant
symplectic form. Then $\mathcal{D}=(V_{6}(q),B^{G})$, with $%
G=TG_{0}$ and $G_{2}(q)\trianglelefteq G_{0}\leq (Z_{q-1}\times
G_{2}(q)):Z_{h}$, where $Z_{h}=\mathrm{Out}(G_{2}(q))$, is a point $2$%
-transitive, flag-transitive $2$-$(q^{6},q^{3},q^{3})$ design.
\end{example}

\begin{proof}
Since $G_{2}(q)<Sp_{6}(q)$, $q=2^{h}$, $h\geq 1$, it follows that $G_{2}(q)$
preserves a symplectic form $F$ on $V$, where $V=V_{6}(q)$. The group $%
G_{2}(q)$ has one orbit of length $q^{3}(q^{3}+1)$ of totally isotropic $3$%
-dimensional subspaces of $V$ by \cite[Lemma 5.3]{Coop}. Moreover, if $B$ is
a representative of such a orbit, $G_{2}(q)_{B}\cong SL_{3}(q)$ and $\mathrm{%
Aut}(G_{2}(q))_{B}\cong SL_{3}(q):Z_{h}$.

Let $G=TG_{0}$, where $G_{2}(q)\leq G_{0}\leq (Z_{q-1}\times G_{2}(q):Z_{h}$%
, where $Z_{q-1}$ is the center of $GL_{6}(q)$. Then $\left\vert B^{G_{0}}\right\vert =q^{6}(q^{3}+1)$ by \cite[Table 8.30]{BHRD},
hence $\mathcal{D}=(V,B^{G})$ is a $2$-$(q^{6},q^{3},q^{3})$ design as $G$
acts $2$-transitively on $V$. Also $\mathcal{D}$ is clearly flag-transitive.
\end{proof}

\begin{example}
\label{Ex3}Let $B=\left\langle (1,0,0,0),(0,0,0,1)\right\rangle _{GF(3)}$
and let $G=TG_{0}$ where $SL_{2}(5)\trianglelefteq G_{0}\leq \left(
\left\langle -1\right\rangle .S_{5}^{-}\right) :Z_{2}$. Then $\mathcal{D}%
=\left( V_{4}(3),B^{G_{0}}\right) $ is a $2$-$(3^{4},3^{2},3)$-design
admitting $G=TG_{0}$ as flag-transitive automorphism group. Moreover, if $%
G_{0}$ is not isomorphic $SL_{2}(5)$ then $G_{0}$ acts point-$2$%
-transitively on $\mathcal{D}$.
\end{example}

\begin{proof}
Let $H_{0}=\left\langle \alpha ,\beta \right\rangle $, where 
\[
\alpha =\left( 
\begin{array}{rrrr}
0 & 0 & 0 & -1 \\ 
-1 & 0 & 1 & 1 \\ 
-1 & -1 & 0 & -1 \\ 
1 & 0 & 0 & 0%
\end{array}%
\right)\text{ and  } \; \beta =\left( 
\begin{array}{cccc}
1 & 1 & 0 & 0 \\ 
0 & 1 & 0 & 0 \\ 
0 & 0 & 1 & 1 \\ 
0 & 0 & 0 & 1%
\end{array}%
\right) \text{.} 
\]%
It is straightforward to check that $\alpha ^{4}=\left[ \alpha ^{2},\beta %
\right] =\beta ^{3}=\left( \alpha \beta \right) ^{5}=1$. Thus $H_{0}\cong
SL_{2}(5)$. Moreover $H_{0}$ preserves the symplectic form $$%
F(X,Y)=-X_{1}Y_{2}+X_{2}Y_{1}-X_{3}Y_{4}+X_{4}Y_{3},$$ where $X=(X_{1},X_{2},X_{3},X_{4})$ and $Y=(Y_{1},Y_{2},Y_{3},Y_{4})$, hence $%
H_{0}<Sp_{4}(3)$.

The $2$-dimensional subspace $B=\left\langle
(1,0,0,0),(0,0,0,1)\right\rangle $ of $V_{4}(3)$ is\ totally isotropic (with
respect to $F$), and $\left\langle \alpha \right\rangle \leq H_{0,B}$.
Assume that the order of $H_{0,B}$ is divisible of $8$. Then $H_{0,B}$
contains the Sylow $2$-subgroup $S$ of $H_{0}$ containing $\left\langle
\alpha \right\rangle $ and hence $S=\left\langle
\alpha ,\gamma \right\rangle \cong Q_{8}$, where $\gamma =\alpha ^{3}\beta
\alpha ^{3}\beta ^{2}\alpha ^{3}\beta \alpha ^{3}\beta ^{2}\alpha \beta $.
However,%
\[
\gamma =\left( 
\begin{array}{rrrr}
-1 & 0 & -1 & -1 \\ 
0 & 1 & 0 & 1 \\ 
-1 & -1 & 1 & -1 \\ 
0 & 1 & 0 & -1%
\end{array}%
\right) 
\]%
does not preserve $B$.

Assume that the order of $H_{0,B}$ is divisible of $3$. Since each Sylow $2$%
-subgroup of $SL_{2}(5)$ is isomorphic to $Q_{8}$ and its unique involution
is $-1$, and since each $5$-subgroup of $H_{0}$ acts irreducibly on $%
V_{4}(3) $, it follows that the forty $1$-dimensional subspaces of $V_{4}(3)$
are is partitioned into two $H_{0}$-orbits each of length $20$. Hence $\mathrm{%
Fix(}K\mathrm{)}$ is a $2$-dimensional subspace of $V_{4}(3)$, where $K \cong Z_{3}$, and $\mathcal{%
S}=\mathrm{Fix(}K\mathrm{)}^{H_{0}}$ is a transitive $2$-spread of $V_{4}(3)$%
. Also $\mathcal{S}$ is Desarguesian by \cite[Proposition 5.3 and Corollary 5.5]{Fou2}.

If $B\in \mathcal{S}$ then $\beta $ fixes $B$, since $\beta $ fixes $%
(0,0,0,1)$ in $B$, and we reach a contradiction. Then $H_{0,B}$ preserves the $1$%
-dimensional subspace $\mathrm{Fix}(K)\cap B$. However, this is
impossible since the square of each element of order $4$ contained in $%
H_{0,B}$ is $-1$. Thus $H_{0,B}=\left\langle \alpha \right\rangle $ and
hence $\left\vert B^{H_{0}}\right\vert =30$.

Let 
\[
\delta =\left( 
\begin{array}{rrrr}
0 & 0 & 1 & 1 \\ 
0 & 0 & 0 & 1 \\ 
-1 & 1 & 0 & 0 \\ 
0 & -1 & 0 & 0%
\end{array}%
\right) \text{ and  } \; \psi =\left( 
\begin{array}{rrrr}
1 & 1 & 0 & 0 \\ 
0 & -1 & 0 & 0 \\ 
0 & 0 & 1 & 0 \\ 
0 & 0 & 0 & -1%
\end{array}%
\right) \text{.} 
\]

It is not difficult to see that $\delta \in N_{Sp_{4}(3)}(H_{0})$. On the
other hand, $H_{0}\left\langle \delta \right\rangle \cong \left\langle
-1\right\rangle .S_{5}^{-}$ by \cite{At}, since the Sylow $2$-subgroups of $%
H_{0}\left\langle \delta \right\rangle $ are isomorphic to $Q_{16}$. Then $%
H_{0}\left\langle \delta \right\rangle $ acts transitively on the set of
non-zero vectors of $V_{4}(3)$, since its unique involution is $-1$ and
since each $5$-subgroup of $H_{0}\left\langle \delta \right\rangle $ acts
irreducibly on $V_{4}(3)$. Then $T:H_{0}\left\langle \delta \right\rangle $
acts $2$-transitively on $V_{4}(3)$. Moreover, $B^{H_{0}}=B^{H_{0}\left\langle \delta \right\rangle}$ since $%
H_{0}\vartriangleleft H_{0}\left\langle \delta \right\rangle <Sp_{4}(3)$ and
since the number of totally isotropic $2$-dimensional subspaces is $40$.
Thus $\mathcal{D}=\left( V_{4}(3),B^{G_{0}}\right) $ is a $2$-$%
(3^{4},3^{2},3)$-design admitting both $T:H_{0}$ and $T:H_{0}\left\langle
\delta \right\rangle $ as a flag-transitive automorphism groups.

Since $%
H_{0}\trianglelefteq H_{0}\left\langle \delta \right\rangle :\left\langle
\psi \right\rangle \leq GSp_{4}(3)$ and $o(\psi )=2$, since $%
GSp_{4}(3) $ preserves the $Sp_{4}(3)$-orbit consisting of the $40$ totally
isotropic $2$-dimensional subspaces, it follows that $B^{H_{0}}=B^{H_{0}\left\langle \delta
\right\rangle }=B^{H_{0}\left\langle \delta ,\psi \right\rangle }$. Hence 
$\mathcal{D}$ admits also $T:(H_{0}\left\langle \delta
,\psi \right\rangle )$ as flag-transitive automorphism groups. Now, if we set $G=T:G_{0}$, where $H_{0} \trianglelefteq G_{0} \leq H_{0}\left\langle \delta,\psi \right\rangle$, the assertion follows.
\end{proof}

\bigskip

It is worth noting that in Examples \ref{Ex4}--\ref{Ex3} the full
flag-transitive automorphism group $\Gamma $ is determined. Indeed, it
follows from \cite{MF} and \cite{Mo1} that $\Gamma $ is of affine type since
the number of points of $\mathrm{\mathcal{D}}$ is a power of a prime. Now,
we may apply Theorem \ref{main}, with $\Gamma $ in the role of $G$, thus
obtaining that $\Gamma $ is a group as in Examples \ref{Ex4}--\ref{Ex3}.

\bigskip 
\begin{example}
\label{Ex6} The following further examples are found with the aid of \textsf{GAP}\cite{GAP}. In
these cases $\mathcal{D}=(V_{6}(2),B^{G})$ is a $2$-$(2^{6},2^{3},2^{i} )$
design, with $1 \leq i \leq 3$, admitting a flag-transitive automorphism group $G=TG_{0}$, where $%
(B,G_{0})$ are as in Table \ref{t0}.

\begin{table}[h]
\caption{flag-transitive $2-(2^{6},2^{3},2^{i} )$ designs with $1 \leq i \leq 3$}
\label{t0} 
\centering
\begin{tabular}{|l|l|l|}
\hline
$i $ & Base block $B$ & $G_{0}$ \\ \hline
$1$ & $\left\{ 1,6,10,16,20,22,28,30\right\} $ & $D_{18}$, $D_{18}.Z_{3}$ \\ 
\hline
$2$ & $\left\{ 1,2,7,8,18,24,30,32\right\} $ & $3^{1+2}:Q_{8}$, $%
3^{1+2}:Z_{8}$, $3^{1+2}:SD_{16}$ \\ \hline
& $\left\{ 1,2,7,8,26,28,31,32\right\} $ & $3^{1+2}:Z_{8}$, $PSU_{3}(3)$ \\ 
\hline
$3$ & $\left\{ 1,2,3,4,18,19,21,24\right\} $ & $3^{1+2}:Q_{8}$, $%
3^{1+2}:Z_{8}$, $3^{1+2}:SD_{16}$, $\left( 3^{1+2}:Q_{8}\right) :3:2$ \\ 
\hline
& $\left\{ 1,2,7,8,26,28,31,32\right\} $ & $3^{1+2}:SD_{16}$, $G_{2}(2)$ \\ 
\hline
\end{tabular}
\end{table}

Note that the $2$-design in Line 1 is an example of (1) in Theorem \ref{main}. Indeed, $%
G_{0}<\Gamma L_{1}(2^{6})$ and it can be shown that the set of the blocks incident with 
$0$ is the union of two Desarguesian spreads of $V_{6}(2)$ each of these
permuted transitively by a cyclic group of order $9$ of $G_{0}$. The two
spreads are switched by an involution of $G_{0}$.

The example in Line 6 actually is a $2$-design constructed in Example \ref{Ex1} for $q=2$. Finally, the
example in Line 3 is a subdesign of that in Line 6 as the base block is the
same in both cases and since $G_{2}(2)\cong PSU_{3}(3):Z_{2}$.
\end{example}

\section{Preliminary Reductions}\label{S2}

Let $\mathcal{D}=(\mathcal{P},\mathcal{B})$ be a $2$-$(k^{2},k,\lambda )$
design, where $\lambda \mid k$, admitting a flag-transitive automorphism
group $G$ such that $T$, the socle of $G$, is an an elementary abelian $p$%
-group for some prime $p$. Then $G$ acts point-primitively on $\mathcal{D}$
by \cite[2.3.7.c]{Demb}, since $r=(k+1)\lambda >(k-3)\lambda $. Thus $%
\mathcal{P}$ can be identified with a $2m$-dimensional $GF(p)$-vector space $%
V$ in a way that $T$ is the translation group of $V$ and hence $G=TG_{0}\leq
AGL(V)$. Therefore, $\mathcal{D}$ is a $2$-$(p^{2m},p^{m},p^{f})$ design,
with $0\leq f\leq m$, and so $r=p^{f}(p^{m}+1)$ and $b=p^{m+f}(p^{m}+1)$.

\bigskip

The following lemmas and corollary will frequently be used throughout the
paper.

\begin{lemma}
\label{PP}If $\mathcal{D}$ is a $2$-$(k^{2},k,\lambda )$ design, with $%
\lambda \mid k$, admitting a flag-transitive automorphism group $G$, then $%
\left\vert y^{G_{x}}\right\vert =(p^{m}+1)\left\vert B\cap
y^{G_{x}}\right\vert $ for any point $y$ of $\mathcal{D}$, with $y\neq x$,
and for any block $B$ of $\mathcal{D}$ incident with $x$. In particular, $%
p^{m}+1$ divides the length of each point-$G_{x}$-orbit on $\mathcal{D}$
distinct from $\left\{ x\right\} $.
\end{lemma}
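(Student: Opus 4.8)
The plan is to exploit flag-transitivity together with the standard double-counting identity relating the number of blocks through a pair of points. First I would fix a point $x$ of $\mathcal{D}$, a block $B$ incident with $x$, and a point $y\neq x$. Since $G$ is flag-transitive, the stabilizer $G_{x}$ acts transitively on the set of blocks incident with $x$; the number of such blocks is the replication number $r=\lambda(k+1)=p^{f}(p^{m}+1)$. Now I would count, in two ways, the set of pairs $(z,C)$ with $z\in y^{G_{x}}$, $C$ a block incident with $x$, and $z\in C$. On the one hand, every $z\in y^{G_{x}}$ lies together with $x$ in exactly $\lambda$ blocks, so this count equals $\lambda\,|y^{G_{x}}|$. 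On the other hand, summing over the $r$ blocks $C$ incident with $x$, and using that $G_{x}$ acts transitively on these blocks while stabilizing the orbit $y^{G_{x}}$, the number $|C\cap y^{G_{x}}|$ is independent of the choice of $C$; hence the count equals $r\,|B\cap y^{G_{x}}|$. Equating gives $\lambda\,|y^{G_{x}}| = r\,|B\cap y^{G_{x}}| = \lambda(p^{m}+1)\,|B\cap y^{G_{x}}|$, and cancelling $\lambda$ yields the displayed identity $|y^{G_{x}}| = (p^{m}+1)\,|B\cap y^{G_{x}}|$.

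The ``in particular'' clause is then immediate: since $|B\cap y^{G_{x}}|$ is a non-negative integer and $y^{G_{x}}\neq\{x\}$ forces $|y^{G_{x}}|\geq 1$, in fact $|B\cap y^{G_{x}}|\geq 1$ because $B$ together with $x$ meets every point-$G_{x}$-orbit (indeed the $r$ blocks through $x$ cover all $k^{2}-1$ remaining points, and transitivity of $G_{x}$ on these blocks forces each block through $x$ to meet each $G_{x}$-orbit nontrivially). Therefore $p^{m}+1$ divides $|y^{G_{x}}|$ for every point-orbit distinct from $\{x\}$.

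I expect the only delicate point to be the justification that $|C\cap y^{G_{x}}|$ does not depend on the block $C$ through $x$: this is exactly where flag-transitivity (equivalently, transitivity of $G_{x}$ on blocks through $x$) is used, since for $g\in G_{x}$ with $C^{g}=C'$ one has $(C\cap y^{G_{x}})^{g}=C'\cap y^{G_{x}}$, so the two intersections have equal cardinality. Everything else is the routine Fisher-type double count, and the arithmetic input $r=\lambda(p^{m}+1)$ was already recorded in the preliminary reductions of Section \ref{S2}. No case analysis is needed, and the argument is uniform in $p$, $m$, and $f$.
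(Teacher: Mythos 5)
Your proof is correct and is essentially the paper's argument: the double count of flags $(z,C)$ with $z\in y^{G_{x}}$, $C\ni x$, together with the $G_{x}$-invariance of $|C\cap y^{G_{x}}|$, is exactly the tactical-configuration identity that the paper invokes via \cite[1.2.6]{Demb}, combined with $r=\lambda(p^{m}+1)$. The only difference is that you spell out the counting explicitly (and add a harmless, slightly redundant justification that $|B\cap y^{G_{x}}|\geq 1$, which already follows from the identity itself), so no further changes are needed.
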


\begin{proof}
Let $y$ be any point of $\mathcal{D}$, $y\neq x$, and $B$ be any block of $%
\mathcal{D}$ incident with $x$. Since $(y^{G_{x}},B^{G_{x}})$ is a tactical
configuration by \cite[1.2.6]{Demb}, and $r=p^{f}(p^{m}+1)$, it follows that $%
\left\vert y^{G_{x}}\right\vert p^{f}=(p^{m}+1)p^{f}\left\vert B\cap
y^{G_{x}}\right\vert $. Hence $\left\vert y^{G_{x}}\right\vert
=(p^{m}+1)\left\vert B\cap y^{G_{x}}\right\vert $, which is the assertion.
\end{proof}

\begin{lemma}
\label{inv}If $p$ is odd, $T$ does not act block-semiregularly on $\mathcal{D%
}$ and $-1\in G_{0}$, then the blocks of $\mathcal{D}$ are subspaces of $%
AG_{2m}(p)$.
\end{lemma}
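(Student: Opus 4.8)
\textbf{Proof proposal for Lemma \ref{inv}.}

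The plan is to exploit the flag-transitivity together with the action of $-1$ to show that a block $B$ through the origin $0$ is closed under addition. First I would observe that since $T$ is the translation group and $G$ is flag-transitive, we may assume $0\in B$ for the block under consideration, and that $G_0=G_{0}$ acts transitively on the blocks through $0$. Because $-1\in G_0$ and $-1$ fixes $0$, the block $B$ through $0$ satisfies: if $v\in B$ then $-v\in B'$ for some block $B'$ through $0$; flag-transitivity will be used to control how $B$ meets its translates. The key combinatorial input is Lemma \ref{PP}: for $y\neq 0$, $|y^{G_0}|=(p^m+1)|B\cap y^{G_0}|$, so every nonzero point-orbit of $G_0$ meets $B$ in a constant proportion of its points.

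Next I would set up the standard argument identifying blocks with subspaces. Consider a block $B$ with $0\in B$ and $|B|=p^m$; I want to show $B=\langle B\rangle$, equivalently that $B$ is a $GF(p)$-subspace, equivalently (since $|B|=p^m$) that $B$ is closed under addition. The hypothesis that $T$ does \emph{not} act block-semiregularly means some nonidentity translation $t_w$ (translation by $w\neq 0$) fixes some block $B_1$ setwise; by flag-transitivity and conjugacy in $T:G_0$ one transfers this to show that the setwise stabilizer $T_B$ of our block $B$ through $0$ is nontrivial, say $w\in B$ with $t_w(B)=B$. Then $t_w$ permutes $B$, and since $0\in B$ we get $w=t_w(0)\in B$, and more generally the cyclic group $\langle t_w\rangle$ (of order $p$) acts on $B$; its orbits partition $B$ into cosets of $\langle w\rangle_{GF(p)}$, so $B$ is a union of lines through the origin in the direction $w$ once we also know $\langle w\rangle\subseteq B$. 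To upgrade ``some direction'' to ``all directions'' I would use that $G_0$ normalizes $T$ and acts on $T_B$; the directions $w$ with $t_w(B)=B$ form a $GF(p)$-subspace $W$ (kernel-type argument: $T_B$ is a subgroup of $T\cong V$), and $B$ is a union of cosets of $W$ lying inside $B$, in particular $W\subseteq B$. If $\dim W=m$ then $B=W$ and we are done; otherwise one must use $-1$ and flag-transitivity to force $\dim W=m$.

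The role of $-1$: the map $-1\in G_0$ fixes $0$ and sends the block $B$ (through $0$) to another block $B^{-1}$ through $0$ with $B^{-1}=-B$. Now $B\cap(-B)$ contains $W$ and is symmetric under $-1$. I would argue that if $u,v\in B$ then, using flag-transitivity to move the flag $(0,B)$ and the pair structure, together with $2$-design property counting via Lemma \ref{PP}, the point $u+v$ must also lie in $B$: the involution $-1$ guarantees $B$ is ``balanced'' about $0$, and combined with the translate structure ($B$ a union of $W$-cosets) one shows $B/W$ is itself closed under the induced addition in $V/W$, hence by induction on dimension $B$ is a subspace. Here the parity/oddness of $p$ enters: for $p$ odd, $2$ is invertible, so from $u,-u\in B$ and the $W$-coset structure one can halve and recover closure; this fails for $p=2$, which is why the hypothesis $p$ odd is needed.

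The main obstacle I anticipate is the step forcing $\dim W=m$ (equivalently, promoting partial translation-invariance of a block to full subspace structure). The clean way is likely this: the blocks through $0$ that are unions of $W$-cosets, for a \emph{fixed} $W=T_B$, are permuted by $G_{0,B}$ and by $N_{G_0}(W)$; flag-transitivity forces $r=p^f(p^m+1)$ and $|B|=p^m$, and a counting argument (how many $W$-cosets fit in $B$, versus how many blocks share the subgroup $W$) combined with Lemma \ref{PP} applied to the orbit $W\setminus\{0\}$ pins down $|W|=p^m$, i.e. $B=W$ is a subspace. One then transports this to every block via $G=T:G_0$ acting block-transitively: every block is a translate of a subspace, hence (being a block through a suitable point after translation) itself a subspace of $AG_{2m}(p)$. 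I would present the counting carefully, as that is where the hypotheses ``$\lambda\mid k$'', ``$-1\in G_0$'' and ``$T$ not block-semiregular'' all get consumed simultaneously.
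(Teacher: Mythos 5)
Your opening moves coincide with the paper's (block-transitivity plus $T\trianglelefteq G$ give $T_{B}\neq 1$ for a block $B$ through $0$, and $T_{B}$ corresponds to a $GF(p)$-subspace $W\subseteq B$ with $B$ a union of $W$-cosets), but the decisive step is missing. Everything hinges on promoting $B$ from a union of $W$-cosets to $B=W$ itself (equivalently $\dim W=m$), and precisely there your proposal substitutes wishes for arguments: Lemma \ref{PP} cannot be ``applied to the orbit $W\setminus\{0\}$'' because $W\setminus\{0\}$ is not a $G_{0}$-orbit; no mechanism is given for why $u+v\in B$ or why the $W$-cosets inside $B$ should form a subgroup of $V/W$ (a $-1$-symmetric union of cosets need not be one, so ``balanced about $0$'' plus ``halving'' proves nothing); and you never establish that $-1$ preserves $B$ at all — you correctly observe that $-1$ may send $B$ to a different block through $0$, but the later steps tacitly assume $-B=B$. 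You also never use that $\lambda$ is a power of $p$, which is exactly where that hypothesis must enter.

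The paper closes these gaps by a short mechanism absent from your sketch. For a nonzero $x$, the involution $-1$ permutes the $\lambda=p^{f}$ blocks through the pair $\{x,-x\}$; since $p$ is odd, it fixes one of them, say $B$ (this is where $\lambda\mid k$ is consumed). Since $k=p^{m}$ is odd, $-1$ fixes a point of $B$, necessarily $0$, so $0\in B$ and $-1\in G_{B}$. Next, $T_{B}$ and $T_{B}\langle-1\rangle$ are both normal in $G_{B}$ and $0^{T_{B}}=0^{T_{B}\langle-1\rangle}$; since $G_{B}$ is transitive on $B$, the $T_{B}$-orbit partition of $B$ coincides with the $T_{B}\langle-1\rangle$-orbit partition, so $-1$ preserves each $T_{B}$-orbit in $B$ and (each orbit having odd length) fixes a point in each of them. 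As $0$ is the only vector fixed by $-1$, there is a single orbit, i.e. $B=0^{T_{B}}$, which is a $GF(p)$-subspace and forces $|T_{B}|=p^{m}$ — the conclusion your unspecified counting was supposed to deliver. Block-transitivity of $G$ then makes every block a translate of a linear image of $B$, hence a subspace of $AG_{2m}(p)$. Without the selection of a $-1$-invariant block and this orbit argument inside $B$, your outline does not yield the lemma.
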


\begin{proof}
Let $x$ be any non-zero vector of $V$. Then $-1$ preserves the $p^{f}$
blocks incident with $\pm x$, and hence $-1$ preserves one of them, say $B$,
as $p$ is odd. Then $-1$ fixes a point on $B$ as $p^{m}$ is odd and hence $%
0\in B$. Also $T_{B}\neq 1$, since $G$ acts block-transitively, $T$ does
not act block-semiregularly on $\mathcal{D}$ and $T \triangleleft G$. Hence $0^{T_{B}}=0^{T_{B}\left%
\langle -1\right\rangle }$. Then each $T_{B}$-orbit in $B$ is also a $%
T_{B}\left\langle -1\right\rangle $-orbit as both $T_{B}$ and $%
T_{B}\left\langle -1\right\rangle $ are normal subgroups of $G_{B}$.
Therefore $-1$ fixes a point on each $T_{B}$-orbit contained in $B$, as $%
G_{B}$ acts transitively on $B$. Then $B=0^{T_{B}}$ since $-1$ does not fix
non-zero vectors of $V$. Thus $B$ is a subspace of $V$ and hence the
assertion follows from the block-transitivity of $G$ on $\mathcal{D}$, since 
$V=V_{2m}(p)$.
\end{proof}

\begin{lemma}
\label{cici}The following hold:

\begin{enumerate}
\item $m-t\leq f\leq m$, where $p^{t}=\left\vert T_{B}\right\vert $ and $%
B $ is any block of $\mathcal{D}$.

\item $G_{B}/T_{B}$ is isomorphic to a subgroup of $G_{0}$ of index $%
p^{f-(m-t)}(p^{m}+1)$ and containing an isomorphic copy of $G_{0,B}$.
\end{enumerate}
\end{lemma}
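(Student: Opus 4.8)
The plan is to extract everything from the reductions already made in this section: $\mathcal{D}$ is a $2$-$(p^{2m},p^{m},p^{f})$ design with $b=p^{m+f}(p^{m}+1)$, $G=TG_{0}\leq \AGL(V)$ with $V=V_{2m}(p)$ and $T$ the translation group (so $T\cap G_{0}=1$), and $G$ is block-transitive, being flag-transitive. The key device is the natural projection $\rho\colon G\to G/T\cong G_{0}$, which has kernel $T$, restricts to an isomorphism on $G_{0}$, and satisfies $\rho(H)\cong H/(H\cap T)$ for every subgroup $H\leq G$.

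First I would determine $t$. Viewing $T$ as the additive group of $V$, the subgroup $T_{B}=T\cap G_{B}$ is exactly the group of ``periods'' $\{v\in V:v+B=B\}$, so $B$ is a union of cosets of $T_{B}$ in $V$; hence $p^{t}=|T_{B}|$ divides $|B|=p^{m}$ and $t\leq m$. Next, since $T\trianglelefteq G$ and $G$ is block-transitive, $T_{gB}=gT_{B}g^{-1}$ shows $|T_{B}|$ is independent of the block $B$; therefore the block set is partitioned into $T$-orbits, each of size $[T:T_{B}]=p^{2m-t}$, so $p^{2m-t}$ divides $b=p^{m+f}(p^{m}+1)$. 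As $\gcd(p,p^{m}+1)=1$ this forces $2m-t\leq m+f$, i.e.\ $m-t\leq f$; combined with $f\leq m$, already noted in this section, this is (1).

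For (2), block-transitivity gives $|G_{B}|=|G|/b=p^{2m}|G_{0}|/\big(p^{m+f}(p^{m}+1)\big)$. Applying $\rho$ to $G_{B}$ yields a subgroup $\rho(G_{B})\cong G_{B}/(G_{B}\cap T)=G_{B}/T_{B}$ of $G_{0}$, of order $|G_{B}|/p^{t}$, hence of index
\[
[G_{0}:\rho(G_{B})]=\frac{|G_{0}|\,p^{t}}{|G_{B}|}=\frac{b}{p^{2m-t}}=p^{f-(m-t)}(p^{m}+1),
\]
a positive integer by (1). Finally $G_{0,B}=G_{0}\cap G_{B}\leq G_{B}$, so $\rho(G_{0,B})\leq\rho(G_{B})$, and since $\rho$ is injective on $G_{0}$ we get $\rho(G_{0,B})\cong G_{0,B}$; thus $G_{B}/T_{B}$, identified with $\rho(G_{B})\leq G_{0}$, contains an isomorphic copy of $G_{0,B}$, which completes (2).

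There is no genuine obstacle here: the argument is elementary group theory once the parameters and the affine realisation $G=TG_{0}$ are available. The only points needing a moment's care are the fact that $|T_{B}|$ does not depend on the chosen block $B$ (used to conclude $p^{2m-t}\mid b$) and the identification of $T_{B}$ with the period group of $B$ inside $V$ (used for $t\leq m$).
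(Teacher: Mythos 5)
Your proof is correct, but it reaches the conclusion by a different counting route than the paper. The paper works inside a single block: it splits $B$ into $T_{B}$-orbits of common length $p^{t}$, uses flag-transitivity (i.e.\ the transitivity of $G_{B}$ on $B$) to see that these orbits are permuted transitively, deduces $[G_{B}:G_{0,B}T_{B}]=p^{m-t}$, and then combines this with $r=[G_{0}:G_{0,B}]$ to get $[G_{0}:J]\,p^{m-t}=p^{f}(p^{m}+1)$ for the image $J\cong G_{B}/T_{B}$ in $G_{0}$. You instead count on the block set: constancy of $|T_{B}|$ (from $T\trianglelefteq G$ and block-transitivity) gives $T$-orbits of blocks of common size $p^{2m-t}$ dividing $b=p^{m+f}(p^{m}+1)$, and the order of $\rho(G_{B})$ comes straight from $|G_{B}|=|G|/b$; these are the same identity in the end (since $b=p^{m}r$), but your derivation uses only block-transitivity and orbit--stabilizer, never the interior structure of $B$, whereas the paper's derivation makes explicit the fact that the $T_{B}$-orbits in $B$ are permuted transitively by $G_{B}$ --- a structural fact it reuses repeatedly later (e.g.\ in Lemma \ref{inv} and in the Suzuki analysis), which is presumably why it is proved that way. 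Two minor remarks: the phrase ``a positive integer by (1)'' is unnecessary --- the quantity is an index, hence automatically a positive integer, and indeed its integrality together with $\gcd(p,p^{m}+1)=1$ would re-derive $m-t\leq f$, so your parts (1) and (2) could be collapsed; and your observation $t\leq m$ (via the period-group description of $T_{B}$), while true, is not needed for the statement.
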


\begin{proof}
Let $B$ be any block of $\mathcal{D}$ incident with $0$. Then $B$ is split into $T_{B}$%
-orbits of equal length $p^{t}$, where $t\geq 0$, permuted transitively by $%
G_{B}$, since $T_{B}\trianglelefteq G_{B}$. Thus $\left[ G_{B}:G_{0,B}T_{B}%
\right] =p^{m-t}$ and hence $G_{B}/T_{B}$ is isomorphic to a subgroup $J$ of 
$G_{0}$ such that $\left\vert J\right\vert =p^{m-t}\left\vert
G_{0,B}\right\vert $ and it contains an isomorphic copy of $G_{0,B}$. Then $r=\left[ G_{0}:J\right] p^{m-t}$ implies $\left[
G_{0}:J\right] p^{m-t}=p^{f}(p^{m}+1)$ and hence $m-t\leq f\leq m$. This
proves (1) and (2).
\end{proof}

\begin{corollary}
\label{p2}If $p=2$ and $t\geq m-1$, then the blocks of $\mathcal{D}$ are
subspaces of $AG_{2m}(2)$.
\end{corollary}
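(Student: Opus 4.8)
The plan is to reduce to a single block through the origin and then exploit the fact that, over $GF(2)$, the union of a subgroup with one of its cosets is again a subgroup. By point-transitivity of $G$ (recall that $T\leq G$ acts regularly on $V=V_{2m}(2)$) we may assume that the block $B$ is incident with $0$; since $G$ is block-transitive and $G\leq AGL(V)$, it then suffices to prove that this $B$ is a linear subspace of $V$, because every other block is the image of $B$ under an element of $AGL(V)$ and hence a coset of a linear subspace, i.e. a subspace of $AG_{2m}(2)$.

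Next I would invoke the orbit decomposition established in the proof of Lemma \ref{cici}: the block $B$ is partitioned into $T_{B}$-orbits of common length $p^{t}=2^{t}$, so $B$ is the union of exactly $2^{m-t}$ such orbits. The hypothesis $t\geq m-1$ forces $2^{m-t}\in\{1,2\}$. Identifying $T_{B}$ with the corresponding $GF(2)$-subspace $W$ of $V$ of dimension $t$, the $T_{B}$-orbit of $0$ is $W$ itself. If $t=m$ there is a single orbit and $B=0^{T_{B}}=W$, a linear subspace of dimension $m$, and we are done. If $t=m-1$, then $B=W\cup(v+W)$, where $v$ is any vector of $B$ outside $W$, the second $T_{B}$-orbit being the coset $v+W$. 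Here the key point is that, since we are in characteristic $2$, $\langle v\rangle=\{0,v\}$ and hence $W\cup(v+W)=W+\langle v\rangle$; as $v\notin W$ this is a linear subspace of $V$ of dimension $m$, and it has $2^{m}=k$ elements, so it equals $B$. In either case $B$ is a linear subspace, which completes the argument.

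The argument is essentially routine, and there is no real obstacle: the only mildly delicate observation is precisely that in characteristic $2$ the union of the subgroup $T_{B}$ with a single one of its cosets is again a subgroup, which is what makes the case $t=m-1$ work (and which fails for odd $p$, explaining why the extra hypothesis $-1\in G_{0}$ is needed in Lemma \ref{inv}).
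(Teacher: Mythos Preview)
Your proof is correct and follows essentially the same route as the paper's: both observe that when $t=m-1$ the block $B=W\cup (v+W)$ equals the subspace $W\oplus\langle v\rangle$ over $GF(2)$. The only difference is that the paper then notes this forces $\lvert T_{B}\rvert=2^{m}$, i.e.\ $t=m$, so the case $t=m-1$ is in fact vacuous; but that extra observation is not needed for the corollary as stated.
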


\begin{proof}
Assume that $t=m-1$. Then $B=0^{T_{B}}\cup x^{T_{B}}=0^{T_{B}}\cup
(0^{T_{B}}+x)$ for some $x\notin 0^{T_{B}}$, with $\left\vert
0^{T_{B}}\right\vert =\left\vert T_{B}\right\vert =2^{m-1}$. So, hence $%
B=0^{T_{B}}\oplus \left\langle x\right\rangle $, which is a contradiction as 
$t=m-1$. Thus $t=m$ and hence the assertion follows.
\end{proof}

\bigskip

For each divisor $h$ of $2m$ the group $\Gamma L_{2m/h}(p^{h})$ has a
natural irreducible action on $V$. The point-primitivity of $G$ on $\mathcal{%
D}$ implies that $G_{0}$ acts irreducibly on $V_{2m}(p)$. Choose $h$ to be
minimal such that $G_{0}\leq \Gamma L_{2m/h}(p^{h})$ in this action, and
write $q=p^{h}$ and $n=2m/h$. Thus $G_{0}\leq \Gamma L_{n}(q)$ and $%
v=q^{n}=p^{2m}$.

\bigskip

\begin{theorem}
\label{lambada}If $\lambda =1$, then $\mathcal{D}$ is a translation plane
and either $G_{0}\leq \Gamma L_{1}(q)$ or one of the following holds:

\begin{enumerate}
\item $\mathcal{D}$ is the Desarguesian plane of order $q$ and $SL_{2}(q)\trianglelefteq G_{0}\leq
\Gamma L_{2}(q)$.

\item $\mathcal{D}$ is the L\"{u}neburg plane of order $2^{2h}$, 
$h$ odd, and $Sz(2^{h})\trianglelefteq G_{0}\leq \left( Z_{2^{h}-1}\times
Sz(2^{h})\right) .Z_{h}$.

\item $\mathcal{D}$ is the Hall plane of order $3^{2}$ and $SL_{2}(5)\leq
G_{0}\leq (D_{8}\circ Q_{8}).S_{5}$.

\item $\mathcal{D}$ is the Hering plane of order $3^{3}$ and $G_{0}\cong
SL_{2}(13)$.
\end{enumerate}
\end{theorem}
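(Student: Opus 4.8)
The plan is to translate the hypothesis $\lambda=1$ into affine-plane geometry and then appeal to the classification of flag-transitive affine planes. First, since $\lambda=1$ and $v=k^{2}$, the design $\mathcal{D}$ is a $2$-$(k^{2},k,1)$ design, hence an affine plane of order $k=p^{m}$. Because $G=TG_{0}$ contains the full translation group $T$ of $V=V_{2m}(p)$, the plane $\mathcal{D}$ is a translation plane: its lines through $0$ are $GF(p)$-subspaces of $V$ of dimension $m$, they form a spread $\mathcal{S}$ with $\left\vert \mathcal{S}\right\vert =p^{m}+1$, and $G_{0}$ leaves $\mathcal{S}$ invariant. The blocks of $\mathcal{D}$ incident with $0$ are precisely the members of $\mathcal{S}$, and for any $\ell\in\mathcal{S}$ and any $y\in\ell$ the translation by $y$ fixes $\ell$ and moves $0$ to $y$; hence $G$ is flag-transitive on $\mathcal{D}$ if and only if $G_{0}$ is transitive on $\mathcal{S}$. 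If $G_{0}\leq\Gamma L_{1}(q)$ we are in the first alternative, so from now on I would assume $G_{0}\nleq\Gamma L_{1}(q)$.

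At this point $(\mathcal{D},G)$ is a flag-transitive linear space with $\lambda=1$ and $G\nleq A\Gamma L_{1}(v)$, so I would invoke the classification of such pairs, announced in \cite{BDDKLS} and proved in \cite{BDD}, \cite{Da}, \cite{De0}, \cite{De}, \cite{Kle}, \cite{LiebF}, \cite{Saxl}; intersecting that list with the condition $v=k^{2}$, i.e.\ selecting the affine planes, leaves exactly the Desarguesian planes of order $q$, the L\"uneburg planes of order $2^{2h}$ with $h$ odd, the Hall plane of order $9$ and the Hering plane of order $27$. For completeness I would also sketch a more self-contained route through the structure theory of translation planes admitting a collineation group transitive on the spread: the point-primitivity of $G$ already recorded places $G_{0}$ in an Aschbacher class in $\Gamma L_{n}(q)$ \cite{KL}, and then, using that $G_{0}$ is transitive on the $p^{m}+1$ components of $\mathcal{S}$ together with the known lists of transitive linear and near-linear groups (Hering, Liebeck; see also \cite{BP}), one forces $G_{0}$ to involve one of $SL_{2}(q)$, $Sz(2^{h})$, the normalizer of an extraspecial $2$-group, or $SL_{2}(13)$, and in each case one determines the $G_{0}$-invariant spreads of $V$; in particular no $SL_{a}(s)$ with $a\geq 3$, no $Sp_{a}(s)$ with $a\geq 4$ and no $G_{2}(s)'$ admits an invariant spread of size $s^{a/2}+1$ — this is an orbit count on the relevant (totally isotropic) subspaces — so the survivors are exactly the four planes above, carrying their classical spreads (the L\"uneburg spread as in \cite{Lu}, the regular spread in the Desarguesian case, and the known spreads of the Hall plane of order $9$ and the Hering plane of order $27$).

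Finally I would pin down the ranges for $G_{0}$ in the four cases, each time locating $G_{0}$ between the relevant normal subgroup and its normalizer in $\Gamma L_{2m/h}(p^{h})$: $SL_{2}(q)\trianglelefteq G_{0}\leq\Gamma L_{2}(q)$ in the Desarguesian case; $Sz(2^{h})\trianglelefteq G_{0}\leq N_{\Gamma L_{4}(2^{h})}(Sz(2^{h}))=(Z_{2^{h}-1}\times Sz(2^{h})).Z_{h}$ for the L\"uneburg plane; $SL_{2}(5)\leq G_{0}\leq(D_{8}\circ Q_{8}).S_{5}$, the normalizer of the relevant extraspecial group in $GL_{4}(3)$, for the Hall plane of order $9$; and $G_{0}\cong SL_{2}(13)<SL_{3}(3)$ for the Hering plane of order $27$ (here equality with $SL_{2}(13)$ is forced). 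These normalizer computations are standard and can be read off from \cite{BHRD}, \cite{At} and \cite{KL}, exactly as in Examples \ref{Ex4}--\ref{Ex3}. The main obstacle is the intermediate step: matching the general flag-transitive linear-space list to the affine planes and, in the self-contained route, running the orbit arguments that exclude invariant spreads for the symplectic, $G_{2}$ and large-dimensional linear families and for the bulk of the extraspecial and sporadic cases, so that precisely the four listed planes — with the stated bounds on $G_{0}$ — remain.
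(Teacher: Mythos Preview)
Your proposal is correct and follows essentially the same route as the paper: show that $\mathcal{D}$ is a translation plane and then invoke the classification of flag-transitive affine planes. The paper's proof is in fact even terser than yours --- it cites Wagner \cite{Wa} for the translation-plane conclusion and then Liebeck \cite{LiebF} directly for the classification --- whereas you argue the translation-plane property from the presence of $T$ (which is fine given the setup of Section~\ref{S2}) and cite the broader BDDKLS list; your additional ``self-contained route'' through Aschbacher classes is unnecessary here but not wrong.
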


\begin{proof}
$\mathcal{D}$ is a translation plane of order $q$ by \cite[Theorems 2
and 4]{Wa}, since $\lambda =1$. Hence, the assertion follows from \cite[Main Theorem]{LiebF}.
\end{proof}

\bigskip

\emph{In view of Theorem \ref{lambada} in the sequel we will assume that $\lambda \geq 2$.}

\bigskip

\begin{lemma}
\label{jedan}If $G_{0}\leq \Gamma L_{1}(p^{2m})$, the blocks of $\mathcal{D}$
are $m$-dimensional subspaces of $AG_{2m}(p)$.
\end{lemma}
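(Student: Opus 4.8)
The plan is to fix a block $B$ and prove that $B$ is an $m$-dimensional affine subspace of $AG_{2m}(p)$; since $G$ is block-transitive this yields the assertion for all blocks. Here $n=1$ and $q=p^{2m}$ in the notation preceding Theorem~\ref{lambada}. By the point-transitivity of $T$ we may assume $0\in B$; set $W=0^{T_{B}}$, which is an additive, hence $GF(p)$-, subspace of $V$ of dimension $t$, where $p^{t}=|T_{B}|$ as in Lemma~\ref{cici}. Then $B$ is the disjoint union of the $p^{m-t}$ distinct $W$-cosets it contains, one of which is $W$ itself. It is enough to show $t=m$, for then $B=W$.

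The first step is structural. For $g\in G$ let $\pi(g)\in G_{0}$ be its semilinear part, so that $\pi(g):x\mapsto ax^{\phi}$ when $g:x\mapsto ax^{\phi}+c$ (with $a\in GF(q)^{\ast}$, $\phi\in\mathrm{Gal}(GF(q)/GF(p))$, $c\in GF(q)$). I claim $\pi(G_{B})\leq (G_{0})_{W}$, the setwise stabilizer of $W$ in $G_{0}$. Indeed, if $g\in G_{B}$ then $g$ permutes the $T_{B}$-orbits on $B$, i.e.\ the $W$-cosets in $B$, because $T_{B}\trianglelefteq G_{B}$; hence $W^{g}$ is the $W$-coset containing $0^{g}=c$, namely $c+W$. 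On the other hand $W^{g}=aW^{\phi}+c$, so $aW^{\phi}=W$, i.e.\ $\pi(g)\in (G_{0})_{W}$.

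Next I bound $(G_{0})_{W}$ in the case $0<t<m$. The group $M_{W}=\{a\in GF(q)^{\ast}:aW=W\}$ acts semiregularly on $W\setminus\{0\}$, so $|M_{W}|\mid p^{t}-1$; and $(\Gamma L_{1}(q))_{W}/M_{W}$ embeds in $\mathrm{Gal}(GF(q)/GF(p))\cong Z_{2m}$, whence $|(G_{0})_{W}|\leq 2m(p^{t}-1)$. As $G_{B}$ is transitive on the $p^{m}$ points of $B$ with point-stabilizer $G_{0,B}$ at $0$, we get $|G_{B}|=p^{m}|G_{0,B}|$ and thus $|\pi(G_{B})|=|G_{B}|/|T_{B}|=p^{m-t}|G_{0,B}|$; combining, $p^{m-t}|G_{0,B}|\leq 2m(p^{t}-1)$. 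On the arithmetic side, writing $C=G_{0}\cap GL_{1}(q)$ and $e=|G_{0}/C|$ (so $e\mid 2m$ and $|G_{0}|=|C|\,e$), the equality $r=p^{f}(p^{m}+1)=[G_{0}:G_{0,B}]$ forces $p^{f}\mid e\mid 2m$ since $\gcd(p,|C|)=1$; and Lemma~\ref{cici}(1) gives $f\geq m-t$, so $p^{m-t}\leq 2m$.

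Together with Lemma~\ref{PP}, which forces $p^{m}+1$ to divide every $G_{0}$-orbit length on $V\setminus\{0\}$ (each such length being of the form $|C|\cdot N$ with $N\mid e$, and each such orbit meeting $B$), and the divisibility $|C\cap G_{0,B}|\mid\gcd(p^{t}-1,p^{m}-1)$ coming from the semiregular action of $C$, the above inequalities reduce the case $t<m$ to a short list of parameters $(p,m,t,f)$ with $p^{m-t}\leq 2m$. For $p$ odd with $t\geq1$ and $-1\in G_{0}$ the blocks are already subspaces by Lemma~\ref{inv}, and for $p=2$ with $t\geq m-1$ by Corollary~\ref{p2}; so only $p=2$ with $t\leq m-2$, and $p$ odd with $t=0$ or $-1\notin G_{0}$, remain, and each of these residual configurations is eliminated directly using the orbit constraints above (and, for the few smallest parameter sets, a direct verification), always contradicting $t<m$. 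I expect the genuine obstacle to be this final step when $t=m-1$ and $p$ is odd: there $p^{m-t}|G_{0,B}|\leq 2m(p^{t}-1)$ is nearly tight, and the contradiction has to be squeezed out of finer information --- the induced action of $(G_{0})_{W}$ on $V/W$ on the $p$ cosets of $W$ in $B$, together with the fact that each $G_{0}$-orbit on $V\setminus\{0\}$ meets $B$ in $|y^{G_{0}}|/(p^{m}+1)$ points.
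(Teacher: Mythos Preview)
Your opening moves are correct: the containment $\pi(G_B)\le (G_0)_W$ holds, and the bound $|(G_0)_W|\le 2m(p^{t}-1)$ for $0<t<m$ is valid. But the proof is unfinished, and you say as much. The assertion that the residual configurations form a ``short list'' is not justified: for $p$ odd with $-1\notin G_0$ the constraint $p^{m-t}\mid 2m$ alone permits infinitely many triples $(p,m,t)$ (take any $m$ with $p\mid m$ and $t=m-1$), and the inequality $p^{m-t}|G_{0,B}|\le 2m(p^{t}-1)$ is far too coarse to dispose of them; Lemma~\ref{PP} and the side condition on $|C\cap G_{0,B}|$ do not visibly close the gap either. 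The deferred ``direct verification'' for $t=0$, $p^{m}=4$ is a separate, smaller omission.

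The idea you are missing, and which the paper exploits, is that $TC$ is a Frobenius group with kernel $T$, where $C=G_0\cap GL_1(q)=\langle\bar\omega^{\,c}\rangle$. This gives a clean dichotomy on $G_B\cap TC$. If $G_B\cap TC\supsetneq T_B$, then $G_B\cap TC=T_B{:}X$ for a nontrivial $p'$-group $X$ conjugate by a translation into $C$; such an $X$ has exactly one fixed point on $V$. Yet $T_BX=G_B\cap TC\trianglelefteq G_B$, so every $T_B$-orbit in $B$ is already a $T_BX$-orbit, forcing $X$ to fix a point in each of the $p^{m-t}>1$ cosets of $W$ inside $B$ --- a contradiction. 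In the complementary case $G_B\cap TC=T_B$ one has $\pi(G_B)\cap C=1$, so $\pi(G_B)$ embeds in the cyclic quotient $G_0/C$ of order $2m/s$, yielding the \emph{divisibility} $p^{m-t}|G_{0,B}|\mid 2m/s$ rather than merely an inequality; together with $|G_{0,B}|=\dfrac{(p^{m}-1)(2m/s)}{c\,p^{f}}$ this forces $p^{m-t}(p^{m}-1)\mid c p^{f}$ and hence $t=m$. The argument is uniform in $p$ and never needs $-1\in G_0$, which is exactly where your approach stalls.
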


\begin{proof}
Since $G_{0}\leq \Gamma L_{1}(p^{2m})$, where $\Gamma
L_{1}(p^{2m})=\left\langle \bar{\omega},\bar{\alpha}\right\rangle $, with $%
\bar{\omega}:x\rightarrow \omega x$, $\omega $ a primitive element of $%
GF(p^{2m})^{\ast }$ and $\bar{\alpha}:x\rightarrow x^{p}$, it follows that $%
G_{0}=\left\langle \bar{\omega}^{c},\bar{\omega}^{e}\bar{\alpha}%
^{s}\right\rangle $ where $c,e,s$ are integers such that $c\mid p^{2m}-1$, $%
s\mid 2m$ and $e\left( \frac{p^{2m}-1}{p^{s}-1}\right) \equiv 0\pmod{c}$
(e.g. see \cite[Lemma 2.1]{FK}). Then $\left\vert G_{0}\right\vert =\frac{%
p^{2m}-1}{c}\cdot \frac{2m}{s}$, and since $r=p^{f}(p^{m}+1)$ divides $%
\left\vert G_{0}\right\vert $, it follows that 
\begin{equation}
p^{f}\mid \frac{2m}{s}\text{ and }c\mid \left( p^{m}-1\right) \left(
p^{m}+1,2m/sp^{f}\right) \text{.}  \label{restr}
\end{equation}%
Let $B$ be any block incident with $0$ and let $W=0^{T_{B}}$. Then $%
\left\vert W\right\vert =p^{t}$, where $t\geq 0$, and $0\leq m-t\leq f\leq m$
by Lemma \ref{cici}(1).

\begin{enumerate}
\item \textbf{$T$ does not act block-semiregularly on $\mathcal{D}$.}
\end{enumerate}

Assume that $t=0$. Then $f=m$ and hence $p^{m}\leq 2m/s\leq 2m$ by (\ref{restr}). Thus $%
p^{m}=4$, as $p^{m}>2$, and hence $G_{0}\leq \Gamma L_{1}(2^{4})$ and $%
\mathcal{D}$ is a $2$-$(2^{4},2^{2},2^{2})$ design. Moreover, $%
G_{0}=\left\langle \bar{\omega}^{c},\bar{\omega}^{e}\bar{\alpha}%
\right\rangle $ with $c\mid 3$. If $c=1$ then $G_{0}=\Gamma L_{1}(2^{4})$
and hence $\left\langle \bar{\omega}^{5}\right\rangle $ preserves each block
incident with $0$, since $G_{0}$ \ acts transitively on the $20$ blocks
incident with $0$ and since $\left\langle \bar{\omega}^{5}\right\rangle
\vartriangleleft G_{0}$. However this is impossible since $V^{\ast }$ is
split into $5$ orbits under $\left\langle \bar{\omega}^{5}\right\rangle $.
Thus $c=3$ and hence $G_{0}\cong F_{20}$. Moreover $G_{B} \cong Z_{4}$ by Lemma \ref{cici}(2).

Since $G_{0}$ contains a Sylow $2$-subgroup of $\Gamma L_{1}(2^{4})$ and since $%
\left\langle \bar{\omega}^{3}\right\rangle $ is a normal subgroup of $\Gamma
L_{1}(2^{4})$, we may assume that $e=0$. Hence, $G_{0}=\left\langle \bar{\omega}%
^{3},\bar{\alpha}\right\rangle $. Since $G_{B}\cong Z_{4}$ by Lemma \ref{cici}(2), we may also assume that $B$ is such that $G_{B} \leq T:\left\langle \bar{\alpha}\right\rangle$. Let $\gamma $ be a generator of $G_{B}$, one can easily see that either that $\gamma :x\rightarrow x^{2}+\omega ^{i}$
with $T_{GF(2^{4})/GF(2)}(\omega ^{i})=0$ and $\left\langle \gamma \right\rangle$ is a $G$-conjugate of $\left\langle \bar{\alpha}\right\rangle$, or $\gamma :x\rightarrow
x^{4}+\omega ^{i}$ with $\omega
^{i}\notin GF(4)$. Therefore, in the former case we may actually assume that $G_{B}=\left\langle \bar{\alpha}\right\rangle$. Then $B=\{z,z^{2},z^{4},z^{8} \}$ for some $z \in GF(16) \setminus GF(4)$. If $T_{GF(16)/GF(2)}(z)=0$, then $B+z=\{0, z^{2}+z,z^{4}+z,z^{8}+z\}$ is a $GF(2)$-space, and we reach a contradiction. Hence $T_{GF(16)/GF(2)}(z)=1$. However this case is ruled out since the norm of $GF(16)$ over $GF(4)$ on two elements of $B+z$ is equal. Indeed, the three $\left\langle \bar{\omega}^{3}\right\rangle $-orbits form a partition of $GF(16)^{\ast }$ and they consist of the points with norm of $GF(16)$ over $GF(4)$ equals to $1,\omega ^{5},\omega ^{10}$ respectively. Also each block of $\mathcal{D}$ incident with $0$ must
intersect each of them in precisely one point, since $\left\langle \bar{\omega}^{3}\right\rangle \trianglelefteq G_{0}$ and since $G_{0}$ permutes these blocks transitively.  

Assume that $G_{B}=\left\langle \gamma \right\rangle$ with $\gamma :x\rightarrow
x^{4}+\omega ^{i}$ and $\omega^{i}\notin GF(4)$. If $y \in B$ then $B=\{y,y^{4}+\omega^{i},y+\omega^{4i}+\omega^{i},y^{4}+\omega^{4i} \}$ and hence $B+y=\{0,y^{4}+y+\omega^{i},\omega^{4i}+\omega^{i},y^{4}+y+\omega^{4i} \}$ is a $GF(2)$-space, a contradiction. 
\begin{enumerate}
\item[2.] \textbf{The blocks are $m$-dimensional subspaces of $AG_{2m}(p)$.}
\end{enumerate}

Suppose the contrary. Then $0<t<m$ by (1). Assume that $G_{B}\cap T\left\langle \bar{\omega}^{c}\right\rangle \neq T_{B} $. Then $G_{B}\cap T\left\langle \bar{\omega}^{c}\right\rangle = T_{B}:X $ with $1<X \leq \left\langle \bar{\omega}^{c}\right\rangle^{\tau}$ for some $\tau \in T$ by \cite[Propositions 8.2 and 17.3]{Pass}, since $T\left\langle \bar{\omega}^{c}\right\rangle$ is a Frobenius group. Since $X$ has order coprime to $p$ and since $k=p^{m}$, it follows that $X$ fixes at least a point $x_{0}$ in $B$. Thus each $T_{B}$-orbit
in $B$ is also a $T_{B}X$-orbit, since both $T_{B}$ and $T_{B}X$ are normal subgroups of $G_{B}$, $x_{0}^{T_{B}X}=x_{0}^{T_{B}}$ and $G_{B}$ acts transitively on $B$. Then $X$ fixes a point in $B \setminus x_{0}^{T_{B}}$ since $0<t<m$. However this is impossible as $X$ acts semiregularly on $GF(q)\setminus \{x_{0}\}$, since $1<X \leq \left\langle \bar{\omega}^{c}\right\rangle^{\tau}$.  

Assume that $G_{B}\cap T\left\langle \bar{\omega}^{c}\right\rangle =T_{B}$.
Then $G_{B}/\left( G_{B}\cap T\left\langle \bar{\omega}^{c}\right\rangle
\right) $ is isomorphic to a subgroup of $\left\langle \bar{\omega}^{e}\bar{%
\alpha}^{s}\right\rangle $ of order $p^{m-t}\left\vert G_{0,B}\right\vert $.
Then $p^{m-t}\frac{\left( p^{m}-1\right) \left( 2m/sp^{f}\right) }{c}\mid
2m/sp^{f}$ as $\left\vert G_{0,B}\right\vert =\frac{\left( p^{m}-1\right)
\left( 2m/sp^{f}\right) }{c}$, hence $p^{m-t}\left( p^{m}-1\right) \mid c$.
Thus $t=m$ as $c\mid (p^{2m}-1)$, but this contradicts $t<m$. Therefore the
blocks of $\mathcal{D}$ are $m$-dimensional subspaces of $AG_{2m}(p)$.
\end{proof}

\bigskip

Let $a,e$ be integers. A divisor $w$ of $a^{e}-1$ that is coprime to each $%
a^{i}-1$ for $1\leq i<e$ is said to be a \emph{primitive divisor}, and we
call the largest primitive divisor $\Phi _{e}^{\ast }(a)$ of $a^{e}-1$ the 
\emph{primitive part} of $a^{e}-1$. One should note that $\Phi _{e}^{\ast
}(a)$ is strongly related to cyclotomy in that it is equal to the quotient
of the cyclotomic number $\Phi _{e}(a)$ and $(n,\Phi _{e}(a))$ when $e>2$.
Also $\Phi _{e}^{\ast }(a)>1$ for $a\geq 2$, $e>2$ and $(q,e)\neq (2,e)$ by
Zsigmondy's Theorem (for instance, see \cite[Theorem II.6.2]{Lu}).

\bigskip

\emph{From now on we assume that $m>1$ and $(p,2m)\neq (2,6)$.} The cases $m=1$ or $(p,2m)= (2,6)$ are tackled in the final section of the
present paper.

\bigskip

\begin{lemma}
\label{alternative}Set $H_{0}=G_{0}\cap GL_{n}(q)$, then $\Phi _{2m}^{\ast
}(p)$ divides the order of $H_{0}$ and $\Phi _{2m}^{\ast }(p)>1$.
\end{lemma}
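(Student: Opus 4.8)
The goal is to show that the primitive part $\Phi_{2m}^{\ast}(p)$ divides $|H_0|$, where $H_0 = G_0 \cap GL_n(q)$, and that $\Phi_{2m}^{\ast}(p)>1$. The second assertion is immediate: we have assumed $m>1$ and $(p,2m)\neq(2,6)$, so $2m>2$ (indeed $2m\geq 4$) and $(p,2m)\neq(2,6)$, whence Zsigmondy's Theorem (cited as \cite[Theorem II.6.2]{Lu} in the excerpt) gives $\Phi_{2m}^{\ast}(p)>1$. The only case Zsigmondy excludes with $2m>2$ is $(p,2m)=(2,6)$, which we have explicitly set aside, together with the trivial range $2m\le 2$. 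So the heart of the matter is the divisibility statement.

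\textbf{Main argument for divisibility.} The plan is to exploit that $r = p^f(p^m+1)$ divides $|G_0|$, combined with the structure $G_0 \leq \Gamma L_n(q)$ and $G_0/H_0 \lesssim \Gal(GF(q)/GF(p)) = Z_h$. First I would note that $p^m+1$ divides $r$ and hence divides $|G_0|$. Since $|G_0 : H_0|$ divides $h$, and $h$ is a divisor of $2m$, the contribution of the ``field automorphism part'' to $|G_0|$ is a divisor of $2m$; so the prime-to-$2m$ part of $p^m+1$ must already divide $|H_0|$. More to the point, I want to locate $\Phi_{2m}^{\ast}(p)$ inside $p^m+1$: since $p^{2m}-1 = (p^m-1)(p^m+1)$ and a primitive divisor of $p^{2m}-1$ is coprime to $p^m-1$ (as $m<2m$), every primitive divisor of $p^{2m}-1$ divides $p^m+1$. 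Thus $\Phi_{2m}^{\ast}(p) \mid p^m+1 \mid r \mid |G_0|$. It remains to push this divisor from $|G_0|$ down into $|H_0|$.

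\textbf{Descending to $H_0$.} For this I would argue that $\Phi_{2m}^{\ast}(p)$ is coprime to $|G_0/H_0|$. Indeed $|G_0/H_0|$ divides $h \mid 2m$, so every prime divisor $\ell$ of $|G_0/H_0|$ satisfies $\ell \le 2m$. On the other hand, any prime $\ell$ dividing $\Phi_{2m}^{\ast}(p)$ is a primitive prime divisor of $p^{2m}-1$, so the multiplicative order of $p$ modulo $\ell$ is exactly $2m$, forcing $2m \mid \ell - 1$ and in particular $\ell \geq 2m+1 > 2m$. Hence no prime of $\Phi_{2m}^{\ast}(p)$ divides $|G_0/H_0|$, i.e. $\gcd(\Phi_{2m}^{\ast}(p), |G_0/H_0|)=1$. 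Combined with $\Phi_{2m}^{\ast}(p) \mid |G_0| = |H_0|\cdot|G_0/H_0|$, this yields $\Phi_{2m}^{\ast}(p) \mid |H_0|$, as desired.

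\textbf{Anticipated obstacle.} The only subtle point is the coprimality step: one must be careful that $\Phi_{2m}^{\ast}(p)$ really is built only from primes of multiplicative order exactly $2m$, so that the bound $\ell \ge 2m+1$ applies to each of its prime factors — this is exactly the definition of primitive divisor recalled just before the lemma, so no new input is needed, but it is the crux. A secondary point worth a line is making sure $2m>2$ genuinely holds under the standing hypotheses (it does, since $m>1$); this guarantees both that Zsigmondy applies and that $m<2m$, which is what makes $\Phi_{2m}^{\ast}(p)$ coprime to $p^m-1$ and hence a divisor of $p^m+1$. Everything else is routine manipulation of the relations $r=p^f(p^m+1)$, $r \mid |G_0|$, and $|G_0:H_0| \mid h \mid 2m$ already established in the excerpt.
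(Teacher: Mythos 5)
Your proof is correct and follows essentially the same route as the paper: deduce $\Phi_{2m}^{\ast}(p)\mid p^{m}+1\mid r\mid |G_{0}|$, note $\Phi_{2m}^{\ast}(p)>1$ by Zsigmondy under the standing hypotheses, and then descend to $H_{0}$ because $[G_{0}:H_{0}]$ divides $h\mid 2m$ while $\Phi_{2m}^{\ast}(p)$ is coprime to $2m$. The only cosmetic difference is that the paper cites the fact $\Phi_{2m}^{\ast}(p)\equiv 1\pmod{2m}$ from the literature, whereas you rederive the needed coprimality elementarily from the order of $p$ modulo each primitive prime divisor; both are fine.
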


\begin{proof}
Since $r=p^{f}(p^{m}+1)$ divides $\left\vert G_{0}\right\vert $, then $\Phi
_{2m}^{\ast }(p)$ divides $\left\vert G_{0}\right\vert $. Moreover, $\Phi
_{2m}^{\ast }(p)>1$ by \cite[Theorem II.6.2]{Lu}, since $m>1$ and $(p,2m) \neq (2,6)$
by our assumption. As $\Phi _{2m}^{\ast }(p)\equiv 1\pmod{2m}$ by \cite[Proposition 5.2.15.(ii)]{KL}, it follows that $\Phi _{2m}^{\ast }(p)$ is
coprime to $h$, since $h\mid 2m$. Thus $\Phi _{2m}^{\ast }(p)$ divides the
order $H_{0}$, since $q=p^{h}$ and $\left[ \Gamma L_{n}(q):GL_{n}(q)\right]
=h$.
\end{proof}

\bigskip

On the basis of Lemma \ref{alternative}, $H_{0}$ is one of the following
groups by \cite[Theorem 3.1]{BP} and by the minimality of $n$:

\begin{enumerate}
\item $SL_{n}(q)\trianglelefteq G_{0}$;

\item $Sp_{n}(q)\trianglelefteq G_{0}$;

\item $SU_{n}(q^{1/2})\trianglelefteq G_{0}$ and $n$ odd;

\item $\Omega _{n}^{-}(q)\trianglelefteq G_{0}$;

\item $G_{0}\leq (D_{8}\circ Q_{8}).S_{5}$ and $(q,n)=(3,4)$;

\item $G_{0}$ is a \emph{nearly simple group}, that is, $S\trianglelefteq
G_{0}/\left( G_{0}\cap Z\right) \leq Aut(S)$ for some non-abelian simple
group $S$, where $Z$ is the center of $GL(V)$. Moreover, if $N$ is the preimage of $S$ in $G_{0}$, then $N$ is
absolutely irreducible on $V$ and $N$ is not a classical subgroups defined
over a subfield of $GF(q)$ in its natural representation.
\end{enumerate}

\bigskip

We are going to analyze (6) first, then (5) and finally (1)--(4).

\section{The case where $G_{0}$ is a nearly simple group}

The aim of this section is to prove the following result.

\begin{theorem}
\label{ClassS}If $G_{0}$ is a nearly simple group, then $q=2^{h}$ and one of the following holds:
\begin{enumerate}

\item $\mathcal{D}$ is a $2$-$(2^{4},2^{2},2^{2})$ design isomorphic to that constructed in Example \ref{Ex0} for $q=2$, and $A_{6}\trianglelefteq G_{0}\leq S_{6}$.

\item $\mathcal{D}$ is a $2$-$(q^{4},q^{2},\lambda)$ design, with $\lambda = q, q^{2}/2$ or $q^{2}$, isomorphic to one the $2$-designs constructed in Example \ref{Ex1}, and $Sz(q)\trianglelefteq G_{0}\leq \left( Z_{q-1}\times
Sz(q)\right) .Z_{h}$ ($h$ is odd).

\item $\mathcal{D}$ is a $2$-$(q^{6},q^{3},q^{3})$ design isomorphic to that constructed in Example \ref{Ex2}, and $G_{2}(q)\trianglelefteq G_{0}\leq (Z_{q-1}\times
G_{2}(q)):Z_{h}$.

\end{enumerate}
\end{theorem}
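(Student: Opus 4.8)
The starting point is the classification of nearly simple groups $G_0$ whose order is divisible by $\Phi_{2m}^{\ast}(p)$ given in \cite[Theorem 3.1]{BP}: by Lemma \ref{alternative} this applies here, so $G_0$ is one of a short, explicit list of nearly simple groups, organized by the simple socle $S$ and the dimension $n$ of the natural module. The plan is to run through that list and, for each candidate $(S,n,q)$, either derive a contradiction or identify the resulting design. The two universal constraints doing most of the elimination are: (i) $r=p^{f}(p^{m}+1)$ with $0\le f\le m$ must divide $|G_0|$, which forces $p^{m}+1$ (equivalently the primitive part $\Phi_{2m}^{\ast}(p)$) into $|G_0|$ and pins down $q=p^h$, $n=2m/h$ tightly; and (ii) Lemma \ref{cici}, giving $m-t\le f\le m$ together with the fact that $G_{B}/T_B$ embeds in $G_0$ with index $p^{f-(m-t)}(p^m+1)$ and contains a copy of $G_{0,B}$. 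Constraint (ii) means a point stabilizer–block stabilizer pair of the right index must exist inside $G_0$, and Aschbacher's theorem \cite{KL} restricts what $G_{0,B}$ can be. I expect the arithmetic of (i) alone to kill most entries of the \cite{BP} list outright, since $p^m+1$ is a very restrictive divisor.

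Next I would treat the survivors. For each surviving $(S,n,q)$ I would (a) check $p=2$: since $\lambda\mid k=p^m$ and $r=p^f(p^m+1)$, the cases with $p$ odd that are not already excluded by divisibility must be eliminated by a parity/subspace argument — Lemma \ref{inv} and Corollary \ref{p2} show that if $p$ is odd with $-1\in G_0$, or if $p=2$ and $t\ge m-1$, the blocks are affine subspaces, which for $\lambda\ge 2$ forces $\mathcal D$ into case (1) of Theorem \ref{main} (the $\Gamma L_1$ situation), contradicting $H_0\not\le\Gamma L_1$; so the genuinely new nearly simple examples should all have $q=2^h$. Then (b) for the three expected survivors — $S=A_6\cong\mathrm{Sp}_4(2)'$ with $(n,q)=(4,2)$, $S=Sz(q)$ with $(n,q)=(4,q)$ and $h$ odd, and $S=G_2(q)'$ with $(n,q)=(6,q)$ — I would compute $|B^{G_0}|$ and $|B^{H_0}|$ directly from the subgroup structure recorded in \cite{BHRD} (Tables 8.8, 8.16, 8.30 for the relevant $\Gamma$-groups), exactly as in the proofs of Examples \ref{Ex0}, \ref{Ex2}, \ref{Ex1}, to verify that $(V,B^{G})$ is a $2$-design with the claimed $\lambda$. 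For the Suzuki case the value of $\lambda\in\{q,q^2/2,q^2\}$ depends on which type of subgroup $G_{0,B}$ is ($F_{q(q-1)}$, $D_{2(q-1)}$, or $Z_{q-1}$), and one must match each possibility against the base-block conditions of Example \ref{Ex2}; for $G_2(q)$ the block is a totally isotropic $3$-space and $G_{0,B}\cong SL_3(q)$, forcing $r=q^3(q^3+1)$, i.e. $\lambda=q^3$.

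The isomorphism statements ("isomorphic to the design of Example \ref{ExN}") require identifying $B$ up to the action of the ambient $\Gamma L_n(q)$-normalizer: here I would argue that all choices of block $B$ with the right stabilizer lie in a single orbit under $N_{\Gamma L(V)}(H_0)$ (using the conjugacy-class data in \cite{BHRD}: e.g. $Sp_4(q)$ has one class of non-degenerate $2$-spaces, $G_2(q)$ one class of the relevant totally isotropic $3$-spaces, $Sz(q)$ one class each of $F_{q(q-1)},D_{2(q-1)},Z_{q-1}$ which is also a single $\mathrm{Aut}(Sz(q))$-class), hence $B^{G}$ is determined and coincides with the example's block orbit. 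I anticipate the main obstacle to be the Suzuki case: sorting the totally-isotropic-line geometry of the Tits ovoid so that the three families of Example \ref{Ex2} are matched bijectively with the three isomorphism types of $G_{0,B}$, and in particular confirming that $k_1=|B\cap x_1^{G_0}|=q-1$ in each family (this is precisely what Proposition \ref{IntOrb} and Corollary \ref{DiheD} are set up to prove), together with the existence question for Family 4, which \textsf{GAP} only settles for $q=2^3,2^5$. Everything else — the $A_6$ case and the $G_2(q)$ case — should reduce to the already-completed verifications in Examples \ref{Ex0} and \ref{Ex1}.
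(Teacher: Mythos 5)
Your overall strategy (start from the list in \cite[Theorem 3.1]{BP} via Lemma \ref{alternative}, use divisibility of $r=p^{f}(p^{m}+1)$ into $|G_{0}|$, Lemma \ref{cici} and the subgroup structure in \cite{BHRD} to constrain $G_{0,B}$, then identify the survivors with the Examples) is the same skeleton the paper uses, but there is a genuine gap in the step you rely on to dispose of the odd-characteristic candidates. You claim that when $p$ is odd and $-1\in G_{0}$, Lemma \ref{inv} makes the blocks affine subspaces and that this ``forces $\mathcal{D}$ into case (1) of Theorem \ref{main} (the $\Gamma L_{1}$ situation), contradicting $H_{0}\nleq \Gamma L_{1}$.'' That inference is false: case (1) of Theorem \ref{main} asserts \emph{both} that the blocks are subspaces of $AG_{2m}(p)$ \emph{and} that $G_{0}\leq\Gamma L_{1}(q)$, and the first condition does not imply the second (Example \ref{Ex3}, a $2$-$(3^{4},3^{2},3)$ design with $SL_{2}(5)\trianglelefteq G_{0}$ and blocks that are $2$-dimensional subspaces, is a counterexample to your implication). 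In the paper, Lemma \ref{inv} is only the starting point: each odd-characteristic nearly simple candidate on the \cite{BP} list ($PSL_{2}(7)$ on $V_{6}(3)$, $V_{3}(9)$, $V_{3}(25)$; $SL_{2}(13)$ on $V_{6}(3)$ via the Hering spread; $A_{8}$, $2.PSL_{3}(4)$ on $V_{6}(3)$; $J_{2}$, $PSU_{3}(3)$ on $V_{6}(5)$; the sporadic and remaining cross-characteristic cases) is killed by a bespoke argument (Hermitian-unital and Baer-subplane geometry, spread components, tactical-configuration counts, Sylow fixed-point counts, and in places \textsf{GAP}), since the pure arithmetic of $r\mid|G_{0}|$ eliminates only a handful of lines of the list. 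Your plan as stated has no working mechanism for these eliminations, so the conclusion $q=2^{h}$ is not actually reached.

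A second, smaller shortfall is in the defining-characteristic survivors: knowing that $G_{0,B}$ lies in a single conjugacy class of $SL_{3}(q).2$, $F_{q(q-1)}$, $D_{2(q-1)}$ or $Z_{q-1}$ does not by itself determine the block $B$ (the stabilizer does not pin down its orbit of ``base sets''). The paper must prove, via Lemma \ref{cici}(2) and the analysis of $|T_{B}|$, that $B$ is a $GF(2)$-subspace of the prescribed shape: for $G_{2}(q)$ this is the argument in Proposition \ref{NChar} forcing $B=U_{1}$ or $U_{1}^{\sigma}$, and for $Sz(q)$ it is the whole machinery of Proposition \ref{trisesira} and Lemmas \ref{solo2}--\ref{block} (ruling out $|T_{B}|=q$, establishing $\dim_{GF(2)}B=2h$, and parametrizing $B$) before Proposition \ref{IntOrb} and Corollary \ref{DiheD} can match the families of Example \ref{Ex2}. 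You correctly flag the Suzuki case as the main obstacle, but your proposal defers exactly the content that constitutes the proof there, so as written it verifies that the listed designs exist rather than that no others occur.
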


\bigskip
We analyze the cases where $S$, the socle of $G_{0}/\left( G_{0}\cap Z\right)$, is alternating, sporadic and a Lie type simple group in characteristic $p^{\prime}$ and $p$ separately.
\bigskip

\begin{lemma}
\label{NotFDPM}If $S\cong A_{\ell }$, $\ell \geq 5$, then $\ell =6$ and $%
\mathcal{D}$ is isomorphic to the $2$-design of constructed in Example \ref%
{Ex0} for $q=2$.
\end{lemma}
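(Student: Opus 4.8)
The plan is to work with the nearly simple group $G_0$ whose socle quotient has socle $S\cong A_\ell$, $\ell\geq 5$, and to exploit that $N$ (the preimage of $S$) is absolutely irreducible of dimension $n$ over $GF(q)$ with $q=p^h$. First I would invoke Lemma \ref{alternative}: the primitive part $\Phi_{2m}^{\ast}(p)$ divides $|H_0|$ and hence $|N\cap GL_n(q)|$, and $\Phi_{2m}^{\ast}(p)>1$. This forces the characteristic-zero-modular ``large prime'' to divide the order of a faithful projective representation of $A_\ell$ of dimension $n$ over $GF(q)$. Using the known low-dimensional irreducible representations of alternating groups and their covers (the spin/basic-spin modules, the fully deleted permutation module, and the list of small cases), together with the constraint $\Phi_{2m}^{\ast}(p)\equiv 1\pmod{2m}$ and that $\Phi_{2m}^{\ast}(p)$ divides $r=p^f(p^m+1)$, I would cut the possibilities for $(\ell,n,q)$ down to a short list. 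The degree bounds for representations of $A_\ell$ are roughly linear in $\ell$ (apart from small exceptions), while $\Phi_{2m}^{\ast}(p)$ grows with $p^m=q^{n/2}$, so for all but finitely many configurations there is no room: this is where most of the candidates die.

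\textbf{Key steps in order.} After the reduction above, the surviving candidates should be essentially $A_5, A_6, A_7, A_8$ in small dimensions (recalling $A_5\cong SL_2(4)\cong PSL_2(5)$, $A_6\cong PSL_2(9)$, $A_8\cong SL_4(2)$, and the exceptional $2$- and $6$-dimensional covers of $A_7$), and these must moreover satisfy $v=q^n=p^{2m}$ with $n=2m/h$ and $(p,2m)\neq(2,6)$, $m>1$, as in the running hypotheses. For each such candidate I would: (i) pin down $q$ and $n$, hence $p^m$ and $r=p^f(p^m+1)$ with $0\le f\le m$; (ii) use Lemma \ref{cici} to bound $|T_B|=p^t$ via $m-t\le f\le m$ and then deduce the structure of a block stabilizer $G_{0,B}$ as a subgroup of $G_0$ whose index is $p^{f-(m-t)}(p^m+1)$; (iii) match $G_{0,B}$ against the subgroup structure of $A_\ell$ (maximal subgroups, intransitive/imprimitive/primitive types) and check the divisibility $r\mid |G_0|$ together with Lemma \ref{PP} ($p^m+1$ divides every nontrivial point-$G_0$-orbit length); (iv) use Corollary \ref{p2} and Lemma \ref{inv} when applicable to force the blocks to be subspaces, or otherwise construct the block combinatorially. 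The expectation is that all cases collapse except $S\cong A_6$ with $q=2$, $n=4$, where $A_6\trianglelefteq G_0\leq S_6$ acts on $V_4(2)$ and (via $A_6<Sp_4(2)\cong S_6$) the design is the $2$-$(16,4,4)$ design of Example \ref{Ex0}.

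\textbf{The $A_6$ case and the main obstacle.} For $\ell=6$ I would identify the $4$-dimensional $GF(2)$-module of $A_6$ with the natural module of $Sp_4(2)\cong S_6$, note that $G_0$ lies between $A_6$ and $S_6$ and hence between (a conjugate of) $\Omega_4^-(2)\cong A_5$-type data and $Sp_4(2)$; then $p^m=4$, $r=2^f\cdot 5$ with $f\in\{1,2\}$, and the block analysis (block size $k=4$, and $T$ acting on a block with $t\in\{1,2\}$ by Lemma \ref{cici}) forces $B$ to be the non-degenerate symplectic $2$-space giving $\lambda=q^2=4$, exactly as in the proof of Example \ref{Ex0}. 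Uniqueness of the design up to isomorphism then follows from flag-transitivity and the transitivity of $Sp_4(2)$ on non-degenerate $2$-spaces. The main obstacle is step (i)--(iii) of the general reduction: assembling a clean, citable account of the minimal-degree faithful (projective) representations of $A_\ell$ over $GF(p^h)$ in exactly the dimension range where $\Phi_{2m}^{\ast}(p)$ can still divide the group order — the bookkeeping across characteristic $2,3$ and the small exceptional covers of $A_6,A_7$ is delicate, and one must be careful that the representation is genuinely over $GF(q)$ with $q$ minimal (the minimality of $n$ in the setup), not over a proper subfield or extension.
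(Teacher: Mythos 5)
Your overall strategy --- use the primitive part $\Phi_{2m}^{\ast}(p)$ (Lemma \ref{alternative}) to restrict the admissible pairs $(S,V)$, then run a case analysis with Lemmas \ref{PP}, \ref{cici}, \ref{inv} and Corollary \ref{p2} --- is the same as the paper's, and your endgame for $A_{6}\trianglelefteq G_{0}\leq S_{6}$ on $V_{4}(2)$ (a Sylow $3$-element fixing a block pointwise, $B$ a non-degenerate symplectic $2$-space, $\lambda=4$, identification with Example \ref{Ex0}) is essentially the paper's. The genuine gap is in your reduction step. You argue that, since representation degrees of $A_{\ell}$ are roughly linear in $\ell$ while $\Phi_{2m}^{\ast}(p)$ grows with $p^{m}$, ``there is no room'' outside a handful of small cases, and you list the survivors as essentially $A_{5},A_{6},A_{7},A_{8}$ in small dimension. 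This is false for the fully deleted permutation module: there $n=2m$ is itself about $\ell$, and a primitive prime divisor of $p^{2m}-1$ is $\equiv 1\pmod{2m}$, so it can (and typically does) equal $2m+1\leq\ell$ and hence divides $\left\vert S_{\ell}\right\vert$. The list coming out of \cite[Theorem 3.1]{BP}, which is what the paper actually works through, therefore also contains $A_{11},A_{12}$ on $V_{10}(2)$, $A_{13},A_{14}$ on $V_{12}(2)$, $A_{19},A_{20}$ on $V_{18}(2)$, $A_{7},A_{8}$ on $V_{6}(3)$ and $V_{6}(5)$, besides the $V_{4}(2)$, $V_{4}(3)$ and $V_{3}(25)$ possibilities. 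Some of these die quickly by Lemma \ref{PP} (the divisibility of $(p-1)(2m+1)$ or $(p-1)(2m+1)(m+1)$ by $p^{m}+1$ removes $p=2$, $m=6,9$ and $p=5$, $m=3$), but your sieve as stated neither produces nor eliminates them.

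Moreover, the cases that survive that divisibility test are not disposed of by your generic steps (i)--(iv). For $A_{12}\trianglelefteq G_{0}$ on $V_{10}(2)$ one has $r=2^{f}\cdot 33$ dividing $\left\vert G_{0}\right\vert$, and the paper needs a maximal-subgroup analysis forcing $A_{10}\trianglelefteq G_{0,B}$, the fact that $A_{10}$ has no nontrivial $GF(2)$-module of dimension at most $5$ (so it fixes $0^{T_{B}}$, hence $B$, pointwise), and a dimension count on its fixed space to reach a contradiction. Likewise $A_{8}/S_{8}$ on $V_{6}(3)$ requires the Sylow $5$-subgroup fixed-space argument plus an index computation inside $S_{6}\times Z_{\epsilon}$; $A_{5}\leq G_{0}\leq S_{5}$ on $V_{4}(2)$ and the two $p=3$, $m=2$ cases each need their own orbit-intersection arguments; and $A_{7},A_{8}$ on $V_{4}(2)$ and $Z_{3}.A_{7}$ on $V_{3}(25)$ are excluded because $G_{0}$ has no transitive permutation representation of degree $r$. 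None of this follows from your plan, and the incorrect claim that only small-dimensional candidates survive would leave precisely these cases untouched. (A minor point in your $A_{6}$ analysis: Lemma \ref{cici}(1) gives $t\geq 2-f$, not $t\in\{1,2\}$; the paper instead gets $B=\mathrm{Fix}(C)$ directly from the Sylow $3$-subgroup of $G_{0,B}$.)
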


\begin{proof}
Assume that $V$ is the fully deleted permutation module for $S$, where $%
S\cong A_{\ell }$, $\ell \geq 5$. Then $q=p$, $n=2m$, $A_{\ell
}\trianglelefteq G_{0}\leq S_{\ell }\times Z$, where $Z$ is the center of $%
GL_{n}(q)$, and either $\ell =2m+1$ or $2m+2$ according to whether $p$ does
not divide or does divide $n$, respectively, by \cite[Lemma 5.3.4]{KL}.
Moreover $q=2$ and $m=2,5,6,9\,$, or $q=3$ and $m=2,3$, or $q=5$ and $m=3$
by \cite[Theorem 3.1]{BP}. The group $G_{0}$ has one orbit on the set of $1$%
-dimensional $GF(q)$-subspaces of $V$ of\ length $2m+1$ or $(2m+1)\left(
m+1\right) $ according to $\ell =2m+1$ or $2m+2$ respectively. By Lemma \ref%
{PP} $p^{m}+1$ divides $(p-1)\left( 2m+1\right) $ or $(p-1)(2m+1)\left(
m+1\right) $, respectively, and this rules out the cases $p=2$ and $m=6,9$, or $p=5$ and $m=3$.

Assume that $p=m=2$ and $A_{5}\trianglelefteq G_{0}\leq S_{5}$. Then $%
V_{4}(2)^{\ast }=x^{G_{0}}\cup y^{G_{0}}$ with $\left\vert
x^{G_{0}}\right\vert =5$ and $\left\vert y^{G_{0}}\right\vert =10$. Moreover 
$G_{0}$ acts primitively on $y^{G_{0}}$ and hence $G_{0,y}$ is a maximal
subgroup of $G_{0}$ isomorphic either to $D_{6}$ or to $D_{12}$ according as $%
G_{0}$ is isomorphic either to $A_{5}$ or to $S_{5}$ respectively. Thus any
cyclic subgroup of $G_{0}$ of order $3$ fixes exactly one point on $y^{G_{0}}
$. Since $r=2^{f}\cdot 5$, if $B$ is a block incident with $0$ then $G_{0,B}$
contains a cyclic subgroup $C$ of order $3$. Since $\left\vert B\cap
y^{G_{0}}\right\vert =2$ by Lemma \ref{PP}, it follows that $C$ fixes $B\cap
y^{G_{0}}$ pointwise, but this is in contrast with the fact that any cyclic
subgroup of $G_{0}$ of order $3$ fixes exactly one point in $y^{G_{0}}$.

Assume that $p=m=2$ and $A_{6}\trianglelefteq G_{0}\leq S_{6}$. Then $G_{0}$
acts transitively on $V^{\ast }$ and $G_{0}$ contains two conjugacy
classes of subgroups of order $3$: one consisting of the subgroups fixing a
plane of $V_{4}(2)$, the other consisting of subgroups acting semiregularly
on $V^{\ast }$. Since $r=2^{f}\cdot 5$, if $B$ is any block incident
with $0$, $G_{0,B}$ contains a Sylow $3$-subgroup $G_{0}$ and hence there is
a cyclic subgroup $C$ of $G_{0}$ of order $3$ fixing $B$ pointwise. Actually $B=%
\mathrm{Fix}(C)$ and hence $f=2$ and $\mathcal{D}$ is the $2$%
-design constructed in Example \ref{Ex0}, since $G$ acts $2$-transitively on $V$ and $A_{6} \cong Sp_{4}(2)^{\prime}$.

Assume that $p=2$ and $m=5$. Hence $r=2^{f}\cdot 33$ with $f\leq 5$, and  $G_{0}\cong A_{12}:Z_{2^{\varepsilon}}$ with $\varepsilon=0,1$. If $B$ is any block incident with $0$, and $M$ is a maximal subgroup of $G_{0}$ containing $G_{0,B}$, then either $M \cong A_{11}:Z_{2^{\varepsilon}}$ or $M \cong S_{10} \times Z_{2^{\varepsilon}}$ by \cite{At}. If $M \cong A_{11}:Z_{2^{\varepsilon}}$, then $[G_{0}:M]=12$ and $[M:G_{0,B}]=2^{f-2}\cdot 11$. Therefore either $f=2$ and $G_{0,B} \cong A_{10}:Z_{2^{\varepsilon}}$, or $f=3$, $\varepsilon=1$ and $G_{0,B} \cong A_{10}$ by \cite{At}.

If $M \cong S_{10} \times Z_{2^{\varepsilon}}$, then $[G_{0}:M]=66$ and hence $[M:G_{0,B}]=2^{f-1}$ with $ 1 \leq f \leq 2+ \varepsilon$, and again we have $A_{10} \trianglelefteq G_{0,B}$ and $f \leq 3$. Therefore, $G_{0,B}$ contains a normal copy of $A_{10}$, say $Q$, and $f \leq 3$ in either case. Then $Q$ preserves $0^{T_{B}}$, as $T_{B} \trianglelefteq G_{B}$, hence $Q$ fixes $0^{T_{B}}$ pointwise by \cite{AtMod}, since $2 \leq t=\dim 0^{T_{B}} \leq 5$ by Lemma \ref{cici}(1), as $m=5$ and $f \leq 3$. Thus $Q$ fixes $B$ pointwise if $t \geq 4$ by Corollary \ref{p2}. If $t=2$ or $3$, then $B$ is partitioned in $2^{3}$ orbits under $T_{B}$ each of length $2^{2}$ or in $2^{2}$ orbits under $T_{B}$ each of length $2^{3}$ respectively. In both cases $Q$ fixes $B$ pointwise, since $Q$ permutes the $T_{B}$-orbits contained in $B$ and since the minimal primitive permutation representation of $Q$ is $10$. Thus $B \subseteq \mathrm{Fix(}Q \mathrm{)}$ and hence $\dim \mathrm{Fix(}Q \mathrm{)} \geq 5$. Therefore $Q$ acts on $V/ \mathrm{Fix(}Q \mathrm{)}$, which has dimension at most $5$, hence $\mathrm{Fix(}Q \mathrm{)}=V$ by \cite{AtMod}, and we reach a contradiction.

Assume that $p=3$, $m=2$ and $A_{6}\trianglelefteq G_{0} \leq S_{6}\times Z_{2}$. Then $A_{6}\cong
\Omega _{4}^{-}(3)$ and $S_{6}\times Z_{2}\cong GO_{4}^{-}(3)$ by \cite{At}.
Let $Q$ be the quadratic form preserved by $\Omega _{4}^{-}(3)$, then the $%
\Omega _{4}^{-}(3)$-orbits on $V^{\ast }$ consist of the vectors on which $Q$
takes values $0$, $1$ and $-1$ respectively. These orbits have length $20$, $30$ and $%
30$ respectively. Let $\left\langle z\right\rangle $ be a non-singular $1$%
-dimensional subspace of $V$ and let $U\cong D_{8}$ be a Sylow $2$-subgroup
of $\Omega _{4}^{-}(3)_{\left\langle z\right\rangle }\cong S_{4}$. Then $U$
permutes $\pm z$ and hence $U$ fixes at least a block $B$ incident with $\pm z$,
since $\lambda =3^{f}$. Then $U$ fixes a point in $B$ since $k=3^{2}$, which
is necessarily $0$, since $U$ is also a Sylow $2$-subgroup of $\Omega
_{4}^{-}(3)$ and the length of each $\Omega _{4}^{-}(3)$-orbit on $V^{\ast }$
is even. Since $u^{\Omega _{4}^{-}(3)}$, where $u$ is a non-zero singular
vector of $V$ contained in $B$ is also a $G_{0}$-orbit, it follows from Lemma \ref%
{PP} that $\left\vert B\cap u^{\Omega _{4}^{-}(3)}\right\vert =2$. Thus $4$ divides $%
\left\vert \Omega _{4}^{-}(3)_{B,u}\right\vert$, as $U\leq \Omega
_{4}^{-}(3)_{B}$, but this contradicts $\Omega _{4}^{-}(3)_{u}\cong F_{18}$.

Assume that $p=3$, $m=2$ and $A_{5}\trianglelefteq G_{0}\leq S_{5}\times
Z_{2}$. Then $r=30$. If $-1\notin G_{0}$, then $G_{0}$ has one orbit of
length $5$ since $V$ is the fully deleted permutation module for $S_{5}$. Then $r/3=5$ by
Lemma \ref{PP}, whereas $r=30$. Thus $-1\in G_{0}$ . Also $t>0$ by Lemma \ref%
{cici}(1), since $m=2$ and $f=1$. Then the blocks incident with $0$ are $2$%
-dimensional subspaces of $V$ by Lemma \ref{inv}. Also there is a $%
G_{0}$-orbit on $V^{\ast}$, say $\mathcal{O}$, of length $10$. Then $\left\vert B^{\prime}\cap \mathcal{O}\right\vert
=1 $ by Lemma \ref{PP}, where $B^{\prime}$ is any block incident with $0$, since $%
r=30$ and $\lambda =3$. However, this is impossible since $B^{\prime}$ is a $2$%
-dimensional subspace of $V$, $\mathcal{O}$ is a $G_{0}$-orbit and $-1\in
G_{0}$.

Assume that $p=m=3$. Then $A_{8}\trianglelefteq G_{0}\leq
S_{8}\times Z_{2} $ and $r=3^{f}\cdot 28$ with $f=1,2$%
. Note that $T$ does not act block-semiregularly on $\mathcal{D}$, since $%
t \geq m-f\geq 3-2=1$ by Lemma \ref{cici}(1). Suppose that $-1\in G_{0}$. Then the blocks of 
$\mathcal{D}$ are $3$-dimensional subspaces of $AG_{6}(3)$ by Lemma \ref{inv}%
. Let $W$ be a Sylow $5$-subgroup, then $W$ preserves at least two distinct
blocks of $\mathcal{D}$ incident with $0$, say $B_{1},B_{2}$, since $r\equiv
4,2\pmod{5}$ according to whether $f=1$ or $2$ respectively. Then $W$
fixes $B_{i}$ pointwise, $i=1,2$, since the order of $%
GL_{3}(3)$ is not divisible by $5$. Thus $B_{1}+B_{2}\subseteq 
\mathrm{Fix(}W\mathrm{)}$, since the blocks of 
$\mathcal{D}$ are $3$-dimensional subspaces of $AG_{6}(3)$. Hence $W$ fixes $V$ pointwise, since $\dim 
\mathrm{Fix(}W\mathrm{)}\geq 4$ and since $W$ acts semiregularly on $%
V\backslash \mathrm{Fix(}W\mathrm{)}$. This is a contradiction. Thus 
$-1\notin G_{0}$ and hence $A_{8}\trianglelefteq G_{0}\leq S_{8}$. If $B$ is any block incident with $0$, then $%
G_{0,B}\leq S_{6}\times Z_{\epsilon }$ by \cite{At}, where $\epsilon =1,2$ according to
whether $G_{0}$ is $A_{8}$ or $S_{8}$ respectively. Then $%
\left[ S_{6}\times Z_{\epsilon }:G_{0,x}\right] =3^{f}$, since $\left[
G_{0}:S_{6}\times Z_{\epsilon }\right] =28$, and we reach a contradiction by 
\cite{At}.

Assume that $V$ is not the fully deleted permutation module for $S$. Then either $%
G_{0}\cong A_{\ell }$, $\ell =7,8$, and $V=V_{4}(2)$, or $G_{0}\cong (Z_{2^{i}} \times Z_{3}.A_{7}).Z_{2^{j}}$, $i \leq 3$ and $j \leq 1$, and $V=V_{3}(25)$ by \cite[Theorem 3.1]{BP} (note that $\Phi _{4}^{\ast
}(7)=5^{2}$ does not divide the order of $Z_{2}.S_{7}\times Z_{3}$). Then either $r=2^{f}\cdot 5$ with 
$f\leq 2$, or $r=2 \cdot 3^{3} \cdot 5 \cdot 7$ respectively. However both cases are ruled out, since $G_{0}$ has no transitive permutation representations of
degree $r$ by \cite{At}. This completes the proof.
\end{proof}

\begin{lemma}
\label{NotSpor}$S$ is not a sporadic group.
\end{lemma}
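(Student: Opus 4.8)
The plan is to carry out, in a lighter form, the same kind of case analysis that was used for the fully deleted permutation module in the proof of Lemma \ref{NotFDPM}. Suppose for a contradiction that $S$ is a sporadic simple group. By Lemma \ref{alternative} the integer $\Phi_{2m}^{\ast}(p) > 1$ divides $|H_0|$, where $H_0 = G_0 \cap GL_n(q)$ acts absolutely irreducibly on $V$ and $S \trianglelefteq H_0/(H_0 \cap Z) \leq \mathrm{Aut}(S)$; hence the triple $(S, n, q)$ belongs to the short explicit list furnished by \cite[Theorem 3.1]{BP}. Since $v = q^{n} = p^{2m}$, this determines $p$ and $m$, and $\mathcal{D}$ is a $2$-$(p^{2m}, p^{m}, p^{f})$ design with $r = p^{f}(p^{m}+1)$ and $1 \leq f \leq m$ (recall $\lambda = p^{f} \geq 2$). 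It remains to discard each of the finitely many surviving triples.

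The first filter is that, for some $f$ with $1 \leq f \leq m$, the number $r = p^{f}(p^{m}+1)$ must equal the index $[G_0 : G_{0,B}]$ of the stabilizer of a block $B$ through the zero vector; in particular $G_0$ admits a transitive permutation representation of degree $r$. Since the maximal subgroups of $G_0$ and of its almost simple overgroups are tabulated in \cite{At}, this already removes the majority of the candidates. For a triple that survives, I would set $p^{t} = |T_B|$ (so $m - t \leq f$ by Lemma \ref{cici}(1)) and use Lemma \ref{cici}(2): $G_0$ then contains a subgroup $J$ of order $p^{m-t}|G_{0,B}|$ with $[G_0 : J] = p^{f-(m-t)}(p^{m}+1)$ and containing an isomorphic copy of $G_{0,B}$. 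Tracing through \cite{At} either identifies $G_{0,B}$ up to isomorphism, or shows that no such chain of subgroups is available, the latter being the desired contradiction.

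When the subgroup–index conditions are not by themselves conclusive, I would bring in the action on $V$. By Lemma \ref{PP}, $p^{m}+1$ divides $|y^{G_0}|$ for every non-zero $y \in V$, and $|B \cap y^{G_0}| = |y^{G_0}|/(p^{m}+1)$ must be realised inside $G_{0,B}$; since the $G_0$-orbit lengths on $V \setminus \{0\}$ are known, comparing them with this divisibility and incidence condition eliminates the remaining triples. In the few most resistant cases I expect to argue exactly as for the group $A_{10}$ in Lemma \ref{NotFDPM}: a normal $p'$-subgroup $Q$ of $G_{0,B}$ normalises $0^{T_B}$, hence fixes it pointwise; then, invoking Corollary \ref{p2} when $p = 2$ (and Lemma \ref{inv} when $p$ is odd and $-1 \in G_0$) to see that $Q$ must in fact fix $B$ pointwise, one obtains $\dim \mathrm{Fix}(Q) \geq m+1$, contradicting the upper bound on $\dim \mathrm{Fix}(Q)$ read off from \cite{AtMod}.

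The hard part will be precisely this last family of cases, in which $|G_0|$, the divisibility of $r$, and the $G_0$-orbit lengths on $V \setminus \{0\}$ are all simultaneously consistent with the existence of a design, and one is forced to exploit the finer fixed-point structure of the $GF(q)S$-module (for instance, whether the restriction of $V$ to the putative block stabilizer admits an $m$-dimensional constituent). I anticipate that in every such instance the dimension of the fixed space of the relevant $p'$-subgroup stays strictly below $m+1$, so that the contradiction always materialises, but verifying this for each sporadic group occurring on the list of \cite[Theorem 3.1]{BP} is the step demanding the most care.
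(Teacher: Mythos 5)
Your overall framework --- restricting to the candidate triples of \cite[Theorem 3.1]{BP}, then filtering by the divisibility $r=p^{f}(p^{m}+1)\mid |G_{0}|$, the existence of a transitive representation of degree $r$, and Lemmas \ref{cici} and \ref{PP} --- is indeed the toolkit the paper uses, and it does dispose of the easy candidates ($M_{22}$ and $J_{3}$ fall to exactly the Atlas-index argument you describe). But your text stops at a plan precisely where the proof becomes nontrivial, and the mechanism you propose for the ``resistant'' cases is not the one that actually works. For $M_{11}$ and $M_{12}$ on $V_{10}(2)$ the essential step, which you never secure, is the identification of the module: one must show that $V$ is the restriction of the fully deleted permutation module of $A_{11}$, resp.\ $A_{12}$ (via uniqueness of the $10$-dimensional $GF(2)$-representation), which is what produces the small orbits of length $11$, resp.\ $66$, fed into Lemma \ref{PP}. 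For $M_{12}$ even this is not enough: $r=2^{f}\cdot 33$ is perfectly compatible with $|G_{0}|$ and with subgroup chains through $M_{11}$ or $P\Gamma L_{2}(9)$, and the contradiction comes from a finer argument --- $(G_{0,B})^{\prime}\cong PSL_{2}(9)$ must fix the at most four points of $B$ lying in the relevant orbit (its minimal primitive degree being $6$), hence fixes at least two points there, while $N_{G_{0}}((G_{0,B})^{\prime})\cong P\Gamma L_{2}(9)$ is maximal and fixes exactly one. Your generic device ``a normal $p'$-subgroup $Q$ of $G_{0,B}$ fixes $B$ pointwise, so $\dim \mathrm{Fix}(Q)\geq m+1$'' does not apply here: nothing forces $PSL_{2}(9)$ to fix all of the $32$-point block pointwise.

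The case $Z_{2^{1+j}}.J_{2}.Z_{2^{i}}$ on $V_{6}(5)$ is a genuine obstruction to your plan. There the arithmetic filters leave $f=2$, $r=3150$, and Lemmas \ref{cici}(1) and \ref{inv} force $B$ to be a $3$-dimensional $GF(5)$-subspace; but then $G_{0,B}$ is a $\{2,3\}$-group of small order, so no minimal-degree or fixed-space-dimension argument of the kind you borrow from the $A_{10}$ case in Lemma \ref{NotFDPM} is available --- such a group acts faithfully on a $3$-space over $GF(5)$ without difficulty. The paper closes this case by a computation with \textsf{GAP}: $G_{0}$ has no orbit of length $3150$ on the $3$-dimensional subspaces of $V_{6}(5)$. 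Your anticipation that ``the contradiction always materialises'' through fixed-point dimensions is therefore unfounded for at least this case (and for $M_{12}$); as written, the proposal is a strategy outline whose hard cases are exactly the ones left to hope, so it does not yet constitute a proof.
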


\begin{proof}
Assume that $S$ is a sporadic group. Table \ref{T1} contains the
admissible $S$, and the corresponding $G_{0}$, which are determined by using \cite%
[Theorem 3.1]{BP} and \cite{AtMod}.%
\begin{table}[h] 
\caption{ Admissible nearly simple groups with $S$ sporadic}
\label{T1}
\centering
\begin{tabular}{|l|l|l|}
\hline
$S$ & $G_{0}$ & $V$ \\ \hline
$M_{11}$ & $M_{11}$ & $V_{10}(2)$ \\ \hline
$M_{12}$ & $M_{12}:Z_{2^{i}}$, $i=0,1$ &  \\ \hline
$M_{22}$ & $M_{22}:Z_{2^{i}}$, $i=0,1$ &  \\ \hline
$J_{2}$ & $Z_{2^{1+j}}.J_{2}.Z_{2^{i}}$, $i \leq j$, $i,j=0,1$  & $V_{6}(5)$ \\ \hline
$J_{3}$ & $Z_{3}.J_{3}:Z_{2^{i}}$, $i=0,1$ & $V_{9}(4)$ \\ \hline
\end{tabular}%
\end{table}

The fully deleted permutation module for $A_{11}$ is an irreducible module
for $M_{11}$ since $M_{11}<A_{11}$ and by \cite[Theorem 3.5]{He1/2}, since $\Phi _{10}^{\ast }(2)=11$
divides the order of $M_{11}$. On the other hand, up to equivalence, $M_{11}$
has a unique irreducible linear representation on a $10$-dimensional $GF(2)$%
-space by \cite{AtMod}. Thus $V$ is isomorphic to a fully deleted
permutation module for $A_{11}$. Then each $A_{11}$-orbit is a union of $M_{11}
$-orbits, and there is a $A_{11}$-orbit of length $11$ on $V$ which is also
a $M_{11}$-orbit. Then $r/\lambda $ divides $11$ by Lemma \ref{PP},
but this is impossible since $r/\lambda =2^{5}+1$.

Assume that $G_{0} \cong M_{12}:Z_{2^{i}}$, $i=0,1$, and that $V=V_{10}(2)$. Then $r=2^{f}\cdot 33$ with $f \leq 5$. Arguing as in the $M_{11}$-case, we see that $V$ is the fully deleted permutation module for $A_{12}$. Then there is a $A_{12}$-orbit $\mathcal{O}$ of length $66$ which is also a $M_{12}$-orbit. The stabilizer in $M_{12}$ of $x$ in $\mathcal{O}$ is a maximal subgroup of $M_{12}$ isomorphic to $P\Gamma L_{2}(9)$ by \cite{At}. Moreover, $M_{12}$ contains two conjugacy classes $\mathcal{K}$ and $\mathcal{K}^{\prime}$ of subgroups isomorphic to $P\Gamma L_{2}(9)$ which are fused in $M_{12}:Z_{2}$. Also $P\Gamma L_{2}(9)$ is maximal in $M_{12}:Z_{2}$. Thus there is a further $M_{12}$-orbit $\mathcal{O}^{\prime}$ of length $66$ on $V$, and the stabilizer in $G_{0}$ of any point of $\mathcal{O}^{\prime}$ belongs to $\mathcal{K}^{\prime}$, whereas stabilizer in $G_{0}$ of any point in $\mathcal{O}$ belongs to $\mathcal{K}$, since $M_{12}:Z_{2}$ acts on $V$. In particular, $\mathcal{O} \cup \mathcal{O}^{\prime}$ is a $(M_{12}:Z_{2})$-orbit. We denote either $\mathcal{O}$ or $\mathcal{O} \cup \mathcal{O}^{\prime}$ simply by $\mathcal{X}$ according to whether $G_{0}$ is isomorphic to $M_{12}$ or to $M_{12}:Z_{2}$ respectively. Then $\left \vert B \cap \mathcal{X} \right \vert=2$ or $4$, respectively, by Lemma \ref{PP}.     

Let $K$ be a maximal subgroup of $G_{0}$ containing $G_{0,B}$, then $\left[
G_{0}:K\right] \left[ K:G_{0,B}\right] =$ $r=2^{f}\cdot 33$, with $%
f \leq 5$. One of the following holds by \cite{At}:
\begin{enumerate}
\item $K\cong M_{11}$, $\left[ K:G_{0,B}\right] =2^{f-i
-2}\cdot 11$ and $f-i -2=0,1$;
\item $K\cong P\Gamma L_{2}(9)$, $\left[
K:G_{0,B}\right] =2^{f-i -1}$ and $f-i -1=0,1,2$.
\end{enumerate}

Therefore $(G_{0,B})^{\prime} \cong PSL_{2}(9)$ in either case again by \cite{At}. Then $(G_{0,B})^{\prime}$ fixes $B \cap \mathcal{X}$ pointwise, since $\left \vert B \cap \mathcal{X} \right \vert =2, 4$ whereas the minimal primitive permutation representation of $PSL_{2}(9)$ is $6$. Hence  $(G_{0,B})^{\prime}$ fixes at least two points in $\mathcal{X}$. This is not the case, since $N_{G_{0}}((G_{0,B})^{\prime}) \cong P\Gamma L_{2}(9)$ is a maximal subgroup of $G_{0}$ fixing a unique point in $\mathcal{X}$. 

Assume that $G_{0}\cong M_{22}:Z_{2^{i}}$, with $i=0,1$, and $V=V_{10}(2)$. Let $W$ be a maximal subgroup of $G_{0}$ containing $G_{0,B}$. Then $[G_{0}:W][W:G_{0,B}]=r=2^{f}\cdot 33$ with $f \leq 5$. Thus either $W\cong A_{7}$ and $[W:G_{0,B}]=2^{f-i-4}\cdot 3$, with $4+i \leq f \leq 5$, or $W \cong PSL_{3}(4):Z_{2^{i}}$ and $[W:G_{0,B}]=2^{f-1}\cdot 3$, with $f \leq 5$, by \cite{At}. However, this is impossible since $W$ has no such transitive permutation
representations degrees by \cite{At}.

Assume that $G_{0} \cong Z_{2^{1+j}}.J_{2}.Z_{2^{i}}$, with $i \leq j$ and $i,j=0,1$, and $V=V_{6}(5)$. Then $r=5^{f}\cdot 2 \cdot 3^{2} \cdot 7$ with $f=1,2$, since $r$ divides the order of $G_{0}$. One $G_{0}$-orbit of $1$-dimensional subspaces of $V$ is of length $2016$ by \cite[Lemma 5.1]{Lieb2}, and if $\left\langle x \right\rangle$ is any representative of such a orbit, then $E:K \leq G_{0,x}$ where $E \cong Z_{5}\times Z_{5}$ and $K \cong Z_{3}$. If $f=1$ then $E:K \leq G_{0,B}$, where $B$ is any block incident with $0,x$, since $K$ acts irreducibly on $E$. So $r$ is coprime to $5$, since $r=[G_{0}:G_{0,B}]$ and $5^{2}$ is the maximum power of $5$ dividing the order of $G_{0}$, and we reach a contradiction. Therefore $f=2$ and hence $r=3150$. Also $T_{B} \neq 1$ by Lemma \ref{cici}(1) since $m=3$. Then $B$ is a $3$-dimensional $GF(5)$-subspace of $V$ by Lemma \ref{inv}. However $G_{0}$ has no orbits of length $3150$ on the set of $3$-dimensional subspaces of $V$ by \cite{GAP}, hence this case is ruled out. 

Assume that $G_{0} \cong Z_{3}.J_{3}:Z_{2^{i}}$, with $i=0,1$, and $V=V_{9}(4)$. Then $r=2^{f}\cdot 3^{3} \cdot 19$ with $f \leq 7+i$, since $r$ divides the order of $G_{0}$. So the order of $G_{0,B}$ is divisible by $3^{3} \cdot 5\cdot 17$, which is not the case by \cite{At}. This completes the proof.
\end{proof}

\begin{lemma}
\label{Cross}If $S$ is a Lie type simple group in characteristic $p^{\prime
}$, then $S\cong PSL_{2}(9)$ and $%
\mathcal{D}$ is isomorphic to the $2$-design of constructed in Example \ref%
{Ex0} for $q=2$.
\end{lemma}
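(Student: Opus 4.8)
The plan is to follow the pattern already used in Lemmas \ref{NotFDPM} and \ref{NotSpor}. First I would invoke \cite[Theorem 3.1]{BP}, keeping in mind the minimality of $n$, Lemma \ref{alternative} (so $\Phi_{2m}^{\ast}(p)\mid|H_0|$ and $\Phi_{2m}^{\ast}(p)>1$), the absolute irreducibility of the preimage $N$ of $S$ in $G_0$, and the fact that $N$ is not a classical subgroup defined over a proper subfield of $GF(q)$ in its natural representation. This reduces the cross-characteristic Lie type socles to a short explicit list of triples $(S,G_0,V)$; the decomposition matrices \cite{AtMod} (and, where necessary, \cite{GAP}) then pin down the module $V$, hence $n$, $q=p^{h}$ and $m$, in each case, and one discards any line with $m=1$ or $(p,2m)=(2,6)$ since those are postponed to the final section. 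I expect this list to be of a length comparable to Table \ref{T1}, to contain the line $S\cong PSL_2(9)$, $V=V_4(2)$, and otherwise to consist of a handful of small groups (with $PSL_2(7)$ as a typical further candidate), most of which are killed at once by the observation that $r=p^{f}(p^{m}+1)$ must divide $|G_0|$ while the relevant prime powers simply are not available in $|\mathrm{Aut}(S)|$ times the cyclic scalars.

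Next I would eliminate every line except $S\cong PSL_2(9)$, $V=V_4(2)$, using exactly the devices exploited throughout this section. For each surviving line: by Lemma \ref{PP}, $p^{m}+1$ divides the length of every nontrivial point-$G_0$-orbit on $V$, in particular the lengths of the $G_0$-orbits on the $1$-dimensional $GF(q)$-subspaces (read off from \cite[Lemma 5.1]{Lieb2} or \cite{GAP}), which disposes of most cases; for the rest, if $B$ is a block through $0$ and $K$ is a maximal subgroup of $G_0$ with $G_{0,B}\leq K$, then $[G_0:K][K:G_{0,B}]=r=p^{f}(p^{m}+1)$, so \cite{At} forces the isomorphism type of $G_{0,B}$, which is made to contain a (quasi)simple normal subgroup $Q$ of large minimal permutation degree; since $Q$ acts on $B\cap\mathcal{X}$ for a suitable $G_0$-orbit $\mathcal{X}$ with $|B\cap\mathcal{X}|$ small by Lemma \ref{PP}, $Q$ fixes $B\cap\mathcal{X}$ pointwise, which either contradicts the number of fixed points of $Q$ on $\mathcal{X}$, or — using Lemma \ref{cici}(1) to bound $t=\dim 0^{T_B}$ and then Lemma \ref{inv}/Corollary \ref{p2} to force the blocks to be subspaces of $AG_{2m}(p)$ — makes $B\subseteq\mathrm{Fix}(Q)$ of too large a dimension, whence $\mathrm{Fix}(Q)=V$ by \cite{AtMod}, a contradiction. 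As in the $p=2$, $m=5$ argument of Lemma \ref{NotFDPM}, the subcases $t\geq m-1$ (Corollary \ref{p2}) and $t$ small are treated separately.

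It remains to treat the one surviving line, $S\cong PSL_2(9)\cong A_6$ acting on $V=V_4(2)$, i.e.\ $p=m=2$. Up to equivalence $A_6$ has a unique irreducible $4$-dimensional $GF(2)$-module, namely the fully deleted permutation module; hence this is precisely the subcase $A_6\trianglelefteq G_0\leq S_6$ already settled in the proof of Lemma \ref{NotFDPM}, where it was shown that $G_{0,B}$ contains a cyclic subgroup $C$ of order $3$ with $B=\mathrm{Fix}(C)$, so $f=2$, and that $\mathcal{D}$ is the $2$-$(2^{4},2^{2},2^{2})$ design of Example \ref{Ex0} for $q=2$, because $G$ is point-$2$-transitive on $V$ and $A_6\cong Sp_4(2)'$. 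This is exactly the assertion of the lemma.

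The step I expect to be the main obstacle is the bulk elimination of the second paragraph: several groups in the \cite[Theorem 3.1]{BP} list do not clash numerically with the divisibility conditions imposed by $r$ and $p^{m}+1$, so for those one must descend to the structure of $G_{0,B}$ and carefully combine the fixed-space dimension counts via \cite{AtMod} with the minimal-permutation-degree bounds for the relevant (quasi)simple sections $Q$, keeping the case analysis on $t=\dim 0^{T_B}$ under control throughout.
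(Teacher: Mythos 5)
Your outline reproduces the paper's overall strategy (reduce to the list in \cite[Theorem 3.1]{BP} together with Lemma \ref{alternative}, kill some lines by divisibility of $r=p^{f}(p^{m}+1)$, and settle $S\cong PSL_{2}(9)$ on $V_{4}(2)$ by the $A_{6}\trianglelefteq G_{0}\leq S_{6}$ case of Lemma \ref{NotFDPM} — that last reduction is exactly what the paper does). But there is a genuine gap: the content of this lemma lies precisely in the ``bulk elimination'' you defer, and the uniform mechanism you propose does not actually work for several of the surviving lines. The list here is substantially longer than Table \ref{T1} (it contains about eighteen candidates, of which only seven die by the divisibility of $r$), and for most survivors $G_{0,B}$ contains no quasisimple normal subgroup $Q$ of large minimal permutation degree, so the ``$Q$ fixes $B\cap\mathcal{X}$ pointwise / $\mathrm{Fix}(Q)=V$'' template has nothing to bite on. Concretely: for $SL_{2}(13)$ on $V_{6}(3)$ one gets $G_{0,B}\cong Z_{26}$, and the contradiction comes from an explicit computation with the Sylow $13$-generator showing that $B$ must be a component of the Hering spread, i.e.\ $\lambda=1$; for $(PSL_{2}(7)\times Z_{2^{i}}):Z_{2^{j}}$ on $V_{3}(9)$ one needs the orbit structure on the Hermitian unital of $PG_{2}(9)$, the possible intersections of a Baer subplane with the unital, and a final block count $b=12\cdot 28\neq 3^{4}\cdot 28$; for $PSL_{2}(11)$ on $V_{5}(4)$ and $PSL_{2}(5^{2})$ on $V_{12}(2)$ the eliminations rest on counting, via an incidence double count over $\mathrm{Fix}(K)$ with $K$ of order $5$, the blocks through $0$ preserved by $K$, leading to equations with no integer solutions. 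None of these is captured by Lemma \ref{PP} plus maximal-subgroup index factorizations plus fixed-space dimension bounds, and you yourself flag this step as the expected obstacle; as it stands the proposal is a plan for the easy part of the argument rather than a proof of the lemma.
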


\begin{proof}
Assume that $S$ is a Lie type simple group in characteristic $p^{\prime }$. Since $\Phi^{\ast}_{2m}(p)$ divides the order of $G_{0}$, with $\Phi^{\ast}_{2m}(p)>1$, then Table \ref{T2} holds by \cite[Theorem 3.1]{BP}, \cite{BHRD} and \cite{AtMod}. The groups in lines 2--3, 13--15 and 17--18 are ruled out, since they violate the fact that $r=p^{f}(p^{m}+1)$, with $f\geq 1$, must divide the order of $G_{0}$. The group in Line 5 yields the assertion by Lemma \ref{NotFDPM}.  
\begin{table}[h!] 
\caption{Admissible nearly simple groups with $S$ a Lie type simple group in characteristic $p^{\prime }$.}
\label{T2}
\centering
\begin{tabular}{|l|l|l|}
\hline
$S$ & $G_{0}$ & $V$ \\ \hline
$PSL_{2}(7)$ &  $PSL_{2}(7): Z_{2^{i}}$, $i \leq 2$ & $V_{6}(3)$ \\ \hline
 &        $SL_{2}(7): Z_{2^{i}}, (Z_{4} \circ SL_{2}(7)): Z_{2^{i}}, i \leq 1$                                        & $V_{6}(5)$ \\ \hline
 &        $(PSL_{2}(7) \times Z_{2^{i}}): Z_{2^{j}}, i \leq 3,j \leq 1$                                        & $V_{3}(9)$ \\ \hline
             & $(PSL_{2}(7) \times Z_{\theta}):Z_{2^{i}}$, $\theta \mid 24$ & $V_{3}(25)$ \\ \hline
$PSL_{2}(9)$ & $A_{6}, S_{6}$ & $V_{4}(2)$ \\ \hline
$PSL_{2}(11)$ & $PSL_{2}(11),PGL_{2}(11)$ & $V_{10}(2)$ \\ \hline
              & $(PSL_{2}(11) \times Z_{3^{i}}):Z_{2^{j}}$, $i,j \leq 1$ & $V_{5}(4)$ \\ \hline
$PSL_{2}(13)$ & $SL_{2}(13)$ & $V_{6}(3)$ \\ \hline
& $(PSL_{2}(13)\times Z_{3^{i}}):Z_{2^{j}}$, $i,j\leq 1$ & $V_{6}(4)$ \\ \hline
$PSL_{2}(17)$ & $PSL_{2}(17)$ & $V_{8}(2)$ \\ \hline
$PSL_{2}(19)$ & $(PSL_{2}(19) \times Z_{3^{i}}):Z_{2^{j}}$, $i,j\leq 1$ & $V_{9}(4)$ \\ \hline
$PSL_{2}(5^{2})$ & $PSL_{2}(5^{2})$, $P\Sigma L_{2}(5^{2})$ & $V_{12}(2)$ \\ 
\hline
$PSL_{2}(37)$ & $PSL_{2}(37),PGL_{2}(37)$ & $V_{18}(2)$ \\ \hline
$PSL_{2}(41)$ & $PSL_{2}(41),PGL_{2}(41)$ & $V_{20}(2)$ \\ \hline
$PSL_{3}(3)$ & $PSL_{3}(3),PGL_{3}(3)$ & $V_{12}(2)$ \\ \hline
$PSL_{3}(2^{2})$ & $Z_{2}.PSL_{3}(2^{2}).K$, where $K\leq Z_{2}\times Z_{2}$
& $V_{6}(3)$ \\ \hline
$PSU_{3}(3)$ & $PSU_{3}(3)\times Z_{2^{i}}$, $G_{2}(2)\times Z_{2^{i}}$, $%
i\leq 2$ & $V_{6}(5)$ \\ \hline
$PSp_{4}(5)$ & $(PSp_{4}(5) \times Z_{3^{i}} ):Z_{2^{j}}$, $i,j \leq 1$ & $V_{12}(4)$ \\ \hline
\end{tabular}%
\end{table}

Assume that $G_{0} \cong PSL_{2}(7): Z_{2^{i}}$, $i \leq 2$, and $V=V_{6}(3)$. Then $V$ is the deleted permutation module for $(G_{0})^{\prime}$ by \cite[Table 1]{Mor} and by \cite{AtMod}. Thus there is a $G_{0}$-orbit of length $7$ or $14$ for $i\neq 2$. However this is impossible by Lemma \ref{PP}, since $r/\lambda=28$. Therefore $G_{0} \cong PSL_{2}(7): Z_{4}$ and there is a non-zero vector $x$ of $V$ such that $\left\vert x^{G_{0}}\right\vert=28$. Then $\left\vert B \cap x^{G_{0}}\right\vert=1$, where $B$ is any block incident with $0$ by Lemma \ref{PP}. Moreover $r=3\cdot 28$, since $\lambda=3^{f} > 1$. Then $T$ does not act block-semiregularly on $\mathcal{D}$ by Lemma \ref{cici}(1), since $t \geq m-f=3-1>0$, hence each block incident with $0$ is a subspace of $V$ by Lemma \ref{inv}. So $-1$ preserves $B$ and hence it fixes $B \cap x^{G_{0}}$, as $\left\vert B \cap x^{G_{0}}\right\vert=1$, and we reach a contradiction.  

Assume that $G_{0} \cong (PSL_{2}(7) \times Z_{2^{i}}): Z_{2^{j}}$, where $i \leq 3$, $j \leq 1$,                           and $V=V_{3}(9)$. Before analyzing this case some information on the $G_{0}$-orbits on $PG_{2}(9)$ are needed.

Set $Q$ the (normal) copy of $PSL_{2}(7)$ inside $G_{0}$. Then $Q<SU_{3}(3)<SL_{3}(9)$ by \cite[Tables 8.5--8.6]{BHRD}, hence $Q$ preserves an Hermitian unital $\mathcal{H}$ of order $3$ in $PG_{2}(9)$. It is not difficult to see that the $3$-elements in $Q$ are not unitary transvections. Hence, any Sylow $3$-subgroup of $Q$ fixes a point of $\mathcal{H}$ and acts semiregularly elsewhere in $PG_{2}(9)$. Also the Sylow $7$-subgroups of $Q$ do not fix points or lines of $PG_{2}(9)$ by \cite[Theorem 3.5]{He1/2}, since $\Phi^{\ast}_{6}(3)=7$. Thus $Q$ acts transitively on $\mathcal{H}$, since the normalizer in $Q$ of any Sylow $3$-subgroup of $Q$ is $S_{3}$. Now, any Sylow $2$-subgroup of $Q$ fixes a point in $PG_{2}(9) \setminus \mathcal{H}$ and hence there is $Q$-orbit in $PG_{2}(9) \setminus \mathcal{H}$ of length $21$. The remaining $42$ points of $PG_{2}(9)$ form a $Q$-orbit because $Q$ contains elementary abelian groups of order $4$ consisting of unitary homologies in a triangular configuration and a Sylow $2$-subgroup of $Q$ containing the homology group fixes one vertex of the triangle e switches the remaining two ones.

Since $Q<SU_{3}(3)$ and since there is a unique conjugacy class of involutions of $SU_{3}(3)$, it follows that each involution in $Q$ fixes a unique non-isotropic $1$-dimensional subspace of $V$ and acts semiregularly elsewhere on $V$. Let $J$ be a Sylow $2$-subgroup of $Q$ and let $\left\langle z \right\rangle$ be its fixed point in $PG_{2}(9)$. Clearly $J \cong D_{8}$ and  $\left\langle z \right\rangle \notin \mathcal{H}$. Since $J$ consists of linear maps and $J^{\left\langle z \right\rangle} \leq GL_{1}(9) \cong Z_{8}$, it follows that $J(\left\langle z \right\rangle) \cong Z_{4}$. Therefore $\left\vert z^Q \right \vert = 42$.

Since $Q_{\left \langle x \right\rangle} \cong S_{3}$, where $\left \langle x \right\rangle$ is isotropic, and since each involution in $Q$ fixes a unique non-isotropic $1$-dimensional subspace of $V$ pointwise and acts semiregularly elsewhere on $V$, it follows that the set of non-zero isotropic vectors of $V$ are partitioned into four $Q$-orbits each of length $56$. Note that, $Q:Z_{2}$ has still orbits of length $56$ and $42$, since each involution in $(Q:Z_{2})/Q$ fixes a $3$-dimensional $GF(3)$-subspace of $V$ (that is the involution is a Baer one).    

Now, we are in position to analyze the case. Since $r$ is a divisor of the order of $G_{0}$, $r=3 \cdot 28$. Note that $G_{0} \cap Z(GL_{3}(9)) \cong Z_{2^{i}}$. If $-1 \notin G_{0}$, then $i=0$ and hence $G_{0} \cong PSL_{2}(7) : Z_{2^{j}}$. We have proven that there are $G_{0}$-orbits on $V^{\ast}$ of length $56$ and $42$. Then $r/\lambda$ must divide $14$ by Lemma \ref{PP}, whereas $r/\lambda=28$. Therefore $-1 \in G_{0}$, hence any block $B$ incident with $0$ is a $3$-dimensional $GF(3)$-subspace of $V$ by Lemma \ref{inv}, since $t=m-f=3-1=2$ by Lemma \ref{cici}(1). Furthermore, no blocks incident with $0$ are $1$-dimensional $GF(9)$-subspaces of $V$ since $\lambda \geq 2$. Thus $i=1$. If $B$ is any block incident with $0$ and $x$ is a non-zero isotropic vector of $V$, then $\left \vert B \cap x^{G_{0}} \right \vert =2,8,14$ by \cite[Theorem 2.9]{BarEbe}. Then the possibilities for $\left \vert x^{G_{0}} \right\vert $ are $2 \cdot 28,8\cdot 28,14\cdot 28$ respectively by Lemma \ref{PP}. On the other hand, since $Q \times \left \langle -1 \right\rangle \trianglelefteq G_{0} \leq (Q \times \left \langle -1 \right\rangle):Z_{2}$, the stabilizer in $Q \times \left \langle -1 \right\rangle$ of $\left \langle x \right\rangle$ contains an elementary abelian group of order $4$ consisting of linear maps, hence there is a cyclic group group of order $2$ fixing $\left \langle x \right\rangle$ pointwise. Therefore, $\left \vert x^{G_{0}} \right\vert \leq 4 \cdot 28$ and combining this fact with the previous information on  $\left \vert x^{G_{0}} \right\vert$ we derive that $\left \vert x^{G_{0}} \right\vert = 2 \cdot 28$. So $B$ induces a Baer subplane of $PG_{2}(9)$ tangent to the unital $\mathcal{H}$. Since the number of blocks incident with $0$ and with any non-zero vector of $\left \langle x \right\rangle$ is $\lambda$ times the number of $1$-dimensional $GF(3)$-subspaces contained in $\left \langle x \right\rangle$, which is $3 \cdot 4$, and since $G_{0}$ induces a transitive group on $\mathcal{H}$, it follows that $b=12 \cdot 28$. However this is impossible since $b=vr/k=3^{4} \cdot 28$. Thus this case is ruled out.

Assume that $PSL_{2}(11) \trianglelefteq G_{0} \leq PGL_{2}(11)$ and $V=V_{10}(2)$. Then $V$ is the deleted permutation module for $(G_{0})^{\prime}$ by \cite[Table 1]{Mor} and by \cite{AtMod}. Thus there is a $G_{0}$-orbit of length $11$ or $22$. However this is impossible by Lemma \ref{PP}, since $r/\lambda=33$.

Assume that $G_{0} \cong (PSL_{2}(11) \times Z_{3^{i}}):Z_{2^{j}}$, $i,j \leq 1$ and $V_{5}(4)$. Then $r=2^{f}\cdot 33$, with $1 \leq f \leq 2+j$, since $r$ divides the order of $G_{0}$. Hence $Z_{5} \triangleleft G_{0,B} \leq (D_{10}:\times Z_{3^{i}}):Z_{2^{j}}$, where $B$ is a block incident with $0$, by \cite{At}. If $c$ denotes the number of blocks incident with $0$ and preserved by any fixed cyclic subgroup $C$ of order $5$, it follows that $r=[G_{0}:N_{G_{0}}(C)]\cdot c$ and hence $c=2^{f-1}$. On the other hand, $C$ fixes at least a non-zero vector $y$ of $V$ and preserves the $2^{f}$ blocks incident with $0,y$ provided that $2^{f} \leq 5$. Consequently, we have $f=3$ and hence $j=1$. Also the number of blocks preserved by $C$ and incident with $0$ and with any non-zero vector of $V$ fixed by $C$ is necessarily $3$. Let us count the pairs $(x,B^{\prime})$, where $x$ is any non-zero vector of $V$ fixed by $C$, and $B^{\prime}$ is any block incident with $0,x$ and preserved by $C$. Note that the number of non-zero vectors fixed by $C$ in any block incident with $0$ and preserved by $C$ is a constant $\mu$, as $C$ is a normal subgroup of the stabilizer of the block and as $G_{0}$ acts transitively on the set  of blocks incident with $0$. Also $\mathrm{Fix}(C)$ is a $e$-dimensional $GF(2)$-subspace of $V$, with $e \geq 1$. So $(2^{e}-1)\cdot 3=4 \cdot \mu$, which has no integer solutions.

Assume that $G_{0}\cong SL_{2}(13)$ and $V=V_{6}(3)$. Then $r=3\cdot 28$
divides the order of $G_{0}$. Thus $T$ does not act block-semiregularly on $%
\mathcal{D}$ by Lemma \ref{cici}(1), since $f=1$ and $m=3$, hence the block
incident with $0$ are $3$-dimensional subspaces of $V$ by Lemma \ref{inv}.
Moreover $G_{0,B}\cong Z_{26}$, where $B$ is a block of $\mathcal{D}$
incident with $0$, since $r=84$. The group $GL_{6}(3)$ contains a
unique conjugacy class of subgroups isomorphic to $SL_{2}(13)$, and each $%
SL_{2}(13)$ preserves a unique $3$-spread of $GL_{6}(3)$ defining the Hering
translation plane of order $3^{3}$ by \cite[Propositions 2.7-2.8]{BaPo}.
Then we may assume that $G_{0}$ is be the copy of $SL_{2}(13)$ whose generators
are those in \cite[Lemma 2.4]{BaPo}. Hence, a Sylow $13$-subgroup is
generated by $\varphi $ represented by the matrix $\mathrm{Diag(}\alpha
,\alpha ^{4}\mathrm{)}$, where 
\[
\alpha =\left( 
\begin{array}{ccc}
0 & 0 & 1 \\ 
1 & 0 & 1 \\ 
0 & 1 & 0%
\end{array}%
\right) 
\]%
Let $(x_{1},x_{2},x_{3},y_{1},y_{2},y_{3})\in B$, with $%
(x_{1},x_{2},x_{3},y_{1},y_{2},y_{3})\neq (0,0,0,0,0,0)$. Since $B$ is a
subspace of $V_{6}(3)$, and $G_{0,B}\cong Z_{26}$ acts regularly on $B^{\ast
}$, it follows that 
\[
(x_{1},x_{2},x_{3},y_{1},y_{2},y_{3})+(x_{1},x_{2},x_{3},y_{1},y_{2},y_{3})^{\varphi }=\varepsilon (x_{1},x_{2},x_{3},y_{1},y_{2},y_{3})^{\varphi ^{i}}%
\text{,}
\]%
where $\varepsilon =\pm 1$, for some $0\leq i\leq 12$. Note that $\alpha
^{3}=\alpha +1$ by \cite[Lemma 2.2.(i)]{BaPo}, hence $\alpha ^{4}=\alpha
^{2}+\alpha $ and $\alpha ^{5}=\alpha ^{4}+1$. Then 
\[
\left\{ 
\begin{array}{c}
(x_{1},x_{2},x_{3})+(x_{1},x_{2},x_{3})^{\alpha
}=(x_{1},x_{2},x_{3})^{\alpha ^{3}} \\ 
(y_{1},y_{2},y_{3})+(y_{1},y_{2},y_{3})^{\alpha
^{4}}=(y_{1},y_{2},y_{3})^{\alpha ^{5}}%
\end{array}%
\right. 
\]%
and so $(x_{1},x_{2},x_{3})^{\alpha ^{3}}=\varepsilon
(x_{1},x_{2},x_{3})^{\alpha ^{i}}$ and $(y_{1},y_{2},y_{3})^{\alpha
^{5}}=\varepsilon (y_{1},y_{2},y_{3})^{\alpha ^{4i}}$. If $%
(x_{1},x_{2},x_{3})\neq (0,0,0)$ and $(y_{1},y_{2},y_{3})\neq (0,0,0)$, then 
$i\equiv 3\pmod{13}$ and $4i\equiv 5\pmod{13}$, which cannot occur. Thus either $(x_{1},x_{2},x_{3})=(0,0,0)$ or $%
(y_{1},y_{2},y_{3})=(0,0,0)$ and hence $B$ is a component of a $3$-spread
of $GL_{6}(3)$ defining the Hering translation plane of order $3^{3}$ (see
the proof of \cite[Propositions 2.7]{BaPo}), whereas $\lambda \geq 2$.

Assume that $G_{0}\cong PSL_{2}(13)\times Z_{3^{i}}$, $i\leq 1$, acting on $%
V_{6}(4)$. It follows from \cite[p. 502]{Lieb2} that, $G_{0}$ preserves a
set of $1$-dimensional subspaces of $V_{6}(4)$ of size $91$ or $182$.
Then $r/\lambda $ divides either $91\cdot 3$ or $182\cdot 3$, respectively, by Lemma \ref{PP}. However this is impossible since $r/\lambda =4^{3}+1=\allowbreak 65$.

Assume that $G_{0} \cong PSL_{2}(17)$ and $%
V=V_{8}(2)$. Then $r =2^{f}\cdot 17$ with $1\leq f\leq m=4$. If $B$ is any block incident with $0$,
then $G_{0,B}$ contains a
Sylow $3$-subgroup $W$ of $G_{0}$. So $Z_{9}\cong W\trianglelefteq
G_{0,B}\leq D_{18}$. Then $2^{3} \mid \left \vert T_{B} \right \vert$, since $G_{B}/T_{B}$ is isomorphic to a subgroup of $G_{0}$ and contains an isomorphic  copy of $G_{0,B}$ as a subgroup of index $2^{4}/\left \vert T_{B} \right \vert$ by Lemma \ref{cici}(2). Thus $B$ is a $4$-dimensional $GF(2)$-subspace by Corollary \ref{p2}. Since $SL_{4}(2)$ does not contain cyclic subgroups of order $9$, it follows that $B \subseteq \mathrm{Fix}(W_{0})$ where $W_{0}$ is the cyclic subgroup of $W$ of order $3$. Then $W$ preserves $\mathrm{Fix}(W_{0})$ and acts semiregularly on $V \setminus \mathrm{Fix}(W_{0})$, since $W \cong Z_{9}$. However this is impossible since $\dim \mathrm{Fix}(W_{0}) \geq 4$.

Assume that $G_{0} \cong (PSL_{2}(19) \times Z_{3^{i}}):Z_{2^{j}}$, $i,j\leq 1$ and $V_{9}(4)$. Since $r/\lambda = 3^{3}\cdot 19 $ must divide the order of $G_{0}$, it follows that $j=1$, hence $G_{0}$ contains an elementary abelian subgroup $E$ of order $9$. Then there is a non-zero vector $V$, say $x$, such that $Z_{3} \leq E_{x}$ by \cite[Theorems 3.2.3 and 3.3.1]{Go}. So $\left\vert x^{G_{0}}\right\vert$ is not divisible by $3^{3}$, whereas $r/\lambda= 3^{3}\cdot 19$ must divide $\left\vert x^{G_{0}}\right\vert$ by Lemma \ref{PP}. Thus this case is excluded.   

Assume that $G_{0}\cong PXL_{2}(5^{2})$, where $X=S,\Sigma $, and $%
V=V_{12}(2) $. Then $\left\vert G_{0}\right\vert =2^{3+c}\cdot 3\cdot
5^{2} \cdot 13$, where $c=0,1$ according as $X=S$ or $X=\Sigma $ respectively. Hence $r=2^{f}\cdot 5 \cdot
13$ and $\left\vert G_{0,B}\right\vert
=2^{3+c-f}\cdot 3\cdot 5$, with $1\leq f\leq 3+c$, since $r$
divides $\left\vert G_{0}\right\vert $. Thus either $G_{0,B}\leq E_{5^{2}}:Z_{24}$, or $%
G_{0,B}< S_{5}\times Z_{2^{c}}$ by \cite{At}. The former yields $%
E_{5^{2}}\leq G_{0,B}$, as $E_{5^{2}}:Z_{24}$ is a Frobenius group and as $15$
divides $\left\vert G_{0,B}\right\vert $, but this contradicts $5^{2} \nmid%
\left\vert G_{0,B}\right\vert $. Thus $A_{5}\trianglelefteq G_{0,B}<
S_{5}\times Z_{2^{c}}$ and hence $1\leq f\leq 1+c\leq 2$. Therefore $\left\vert T_{B} \right\vert  \geq 2^{4}$ by Lemma \ref{cici}(1). Actually, either $\left\vert T_{B} \right\vert = 2^{4}$ or $B$ is a $6$-dimensional subspace of $V$ by Corollary \ref{p2}.

Let $K$ be subgroup of order $5$ of $G_{0,B}$. If $\left\vert T_{B} \right\vert = 2^{4}$, then $B$ consists of $4$ regular orbits under $T_{B}$, and $K$ fixes exactly $1$ point in each of them by \cite[Proposition 4.2]{Pass}, since $\left\vert \mathrm{Fix}(K)\right\vert=2^{4}$. Thus $K$ fixes exactly $4$ points in $B$. The same conclusion holds when $B$ is a $6$-dimensional subspace of $V$, since $K <SL_{6}(2)$. Thus $\left\vert B \cap \mathrm{Fix}(K)\right\vert=2^{2}$ in each case. On the other hand, since $\left\vert G_{0,B}\right\vert=60\cdot 2^{\theta}$, where $0 \leq \theta \leq 1+c$, the number of blocks incident with $0$ and preserved by $K$ is equal to $$\frac{\left\vert N_{G_{0}}(K)\right\vert \cdot \left\vert K^{G_{0}} \cap G_{0,B} \right\vert}{\left\vert G_{0,B}\right\vert}=2^{1+c - \theta},$$ since $N_{G_{0}}(K) \cong F_{20} \times Z_{2^{c}}$ and $\left\vert K^{G_{0}} \cap G_{0,B} \right\vert=6$. Also $K$ preserves the $2^{f}$ blocks incident with $0$ and with a non-zero vector of $\mathrm{Fix}(K)$, since $f \leq 2$. Hence, by counting the number of pairs $(B^{\prime},x)$, where $B^{\prime}$ is a block incident with $0$ and preserved by $K$ and where $x \in B^{\prime}\cap \mathrm{Fix}(K)$, $x\neq 0$, we obtain $2^{1+c-\theta}\cdot (2^{2}-1)=2^{f}\cdot (2^{4}-1)$, which is clearly impossible. 

Finally, assume that $G_{0}\cong Z_{2}.PSL_{3}(2^{2}).K$, where $K\leq Z_{2}\times
Z_{2}$, and $V=V_{6}(3)$. Thus $\left\vert G_{0}\right\vert =2^{7+c}\cdot
3^{2}\cdot 5\cdot 7$, with $c\leq 2$, hence $r=3^{f}\cdot 28$ with $%
f=1,2 $, since $r$ divides $\left\vert G_{0}\right\vert $. Therefore $T$ does not
act block-semiregularly on $\mathcal{D}$ by Lemma \ref{cici}(1), since $%
f\leq 2$ and $m=3$, hence each block $B$ incident with $0$ is a $3$-dimensional
subspace of $V$ by Lemma \ref{inv}, since $-1\in G_{0}$. Moreover $B\subseteq \mathrm{Fix}(U)$, where $U$ is any subgroup of $G_{0,B}$ of order $5$, since $5$ does not divide the order of $%
GL_{3}(3)$. Then $i \geq 3$, where $i=\dim \mathrm{Fix}(U)$. So $U$ acts semiregularly on $V \setminus B$ and hence $5\mid 3^{6}-3^{i}$, which is a contradiction as $3 \leq i <6$.

\end{proof}

\begin{proposition}
\label{NChar}Let $S$ be a Lie type simple group in characteristic $p$. If $%
H=TH_{0}$, where $H_{0}=(G_{0})^{\infty }$, then $H$ acts flag-transitively
on $\mathcal{D}$ and one of the following holds:

\begin{enumerate}
\item $H_{0}\cong G_{2}(q)$, $q=2^{h}$, and $\mathcal{D}$ is a $2$-$%
(q^{6},q^{3},q^{3})$ design isomorphic to that constructed in Example %
\ref{Ex1}.

\item $H_{0}\cong Sz(q)$, $q=2^{h}$ with $h$ odd and $h>1$, and $\mathcal{D}
$ is a $2$-$(q^{4},q^{2},\lambda)$ design with $\lambda \mid q^{2}$. Moreover, if $B$ is any block of 
$\mathcal{D}$ incident with $0$, either $Z_{q-1}\leq H_{0,B}\leq N_{H_{0}}(W)$,
where $W$ is a Sylow $2$-subgroup of $H_{0}$, or $Z_{q-1}\trianglelefteq
H_{0,B}\leq D_{2(q-1)}$.
\end{enumerate}
\end{proposition}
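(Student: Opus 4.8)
The plan is to determine, via \cite[Theorem~3.1]{BP}, the admissible pairs $(S,V)$ and then to treat the two surviving families. Recall from the discussion following Lemma~\ref{alternative} that the preimage $N$ of $S$ in $G_{0}$ is absolutely irreducible on $V$ and is not a classical group over $GF(q)$ acting on its natural module (those possibilities being recorded in cases (1)--(4) there and treated separately), and that $\Phi_{2m}^{\ast}(p)>1$ divides $|G_{0}|$. Since $\Phi_{2m}^{\ast}(p)$ is a primitive prime divisor of $p^{nh}-1=p^{2m}-1$, whereas $|S|$ involves only cyclotomic factors of bounded index, already the requirement $\Phi_{2m}^{\ast}(p)\mid|S|$ restricts $S$ to a short list; together with the parity constraint that $nh=2m$ is even, a routine inspection of \cite[Theorem~3.1]{BP} leaves $S\cong G_{2}(q)$ on $V_{6}(q)$ with $q=2^{h}$ (the case $q$ odd being ruled out by the ppd requirement on the $7$-dimensional module, and $(p,2m)=(2,6)$ by our standing hypothesis), $S\cong Sz(q)$ on $V_{4}(q)$ with $q=2^{h}$ and $h$ odd (and $h>1$ since $Sz(2)$ is solvable), plus finitely many small exceptional pairs — for instance ${}^{2}G_{2}(q)$ on $V_{7}(q)$, which is impossible because $n=7$ forces $2m=7h$ with $h$ odd, against $2m$ even, and a handful of low-dimensional $PSL$-modules, each disposed of by the orbit-length obstruction of Lemma~\ref{PP} or by $r=p^{f}(p^{m}+1)\mid|G_{0}|$ exactly as in Lemmas~\ref{NotFDPM} and \ref{Cross}. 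Throughout we use that flag-transitivity of $G$, together with $T$ being point-transitive, forces $G_{0}$ to act transitively on the $r$ blocks through $0$, whence $|G_{0,B}|=|G_{0}|/r$; consequently $H=TH_{0}$ inherits flag-transitivity from $G$ exactly when $|H_{0,B}|=|H_{0}|/r$ for some (equivalently every) block $B$ through $0$, so it suffices, case by case, to identify $H_{0,B}=H_{0}\cap G_{0,B}$.

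\emph{Case $S\cong G_{2}(q)$.} Here $2m=6h$, $k=q^{3}$, $r=p^{f}(q^{3}+1)$ and $H_{0}=(G_{0})^{\infty}\cong G_{2}(q)<Sp_{6}(q)$. Comparing $r$ with the indices in $G_{2}(q)$ of its maximal subgroups (Aschbacher's theorem, \cite{KL}), one finds that a divisor of $p^{f}(q^{3}+1)$ cannot be the index of a parabolic, of $SU_{3}(q){:}2$, of the $SL_{2}(q)\times SL_{2}(q)$-type subgroup, of a subfield subgroup, or of a small exceptional maximal subgroup — for example the relevant odd parts $(q^{2}+q+1)(q^{3}+1)$, $\tfrac{1}{2}q^{3}(q^{3}-1)$, $q^{4}(q^{4}+q^{2}+1)$, etc., do not divide $q^{3}+1$ — so $H_{0,B}$ lies in $SL_{3}(q){:}2$; a comparison of orders then gives $H_{0,B}\cong SL_{3}(q)$, so $r=[G_{2}(q):SL_{3}(q)]=q^{3}(q^{3}+1)$, forcing $p^{f}=q^{3}$, i.e.\ $f=m$ and $\lambda=q^{3}$, and $|H_{0,B}|=|SL_{3}(q)|=|H_{0}|/r$, so $H$ is flag-transitive. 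Finally, $SL_{3}(q)$ acting on $V_{6}(q)=W\oplus W^{\ast}$ (natural $\oplus$ dual) has on $V_{6}(q)\setminus\{0\}$ only two orbits of size $q^{3}-1=|B\setminus\{0\}|$, namely $W\setminus\{0\}$ and $W^{\ast}\setminus\{0\}$ (all others being strictly larger), so the block $B$ must be one of the totally isotropic $3$-spaces $W,W^{\ast}$; as $G_{2}(q)$ has a single orbit on these by \cite[Lemma~5.3]{Coop}, $\mathcal{D}$ is the $2$-$(q^{6},q^{3},q^{3})$ design of Example~\ref{Ex1}, which is conclusion (1).

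\emph{Case $S\cong Sz(q)$.} Here $2m=4h$, $k=q^{2}$, $h$ odd with $h>1$, $H_{0}=(G_{0})^{\infty}\cong Sz(q)<Sp_{4}(q)$, and $\mathcal{D}$ is a $2$-$(q^{4},q^{2},\lambda)$ design with $\lambda=p^{f}\mid q^{2}$ and $|H_{0,B}|=|Sz(q)|/r=2^{\,m-f}(q-1)$. By Suzuki's list of the maximal subgroups of $Sz(q)$ (see also \cite[Table~8.16]{BHRD}), $H_{0,B}$ lies in one of $N_{H_{0}}(W)=W{:}Z_{q-1}$ with $|W|=q^{2}$, $D_{2(q-1)}$, $Z_{q\pm\sqrt{2q}+1}{:}Z_{4}$, or a subfield subgroup $Sz(q_{0})$. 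The last three are excluded by elementary divisibility: since $\sqrt{2q}=2^{(h+1)/2}$ with $h$ odd one computes $\gcd(q-1,\,q\pm\sqrt{2q}+1)=1$, and $q-1=2^{h}-1$ cannot divide the odd part of $(2^{h_{0}}-1)(h/h_{0})$ for a proper divisor $h_{0}$ of $h$; in each case the full $2'$-part $q-1$ of $|H_{0,B}|$ fails to divide the order of the putative overgroup. Hence $H_{0,B}\leq N_{H_{0}}(W)$ or $H_{0,B}\leq D_{2(q-1)}$. In the first alternative $H_{0,B}$ is a subgroup of the solvable group $W{:}Z_{q-1}$ whose $2'$-part equals the full $2'$-part $q-1$, so by Hall's theorem it contains a conjugate of the complement $Z_{q-1}$, giving $Z_{q-1}\leq H_{0,B}\leq N_{H_{0}}(W)$; in the second, $|H_{0,B}|=2^{\,m-f}(q-1)\mid 2(q-1)$ forces $m-f\leq1$, and as $Sz(q)$ has no cyclic subgroup of order $2(q-1)$ we obtain $H_{0,B}\in\{Z_{q-1},D_{2(q-1)}\}$, so $Z_{q-1}\trianglelefteq H_{0,B}\leq D_{2(q-1)}$. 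In either alternative $|H_{0,B}|=2^{\,m-f}(q-1)=|H_{0}|/r$, so $H=TH_{0}$ is flag-transitive and $\lambda\mid q^{2}$; this is conclusion (2) (compare Example~\ref{Ex2}). The delicate point of the whole proof is precisely this last case: since the primitive prime divisor $\ell\mid q^{2}+1$ divides the order of no maximal subgroup of $Sz(q)$, one cannot locate $G_{0,B}$ by the usual ``$\ell$ divides the stabilizer'' device and must instead work with the exact value $|H_{0,B}|=2^{\,m-f}(q-1)$ and the gcd estimates above, the solvability of $N_{H_{0}}(W)$ being essential for placing $Z_{q-1}$ inside $H_{0,B}$.
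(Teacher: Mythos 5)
Your overall route is the paper's: reduce via \cite[Theorem 3.1]{BP} to $H_{0}\cong G_{2}(q)$ on $V_{6}(q)$ and $H_{0}\cong Sz(q)$ on $V_{4}(q)$, then locate $H_{0,B}$ inside a maximal subgroup of $H_{0}$ by divisibility. But there is a genuine gap exactly at the point your last sentence calls ``the delicate point'': the flag-transitivity of $H$, which is part of the statement, is never proved. In the $Sz(q)$ case you begin with $|H_{0,B}|=|Sz(q)|/r=2^{m-f}(q-1)$, which is precisely the assertion $[H_{0}:H_{0,B}]=r$, i.e.\ the flag-transitivity of $H$; your closing ``in either alternative $|H_{0,B}|=|H_{0}|/r$, so $H$ is flag-transitive'' is therefore circular. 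A priori one only knows that $[H_{0}:H_{0,B}]$ divides $r$, since $r/[H_{0}:H_{0,B}]=[G_{0}:H_{0}G_{0,B}]$ is an integer, hence only that $2^{2h-f}(q-1)$ divides $|H_{0,B}|$. The paper closes this hole by observing that $[G_{0}:H_{0}G_{0,B}]$ also divides $[G_{0}:H_{0}]$, a divisor of $h(2^{h}-1)$, and exploiting $(2^{m}+1,2^{h}-1)=1$; alternatively one can run your eliminations with the divisibility statement alone and note that once $H_{0,B}\leq W{:}Z_{q-1}$ or $H_{0,B}\leq D_{2(q-1)}$ the odd part of $|H_{0,B}|$ is forced to equal $q-1$, which together with the oddness of $[G_{0}:H_{0}]$ (here $h$ is odd) gives the ratio $1$. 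Some argument of this kind must be supplied. Two smaller defects: your reading of \cite[Theorem 3.1]{BP} omits the possibility $H_{0}\cong Sz(q^{1/2})$ on $V_{4}(q)$ with $h\equiv 2\pmod{4}$, which the paper lists and kills because $2^{2h}+1$ cannot divide $|G_{0}|$; and in the subfield elimination the odd part of $|Sz(q_{0})|$ is $(2^{2h_{0}}+1)(2^{h_{0}}-1)$, not ``$(2^{h_{0}}-1)(h/h_{0})$'' (the correct check, $\gcd(2^{h}-1,2^{2h_{0}}+1)=1$ and $2^{h}-1\nmid 2^{h_{0}}-1$, still works).

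On the positive side, your $G_{2}(q)$ endgame is genuinely simpler than the paper's. The paper first forces $T_{B}$ to have order $2^{3h}$ (so $B$ is a $GF(2)$-subspace) and then analyses $\mathrm{Fix}(Q)$ for a Sylow $2$-subgroup $Q$ of $(H_{0,B})'$; you instead use that $SL_{3}(q)\leq H_{0,B}$ acts on $V=U_{1}\oplus U_{1}^{*}$ as natural plus dual, so that $B\setminus\{0\}$, being an $SL_{3}(q)$-invariant set of size $q^{3}-1$, must be $U_{1}\setminus\{0\}$ or $U_{1}^{*}\setminus\{0\}$ — no subspace structure of $B$ is needed. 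Just be careful with the order of the steps: the order comparison only yields $SL_{3}(q)\trianglelefteq H_{0,B}\leq SL_{3}(q).2$, not $H_{0,B}\cong SL_{3}(q)$ outright; the orbit argument needs only the former, and $H_{0,B}\cong SL_{3}(q)$ (hence $r=q^{3}(q^{3}+1)$, $f=3h$, and the identification with Example \ref{Ex1}) then follows from $B=U_{1}$ and \cite[Lemma 5.3]{Coop}.
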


\begin{proof}
Assume that $S$ and $H$ are as in the Proposition's statement. By \cite[Theorem 3.1]{BP} one of the following holds:

\begin{enumerate}
\item[(i).] $V=V_{6}(q)$, $q=2^{h}$ and $H_{0}\cong G_{2}(q)^{\prime }$;

\item[(ii).] $V=V_{4}(q)$, $q=2^{h}$, $h\equiv 2\pmod{4}$ and $H_{0}\cong
Sz(q^{1/2})$;

\item[(iii).] $V=V_{4}(q)$, $q=2^{h}$, $h\equiv 1\pmod{2}$ and $%
H_{0}\cong Sz(q)$.
\end{enumerate}

Since $G_{0}\leq \left( Z_{2^{h}-1}\times H_{0}\right) .Z_{h/\epsilon }$,
where $\epsilon =2$ in (ii) and $\epsilon =1$ in (i) or (iii), it follows that $%
\frac{\left[ G_{0}:H_{0}\right] }{\left[ G_{0,B}:H_{0,B}\right] }\mid \frac{h%
}{\epsilon }(2^{h}-1)$. Also, since $r=\frac{\left[
G_{0}:H_{0}\right] }{\left[ G_{0,B}:H_{0,B}\right] }\left[ H_{0}:H_{0,B}%
\right] =2^{f}(2^{m}+1)$, we get 
\[
\frac{\left[ G_{0}:H_{0}\right] }{\left[ G_{0,B}:H_{0,B}\right] }\mid \left( 
\frac{h}{\epsilon }(2^{h}-1),2^{f}(2^{m}+1)\right) \text{.} 
\]%
Since $(2^{m}+1,2^{h}-1)=1$, as $h\mid m$ being $n=4,6$, it follows that $\frac{\left[
G_{0}:H_{0}\right] }{\left[ G_{0,B}:H_{0,B}\right] }=2^{u}$ for some $u\leq
f $ in case (i), whereas $r=\left[ G_{0}:G_{0,B}\right] =\left[ H_{0}:H_{0,B}%
\right] $, and hence $H=TH_{0}$ acts flag-transitively on $\mathcal{D}$ in
cases (ii) and (iii).

Assume that Case (i) holds. Then $\left[ H_{0}:H_{0,B}\right]
=2^{f-u}(2^{3h}+1)$. Let $M$ be a maximal subgroup of $H_{0}$ containing $%
H_{0,B}$. Then $M$ is one of the groups listed in \cite[Table 8.30]{BHRD} (or in \cite[Theorems 2.3 and 2.4]{Coop}). If $M$ is isomorphic either to 
$PSL_{2}(13)$ or to $J_{2}$, then $h=2$ and hence $\left[ H_{0}:H_{0,B}%
\right] =2^{f-u}\cdot 5\cdot 13$. The former is ruled out since $\left[
H_{0}:M\right] =2^{10}\cdot 3^{2} \cdot 5^{2}$ does not divide $\left[ H_{0}:H_{0,B}%
\right] $, the latter yields $\left[ M:H_{0,B}\right] \mid 2^{f-u}\cdot
5\cdot 13$ but $J_{2}$ has no such transitive permutation representations
degrees by \cite{At}.

Assume that $M\cong G_{2}(2^{h/x})$ with $x$ a prime. Then $\frac{%
2^{6h}(2^{2h}-1)(2^{6h}-1)}{2^{6\frac{h}{x}}(2^{2\frac{h}{x}}-1)(2^{6\frac{h%
}{x}}-1)}$ must divide $2^{f-u}(2^{3h}+1)$ and so $x=2$ as $6(h-h/x)\leq
f\leq 3h$. However $\left[ M:H_{0,B}\right] =2^{3h}(2^{h}+1)(2^{3h}+1)$ does
not divide $2^{f-u}(2^{3h}+1)$.

Assume that $M$ is isomorphic either to $[2^{5h}]:GL_{2}(2^{h})$ or to $%
SL_{2}(2^{h})\times SL_{2}(2^{h})$, or to $SU_{3}(2^{h}).Z_{2}$. Then $\left[
M:H_{0,B}\right] $ is divisible either by $\frac{2^{6h}-1}{2^{2h}-1}$ or by $%
2^{2h-1}(2^{3h}-1)$, and none of these integers divides $2^{f-u}(2^{3h}+1)$, hence these cases are ruled out.

Finally, assume that $M\cong SL_{3}(2^{h}).Z_{2}$.
Since $\left[ H_{0}:M\right] =2^{3h-1}(2^{3h}+1)$ must divide $\left[ H_{0}:H_{0,B}%
\right] =2^{f-u}(2^{3h}+1)$, it follows that $3h\geq f\geq u+3h-1$ and hence 
$u\leq 1$. Thus $\left[ M:H_{0,B}\right] \leq 2$, as $\left[ M:H_{0,B}\right]
=2^{f-u-3h+1}$, and hence $H_{0,B}\cong SL_{3}(2^{h}).Z_{2^{\varepsilon }}$,
where $\varepsilon =0,1$.

Since $H$ is a normal subgroup of $G$ acting $2$-transitively on $V$ and
since $\left[ H_{0}:H_{0,B}\right] =2^{3h-\varepsilon +1}(2^{3h}+1)$, it
follows that $\mathcal{D}_{\varepsilon -1}=(V_{6}(2^{h}),B^{H})$, where $B$
is any block of $\mathcal{D}$, is a $2$-$(2^{6h},2^{3h},2^{3h-\varepsilon
+1})$ design admitting $H$ as a point-$2$-transitive, flag-transitive
automorphism group.

Note that $M=\left( H_{0,B}\right) ^{\prime }:\left\langle \sigma
\right\rangle $, with $\sigma $ of order $2$, is maximal in $H_{0}$. Then $M$
preserves the decomposition $V=U_{1}\oplus U_{2}$, where $U_{1},U_{2}$ are
two $3$-dimensional totally isotropic subspaces of $V$ with respect to a
symplectic form preserved by $G_{2}(q)$. If $\varphi \in \left( H_{0,B}\right) ^{\prime }$, then $\varphi$ preserves $U_{1},U_{2}$ and $\varphi _{\mid U_{2}}$ is the inverse transpose of $%
\varphi _{\mid U_{1}}$. In \ particular $\left( H_{0,B}\right) ^{\prime }$
acts naturally on $U_{1}$ and on $U_{2}$, whereas the outer automorphism $\sigma $ of $\left( H_{0,B}\right) ^{\prime }$ switches $U_{1}$ and $U_{2}$ (see \cite[Lemma
7.4.5 and proof therein]{BHRD}). Thus $V=U_{1} \oplus U_{1}^{\sigma}$.

The group $H_{B}/T_{B}$ is isomorphic to a subgroup of $H_{0}$ of order $%
2^{3h-t}\left\vert H_{0,B}\right\vert $, where $2^{t}=\left\vert
T_{B}\right\vert $, by Lemma \ref{cici}(2). Then $t \geq 3h+\varepsilon -1$,
since $H_{0,B}\cong SL_{3}(2^{h}).Z_{2^{\varepsilon }}$ and $H_{B}/T_{B}\leq
SL_{3}(2^{h}).Z_{2}$, hence $t=3h$ and $B$ is a $3h$-dimensional $GF(2)$-subspace of $V$ by Lemma \ref{p2}.

Let $Q$ be a Sylow $2$-subgroup of $(H_{0,B})^{\prime}$ normalized by $\sigma$. Then $\mathrm{Fix}(Q)=\left\langle
x\right\rangle \oplus \left\langle x^{\sigma }\right\rangle $ for some non-zero vector $x$ of $U_{1}$. Moreover there is cyclic subgroup $K$ of $N_{H_{0,B}}(Q)$
of order $2^{h}-1$ preserving $\mathrm{Fix}(Q)$, inducing the scalar multiplication on $\left\langle
x\right\rangle$ and on $\left\langle x^{\sigma}\right\rangle $ and permuting regularly the remaining $2^{h}-1$ one dimensional subspaces. Thus either $\mathrm{Fix}(Q) \subseteq B$, or $\varepsilon=1$ and $\mathrm{Fix}(Q) \cap B$ is one of the spaces $\left\langle x\right\rangle$ or $\left\langle x^{\sigma}\right\rangle $, since $B$ is a $GF(2)$-subspace of $V$ and $K \leq H_{0,B}$. Since $H_{0,B}^{\prime}$ acts transitively on $U_{1}^{\ast}$ and on $(U_{1}^{\sigma})^{\ast}$ and since $k=q^{3}$, the former case is ruled out, whereas the latter yields $B=U_{1}$ or $B=U_{1}^{\sigma}$ respectively. In each case $%
B^{H}=B^{G}$ and hence $\mathcal{D}$ is isomorphic to the $2$-design
constructed in Example \ref{Ex1}. This proves (1).

Case (ii) is ruled out since the order of $Sz(2^{h/2})$, which is $%
2^{h}(2^{h}+1)(2^{h/2}-1)$, is not divisible by $r/2^{f}=$ $2^{2h}+1$.

Assume that Case (iii) holds. Recall that $H$ acts flag-transitively
on $\mathcal{D}$. Hence $2^{2h-f}(2^{h}-1)$, with $f\leq 2h$, divides the
order of $H_{0,B}$ since $r=2^{f}(2^{2h}+1)$. Then $2^{2h-f}(2^{h}-1)$
divides the order of any maximal subgroup $R$ of $H_{0}$ containing $H_{0,B}$. The possibilities for $R$ are listed in \cite[Table 8.16]{BHRD}, or in 
\cite{Suz}. Clearly, $R$ is not isomorphic to $Z_{2^{h}\pm
2^{(h+1)/2}+1}:Z_{4}$. If $M\cong Sz(2^{h_{0}})$, where $h/h_{0}$ is an odd
prime and $h_{0}>1$, then $2^{2h-f}(2^{h}-1)$ divides $%
2^{2h_{0}}(2^{2h_{0}}+1)(2^{h_{0}}-1)$, which is a contradiction. Then $%
Z_{2^{h}-1}\leq H_{0,B} \leq R$ and hence either $R=N_{H_{0}}(W)$, where $W$ is a Sylow 
$2$-subgroup of $H_{0}$, or $R\cong D_{2(2^{h}-1)}$. Thus we obtain (2).
\end{proof}

\bigskip

\subsection{The Suzuki group}\label{Sz}

In this subsection we focus on Case (2) of Proposition \ref{NChar}. In order to do so, some useful facts about $\Sigma =Sz(q)$, $q=2^{2e+1}$ and $e\geq 1$, need to be recalled.

Let $\sigma :x\rightarrow x^{2^{e+1}}$ be an automorphism of $%
GF(q) $, and let%
\bigskip
\tiny
\begin{equation}\label{Sugr}
\varphi (l,w)=\left( 
\begin{array}{cccc}
1 & 0 & 0 & 0 \\ 
l & 1 & 0 & 0 \\ 
l^{1+\sigma }+w & l^{\sigma } & 1 & 0 \\ 
l^{2+\sigma }+lw+w^{\sigma } & w & l & 1%
\end{array}%
\right) \text{, }\psi (m)=\left( 
\begin{array}{cccc}
m^{2+\sigma } & 0 & 0 & 0 \\ 
0 & m^{\sigma } & 0 & 0 \\ 
0 & 0 & m^{-\sigma } & 0 \\ 
0 & 0 & 0 & m^{-2-\sigma }%
\end{array}%
\right) \text{, }\phi =\left( 
\begin{array}{cccc}
0 & 0 & 0 & 1 \\ 
0 & 0 & 1 & 0 \\ 
0 & 1 & 0 & 0 \\ 
1 & 0 & 0 & 0%
\end{array}%
\right) \text{ }
\bigskip 
\end{equation}
\normalsize

where $l,w,m\in GF(q)$, $m\neq 0$. By \cite{Suz} (see also \cite[Theorem IV.24.2]{Lu}), the following hold:

\begin{enumerate}
\item[(i).] $U=\left\{ \varphi (l,w):l,w\in GF(q)\right\} $ is a Sylow $2$%
-subgroup of $\Sigma $.\ It has order $q^{2}$, exponent $4$ and $Z(U)=\left\{ \varphi (0,w):w\in GF(q)\right\} $;

\item[(ii).] $N_{\Sigma }(U)=U:K$ is a Frobenius group, where $K=\left\{ \psi (m):m\in GF(q)^{\ast }\right\} $ is a cyclic group of order $q-1$;

\item[(iii).]  $\phi $ is involutory and normalizes $K$. Also $D=K\left\langle \beta
\right\rangle $ is a dihedral group of order $2(q-1)$ and is maximal in $\Sigma$;

\item[(iv).] $K^{\Sigma }$, $D^{\Sigma }$ are the unique conjugacy classes of cyclic subgroups of
order $q-1$ \ and dihedral subgroups of order $2(q-1)$ in $\Sigma $ by \cite%
[Table 8.16]{BHRD}. Also $\Sigma =\left\langle U,K,\phi
\right\rangle $.

\item[(v).] The set $\mathcal{S}=\left\{ L\left( \infty \right) ,L(l,w):%
\text{ }l,w\in GF(q)\right\} $, where
\begin{eqnarray*}
L\left( \infty \right) &=&\left\{ \left( x,y,0,0\right) :x,y\in GF(q)\right\}
\\
L(l,w) &=&\left\{ \left( x\left(l^{\sigma +2}+lw+  w^{\sigma }\right)
+y(l^{\sigma +1}+w),xw+yl^{\sigma },xl+y,x\right) :x,y\in GF(q)\right\},
\end{eqnarray*}%
is a $2$-spread of $V$ defining the L\"{u}neburg translation plane of order $%
q^{2}$, and $\Sigma $ acts $2$-transitively on $\mathcal{S}$ (more details
on the L\"{u}neburg translation plane are provided in \cite[Chapter IV]{Lu}).$\allowbreak $
\end{enumerate}

\bigskip

\emph{Throughout this subsection we will use properties (i)--(v) of $\Sigma $
without recalling them.} The following fact about the group $T\Sigma $ is also needed.

\bigskip

\begin{proposition}
\label{trisesira} Each subgroup of $T\Sigma $ isomorphic to $UK$, and not contained in an element of $\Sigma ^{T}$, has a unique
point-orbit of length $q^{2}$ in the L\"{u}neburg translation plane of order 
$q^{2}$ and that is a line.
\end{proposition}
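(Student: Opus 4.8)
The plan is to study $R\leq T\Sigma$ with $R\cong UK$ through its Fitting subgroup. Write $R=R_{2}\rtimes R_{0}$, where $R_{2}:=F(R)$ is the Fitting subgroup (so $R_{2}\cong U$, the Frobenius kernel, and $R_{2}$ is characteristic in $R$) and $R_{0}\cong K$ is a Frobenius complement. Since $\Sigma$ fixes $0$ and $T$ is regular on the points, the subgroups $\Sigma^{t}$ ($t\in T$) are precisely the point-stabilisers, so the hypothesis says $R$ has no fixed affine point; I would first deduce that $R_{2}$ has none either, for if $R_{2}$ fixed a point $P$ it would be a Sylow $2$-subgroup of $\Sigma^{P}\cong Sz(q)$, whence $R\leq N_{T\Sigma}(R_{2})$, and since no nontrivial translation normalises $R_{2}$ (it would lie in the stabiliser of two distinct points, a group of odd order $q-1$) while $N_{\Sigma^{P}}(R_{2})\cong UK$ already has order $|R|$, we get $R=N_{T\Sigma}(R_{2})=N_{\Sigma^{P}}(R_{2})\leq\Sigma^{P}$, a contradiction. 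Next I would locate a point at infinity fixed by $R$: as $T$ fixes every point at infinity, $R$ acts on them through $\overline{R}\leq T\Sigma/T\cong\Sigma$, with $\overline{R_{0}}\cong K$; since $R\cap T\trianglelefteq R$ is elementary abelian it lies in $Z(R_{2})$, on which $R_{0}$ acts as $K$ acts on $Z(U)\cong\mathbb{F}_{q}$, namely irreducibly, so $R\cap T\in\{1,Z(R_{2})\}$. Hence $|\overline{R}|$ is $q^{2}(q-1)$ or $q(q-1)$, and by the subgroup structure of $Sz(q)$ (minimality of the index $q^{2}+1$ in the first case; exclusion of the other maximal subgroups by order, and conjugacy of Frobenius complements inside $UK$, in the second), $\overline{R}$ is $\Sigma$-conjugate to $N_{\Sigma}(U)=UK$ or to $Z(U)K$, each of which fixes exactly one point at infinity $p_{\infty}$. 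Thus $R$ fixes exactly the point $p_{\infty}$ at infinity; let $C\in\mathcal{S}$ be the corresponding component, which after conjugating $R$ in $\Sigma$ we may take to be $L(\infty)$.

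Now $R$ permutes the $q^{2}$ lines of the parallel class of $C$, acting on $V/C$ affinely with linear part $\overline{R}$, and using the matrices $\varphi(l,w),\psi(m)$ of (\ref{Sugr}) one computes the $\overline{R}$-orbits on $V/C$: they are $\{0\}$ together with one orbit of length $q-1$ and one of length $q^{2}-q$ when $\overline{R}\sim UK$, and $\{0\}$ together with $q+1$ orbits of length $q-1$ when $\overline{R}\sim Z(U)K$; in either case $C_{V/C}(\overline{R})=0$, so $R$ fixes at most one line through $p_{\infty}$. \emph{The crux is to show that $R$ does fix such a line $\ell$}, and this I expect to be the main obstacle: it amounts to the vanishing of a cohomology obstruction, namely that the translation-valued $1$-cocycle describing the $R$-action on $V/C$ is a coboundary. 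I would settle it by a direct computation with the small-rank $\overline{R_{2}}$-module $V/C$ (on which $\overline{R_{2}}$ acts either trivially or by the transvection $(z,t)\mapsto(z+tl,t)$), feeding back also on the companion module $C$, the hypotheses $h$ odd and $h>1$ being used here.

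Granting that $R$ fixes the line $\ell$ through $p_{\infty}$, the kernel $R_{(\ell)}$ of the $R$-action on the $q^{2}$ points of $\ell$ is normal in $R\cong UK$, hence lies in $\{1,Z(R_{2}),R_{2},R\}$; the possibilities $R_{2}$ and $R$ would yield an affine fixed point, and $Z(R_{2})$ is excluded because it is either a group of fixed-point-free translations or, by a short additional argument, also forces such a point; so $R_{(\ell)}=1$. Then $R_{2}\trianglelefteq R$ acts faithfully on $\ell$ without fixed points, and in fact regularly: a nonidentity $\rho\in R_{2}$ fixing $Q\in\ell$ equals, after conjugating by the translation $t_{Q}$ carrying $0$ to $Q$, an element $\overline{\rho}^{\,t_{Q}}$ with $\overline{\rho}\in U$, and any involution of $\langle\overline{\rho}\rangle$ lies in $Z(U)$, whose fixed subspace is $C$, so the corresponding element of $R_{2}$ fixes $Q+C=\ell$ pointwise, against $R_{(\ell)}=1$; hence $|R_{2}|=q^{2}$ forces $R_{2}$ regular on $\ell$, so $\ell$ is a single $R$-orbit of length $q^{2}$. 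For uniqueness, every point of the plane lies on exactly one line of the parallel class of $C$; $R$ permutes the remaining $q^{2}-1$ such lines in orbits of length $q-1$ or $q^{2}-q$, and $R_{2}$ fixes each of them (the orbit length being coprime to $q$) and, by the same argument, acts regularly on it, so the corresponding $R$-orbit of points has length $(q-1)q^{2}$ or $(q^{2}-q)q^{2}$. Therefore $\ell$ is the unique $R$-orbit of length $q^{2}$, and it is a line of the L\"uneburg plane, as claimed.
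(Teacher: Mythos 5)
Your strategy is genuinely different from the paper's (which first shows, via Bell's computation of $H^{1}(Sz(q),V)$ in \cite{Be} and a counting/normalizer argument, that every $UK$-subgroup of $T\Sigma$ lies in one of the $q$ conjugacy classes of complements $\Sigma_{i}$, and then reads off the orbit structure of the exotic complements from \cite[Theorem IV.28.11]{Lu}), but as it stands it has two problems, one of which is exactly the missing hard content. First, your opening reduction is wrong as argued: it is false that no nontrivial translation normalises $R_{2}$. If $R_{2}$ is a Sylow $2$-subgroup of a point stabiliser $\Sigma^{P}$, its fixed-point set in the plane is a set of $q$ collinear affine points (a coset of a $GF(q)$-line inside a spread component), and the translations by the corresponding vectors centralise $R_{2}$; moreover your parenthetical justification confuses the action on the $q^{4}$ affine points with the $2$-transitive action of $Sz(q)$ on the $q^{2}+1$ spread components: in the plane, two-point stabilisers are $2$-groups of order $q^{2}$ or $q$, not cyclic of odd order $q-1$. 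The conclusion of that step ($R_{2}$ fixes no affine point) is still true, but it needs a different argument, e.g. a Schur--Zassenhaus conjugacy argument inside $N_{T\Sigma}(R_{2})=(T_{0}\times R_{2}){:}K$, where $T_{0}$ is the group of translations by vectors fixed by $R_{2}$, showing that $R$ is then forced into a point stabiliser.

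Second, and more seriously, the step you yourself flag as "the crux" --- that $R$ actually fixes a line of the parallel class determined by $p_{\infty}$ (together with, in the configuration $R\cap T=Z(R_{2})$, the fact that $R\cap T\subseteq C$, without which no line of that class can be fixed at all) --- is not proved but only announced as "a direct computation". This is precisely the non-formal input of the proposition: the relevant $1$-cocycle need not vanish for abstract reasons, and the paper supplies this content by proving that $T\Sigma$ has exactly $q$ conjugacy classes of subgroups isomorphic to $UK$, each contained in a unique conjugate of one of the $q$ complements $\Sigma_{0},\dots,\Sigma_{q-1}$ (this also eliminates your case $R\cap T\neq 1$ outright), and then invoking L\"uneburg's determination of the line- and point-orbits of the exotic complements. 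Until you either carry out the cocycle computation for both configurations of $\overline{R}$ (namely $UK$ and $Z(U)K$) or import an equivalent classification of the $UK$-subgroups of $T\Sigma$, the proof is incomplete. Two smaller slips: the normal subgroups of $UK$ are $1$, $Z(U)$, $U$ and the groups $UK_{0}$ with $K_{0}\leq K$, not just your four; and $q^{2}-q$ is not coprime to $q$, so $R_{2}$ need not fix every line in an $R$-orbit of that length --- the uniqueness conclusion survives, but by the divisibility observation that every nontrivial line-orbit length is divisible by the odd number $q-1$, so no point-orbit of $2$-power length $q^{2}$ can lie over it.
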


\begin{proof}
We prove the assertion in four steps.

\begin{enumerate}
\item \textbf{$T\Sigma $ contains $q$ conjugacy classes of
subgroups isomorphic to $\Sigma $ and $q-1$ of these are
fused in $T:(Z \times \Sigma)$, where $Z=Z(GL_{4}(q))$.}
\end{enumerate}

$T\Sigma $ contains $q$ conjugacy classes of subgroups isomorphic to $\Sigma 
$, each of length $q^{4}$, by \cite[Theorem 2]{Be} and \cite[17.7]{Asch2}. Let $\Sigma _{0},\Sigma _{1}...,\Sigma _{q-1}$
be the representatives of such classes, where $\Sigma _{0}=\Sigma $. Then $Z$
preserves $\Sigma _{0}^{T\Sigma }$ and permutes the remaining classes.
Suppose that $Z$ does not act transitively on $\left\{ \Sigma _{1}^{T\Sigma
},...,\Sigma _{q-1}^{T\Sigma }\right\} $. Then there is subgroup $%
\left\langle \xi \right\rangle $ of $Z$ preserving $\Sigma _{i}^{T\Sigma }$
for some $1\leq i\leq q-1$. Since $\left\vert \Sigma _{i}^{T\Sigma
}\right\vert =q^{4}$, $\left\langle \xi \right\rangle $ normalizes an
element of $\Sigma _{i}^{T\Sigma }$. Without loss we may assume that the normalized element is $\Sigma _{i}$.

Let $\Pi (V,\mathcal{S})$ be the projective extension of the L\"{u}neburg
translation plane $\mathcal{A}(V,\mathcal{S})$ of order $q^{2}$ preserved by 
$T\Sigma $, where $\Sigma $ fixes the origin $O$ of $\mathcal{A}(V,\mathcal{S})$%
, and $\ell _{\infty }$ be the line at infinity of $\mathcal{A}(V,\mathcal{S}%
)$. Clearly $Z$ is a collineation group of $\mathcal{A}(V,\mathcal{S})$, and
hence of $\Pi (V,\mathcal{S})$, by \cite[Theorem I.1.10]{Lu}. Since $\Sigma
_{i}\notin \Sigma _{0}^{T\Sigma }$, the group $\Sigma _{i}$ has a unique
line-orbit $\mathcal{O}$ of $\Pi (V,\mathcal{S})$ of length $q^{2}+1$ by 
\cite[Theorem IV.28.11]{Lu}, since $\Sigma _{i}$ preserves $\ell _{\infty }$%
. Moreover, $\mathcal{O}$ is a line-oval and $\ell _{\infty }$ is its
nucleus, and both are preserved by $\xi $. Also $\xi $ fixes each line lying
in $\mathcal{O}$, since $\xi $ fixes $\ell _{\infty }$ pointwise and since
each point of $\ell _{\infty }$ is the intersection point of $\ell _{\infty }$ with a unique line belonging to $\mathcal{O}$. Then each line in $\mathcal{O}$
is incident with $O$, as this one is the unique affine point of $\Pi (V,\mathcal{S})$ fixed by $\xi $%
, but this contradicts the fact that $\mathcal{O}$ is a line-oval. Thus $Z$ acts
transitively on $\left\{ \Sigma _{1}^{T\Sigma },...,\Sigma _{q-1}^{T\Sigma
}\right\} $ and hence (1) holds.

\begin{enumerate}
\item[2.] \textbf{Let $C$ be any subgroup of $T\Sigma $ isomorphic to $UK$, then $N_{T\Sigma }(C)=C$.}
\end{enumerate}

Let $C$ be any subgroup of $T\Sigma $ isomorphic to $UK$. Assume that $C\neq
N_{T\Sigma }(C)$. Then there is $\alpha \in T\Sigma \setminus C$ such that $C^{\alpha
}=C$. Since $C$ has a unique conjugate class of subgroups isomorphic to $K$
by \cite[17.10]{Asch2}, it follow that $R^{\alpha \beta^{-1} }=R$ for some subgroup $R$
of $C$ isomorphic to $K$ and for some $\beta \in C$. Clearly $R$ fixes a point $x$ of $V$ and hence it is a conjugate of $K$ in $T \Sigma$. Thus $\mathrm{Fix}(R)=\left\{ x\right\} $ and hence $\alpha\beta^{-1} $ fixes $x$ with $ \alpha \beta^{-1} \in  N_{T\Sigma }(C) \setminus C$. So, $R\left\langle \alpha \beta^{-1} \right \rangle$ fixes $x$. Then $N_{T\Sigma }(C)/(N_{T\Sigma
}(C)\cap T)$ is isomorphic to a subgroup of $\Sigma $ and contains a subgroup
isomorphic $R\left\langle \alpha \beta^{-1} \right \rangle$. Then $N_{T\Sigma }(C)/(N_{T\Sigma
}(C)\cap T)$ is a dihedral group of order $2(q-1)$ by \cite[Table 8.16]{BHRD}. Thus $R \leq C/(C\cap
T)\leqslant \left\langle R,\alpha \beta^{-1} \right\rangle $ and hence $C$ contains an
elementary abelian subgroup of index at most $2$, but this contradicts \cite%
[Theorem IV.24.2.(b)]{Lu}, as $C \cong UK$ and $q>2$. Therefore $N_{T\Sigma }(C)=C$.

\begin{enumerate}
\item[3.] \textbf{$T\Sigma $ contains $q$ conjugacy classes of
subgroups isomorphic to $UK$ and $q-1$ of these are
fused in $T:(Z \times \Sigma)$.}
\end{enumerate}

Let $C_{i}$ be a subgroup of $\Sigma _{i}$ isomorphic to $UK$ and assume
that $C_{i}\leq \Sigma _{i}^{\alpha }$ for some $\alpha \in T \Sigma$. Then the
maximality of $C_{i}$ in $\Sigma _{i}$ implies $C_{i}=\Sigma _{i}^{\alpha }\cap
\Sigma _{i}$. Then $\alpha =\alpha _{0}\tau $ for some $\alpha _{0}\in
\Sigma _{i}$ and $\tau \in T$, since $T \Sigma = T \Sigma_{i}$, hence $C_{i}=\Sigma _{i}^{\tau }\cap \Sigma
_{i}$. Then $C_{i}^{\tau }=\Sigma _{i}\cap \Sigma _{i}^{\tau }=C_{i}$, as $o(\tau
)=2 $, and hence $\tau \in N_{T\Sigma }(C_{i})$. Then $\tau \in C_{i}$, as $%
N_{T\Sigma }(C_{i})=C_{i}$ by (2), and hence $\Sigma _{i}^{\tau }=\Sigma _{i}$ and $%
\Sigma _{i}^{\alpha }=\Sigma _{i}$. Thus each element of $%
C_{i}^{T\Sigma } $ is contained in a unique element of $\Sigma _{i}^{T\Sigma }$%
, hence the number of conjugacy classes under $T \Sigma$ of subgroups isomorphic to $UK$ is
at most $q$.

It can be deduced from the proof of \cite[Theorem 2]{Be} that $T \Sigma$ contains exactly $q$ conjugacy classes of subgroups isomorphic to $UK$. Thus $C_{i}^{T\Sigma}$, where $C_{i}\leq \Sigma _{i}$ for $%
i=0,...,q-1$, are the $q$ conjugacy classes of subgroups isomorphic to $UK$.
Then $T:(Z \times \Sigma)$ acts transitively on $\left\{ C_{1}^{T\Sigma },...,C_{q-1}^{T\Sigma
}\right\} $ by (1), and we obtain (3).

\begin{enumerate}
\item[4.] \textbf{The proposition's statement holds.}
\end{enumerate}

Let $C$ be any subgroup of $T\Sigma $ isomorphic to $UK$. By (3) and since $C$ is not contained in any element of $\Sigma_{0}^{T}$, we may
assume that $C=C_{i}$ for $1\leq i\leq q-1$. Since $\Sigma _{i}<T\Sigma $, it
follows that $\Sigma _{i}$ preserves the L\"{u}neburg translation plane of
order $\mathcal{A}(V,\mathcal{S})$ of order $q^{2}$. The line-$\Sigma _{i}$%
-orbits on $\mathcal{A}(V,\mathcal{S})$ have length $q^{2}+1$, $%
(q^{2}+1)(q-1)$ and $(q^{2}+1)q(q-1)$ by \cite[Theorem IV.28.11]{Lu}, since $%
\Sigma _{i}$ preserves the line at infinity of $\mathcal{A}(V,\mathcal{S})$.
Thus $C_{i}$ preserves exactly one line $\ell $ of $\mathcal{A}(V,\mathcal{S}%
)$.

Let $P\in \ell $, then $\left\vert C_{i,P}\right\vert \geq q-1\geq 7$ as 
$\left\vert \ell \right\vert =q^{2}$ and $q\geq 8$. On the other hand, $%
\left\vert C_{i,P}\right\vert $ divides $2(q-1)$, since $C_{i,P}\leq \Sigma
_{i,P}$ and since the point-$\Sigma _{i}$-orbits have length $\frac{1}{2}%
q^{2}(q^{2}+1)$ and $\frac{1}{4}q^{2}(q-1)(q\pm \sqrt{2q}+1)$ again by \cite%
[Theorem IV.28.11]{Lu}. Since $C_{i}$ is a Frobenius group with kernel
isomorphic to $U$, $C_{i}$ does not contain subgroups isomorphic to $%
D_{2(q-1)}$ and hence $\left\vert C_{i,P}\right\vert =q-1$. Thus $\ell $ is a $%
C_{i}$-orbit of length $q^{2}$ in $\mathcal{A}(V,\mathcal{S})$. Actually $%
\ell $ is the unique $C_{i}$-orbit of length $q^{2}$ in $\mathcal{A}(V,%
\mathcal{S})$, since any cyclic subgroup of $C_{i}$ fixes exactly one point
in the $\Sigma _{i}$-orbit of length $\frac{1}{2}q^{2}(q^{2}+1)$ and no
points in the complementary set of the orbit. This proves (4).
\end{proof}

\bigskip
Let $B$ be a block of $\mathcal{D}$ incident with $(0,0,0,0)$ and denote by $B_{0}$ and $B_{\infty}$ the subsets $B \cap L(0,0)$ and $B\cap L(\infty)$ respectively. \emph{We may assume that $B$ is such that either $K\leq H_{0,B}\leq N_{H_{0}}(U)=U:K$ , or $H_{0,B}=K\left\langle \phi \right\rangle $ by Proposition \ref{NChar}(2) and by (iv). Moreover, if $H_{0,B}\leq U:K$ then $H_{0,B}$ is isomorphic to one
of the groups $K$, $Z(U):K$ or $U:K$ by (ii).}

\bigskip

Note that $H$ is a primitive rank $3$ group and the $H_{0}$-orbits on $%
V^{\ast }$ have length $(q^{2}+1)(q-1)$ and $(q^{2}+1)q(q-1)$. If $%
\mathcal{O}$ denotes the short length orbit ($\mathcal{O}$ induces the Tits Ovoid on $%
PG_{3}(q)$), then $P=(x,y,z,t)$ lies in $\mathcal{O}$ if, and only if,
\medskip%
\begin{equation} \label{Ovoid}
P= \left\lbrace
\begin{array}{ll}
(c,0,0,0)& \text{ with } c \in GF(q)^{\ast } \\
(0,0,0,d)& \text{ with } d \in GF(q)^{\ast } \\ 
\frac{1}{m^{2+\sigma }}\left( l^{\sigma +2}+lw+w^{\sigma },w,l,1\right)& \text{ with } m,w,l\in
GF(q),m\neq 0,(l,w)\neq (0,0).\\ 
\end{array}
\right.
\end{equation}
\medskip
Since $r\left\vert B\cap \mathcal{O}\right\vert =(q^{2}+1)(q-1)\lambda $ by Lemma \ref{PP} and
since $r/\lambda =q^{2}+1$, it follows that $\left\vert B\cap \mathcal{O}%
\right\vert =q-1$ and hence that $\left\vert B\cap V \setminus \mathcal{O}%
\right\vert =q(q-1)$.

\bigskip
\begin{lemma}
\label{solo2} The following hold:
\begin{enumerate}
\item If $H_{0,B}\neq U:K$.
\item If $H_{0,B}=Z(U):K$, then $\mathcal{D}$ is isomorphic to the $2$-design constructed in Example \ref{Ex2} (Family 1).
\end{enumerate}
\end{lemma}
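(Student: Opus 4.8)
The plan is to dispose of part (1) by a replication‑number count, and to establish part (2) by pinning down the block $B$ exactly. Throughout, $q=2^{h}$ with $h>1$ odd, $V=V_{4}(q)$, $m=2h$, $e_{1},\dots ,e_{4}$ is the standard basis, and $\langle\,\cdot\,\rangle$ denotes $GF(q)$-span. For part (1) the content is that $H_{0,B}=U:K$ cannot occur: it would give $r=[H_{0}:H_{0,B}]=|Sz(q)|/(q^{2}(q-1))=q^{2}+1$, hence $\lambda=r/(p^{m}+1)=1$, contrary to $\lambda\geq 2$.

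For part (2) assume $H_{0,B}=Z(U):K$; then $r=|Sz(q)|/(q(q-1))=q(q^{2}+1)$, so $\lambda=q$, $f=h$, and $\mathcal D$ is a $2$-$(q^{4},q^{2},q)$ design. Recall from the paragraph preceding the lemma that $|B\cap\mathcal O|=q-1$, so $|B\setminus(\mathcal O\cup\{0\})|=q(q-1)$. First I would locate $B$ among the $(Z(U):K)$-invariant subsets of $V$. From (\ref{Sugr}), $Z(U)$ fixes $L(\infty)$ pointwise and every $Z(U)$-orbit on $V\setminus L(\infty)$ has length $q$; and from (\ref{Ovoid}), $\mathcal O\cap L(\infty)=\langle e_{1}\rangle\setminus\{0\}$, a single $K$-orbit. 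Since $B\cap\mathcal O$ is $(Z(U):K)$-invariant of size $q-1<q$, it cannot meet $V\setminus L(\infty)$; thus $B\cap\mathcal O=\langle e_{1}\rangle\setminus\{0\}$ and $\langle e_{1}\rangle\subseteq B$. Because $z\mapsto z^{\sigma}$ and $z\mapsto z^{\sigma+2}$ permute $GF(q)$, the group $K$ acts semiregularly on $L(\infty)\setminus\{0\}$ with $q+1$ orbits of length $q-1$, so $|B_{\infty}|=1+j(q-1)$; and every $(Z(U):K)$-orbit on $V\setminus L(\infty)$ has length exactly $q(q-1)$, whence $q(q-1)\mid |B|-|B_{\infty}|=(q-1)(q+1-j)$, forcing $j\equiv 1\pmod q$, i.e. $j\in\{1,q+1\}$. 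If $j=q+1$ then $B=L(\infty)$ and $H_{0,B}=N_{H_{0}}(U)=U:K$, excluded by part (1); so $j=1$, $B_{\infty}=\langle e_{1}\rangle$, and $B\setminus L(\infty)$ is a single $(Z(U):K)$-orbit $\Xi$ disjoint from $\mathcal O$.

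The decisive step is to prove $B$ is a $2$-dimensional $GF(q)$-subspace. By Lemma \ref{cici}, $|T_{B}|=2^{t}$ with $t\geq m-f=h$, and $H_{B}/T_{B}$ embeds in $H_{0}$ as a subgroup $J$ of order $2^{3h-t}(q-1)$ containing a copy of $Z(U):K$; since in $Sz(q)$ only $U:K$ is a maximal subgroup of order divisible by $q(q-1)$, and in the Frobenius quotient $(U:K)/Z(U)\cong GF(q){:}K$ a complement lies in no proper subgroup but another complement, we get $J\in\{Z(U):K,\,U:K\}$ and so $t\in\{2h,h\}$. If $t=2h$ then $|T_{B}|=q^{2}=|B|$, so $B$ is a $GF(2)$-subspace by Corollary \ref{p2}; for any $v\in\Xi$ the vector $v+v^{\varphi(0,w)}$ lies in $L(\infty)$, which together with $B_{\infty}=\langle e_{1}\rangle$ forces every vector of $\Xi$ to have last coordinate $0$, and then a direct check eliminates the remaining "hyperbolic" shapes of $\Xi$ (they are not $GF(2)$-linear), leaving $B=\langle e_{1},e_{3}\rangle$. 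If $t=h$ then $0^{T_{B}}$ is a $(Z(U):K)$-invariant $GF(2)$-subspace of $B$ of order $q$ disjoint from $\Xi$, hence $0^{T_{B}}=\langle e_{1}\rangle$; using the action of $H_{B}/T_{B}\cong U:K$ on $V/\langle e_{1}\rangle\cong V_{3}(q)$ (faithful, since $Sz(q)$ has no transvections in its natural module) together with the requirement that $H_{B}$ permute transitively the $q$ cosets of $\langle e_{1}\rangle$ inside $B$, one again forces $B=\langle e_{1},e_{3}\rangle$. In all cases the only $2$-dimensional $(Z(U):K)$-invariant $GF(q)$-subspaces of $V$ containing $e_{1}$ are $L(\infty)$ and $\langle e_{1},e_{3}\rangle$, and $L(\infty)$ has stabiliser $U:K$; therefore $B=\langle e_{1},e_{3}\rangle$, which (under the coordinate identification of this subsection) is the base block of Example \ref{Ex2}, Family 1. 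As $G$ is flag-transitive, $\mathcal D=(V,B^{G})$ is isomorphic to that design.

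I expect the genuine obstacle to be exactly the step "$B$ is $GF(q)$-linear". For $t=2h$ it follows quickly from $B$ being a $GF(2)$-subspace, but the case $|T_{B}|=q$ is the hard one: there one must combine the explicit description of the $(Z(U):K)$-orbits lying in the hyperplane $\{$last coordinate $=0\}$ with the fact that the affine group $H_{B}/T_{B}\cong U:K$ must act transitively on the $q$ translates of $\langle e_{1}\rangle$ inside $B$ — a condition the non-linear ("hyperbolic") candidates do not satisfy.
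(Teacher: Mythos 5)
Your part (1) is correct (since $H=TH_{0}$ is flag-transitive by Proposition \ref{NChar}, $r=[H_{0}:H_{0,B}]=q^{2}+1$ would force $\lambda=1$; the paper instead observes that all $H_{0,B}$-orbits off $L(\infty)$ have length $q^{2}(q-1)$, so $B=L(\infty)$ and $\mathcal{D}$ would be the L\"{u}neburg plane — same contradiction). Your reduction of part (2) to the two possibilities $|T_{B}|\in\{q,q^{2}\}$ via Lemma \ref{cici}(2) and the subgroup structure of $U{:}K$ is also sound, and your handling of the subcase $|T_{B}|=q^{2}$ is essentially the paper's: $B$ is a $GF(2)$-subspace, $v+v^{\varphi(0,w)}\in B\cap L(\infty)=\langle e_{1}\rangle$ kills the last coordinate, and the order-$3$ computation (using $3\nmid q-1$) kills the second, giving $B=\langle e_{1},e_{3}\rangle_{GF(q)}$ and hence Example \ref{Ex2}, Family 1.

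The genuine gap is the subcase $|T_{B}|=q$, which is exactly where the paper does its real work. Your sketch — the faithful action of $H_{B}/T_{B}\cong U{:}K$ on $V/\langle e_{1}\rangle$ plus transitivity of $H_{B}$ on the $q$ cosets of $\langle e_{1}\rangle$ inside $B$ ``forces $B=\langle e_{1},e_{3}\rangle$'' — supplies no mechanism for that conclusion, and the conclusion itself is incompatible with the case hypothesis: if $B$ were the $GF(q)$-subspace $\langle e_{1},e_{3}\rangle$, then $T_{B}$ would contain all translations by vectors of $B$, so $|T_{B}|=q^{2}$, not $q$. The correct resolution, which must be proved, is that $|T_{B}|=q$ cannot occur; the paper establishes this by showing $T_{B}Z(U)\trianglelefteq H_{B}\leq T{:}UK$ and then running the explicit commutator computation (\ref{SlaUkr}) on an element $\alpha=\alpha_{0}\tau$ of a Sylow $2$-subgroup of $H_{B}$ outside $T_{B}Z(U)$, forcing $v_{4}=v_{3}=0$ and hence $\alpha\in T_{B}Z(U)$, a contradiction. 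Moreover, your auxiliary assumption that in this subcase the candidates for $B$ lie in the hyperplane $\{t=0\}$ is unjustified: without $GF(2)$-linearity of $B$ (unavailable when $|T_{B}|=q$) the relation $v+v^{\varphi(0,w)}\in B_{\infty}$ cannot be invoked, and the $(Z(U){:}K)$-orbit of a vector with nonzero last coordinate also has length $q(q-1)$ and can be disjoint from $\mathcal{O}$ — this is precisely what happens for Families 2--4 of Example \ref{Ex2} — so nothing a priori confines $B\setminus L(\infty)$ to that hyperplane. As written, the hard half of part (2) is missing.
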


\begin{proof}
Assume that $H_{0,B}=UK$. Since each $H_{0,B}$-orbit on $V\setminus L(\infty
)$ is of length $q^{2}(q-1)$ and since $k=q^{2}$, it follows that $%
B=L(\infty )$. Thus $\mathcal{D}$ is the L\"{u}neburg translation plane but
this contradicts the assumption $\lambda \geq 2$. Thus $H_{0,B}\neq UK$.

Assume that $H_{0,B}=Z(U)K$. Then $B_{\infty }=B\cap \mathcal{O}%
=\left\langle (1,0,0,0)\right\rangle _{GF(q)}^{\ast }$ since $\left\vert B\cap \mathcal{O}%
\right\vert =q-1$, since the $H_{0,B}$%
-orbits on $V\setminus L(\infty )$ are of length $q(q-1)$ and those in $%
L(\infty )^{\ast }$ are of length $q-1$, and since $L(\infty)\cap \mathcal{O}%
=\left\langle (1,0,0,0)\right\rangle _{GF(q)}^{\ast }$.

Since $H_{B}/T_{B}$ is isomorphic to a subgroup of $H_{0}$ and contains a copy
of $H_{0,B}$ as a subgroup of index $q^{2}/\left\vert T_{B}\right\vert $ by
Lemma \ref{cici}(2), since $UK$ is the unique maximal subgroup of $H_{0}$
containing $Z(U)K$ by \cite[Table 8.16]{BHRD}, and since $U/Z(U):K$ is a
Frobenius group, it results $\left\vert T_{B}\right\vert =q$ or $q^{2}$.

Assume that $\left\vert T_{B}\right\vert =q$. Then $B_{\infty}=(0,0,0,0)^{T_{B}}$. Since $H_{B}$ acts transitively on $B$, since $T_{B} \trianglelefteq H_{B}$ and since $K$ acts semiregularly on $B \setminus B_{\infty}$, it follows that
\begin{equation}
B \setminus B_{\infty} = \bigcup_{m \in GF(q)^{\ast}} B_{\infty}+(x_{0},y_{0},z_{0},t_{0})^{\psi(m)}.
\end{equation}
Then $Z(U)$ preserves each of the $q-1$ orbits under $T_{B}$ partitioning $B \setminus B_{\infty}$, namely the sets $B_{\infty}+(x_{0},y_{0},z_{0},t_{0})^{\psi(m)}$ with $m \in GF(q)^{\ast}$, since $Z(U)$ preserves $B_{\infty}$ and normalizes $T_{B}$. Thus $T_{B}Z(U)$ is the kernel of the action of $H_{B}$ on the $q$ orbits under $T_{B}$ partitioning $B$, since $H_{B}$ acts $2$-transitively on such a partition. Therefore, $T_{B}Z(U) \triangleleft H_{B}$. Actually $T_{B}Z(U) \triangleleft H_{B} \leq T: UK$, since $H_{B}$ permutes $B_{\infty}+(x_{0},y_{0},z_{0},t_{0})^{\phi(m)}$ and hence the parallel lines $L(\infty)+(x_{0},y_{0},z_{0},t_{0})^{\phi(m)}$  of the L\"{u}neburg translation plane.

Let $Q$ be the Sylow $2$-subgroup $H_{B}$ and let $\alpha \in Q\setminus T_{B}Z(U)$. There is $\tau=\tau _{(v_{1},v_{2},v_{3},v_{4})}$ in $T$ and $\alpha_{0}=\varphi (l,w)$ in $U$ such that $\alpha =\alpha _{0} \tau$. Since $\alpha$ normalizes $T_{B}$ and $T_{B}Z(U)$ and since for each $\varphi (0,e)$ in $Z(U)$ we have that 
\[
\varphi (0,e)^{\alpha}=\varphi (0,e)^{\tau
_{(v_{1},v_{2},v_{3},v_{4})}}=\varphi (0,e)\tau _{(ev_{3}+e^{\sigma
}v_{4},ev_{4},0,0)},
\]
it follows that $v_{4}=0$.

The fact that $H_{B}/T_{B}Z(U)$ is Frobenius group of order $q(q-1)$, and its kernel is an elementary abelian $2$-group, implies $\alpha ^{2}\in Z(U)T_{B}$. 
On the other
hand $\alpha ^{2}=(\alpha _{0}\tau)^{2}=\alpha _{0}^{2}[\alpha
_{0},\tau _{0}]$ with $\alpha _{0}^{2}\in Z(U)$ and $[\alpha
_{0},\tau _{0}]\in T$. Hence $[\alpha
_{0},\tau _{0}] \in T_{B}$ as $[\alpha
_{0},\tau _{0}]=\alpha _{0}^{-2}\alpha^{2} \in H_{B}$. Since
\begin{equation}\label{SlaUkr}
(x,y,z,t)^{[\alpha_{0},\tau _{0}]}=\left( x+lv_{2}+(w+l^{\sigma
+1})v_{3},y+l^{\sigma }v_{3},z,t\right), 
\end{equation}
$[\tau _{0},\alpha _{0}]\in T_{B}$ if, and only if, $l^{\sigma }v_{3}=0$. If $l=0$, then $\alpha _{0}\in Z(U)\leq H_{0,B}$ and hence $\tau _{0}\in
H_{B}$ as $\alpha \in H_{B}$. Then $\tau _{0}\in T_{B}$ and so $\alpha \in
T_{B}Z(U)$, which is a contradiction. Thus $l\neq 0$ and hence $v_{3}=0
$. So $\tau \in T_{B}$, $\alpha _{0}\in H_{0,B}$ and
hence $\alpha _{0}\in Z(U)$ and $\alpha \in T_{B}Z(U)$, and we again reach a
contradiction.
 
Assume that $\left\vert T_{B}\right\vert =q^{2}$. Then $B$ is a $GF(2)$%
-subspace of $V$. Since $B^{\ast }$ consists of $B_{\infty }$ and one $%
H_{0,B}$-orbit of length $q(q-1)$ contained in $V\setminus L(\infty )$,
there is $(\bar{x},\bar{y},\bar{z},\bar{t})$ in $V\setminus L(\infty )$ such
that%
\small
\[
B \setminus B_{\infty} = \left\{
(m^{\sigma +2}\left( \bar{x}+w\bar{z}+w^{\sigma }\bar{t}\right) ,m^{\sigma
}\left( \bar{y}+w\bar{t}\right) ,m^{-\sigma }\bar{z},m^{-2-\sigma }\bar{t}%
):w,m\in GF(q),m\neq 0\right\} \text{.}
\]%
\normalsize
Since $B$ is a $GF(2)$-subspace of $V$, for each $c,w_{1},m_{1}\in GF(q)$,
with $c,m_{1}\neq 0$, there must be $w_{2},m_{2}\in GF(q),m_{2}\neq 0$, such that
\begin{equation*}
\left\lbrace
\begin{array}{rcl}
c+m_{1}^{\sigma +2}\left( \bar{x}+w_{1}\bar{z}+w_{1}^{\sigma }\bar{t}\right) &=& m_{2}^{\sigma
+2}\left( \bar{x}+w_{2}\bar{z}+w_{2}^{\sigma }\bar{t}\right) \\
m_{1}^{\sigma }\left( \bar{y}+w_{1}\bar{t}\right) & =& m_{2}^{\sigma }\left( \bar{y}+w_{2}\bar{t}\right) \\
m_{1}^{-\sigma }\bar{z}&=& m_{2}^{-\sigma }\bar{z}\\ 
m_{1}^{-2-\sigma }\bar{t} &=& m_{2}^{-2-\sigma }\bar{t}\text{.}\\
\end{array}
\right.
\end{equation*}
Note that $(\bar{z},\bar{t})\neq (0,0)$, since $(\bar{x},\bar{y},\bar{z},%
\bar{t})\in V\setminus L(\infty )$. If $\bar{t} \neq 0$ then $m_{1}=m_{2}$, $%
w_{1}=w_{2}$ and hence $c=0$, which is not the case. Therefore $\bar{t} = 0$ and $\bar{z} \neq 0$.

Again since $B$ is a $GF(2)$%
-subspace of $V$, for $m_{1},m_{2},w_{1},w_{2} \in GF(q)$, with $m_{1},m_{2}\neq 0$ and $m_{1} \neq m_{2}$, there are $m_{3},w_{3} \in GF(q)$, with $m_{3} \neq 0$, such that

\begin{equation*}
\left\lbrace
\begin{array}{rcl}
(m_{1}+m_{2})^{\sigma +2}\left( \bar{x}+w_{1}\bar{z}\right) &=& m_{3}^{\sigma
+2}\left( \bar{x}+w_{2}\bar{z}\right) \\
(m_{1}+m_{2})^{\sigma }\bar{y} & =& m_{3}^{\sigma }\bar{y} \\
(m_{1}+m_{2})^{-\sigma }\bar{z}&=& m_{3}^{-\sigma }\bar{z}\text{,}\\
\end{array}
\right.
\end{equation*}   
since both $\sigma$ and $2+\sigma$ are automorphisms of $GF(q)$ (see \cite[Lemma IV.2.1]{Lu}). If $\bar{y} \neq 0$, then $m_{1}+m_{2}=m_{3}$ and $m_{1}^{-1}+m_{2}^{-1}=m_{3}^{-1}$, as $\bar{z}\neq 0$, hence the order of $m_{1}m_{2}^{-1}$ is $3$, whereas $q \equiv 2 \pmod{3}$.  
Therefore $\bar{y}=\bar{t}=0$ and hence $B=\left\langle(1,0,0,0),(0,0,1,0) \right\rangle_{GF(q)}$, which induces a line of $PG_{3}(q)$ tangent to the Tits Ovoid. Thus $\mathcal{D}=(V,B^{H})=(V,B^{G})$ is isomorphic to the $2$-design constructed in Example \ref{Ex2} (Family 1).
\end{proof}
\bigskip
On the basis of Lemma \ref{solo2}, in the sequel \emph{we may assume that $H_{0,B}=K$ or $H_{0,B}=K\left\langle \phi \right\rangle$}. Hence either $\lambda=q^{2}$ or $\lambda=q^{2}/2$ respectively, since $r=[H_{0}:H_{0,B}]$ and $r=\lambda (q^{2}+1)$.
\bigskip

\begin{lemma}
\label{BigBig}If $\left\vert T_{B}\right\vert =q$, then either $B_{0}=(0,0,0,0)^{T_{B}}$ or $B_{\infty }=(0,0,0,0)^{T_{B}}$.
\end{lemma}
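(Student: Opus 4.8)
Set $W=0^{T_{B}}$, so that $|W|=q$. Recall that here $H_{0,B}\in\{K,K\langle\phi\rangle\}$; in fact $|T_{B}|=q$ forces $H_{0,B}=K$, for otherwise $H_{B}/T_{B}$ would be a subgroup of $H_{0}\cong Sz(q)$ of order $2q(q-1)$ properly containing a copy of the maximal subgroup $D=K\langle\phi\rangle$, hence equal to $Sz(q)$ — absurd. Since $T_{B}\trianglelefteq H_{B}$ and $K\le H_{B}$, the torus $K$ stabilises $W$. The first thing I would record is that $K$ acts semiregularly on $V\setminus\{(0,0,0,0)\}$ with orbits of length $q-1$: as the maps $X\mapsto X^{2+\sigma},X^{\sigma},X^{-\sigma},X^{-2-\sigma}$ are all permutations of $GF(q)$ (see \cite[Lemma IV.2.1]{Lu}), $\psi(m)$ fixes a non-zero vector only for $m=1$. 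Consequently $W^{*}$ is a single $K$-orbit, so $W$ has no proper non-zero $K$-invariant $GF(2)$-subspace, and its $GF(q)$-span — being $K$-invariant and the four diagonal characters of $K$ on $V$ being pairwise distinct — is a coordinate subspace $\bigoplus_{i\in S}\langle e_{i}\rangle$.

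Next I would handle the case $W\subseteq L(\infty)$ (the case $W\subseteq L(0,0)$ is symmetric). Then $T_{B}$, being translation by the vectors of $W$, stabilises $L(\infty)$, so $B_{\infty}=B\cap L(\infty)$ is a union of cosets of $W$ in $L(\infty)$. As $K$ permutes the $q-1$ cosets of $W$ in $L(\infty)$ other than $W$ transitively — it acts on $L(\infty)/W$ as the scalar group $GF(q)^{*}$ — the set $B_{\infty}$ is $W$ or $L(\infty)$; the latter gives $B=L(\infty)$, against $\lambda\ge2$. Hence $B_{\infty}=W=0^{T_{B}}$. Note this already settles every case with $W$ a $K$-eigenline, since $\langle e_{1}\rangle,\langle e_{2}\rangle\subseteq L(\infty)$ and $\langle e_{3}\rangle,\langle e_{4}\rangle\subseteq L(0,0)$.

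It remains to exclude the possibility that $W$ meets both $L(\infty)$ and $L(0,0)$ trivially (by the first paragraph each of $W\cap L(\infty),W\cap L(0,0)$ is $\{0\}$ or $W$). There $|S|\ge2$ and $S$ meets both $\{1,2\}$ and $\{3,4\}$, so $W$ is the graph of a $K$-equivariant $GF(2)$-isomorphism between two eigenlines; equating the relevant characters up to a Frobenius twist is a short coprimality computation modulo $q-1$ that leaves only a handful of small configurations (for $q=2^{h}$ with $h$ odd, only $q=8$). To kill these I would invoke $|B\cap\mathcal{O}|=q-1$: this set is a single $K$-orbit inside $\mathcal{O}$, and by \eqref{Ovoid} the $K$-orbits in $\mathcal{O}$ are $\langle e_{1}\rangle^{*}$, $\langle e_{4}\rangle^{*}$, and orbits transversal to a $K$-orbit of $q-1$ of the remaining ovoid-lines $\langle v_{l,w}\rangle$, where $v_{l,w}=(l^{\sigma+2}+lw+w^{\sigma},w,l,1)$. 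If $B\cap\mathcal{O}=\langle e_{1}\rangle^{*}$ then $\langle e_{1}\rangle\subseteq B$; since $W\cap\langle e_{1}\rangle=\{0\}$ and $B$ is a union of cosets of $W$, $\langle e_{1}\rangle$ meets each of the $q$ such cosets once, so $B=W+\langle e_{1}\rangle$ is a $GF(2)$-subspace of $V$ and $|T_{B}|\ge|B|=q^{2}$, contradicting $|T_{B}|=q$; likewise for $\langle e_{4}\rangle^{*}$. The only configuration left — $q=8$, $W$ one of the finitely many diagonal $K$-invariant planes, and $B\cap\mathcal{O}$ transversal to a $K$-orbit of ovoid-lines — is ruled out by a direct computation over $GF(8)$, checking that no such $8$-point subspace $W$ admits a point $p$ with $(W+gp)\cap\mathcal{O}$ a single point for every $g\in K$ (equivalently $|B\cap\mathcal{O}|=q-1$). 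Hence $W\subseteq L(\infty)$ or $W\subseteq L(0,0)$, and the lemma follows.

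I expect the last step — excluding the ``transverse'' $W$ — to be the main obstacle: one must first reduce to $q=8$ by the character computation and then dispose of the few surviving blocks over $GF(8)$ by hand or with \textsf{GAP}. Everything preceding it is routine once $K$, $\mathcal{O}$ and the L\"uneburg spread are made explicit through \eqref{Sugr}.
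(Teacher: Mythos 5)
Your setup and the ``easy half'' of the argument are sound and essentially parallel the paper: $W=0^{T_{B}}$ is a $K$-invariant $GF(2)$-subspace whose nonzero vectors form one $K$-orbit, and once $W\subseteq L(\infty)$ (or $L(0,0)$) the coset/quotient argument forcing $B_{\infty}=W$ or $B=L(\infty)$ is the same as the paper's (the observation that $|T_{B}|=q$ forces $H_{0,B}=K$ is correct but not needed; the paper's proof only uses $K\leq H_{0,B}$). The genuine gap is in the decisive step, the dichotomy ``$W\subseteq L(\infty)$ or $W\subseteq L(0,0)$''. You replace it by a $GF(2)[K]$-module analysis: a ``transverse'' $W$ would be the graph of a $K$-equivariant isomorphism between an eigenline of $L(\infty)$ and one of $L(0,0)$, which forces two of the characters $2+\sigma,\sigma,-\sigma,-2-\sigma$ to be Frobenius-conjugate mod $q-1$; you assert (without carrying out) that this leaves only $q=8$, and then you explicitly leave the surviving $q=8$ configurations to ``a direct computation over $GF(8)$ \ldots by hand or with \textsf{GAP}'' that is never performed. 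As written, the lemma is therefore not proved: the case you yourself identify as the main obstacle is exactly the one that is missing, and the auxiliary congruence computation is also only claimed. (Your intermediate claim that the two eigenlines inside each of $L(\infty)$, $L(0,0)$ are the only $h$-dimensional submodules also needs the non-Frobenius-conjugacy of the characters within each half, which is a different statement from the pairwise distinctness you invoke.)

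For comparison, the paper avoids any case split on $q$ and never appeals to $|B\cap\mathcal{O}|=q-1$ at this stage: since $W^{\ast}=(x,y,z,t)^{K}$ and $W$ is closed under addition, one picks $\psi(m_{1})\neq\psi(m_{2})$ in $K$ with $(x,y,z,t)^{\psi(m_{1})}+(x,y,z,t)^{\psi(m_{2})}=(x,y,z,t)$ and reads the four coordinates; whenever one coordinate from $\{x,y\}$ and one from $\{z,t\}$ are both nonzero, the two resulting scalar equations yield $m_{1}+m_{2}=1$ and $m_{1}^{-1}+m_{2}^{-1}=1$ (after inverting the relevant power bijections $X\mapsto X^{\sigma}$, $X\mapsto X^{\sigma+2}$), hence an element of order $3$ in $GF(q)^{\ast}$, impossible as $q\equiv 2\pmod 3$; so $(z,t)=(0,0)$ or $(x,y)=(0,0)$ and the coset argument finishes. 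If you wish to keep your module-theoretic route you must actually produce the congruence computation reducing to $q=8$ and then settle the $q=8$ transverse subspaces explicitly; otherwise the coordinate-wise additive-closure argument gives the dichotomy uniformly and is the shorter path.
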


\begin{proof}
Recall that $B$ is a block of $\mathcal{D}$ incident with $(0,0,0,0)$ and preserved by $K$. Then $K$ preserves $%
(0,0,0,0)^{T_{B}}$, as $T_{B}\trianglelefteq H_{B}$. Also $(0,0,0,0)^{T_{B}}=\left\{
(0,0,0,0)\right\} \cup (x,y,z,t)^{K}$, where $(x,y,z,t)$ is a non-zero vector of $V$
contained in $(0,0,0,0)^{T_{B}}$, since $K$ acts semiregularly on $V^{\ast }$ and $\left\vert T_{B}\right\vert =q$. Since $(0,0,0,0)^{T_{B}}$ is a $GF(2)$-subspace of $V$, there are $\psi (m_{1}),\psi (m_{2})\in K$ such that $(x,y,z,t)^{\psi
(m_{1})}+(x,y,z,t)^{\psi (m_{2})}=(x,y,z,t)$. Hence
\bigskip
\tiny 
\begin{eqnarray*}
(x,y,z,t)^{\psi (m_{1})}+(x,y,z,t)^{\psi (m_{2})} &=&\left( \left(
m_{1}^{2+\sigma }+m_{2}^{2+\sigma }\right) x,(m_{1}^{\sigma }+m_{2}^{\sigma
})y,(m_{1}^{-\sigma
}+m_{2}^{-\sigma })z,(m_{1}^{-2-\sigma }+m_{2}^{-2-\sigma })t\right) \\
&=&\left( \left( m_{1}+m_{2}\right) ^{2+\sigma }x,(m_{1}+m_{2})^{\sigma
}y,(m_{1}^{-1}+m_{2}^{-1})^{\sigma
}z,(m_{1}^{-1}+m_{2}^{-1})^{2+\sigma }t\right),
\bigskip
\bigskip
\end{eqnarray*}%
\normalsize
since $2+\sigma $ is an automorphism of $GF(q)$.

Assume that $(z,t)=(0,0)$. Then $(0,0,0,0)^{T_{B}}\subseteq L(\infty )$ and hence $%
T_{B}$ preserves $B_{\infty}$. Hence $B_{\infty}$ is a union of $%
T_{B}$-orbits each of length $q$, as $T_{B}$ acts point-semiregularly on $%
\mathcal{D}$. Since $T_{B}:K\leq H_{B}$, with $T_{B}\trianglelefteq H_{B}$,
and since $K$ acts semiregularly on $L(\infty )^{\ast }$, it follows that $K$
fixes $(0,0,0,0)^{T_{B}}$ and permutes semiregularly the other possible $T_{B}$-orbits
contained in $B_{\infty}$. Thus either $B_{\infty}=0^{T_{B}}$ or $B_{\infty}$ consists of $0^{T_{B}}$
and other $q-1$ orbits under $T_{B}$. The latter yields $B=L(\infty )$,
hence $\mathcal{D}$ is the L\"{u}neburg translation plane of order $q^{2}$%
, but this contradicts the assumption $\lambda \geq 2$. Therefore $B_{\infty}=(0,0,0,0)^{T_{B}}$.

Assume that $(z,t)\neq (0,0)$. If $x,t\neq 0$. Then $\left(
m_{1}+m_{2}\right) ^{2+\sigma }=(m_{1}^{-1}+m_{2}^{-1})^{2+\sigma }=1$ and
hence $m_{1}+m_{2}=1$ and $m_{1}^{-1}+m_{2}^{-1}=1$. Then the order of $%
m_{1} $ is $3$, whereas $q\equiv 2\pmod{3}$. If $x,z\neq 0$, then $\left(
m_{1}+m_{2}\right) ^{2+\sigma }=(m_{1}^{-1}+m_{2}^{-1})^{\sigma }=1$, hence $m_{1}+m_{2}=1$ and $m_{1}^{-1}+m_{2}^{-1}=1$, and we again reach a
contradiction. Therefore $x=0$, as $(z,t)\neq (0,0)$. Similarly, $y,z\neq 0$
and $y,t\neq 0$ yield a contradiction, hence $y=0$. Thus $%
(0,0,0,0)^{T_{B}}\subseteq L(0,0)$ since $K$ preserves $L(0,0)$. Now, we may argue
as above, with $L(0,0)$ in the role of $L(\infty )$, and we obtain $
B_{0}=(0,0,0,0)^{T_{B}}$ in this case.
\end{proof}

\begin{lemma}
\label{oh}$B$ is a $2h$%
-dimensional $GF(2)$-subspace of $V$.
\end{lemma}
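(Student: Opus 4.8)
The plan is to show $\left\vert T_{B}\right\vert =q^{2}$; writing $\left\vert T_{B}\right\vert =2^{t}$, this means $t=2h=m$, and then $B=0^{T_{B}}$ is a $2h$-dimensional $GF(2)$-subspace of $V$ (the converse being clear). Throughout, let $\pi\colon H\to H_{0}$ be the projection killing $T$, so that $\pi(H_{B})=H_{B}T/T\cong H_{B}/T_{B}$ is the subgroup $J$ of Lemma \ref{cici}(2): it contains a copy of $H_{0,B}$ and $\left\vert J\right\vert =2^{m-t}\left\vert H_{0,B}\right\vert$. If $H_{0,B}=K\left\langle \phi \right\rangle \cong D_{2(q-1)}$, then since $D_{2(q-1)}$ is maximal in $H_{0}=Sz(q)$ by (iii) and $\left\vert J\right\vert <\left\vert H_{0}\right\vert$, we get $J=H_{0,B}$, whence $2^{m-t}=1$ and $t=m$; so from now on $H_{0,B}=K$ and $\left\vert J\right\vert =2^{m-t}(q-1)$. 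Among the maximal subgroups of $Sz(q)$, only the conjugates of $U{:}K$ and of $D_{2(q-1)}$ have order divisible by $q-1$ (those of $Sz(q_{0})$, $q=q_{0}^{r}$, and of $Z_{q\pm 2^{(h+1)/2}+1}{:}Z_{4}$ are prime to $q-1$). As $K\leq J$, a maximal subgroup $M$ of $H_{0}$ containing $J$ contains $K$, hence by (iv) $M$ is $K\left\langle \phi \right\rangle$, or one of the two conjugates $U{:}K$, $U^{\phi}{:}K$ of $U{:}K$ normalized complement-wise by $K$; replacing $B$ by $B^{\phi}$ (which does not change $H_{0,B}=K$) we may assume $M\in \{U{:}K,\,K\left\langle \phi \right\rangle\}$. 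If $M=K\left\langle \phi \right\rangle$ then $2^{m-t}(q-1)\leq 2(q-1)$, so $t\geq m-1$, and Corollary \ref{p2} gives $t=m$. Assume then $M=U{:}K$. Now $J\cap U$ is the normal Sylow $2$-subgroup of $J$ and is $K$-invariant (as $K\leq J$), hence $J\cap U\in \{1,Z(U),U\}$, the only $K$-invariant subgroups of $U$; thus $2^{m-t}\in \{1,q,q^{2}\}$, i.e.\ $t\in \{2h,h,0\}$. (This agrees with the fact that $K$ acts semiregularly on $V^{\ast }$ — the weights $\pm\sigma,\pm(2+\sigma)$ of $\psi(K)$ are units mod $q-1$, since $\sigma$ and $2+\sigma$ induce bijections of $GF(q)$ — so $K$ is free on $(0^{T_{B}})^{\ast }$ and $h\mid t$.) It remains to exclude $t=h$ and $t=0$.

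Suppose $t=h$, so $J=Z(U_{\star}){:}K$ with $U_{\star}\in \{U,U^{\phi}\}$, and let $L_{\star}\in \mathcal{S}$ be the line fixed by $U_{\star}$ ($L_{\star}=L(\infty)$ if $U_{\star}=U$, and $L_{\star}=L(0,0)$ if $U_{\star}=U^{\phi}$, as $\phi$ interchanges $L(\infty)$ and $L(0,0)$). Since $T_{B}\trianglelefteq H_{B}$, the group $H_{B}$ permutes the $q$ cosets of $W:=0^{T_{B}}$ making up $B$; as $n\in H_{B}$ sends the coset $W$ onto $\pi(n)(W)+n(0)$, which must again be a coset of $W$, the group $J$ stabilises the $h$-dimensional $GF(2)$-subspace $W$. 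Being $K$-invariant, $W$ is one of the four $K$-eigenlines $\langle (1,0,0,0)\rangle ,\dots ,\langle (0,0,0,1)\rangle$ of $V$; a check against the action of $Z(U_{\star})$ shows that only the two of these lying in $L_{\star}$ are $Z(U_{\star})$-invariant, and all of $Z(U_{\star})$ — which fixes $L_{\star}$ pointwise — fixes these vectors, and hence fixes the zero coset of $V/W$. But $Z(U_{\star})\trianglelefteq J$ and $J$ is transitive on the $q$ cosets of $W$ in $B$ (one of which is $W$ itself), so all $Z(U_{\star})$-orbits there have length $1$; thus $J$ acts on this $q$-element set through $J/Z(U_{\star})\cong K$, of order $q-1<q$ — a contradiction. (Lemma \ref{BigBig} already places $W$ in $L(\infty)$ or $L(0,0)$ and can be used in place of part of this argument.)

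Suppose finally $t=0$, so that $T$ is block-semiregular, $f=m$, $\lambda =q^{2}$, and $\pi$ maps $H_{B}$ isomorphically onto $U{:}K$. The characteristic Sylow $2$-subgroup $P\trianglelefteq H_{B}$ with $\pi(P)=U$ has order $q^{2}=\left\vert B\right\vert$; as the number of its orbits on $B$ divides both $q-1$ and $q^{2}$ and so equals $1$, $P$ is regular on $B$, whence $B=0^{P}$ with $P\cap T=T_{B}=1$. For $u=\pi(p)$, $p\in P$, one gets $u(B)=B+b_{u}$ where $u\mapsto b_{u}$ is a bijection $U\to B$ and $b$ an injective $1$-cocycle; since $Z(U)$ fixes $L(\infty)$ pointwise, $b$ restricts to a homomorphism on $Z(U)$, so $B\cap L(\infty)=b(U_{\infty})$ with $U_{\infty}=\{u:b_{u}\in L(\infty)\}$ a $K$-invariant subgroup of $U$; hence $\left\vert B\cap L(\infty)\right\vert \in \{1,q\}$, and if it equals $q$ then $B\cap L(\infty)$ is a $K$-eigenline. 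The contradiction is then to be obtained by confronting this with the facts that $B\cap \mathcal{O}$ is a single $K$-orbit of length $q-1$ — one of those listed in (\ref{Ovoid}) — and that $H_{0}=Sz(q)$ acts regularly on the pairs (block through $0$, point of $\mathcal{O}$ on it). This case is the main obstacle. It cannot be settled by ``untwisting'' $P$ into $H_{0}$, because the composition factors of $V_{4}(q)\!\mid_{U}$ are all trivial and so $H^{1}(U,V_{4}(q))\neq 0$: the cocycle $b$, equivalently the complement $P$ of $T$, is genuinely not determined up to $T$-conjugacy, and the argument must instead be pushed through using the geometry of $\mathcal{O}$ and of the L\"{u}neburg spread $\mathcal{S}$.

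Putting the three cases together yields $t=2h=m$, so $B=0^{T_{B}}$ is a $2h$-dimensional $GF(2)$-subspace of $V$.
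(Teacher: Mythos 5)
Your reduction of the case $H_{0,B}=K\left\langle \phi \right\rangle$ and your trichotomy $t\in\{0,h,2h\}$ (via $J\cap U$ being a $K$-invariant subgroup of $U$) are fine and run parallel to the paper, but the exclusion of $t=h$ is broken. The group acting on the $q$ cosets of $W=0^{T_{B}}$ inside $B$ is $H_{B}$, and although this action factors through $H_{B}/T_{B}\cong J$, an element of $J$ lying over $\zeta\in Z(U_{\star})$ acts on $V/W$ as $v+W\mapsto \zeta(v)+c_{\zeta}+W$, where the class $c_{\zeta}+W$ is well defined (two lifts of $\zeta$ in $H_{B}$ differ by an element of $T_{B}$) and is \emph{nonzero} whenever $\zeta\neq 1$: if $c_{\zeta}\in W$ then $\zeta$ itself would lie in $H_{B}$, i.e.\ in $H_{0,B}=K$, which is impossible. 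So the preimage of $Z(U_{\star})$ does not fix the coset $W$, and your claim that all $Z(U_{\star})$-orbits on the coset set have length $1$ fails; in fact, since the number of $Z(U_{\star})$-orbits is a power of $2$ and is permuted transitively by $J/Z(U_{\star})\cong K$ of odd order $q-1$, $Z(U_{\star})$ must be \emph{transitive} on the $q$ cosets — the opposite of what you assert — and no contradiction arises at this level. This is precisely why the paper's proof descends to explicit computations: it shows $H_{B}$ has a unique Sylow $2$-subgroup $X$ with $T_{B}\leq Z(X)$, and then eliminates the alternatives $T_{B}=Z(X)$ and $X$ abelian by hand, working with $\alpha=\tau\varphi(0,w_{0})$ and the relations culminating in (\ref{GlUk}). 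A smaller point: "$K$-invariance" alone does not make $W$ one of the four coordinate lines; Lemma \ref{BigBig} only places $W$ in $L(\infty)$ or $L(0,0)$, and ruling out $W^{\ast}=(x_{0},y_{0},0,0)^{K}$ with $x_{0}y_{0}\neq 0$ needs a further additivity computation (true, but it must be carried out).

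The case $t=0$ is left unproved: you set up the cocycle picture and correctly observe that $H^{1}(U,V_{4}(q))\neq 0$ blocks a naive untwisting, but you stop at "the contradiction is then to be obtained", which is exactly the missing idea. The paper closes this case with Proposition \ref{trisesira}: if $T_{B}=1$ then $H_{B}$ is a subgroup of $T\Sigma$ isomorphic to $UK$, its unique point-orbit of length $q^{2}$ is a line of the L\"{u}neburg plane, so $B$ would be a line through $0$, i.e.\ a spread component, forcing $\left\vert T_{B}\right\vert=q^{2}$ — a contradiction. Without this (or an equivalent geometric argument), and with the $t=h$ step as it stands, the proposal only settles the subcases $H_{0,B}=K\left\langle\phi\right\rangle$ and $J\leq K\left\langle\phi\right\rangle$, so it does not prove the lemma.
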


\begin{proof}
Recall that $H_{0,B}$ is one of
the groups $K$ or $K\left\langle \phi \right\rangle $ by Lemma \ref{solo2}.

Assume that $H_{0,B}=K\left\langle \phi \right\rangle $. The group $%
H_{B}/T_{B}$ is isomorphic to a subgroup of $H_{0}$ and contains an
isomorphic copy of $H_{0,B}$ as subgroup of index $2^{2h-e}$, where $2^{e}=\left\vert
T_{B}\right\vert $, by Lemma \ref{cici}(2). Then $e=2h$, since $H_{0,B}$ is
maximal in $H_{0}$, hence $B$ is a $GF(2)$-vector subspace of $V$ in this
case.

Assume that $H_{0,B}=K$. If $T$ acts block-semiregularly on $\mathcal{D}$, then $H_{B}$ is
isomorphic to a subgroup $H_{0}$ containing a copy of $H_{0,B}$ as a
subgroup of index $q^{2}$. Then $H_{B}\cong UK$ by \cite[Table 8.16]{BHRD}, hence $B$ is a line of the L\"{u}neburg translation plane by Proposition %
\ref{trisesira}, since $H_{0,B}=K$. So $T_{B}$ is a group of order $q^{2}$, but this
contradicts our assumption. Thus $T$ does not act block-semiregularly on $%
\mathcal{D}$ and hence $\left\vert W\right\vert >1$, where $W=(0,0,0,0)^{T_{B}}$.
Let $x$ be a non-zero vector of $W$. Since $K$ acts semiregularly on $V^{\ast }$, it
follows that $2^{h}-1\mid 2^{e}-1$ where $2^{h}=q$ and $\left\vert
W\right\vert =\left\vert T_{B}\right\vert =2^{e}$. Then either $e=h$ and $%
\left\vert W\right\vert =q$, or $e=2h$ and $\left\vert W\right\vert =q^{2}$,
since $h\mid e$ and $e\leq 2h$.

Assume that $\left\vert W\right\vert =q$. Then either $W=B_{\infty}$ or $W=B_{0}$. by Lemma \ref{BigBig}. Since $\phi $
switches $L(0,0)$ and $L(\infty )$, normalizes $K$ and since $B^{\phi}$ is a block of $\mathcal{D}$ preserved by $K$, we may assume that $W=B_{\infty}$.

Since $H_{B}/T_{B}$ is isomorphic to a subgroup of 
$H_{0}$ containing $H_{0,B}$ as a subgroup of index $q$, being $\left\vert
T_{B}\right\vert =q$, it follows from \cite[Table 8.16]{BHRD} that $%
H_{B}/T_{B}\cong Z(U):K$. Then $H_{B}$ contains a unique Sylow $2$-subgroup,
and we denote this one by $X$. Therefore $T_{B}\cap Z(X)\neq 1$, as $%
T_{B}\trianglelefteq X$. The group $K$ normalizes both $Z(X)$ and $T_{B}$ and hence $T_{B}\cap Z(X)$.
Moreover, $K$ acts regularly on $T_{B}^{\ast }$ by \cite[Proposition
4.2]{Pass}, as $K$ acts semiregularly on $V^{\ast }$ and hence on $W^{\ast }$. Thus 
$T_{B}\leq Z(X)$ since $\left\vert T_{B}\right\vert =q$, hence either $T_{B}=Z(X)$ or $X$ is abelian, since $%
H_{B}/T_{B}\cong Z(U):K$ is a Frobenius group.

Let $\alpha \in X \setminus T_{B}$, then $\alpha =\tau \alpha _{0}$ for some $\tau \in
T$ and $\alpha _{0}\in H_{0}$. Both $\alpha $ and $\alpha _{0}$ centralize $%
T_{B}$, since $T_{B}\leq Z(X)$. Thus $\alpha _{0}$ fixes $(0,0,0,0)^{T_{B}}$
pointwise by \cite[Proposition 4.2]{Pass}, hence $\alpha _{0}\in Z(U)$.
If $\alpha _{0}=1$ then $\alpha =\tau \in T_{B}$, since $\alpha \in X\leq
H_{B}$, which is not the case. Thus $o(\alpha _{0})=2$ and hence $\alpha
_{0}=\varphi (0,w_{0})$ for some $w_{0}\in GF(q)$ and $w_{0}\neq 0$. Let $\tau
:(x,y,z,t)\rightarrow (x,y,z,t)+(x_{0},y_{0},z_{0},t_{0})$, where $%
x_{0},y_{0},z_{0},t_{0},w_{0}\in GF(q)$, then 
\[
\alpha :(x,y,z,t)\rightarrow (x+x_{0}+w_{0}\left( z+z_{0}\right)
+w_{0}^{\sigma }\left( t+t_{0}\right) ,y+y_{0}+w_{0}\left( t+t_{0}\right)
,z+z_{0},t+t_{0})\text{.} 
\]

Assume that $T_{B}=Z(X)$. Since $\alpha ^{2}:(x,y,z,t)\rightarrow
(x+w_{0}z_{0}+w_{0}^{\sigma }t_{0},y+w_{0}t_{0},z,t)$, with $w_{0}\neq 0$, it follows that $t_{0}=z_{0}=0$ since $o(\alpha _{0})=2$. Then $\alpha $
preserves $L(\infty )$, $B$ and hence $W$, as $W=B_{\infty}$. So $T_{B}\left\langle \alpha
\right\rangle \cap H_{0,B}\neq 1$, whereas $H_{0,B}=K$, and we reach a
contradiction.

Assume that $X$ is abelian. Since $X\trianglelefteq H_{B}$, it follows that $%
\alpha ^{\psi }\in X$ for each $\psi \in K$. Moreover, $\alpha \alpha ^{\psi
}=\alpha ^{\psi }\alpha $. Let $\psi =\psi (m)$, where $%
m$ is a primitive element of $GF(q)$. It follows that $\alpha ^{\psi }$ maps $%
(x,y,z,t)$ onto $(x^{\prime },y^{\prime },z^{\prime },t^{\prime })$, where%
\[
\left\{ 
\begin{array}{l}
x^{\prime }=x+m^{2+\sigma }x_{0}+m^{\sigma +2}w_{0}(z_{0}+m^{\sigma
}z)+m^{2+\sigma }w_{0}^{\sigma }(t_{0}+m^{2+\sigma }t) \\ 
y^{\prime }=y+m^{\sigma }y_{0}+m^{\sigma }w_{0}(t_{0}+m^{2+\sigma }t) \\ 
z^{\prime }=z+m^{-\sigma }z_{0} \\ 
t^{\prime }=t+m^{-2-\sigma }t_{0}.%
\end{array}%
\right. 
\]%
Then $\alpha \alpha ^{\psi }=\alpha ^{\psi }\alpha $ occurs if, and only if, 
\begin{equation}
\left\{ 
\begin{array}{r}
m^{2}w_{0}z_{0}(m^{3\sigma+2}-1)+w_{0}^{\sigma }t_{0}(m^{3\sigma+6}-1)=0 \\ 
w_{0}t_{0}\left( m^{3\sigma +4}-1\right) =0%
\end{array}%
\right.  \label{GlUk}
\end{equation}%
If $m^{3\sigma +4}=1$ then $2^{2e+1}-1 \mid 3\cdot2^{e+1}+4$, since $m$ is a primitive element of $GF(q)$, where $q=2^{2e+1}$ and $e \geq 1$, and since $m^{\sigma}=m^{2^{e+1}}$. However this is impossible. This forces $t_{0}=0$. Similarly, $m^{3\sigma +2} \neq 1$ and hence $z_{0}=0$. Thus $\left\vert W\right\vert =q^{2}$, which is the assertion.
\end{proof}

\bigskip
An immediate consequence of Lemma \ref{oh} is that both $B_{0}$ and $B_{\infty}$ are $GF(2)$-subspaces of $V$.  
\bigskip

\begin{corollary}\label{OneTwoBoth}
 At least one among $B_{0}$ and $B_{\infty}$ is of dimension $h$ over $GF(2)$.
\end{corollary}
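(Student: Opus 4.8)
The plan is to establish a dimension dichotomy for $B_{0}$ and $B_{\infty}$ from the action of $K$, and then to exclude the degenerate ``transversal'' configuration.

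By Lemma~\ref{oh} the block $B$ is a $2h$-dimensional $GF(2)$-subspace of $V$, so $B_{0}$ and $B_{\infty}$ are $GF(2)$-subspaces, and since $K\leq H_{0,B}$ preserves $B$ and fixes each of the two spread components $L(0,0)$, $L(\infty)$ (these being the only components of $\mathcal{S}$ fixed by $K$), both $B_{0}$ and $B_{\infty}$ are $K$-invariant. Now $K$ acts semiregularly on $V^{\ast}$, hence on $B_{0}^{\ast}$ and on $B_{\infty}^{\ast}$; therefore $|B_{0}|$ and $|B_{\infty}|$ are congruent to $1$ modulo $q-1$, and since these orders divide $q^{2}=2^{2h}$ and $2^{d}\equiv 1\pmod{2^{h}-1}$ forces $h\mid d$, we get $\dim_{GF(2)}B_{0},\dim_{GF(2)}B_{\infty}\in\{0,h,2h\}$. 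If $\dim B_{\infty}=2h$ then $B=B_{\infty}=L(\infty)$, whence $\mathcal{D}$ would be the L\"{u}neburg translation plane of order $q^{2}$, contradicting $\lambda\geq 2$; likewise $\dim B_{0}\neq 2h$. So $\dim B_{0},\dim B_{\infty}\in\{0,h\}$, and the statement will follow once we rule out $\dim B_{0}=\dim B_{\infty}=0$.

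Assume for a contradiction that $B\cap L(0,0)=B\cap L(\infty)=\{(0,0,0,0)\}$. Since $V=L(\infty)\oplus L(0,0)$ is a $K$-invariant direct sum with both summands of $GF(2)$-dimension $2h$, the projections $\pi_{\infty},\pi_{0}$ restrict on $B$ to $GF(2)$-linear isomorphisms onto $L(\infty)$ and onto $L(0,0)$; consequently $B=\{u+\theta(u):u\in L(\infty)\}$ for a $GF(2)$-linear bijection $\theta=\pi_{0}\circ(\pi_{\infty}|_{B})^{-1}\colon L(\infty)\to L(0,0)$, which, since $\pi_{\infty}$ and $\pi_{0}$ are $K$-equivariant, intertwines the two $K$-actions. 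Writing $L(\infty)$ and $L(0,0)$ in the coordinates of \eqref{Sugr} — so that $\psi(m)\in K$ acts on the four coordinate lines as the scalars $m^{2+\sigma}$, $m^{\sigma}$, $m^{-\sigma}$, $m^{-2-\sigma}$ — and using that a $GF(2)$-linear self-map of $GF(q)$ intertwining two such scalar actions is a single Frobenius monomial, one shows by reducing the relevant exponent congruences modulo $q-1$ (here $\sigma^{2}$ is the Frobenius $x\mapsto x^{2}$ of $GF(q)$ and $\gcd(2+2^{e+1},q-1)=\gcd(2^{e+1},q-1)=1$) that such a $\theta$ cannot be a bijection; this yields the desired contradiction. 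For the few small values of $q$ not disposed of by this arithmetic one argues directly, combining the explicit parametrisation \eqref{Ovoid} of the Tits ovoid with $|B\cap\mathcal{O}|=q-1$ and $B\cap(L(\infty)\cup L(0,0))=\{(0,0,0,0)\}$.

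The technical heart — and the step I expect to be the main obstacle — is precisely the exclusion of this ``graph'' configuration: the congruence bookkeeping modulo $q-1$ is fiddly, and the smallest admissible field (in particular $q=8$) has to be handled by a more explicit computation rather than by the general exponent argument; the dichotomy of the second paragraph, by contrast, is a short consequence of the semiregularity of $K$ and Lemma~\ref{oh}.
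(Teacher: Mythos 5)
Your first paragraph is correct, and it is essentially the paper's own closing step: semiregularity of $K$ forces $\dim_{GF(2)}B_{0},\dim_{GF(2)}B_{\infty}\in\{0,h,2h\}$, and $2h$ is excluded because it would give $B=L(\infty)$ or $B=L(0,0)$, hence $U{:}K\leq H_{0,B}$, against Lemma \ref{solo2}. Your exclusion of $\dim B_{0}=\dim B_{\infty}=0$ is, however, a genuinely different route from the paper's: the paper intersects $B$ with the $K$-invariant hyperplane $y=0$, forces the nonzero part of $B\cap\{y=0\}$ to be a single $K$-orbit of a vector $(x_{0},0,z_{0},t_{0})$ with $x_{0}\neq 0$ and $(z_{0},t_{0})\neq(0,0)$, and then runs the Lemma \ref{BigBig}-type computation to produce an element of order $3$ in $GF(q)^{\ast}$, impossible as $q\equiv 2\pmod 3$; you instead view $B$ as the graph of a $K$-equivariant $GF(2)$-linear bijection $\theta\colon L(\infty)\to L(0,0)$ and try to kill $\theta$ by comparing irreducible $GF(2)[K]$-constituents, i.e.\ by the doubling classes of the exponents $2+\sigma,\sigma,-\sigma,-2-\sigma$ modulo $q-1$.

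The gap is the case $q=8$, and it is not a routine verification that can be deferred. Your congruence argument does close every case $q\geq 32$ (a matching of exponent classes would force a sum of at most three powers of $2$ to vanish modulo $2^{2e+1}-1$, which fails for $2e+1\geq 5$), but for $q=8$ the classes genuinely match: modulo $7$ the doubling orbits of $\sigma=4$ and $2+\sigma=6$ are $\{1,2,4\}$ and $\{3,5,6\}$, which are exactly those of $-2-\sigma\equiv 1$ and $-\sigma\equiv 3$, so $L(\infty)\cong L(0,0)$ as $GF(2)[K]$-modules and equivariant bijections exist. Concretely, $B=\{(x,y,cx^{4},dy^{2}):x,y\in GF(8)\}$ with $c,d\neq 0$ is a $K$-invariant $6$-dimensional $GF(2)$-subspace with $B_{0}=B_{\infty}=\{0\}$, because $(2+\sigma)\cdot 4\equiv-\sigma$ and $\sigma\cdot 2\equiv -2-\sigma\pmod 7$; for $c=d^{3}$ one checks it is even $K\langle\phi\rangle$-invariant. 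Hence for $q=8$ the conclusion cannot be recovered from Lemma \ref{oh} and $K$-invariance alone, and the ``direct argument'' you promise must genuinely engage the remaining block hypotheses ($\left\vert B\cap\mathcal{O}\right\vert=q-1$, the structure of $T_{B}$ and $H_{B}$, the $2$-design condition), none of which you carry out; as written the proof is incomplete precisely at the smallest admissible field, where examples of these designs actually occur. Note in this connection that $m\mapsto m^{\sigma+2}$ is multiplicative but not additive, so an identity of the form $m_{1}^{\sigma+2}+m_{2}^{\sigma+2}=(m_{1}+m_{2})^{\sigma+2}$, as invoked in the Lemma \ref{BigBig}-style computations, is not available to dispose of such graph-type subspaces: the $q=8$ case your method isolates is genuinely delicate and needs an explicit, complete treatment.
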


\begin{proof}
Suppose that $\dim_{GF(2)}B_{0}=\dim_{GF(2)}B_{\infty}=0$. The group $%
K $ preserves the hyperplane $\mathcal{H}$ of $V$ of equation $y=0$ and
hence the $GF(2)$-space $B\cap \mathcal{H}$. Then either $\dim_{GF(2)} B\cap 
\mathcal{H}=h$ or $B\subset \mathcal{H}$ since $K$ acts semiregularly on $%
V^{\ast }$. If $\dim_{GF(2)} B\cap \mathcal{H}=h$, then 
\[
B\cap \mathcal{H}=\left\{ (0,0,0,0),(x_{0},0,z_{0},t_{0})^{\psi (m)}:m\in
GF(q)^{\ast }\right\} 
\]%
for some $(x_{0},0,z_{0},t_{0})\in V$, with $x_{0}\neq 0$ \ and $%
(z_{0},t_{0})\neq (0,0)$, since  $B_{0}=B_{\infty}=\left\{ (0,0,0,0) \right\} $. Then, for each $m_{1},m_{2}\in GF(q)^{\ast }$, $m_{1}\neq m_{2}$
there is a unique $m_{3}\in GF(q)^{\ast }$ such that%
$
(x_{0},0,z_{0},t_{0})^{\psi (m_{1})}+(x_{0},0,z_{0},t_{0})^{\psi
(m_{2})}=(x_{0},0,z_{0},t_{0})^{\psi (m_{3})}\text{.} 
$
A similar argument to that of Lemma \ref{BigBig} leads to a contradiction in this case, since $x_{0}\neq 0$ and $%
(z_{0},t_{0})\neq (0,0)$. Thus $B\subset \mathcal{H}$ and hence $B_{0}\neq \left\{
(0,0,0,0) \right\} $ since $L(0,0)\subset \mathcal{H}$, since $\dim_{GF(2)}B=\dim_{GF(2)} L(0,0)=2h$ by Lemma \ref{oh}, and since $\dim_{GF(2)} 
\mathcal{H}=3h$. However this contradicts our assumptions. Thus, at least one among $B_{0}$ and $B_{\infty}$ is of $GF(2)$-dimension greater that $0$.

Suppose that $\dim_{GF(2)} B_{0}>0$. Clearly $\dim_{GF(2)} B_{0}<2h$ as $\lambda \geq 2$, hence $\dim_{GF(2)} B_{0}=h$ since $K$ preserves $L(0,0)$ and acts
semiregularly on $V^{\ast }$. A similar conclusion holds for $\dim_{GF(2)} B_{\infty}>0$. 
\end{proof}

\begin{lemma}\label{block}
$$B=\left\lbrace  
(m_{1}^{\sigma+2}x_{0},m_{1}^{\sigma }y_{0},m_{2}^{-\sigma)}z_{0},m_{2}^{-\sigma-2
}t_{0}) : \; m_{1},m_{2}\in GF(q) \right\rbrace$$ for some $x_{0},y_{0},z_{0},t_{0} \in GF(q)$ such that $(x_{0},y_{0})\neq (0,0)$ and $(z_{0},t_{0})\neq (0,0)$.
\end{lemma}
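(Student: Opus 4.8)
The strategy is to pin down $B$ from the $K$-action on the splitting $V=L(\infty)\oplus L(0,0)$, feeding in the two facts already available: by Lemma~\ref{oh}, $B$ is a $2h$-dimensional $GF(2)$-subspace stable under $K\leqslant H_{0,B}$, and by Corollary~\ref{OneTwoBoth} at least one of $B_{0}=B\cap L(0,0)$, $B_{\infty}=B\cap L(\infty)$ has $GF(2)$-dimension $h$. I shall also use $\lvert B\cap\mathcal{O}\rvert=q-1$, which was deduced from $r\lvert B\cap\mathcal{O}\rvert=(q^{2}+1)(q-1)\lambda$ and $r/\lambda=q^{2}+1$, where $\mathcal{O}$ is the short $H_{0}$-orbit of (\ref{Ovoid}).

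First I would record the module-theoretic input. In the standard basis the element $\psi(m)\in K$ is the diagonal matrix with entries $m^{2+\sigma},m^{\sigma},m^{-\sigma},m^{-2-\sigma}$, so $K$ preserves $L(\infty)=\langle e_{1},e_{2}\rangle_{GF(q)}$ and $L(0,0)=\langle e_{3},e_{4}\rangle_{GF(q)}$; since for $q=2^{2e+1}$ the integers $2\pm\sigma$ and $\pm\sigma$ are all coprime to $q-1$ (elementary, cf.\ \cite[Lemma IV.2.1]{Lu}), each coordinate line $\langle e_{i}\rangle_{GF(q)}$ is an irreducible $h$-dimensional $GF(2)[K]$-module on which $K$ induces the full scalar group of $GF(q)$, and a computation of the order of $2$ modulo $q-1$ shows that inside $L(\infty)$ (resp.\ $L(0,0)$) the two composition factors are inequivalent $GF(2)[K]$-modules. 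Hence the $K$-invariant $GF(2)$-subspaces of $L(\infty)$ are exactly $0$, $\langle e_{1}\rangle_{GF(q)}$, $\langle e_{2}\rangle_{GF(q)}$, $L(\infty)$, and similarly for $L(0,0)$; thus $\dim_{GF(2)}B_{\infty}$ and $\dim_{GF(2)}B_{0}$ lie in $\{0,h,2h\}$, and the value $2h$ is excluded because it forces $B\in\{L(\infty),L(0,0)\}\subseteq\mathcal{S}$, making $\mathcal{D}$ the L\"uneburg plane, contrary to $\lambda\geqslant2$. So both dimensions lie in $\{0,h\}$.

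I then normalise with $\phi$: it interchanges $L(\infty)$ and $L(0,0)$, normalises $K$, fixes $\mathcal{O}$ setwise, sends $B$ to a block $B^{\phi}$ with $H_{0,B^{\phi}}\in\{K,K\langle\phi\rangle\}$, and carries the asserted parametrisation of a block into one of the same shape under $(x_{0},y_{0},z_{0},t_{0})\mapsto(t_{0},z_{0},y_{0},x_{0})$; so I may assume $\dim_{GF(2)}B_{\infty}=h$, whence $B_{\infty}$ is $\langle e_{1}\rangle_{GF(q)}$ or $\langle e_{2}\rangle_{GF(q)}$, i.e.\ $B_{\infty}=\{(m_{1}^{\sigma+2}x_{0},m_{1}^{\sigma}y_{0},0,0):m_{1}\in GF(q)\}$ with exactly one of $x_{0},y_{0}$ zero. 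Since $GF(2)[K]$ is semisimple ($\lvert K\rvert=q-1$ is odd) I pick a $K$-submodule complement $C$ of $B_{\infty}$ in $B$; then $C$ is an $h$-dimensional $K$-submodule with $C\cap L(\infty)=0$, so its projection to $L(0,0)$ is a $K$-isomorphism onto $\langle e_{3}\rangle_{GF(q)}$ or $\langle e_{4}\rangle_{GF(q)}$, while its projection to $L(\infty)$ is a $K$-submodule of $L(\infty)$ which is a homomorphic image of $\langle e_{3}\rangle_{GF(q)}$ or $\langle e_{4}\rangle_{GF(q)}$ and therefore must vanish — an isomorphic copy of either inside $L(\infty)$ would violate the inequivalences just noted — except for the finitely many $q$ for which $L(\infty)$ and $L(0,0)$ share a $GF(2)[K]$-composition factor, which I dispose of separately (a direct check, the relevant $q$ being within reach of \cite{GAP}). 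Thus $C=\{(0,0,m_{2}^{-\sigma}z_{0},m_{2}^{-\sigma-2}t_{0}):m_{2}\in GF(q)\}$ and $B=B_{\infty}\oplus C$ is the set in the statement, with $(x_{0},y_{0})\neq(0,0)$ and $(z_{0},t_{0})\neq(0,0)$. The real obstacle is the coherent bookkeeping in this last part: one must check that the surviving invariants — $H_{0,B}=K$ or $K\langle\phi\rangle$ together with $\lvert B\cap\mathcal{O}\rvert=q-1$ — are compatible with, and in fact restrict, the admissible quadruples $(x_{0},y_{0},z_{0},t_{0})$, and one must run this uniformly, especially over the small exceptional $q$ where the $GF(2)[K]$-module $V$ is not multiplicity-free and "mixed" $2h$-dimensional $K$-submodules have to be eliminated by the ovoid count rather than by the module theory alone.
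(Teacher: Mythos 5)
Your route is genuinely different from the paper's, and the difference is not cosmetic. The paper never classifies the $K$-invariant $GF(2)$-subspaces of $L(\infty)$: it writes $B_{\infty}$ as a single $K$-orbit together with $0$ for an \emph{arbitrary} $(x_{0},y_{0})\neq(0,0)$, shows that the cosets $B_{\infty}+(x_{1},y_{1},z_{1},t_{1})^{\psi(m)}$ exhaust $B\setminus B_{\infty}$, and then uses closure of $B/B_{\infty}$ under addition (the system (\ref{Sys})) to force $(0,0,z_{1},t_{1})\in B_{0}$, with no vanishing conditions on $x_{0},y_{0},z_{0},t_{0}$. Your argument instead concludes that $B_{\infty}\in\{\langle e_{1}\rangle_{GF(q)},\langle e_{2}\rangle_{GF(q)}\}$ and (away from exceptional $q$) that $B$ is a sum of two coordinate $GF(q)$-lines, i.e.\ that one of $x_{0},y_{0}$ \emph{and} one of $z_{0},t_{0}$ vanish. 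That is strictly stronger than the lemma and is incompatible with everything the lemma is used for afterwards: Proposition \ref{IntOrb}, Corollary \ref{DiheD}, Theorem \ref{LemSuz} and Families 2--4 of Example \ref{Ex2} all require blocks with $x_{0},y_{0},z_{0},t_{0}$ all nonzero. The crux is whether the ``diagonal'' set $\{(m^{\sigma+2}x_{0},m^{\sigma}y_{0},0,0):m\in GF(q)\}$ with $x_{0}y_{0}\neq0$ can be a $GF(2)$-subspace, equivalently whether $m\mapsto m^{\sigma+2}$ is additive, equivalently whether $\sigma$ and $\sigma+2$ lie in one $2$-cyclotomic class modulo $q-1$: your ``inequivalent composition factors'' step answers this one way, while the paper's own manipulations (the reduction of (\ref{Sys}), Lemma \ref{BigBig}, and the claim in (\ref{BgigB}) via \cite[Lemma IV.2.1]{Lu}) presuppose the other. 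You proceed as if there were no tension; but as written your proof either empties the very families the lemma is meant to feed or contains an error in the inequivalence computation, and you do not confront, or even notice, which. A proof of this lemma cannot leave that unresolved.

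Independently, the exceptional values of $q$ are a genuine gap, not bookkeeping. You neither determine them (the cyclotomic computation pins them down to $q=8$, where $\langle e_{1}\rangle\cong\langle e_{3}\rangle$ and $\langle e_{2}\rangle\cong\langle e_{4}\rangle$ as $GF(2)[K]$-modules) nor handle the new submodules they admit: there your complement $C$ may be a graph tying the second coordinate to the fourth (or the first to the third), so that $B=B_{\infty}\oplus C$ is \emph{not} of the shape asserted in the statement, and the promised disposal ``by the ovoid count / GAP'' is never carried out. The paper's coset argument, whatever one thinks of its treatment of $\sigma+2$, is at least uniform in $q$; your argument, as it stands, proves nothing for the exceptional case and proves too much elsewhere.
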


\begin{proof}
It follows from Corollary \ref{OneTwoBoth} that at least one among $B_{0}$ and $B_{\infty}$ is of dimension $h$ over $GF(2)$. Possibly by substituting $B$ with $B^{\phi}$, we may actually assume that $\dim_{GF(2)} B_{\infty}=h$. Indeed, this is possible since $B$ is $K$-invariant and since $\phi$ normalizes $K$ and switches $L(0,0)$ and $L(\infty )$. Then there are $x_{0},y_{0}\in GF(q)$, with $%
(x_{0},y_{0})\neq (0,0)$ such that 
\[
B_{\infty }=\left\{ (0,0,0,0),(x_{0},y_{0},0,0)^{\psi (m)}:m\in
GF(q)^{\ast }\right\} \text{.}
\]%
Since $k=q^{2}$, there is $(x_{1},y_{1},z_{1},t_{1})\in B \setminus B_{\infty}$.
Then $B_{\infty}+(x_{1},y_{1},z_{1},t_{1})\subseteq B$ since $B$ is a $GF(2)$%
-subspace of $V$. If $B_{\infty}+(x_{1},y_{1},z_{1},t_{1})^{\psi
(m_{0})}=B_{\infty}+(x_{1},y_{1},z_{1},t_{1})$ for some $m_{0}\in GF(q)^{\ast }$,
then 
\[
L(\infty )+(x_{1},y_{1},z_{1},t_{1})^{\psi (m_{0})}=L(\infty
)+(x_{1},y_{1},z_{1},t_{1})
\]%
and hence $\psi (m_{0})$ preserves the line of $L(\infty
)+(x_{1},y_{1},z_{1},t_{1})$ of the L\"{u}neburg plane. Then $%
(x_{1},y_{1},z_{1},t_{1})=(0,0,0,0)$, since the unique affine lines preserved by $K$
are $L(0,0)$, $L(\infty )$, and we reach a contradiction. Thus 
\[
B\setminus B_{\infty}=\bigcup_{m\in GF(q)^{\ast
}}B_{0}+(x_{1},y_{1},z_{1},t_{1})^{\psi (m)}
\]%
since $K$ preserves $B,B_{\infty}$ and since $k=q^{2}$. As $B/B_{\infty}$ is a vector space
over $GF(2)$, for $m_{1},m_{2}\in GF(q)^{\ast }$, $m_{1}\neq
m_{2}$, there is $m_{3}\in GF(q)^{\ast }$ such that 
\[
(x_{1},y_{1},z_{1},t_{1})^{\psi (m)}+(x_{1},y_{1},z_{1},t_{1})^{\psi
(m_{2})}+(x_{1},y_{1},z_{1},t_{1})^{\psi (m_{3})}\in B_{\infty}\text{.}
\]%
and this is equivalent to say that%
\begin{equation}\label{Sys}
\left\{ 
\begin{array}{rcl}
x_{1}\left( m_{1}^{\sigma +2}+m_{2}^{\sigma +2}+m_{3}^{\sigma +2}\right) &=& x_{0}m^{\sigma +2} \\ 
y_{1}\left( m_{1}^{\sigma }+m_{2}^{\sigma }+m_{3}^{\sigma }\right) &=& y_{0}m^{\sigma } \\  
z_{1}\left( \left( m_{1}^{-1}\right) ^{\sigma }+\left( m_{2}^{-1}\right)^{\sigma }+\left( m_{3}^{-1}\right) ^{\sigma }\right) &=& 0 \\
t_{1}\left( \left( m_{1}^{-1}\right) ^{\sigma +2}+\left( m_{2}^{-1}\right)^{\sigma +2}+\left( m_{3}^{-1}\right) ^{\sigma +2}\right) &=& 0 \\
\end{array}%
\right.   
\end{equation}%
for some $m\in GF(q)^{\ast }$. Since $(x_{1},y_{1},z_{1},t_{1})\in
B\backslash B_{0}$, it follows that $(z_{1},t_{1})\neq (0,0)$. Then $%
m_{3}^{-1}=m_{1}^{-1}+m_{2}^{-1}$, since $\sigma,\sigma +2$ are automorphisms of $%
GF(q)$. If $m_{3}=m_{1}+m_{2}$, then $%
m_{1}^{2}+m_{1}m_{2}+m_{2}^{2}=0$ and hence $m_{2}^{-1}m_{1}$ is an element
of $GF(q)^{\ast }$ of order $3$, but this is impossible since $q\equiv 2\pmod{3}$. Then $m_{3}\not=m_{1}+m_{2}$ and from (\ref{Sys}) we obtain%
\[
x_{1}=x_{0}\left( \frac{m}{m_{1}+m_{2}+m_{3}}\right) ^{\sigma +2}\text{ and }%
y_{1}=y_{0}\left( \frac{m}{m_{1}+m_{2}+m_{3}}\right) ^{\sigma }\text{.}
\]%
Thus $(x_{1},y_{1},0,0)\in B_{\infty}$ and hence $%
B_{\infty}+(x_{1},y_{1},z_{1},t_{1})=B_{\infty}+(0,0,z_{1},t_{1})$. Therefore $(0,0,z_{1},t_{1})\in B_{0}$. Then $B_{0}\neq
\left\{ (0,0,0,0)\right\} $ and so $\left\vert B_{0}\right\vert =q$
since $B_{0}$ is a $GF(2)$-subspace of $V$ and since $K$ preserves $%
L(0,0)$ and acts semiregularly on $V^{\ast }$. Thus%
\begin{equation*}
B=\left\lbrace  
(m_{1}^{\sigma+2}x_{0},m_{1}^{\sigma }y_{0},m_{2}^{-\sigma}z_{1},m_{2}^{-\sigma-2
}t_{1}) : \; m_{1},m_{2}\in GF(q) \right\rbrace,
\end{equation*} 
since $B$ is a $GF(2)$-subspace of $V$ by Lemma \ref{oh}, which is the assertion with $z_{1}$ and $t_{1}$ simply denoted by $z_{0}$ and $t_{0}$ respectively.
\end{proof}

\begin{proposition}
\label{IntOrb}$\left\vert B\cap \mathcal{O}\right\vert =q-1$ if, and only if, $x_{0},y_{0},z_{0},t_{0}\neq 0$ and one of the following holds:
\begin{enumerate}
\item  $x_{0}z_{0}^{\sigma +1}=y_{0}^{\sigma +1}t_{0}$;

\item $x_{0}z_{0}^{\sigma +1}\neq y_{0}^{\sigma +1}t_{0}$ and the map $\zeta
:X\rightarrow \frac{(y_{0}/t_{0})^{\sigma }X^{\sigma }+(z_{0}/t_{0})^{\sigma
+2}}{(x_{0}/t_{0})X^{\sigma }+(z_{0}/t_{0})(y_{0}/t_{0})}$ of $P\Gamma
L_{2}(q)$ fixes exactly one point in $PG_{1}(q)\setminus \{0,\infty\}$.
\end{enumerate}
\end{proposition}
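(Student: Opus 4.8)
The plan is to compute $\left\vert B\cap \mathcal{O}\right\vert $ directly from the shape of $B$ furnished by Lemma~\ref{block} and the description (\ref{Ovoid}) of $\mathcal{O}$, sorting the vectors of $B$ according to which of the three types in (\ref{Ovoid}) they can realise. Write a typical vector of $B$ as $v=(m_{1}^{\sigma +2}x_{0},m_{1}^{\sigma }y_{0},m_{2}^{-\sigma }z_{0},m_{2}^{-\sigma -2}t_{0})$. First I would handle the vectors of $B$ lying in $L(\infty )$, i.e.\ those with vanishing last two coordinates: by (\ref{Ovoid}) such a vector lies in $\mathcal{O}$ only if it has the form $(c,0,0,0)$ with $c\neq 0$, which forces $y_{0}=0$ (and then $x_{0}\neq 0$ since $(x_{0},y_{0})\neq (0,0)$). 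Hence this part of $B$ meets $\mathcal{O}$ in $q-1$ vectors if $y_{0}=0$ and in none if $y_{0}\neq 0$; the symmetric statement with $L(0,0)$ in place of $L(\infty )$ takes care of the vectors with vanishing first two coordinates. The degenerate case $t_{0}=0$, in which every vector of $B$ has last coordinate $0$, is then settled at once, the resulting value of $\left\vert B\cap \mathcal{O}\right\vert $ being incompatible with the conclusion in the standing regime $H_{0,B}\in \{K,K\left\langle \phi \right\rangle \}$; so I assume $t_{0}\neq 0$ henceforth.

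With $t_{0}\neq 0$ I would parametrise the vectors of $B$ having non-zero last coordinate by $m_{2}\in GF(q)^{\ast }$ and $s:=m_{1}/m_{2}$. Substituting $v$ into (\ref{Ovoid}), clearing denominators, and using that $\sigma $ and $\sigma +2$ are automorphisms of $GF(q)$ while $\sigma ^{2}$ is the Frobenius squaring, the condition $v\in \mathcal{O}$ becomes the single equation
\[
x_{0}t_{0}^{\sigma +1}s^{\sigma +2}=z_{0}^{\sigma +2}+z_{0}y_{0}t_{0}^{\sigma }s^{\sigma }+y_{0}^{\sigma }t_{0}^{2}s^{2},
\]
which no longer involves $m_{2}$. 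Consequently the number of such vectors of $B$ in $\mathcal{O}$ is $(q-1)N$, where $N$ is the number of solutions $s\in GF(q)$ of the displayed equation, and in total $\left\vert B\cap \mathcal{O}\right\vert =\varepsilon (q-1)+(q-1)N$, where $\varepsilon =1$ if $y_{0}=0$ and $\varepsilon =0$ otherwise.

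The heart of the proof is to identify $N$ with the number of fixed points of $\zeta $. Dividing the displayed equation by $t_{0}^{\sigma +2}$, rewriting it as an equivalent equation of half the degree (possible in characteristic $2$, where Frobenius is surjective and a sum of squares is a square), and applying the bijective substitution $s\mapsto s^{2\sigma }$, one finds that it has exactly as many solutions in $GF(q)$ as the polynomial $P(X)=(x_{0}/t_{0})X^{\sigma +1}+(y_{0}/t_{0})^{\sigma }X^{\sigma }+(z_{0}/t_{0})(y_{0}/t_{0})X+(z_{0}/t_{0})^{\sigma +2}$ has roots in $GF(q)$; and $P(X)=0$ is precisely the equation $\zeta (X)=X$. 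Thus $N$ equals the number of fixed points of $\zeta $ on $PG_{1}(q)\setminus \{\infty \}$, and $\left\vert B\cap \mathcal{O}\right\vert =q-1$ is equivalent to $\varepsilon +N=1$. I would then rule out $\varepsilon =1$ and the vanishing of any of $x_{0},z_{0},t_{0}$: for instance, if $y_{0}=0$ then $x_{0}\neq 0$, so $P(X)=(x_{0}/t_{0})X^{\sigma +1}+(z_{0}/t_{0})^{\sigma +2}$ has exactly one root because $X\mapsto X^{\sigma +1}$ is a bijection of $GF(q)$, giving $N=1\neq 0$; the remaining exclusions follow from a similar case analysis, repeatedly using that $X\mapsto X^{\sigma -1}$, $X\mapsto X^{\sigma +1}$ and $X\mapsto X^{\sigma +2}$ are bijections of $GF(q)$ (each relevant greatest common divisor with $q-1$ being $1$), which yields $N\in \{0,2\}$ and hence $\left\vert B\cap \mathcal{O}\right\vert \neq q-1$ in the case at hand. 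This leaves $x_{0},y_{0},z_{0},t_{0}\neq 0$ together with $N=1$; and since $\zeta $ then fixes neither $0$ nor $\infty $, $N=1$ means exactly that $\zeta $ fixes a unique point of $PG_{1}(q)\setminus \{0,\infty \}$.

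It remains to separate case (1). One checks that condition (1), $x_{0}z_{0}^{\sigma +1}=y_{0}^{\sigma +1}t_{0}$, is exactly the vanishing of the determinant of the matrix defining $\zeta $; equivalently it is the condition for $P(X)$ to split over $GF(q)$ as a scalar multiple of $(X^{\sigma }+\beta )(X+\gamma )$, and it is also, by a short calculation using $\sigma ^{2}=$ squaring and the bijectivity of $X\mapsto X^{\sigma +1}$, exactly the condition for these two factors to share their root. So under condition (1) one automatically has $N=1$, which is the first alternative in the statement; and when condition (1) fails, $\zeta $ is a non-degenerate element of $P\Gamma L_{2}(q)$ with no such factorisation, so $\left\vert B\cap \mathcal{O}\right\vert =q-1$ has to be imposed as $N=1$, i.e.\ as $\zeta $ fixing a unique point of $PG_{1}(q)\setminus \{0,\infty \}$, which is the second alternative. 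I expect the main obstacle to be the characteristic-$2$ manipulation turning the one-variable equation into the fixed-point equation of $\zeta $ (the reduction to half the degree together with the substitution $s\mapsto s^{2\sigma }$); the subsidiary case analysis showing that a vanishing coordinate prevents $\left\vert B\cap \mathcal{O}\right\vert $ from equalling $q-1$ is a lesser difficulty.
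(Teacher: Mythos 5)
Your reduction is, in substance, the paper's own: you count $K$-orbit representatives (your parameter $s$ plays the role of the paper's $m_{1}$), your displayed membership equation is exactly the paper's equation (\ref{ug1}) after clearing denominators, and the passage to the fixed-point equation of $\zeta$ via the substitution $X=s^{\sigma}$ together with $\sigma^{2}=2$ is the same key step (your phrase ``$s\mapsto s^{2\sigma}$'' should just be $X=s^{\sigma}$, but the polynomial $P(X)$ you arrive at is correct). Your uniform treatment of alternative (1) through the factorisation of $P(X)$ as a scalar multiple of $(X^{\sigma}+\beta)(X+\gamma)$, with the two roots coinciding by the bijectivity of $X\mapsto X^{\sigma+1}$, is a slightly tidier packaging of the paper's case split (I)/(II)(a) and is correct.

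There is, however, a genuine gap in your exclusion of vanishing coordinates. The blanket claim that the remaining degenerate cases give $N\in\{0,2\}$ fails when $x_{0}=z_{0}=0$ (hence $y_{0},t_{0}\neq 0$): there $P(X)=(y_{0}/t_{0})^{\sigma}X^{\sigma}$ has the single root $X=0$, one has $\varepsilon=0$, and in fact $\left\vert B\cap \mathcal{O}\right\vert=q-1$, because in this case $B=\left\langle (0,1,0,0),(0,0,0,1)\right\rangle _{GF(q)}$ meets $\mathcal{O}$ precisely in the $q-1$ vectors $(0,0,0,d)$, $d\neq 0$. No counting of solutions can eliminate this tuple, since the count genuinely equals $q-1$; it must be excluded by the standing hypothesis $H_{0,B}\in \{K,K\left\langle \phi \right\rangle\}$, for this $B$ is the image under $\phi$ of the Family 1 block $\left\langle (1,0,0,0),(0,0,1,0)\right\rangle _{GF(q)}$ of Example \ref{Ex2}, so that $H_{0,B}\cong F_{q(q-1)}\nleq D_{2(q-1)}$ --- which is exactly how the paper disposes of the subcase $z_{0}=0$, $x_{0}=0$ in its proof. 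You do invoke the standing regime for $t_{0}=0$, so the repair is available inside your own framework, but as written the ``only if'' direction of Proposition \ref{IntOrb} is not established.
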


\begin{proof}
Since $K$ acts semiregularly on $B^{\ast }$, where $B$ is as in Lemma \ref{block}, it follows that 
\[
\mathcal{R}=\left\{ \left( x_{0},y_{0},0,0\right) ,\left( m_{1}^{\sigma
+2}x_{0},m_{1}^{\sigma }y_{0},z_{0},t_{0}\right) :m_{1}\in GF(q)\right\} 
\]%
is a system of distinct representatives of the $K$-orbits on $B^{\ast }$.
Hence $\left\vert B\cap \mathcal{O}\right\vert =q-1$ is equivalent to $\left\vert B\cap \mathcal{R}\right\vert =1$, since $\mathcal{O}$ is a $H_{0}$-orbit.

Assume that $t_{0}=0$. Then $\left( m_{1}^{\sigma +2}x_{0},m_{1}^{\sigma
}y_{0},z_{0},0\right) \in \mathcal{O}$ if and only if $m_{1}^{\sigma
+2}x_{0}=1$ and $y_{0}=z_{0}=0$. So $\left\vert B\right\vert =q
$, which is not the case. 

Assume that $t_{0}\neq 0$. The previous argument rules out the case $x_{0}=y_{0}=z_{0}=0$. Therefore $(x_{0},y_{0},z_{0}) \neq (0,0,0)$ and hence $\left( m_{1}^{\sigma +2}x_{0},m_{1}^{\sigma
}y_{0},z_{0},t_{0}\right) \in \mathcal{O}$ if, and only if, 
\[
\left( m_{1}^{\sigma +2}x_{0},m_{1}^{\sigma }y_{0},z_{0},t_{0}\right) =\frac{%
1}{m^{2+\sigma }}\left( w^{\sigma }+l^{\sigma +2}+lw,w,l,1\right) \text{.}
\]%
Suppose that $z_{0}=0$. Then $l=0$, $m_{1}^{\sigma+2}x_{0}=w^{\sigma}t_{0}$ and $m_{1}^{\sigma}y_{0}=wt_{0}$. Also $(0,0,0,1)\in B \cap \mathcal{R}$ as $t_{0} \neq 0$. Then $\left\vert B\cap \mathcal{R}\right\vert =1$ implies $y_{0}\neq 0$, since $(x_{0},y_{0}) \neq (0,0)$. If $x_{0}=0$, then $B= \left\langle (1,0,0,0),(0,0,1,0) \right\rangle^{\phi}_{GF(q)}$ but in this case $H_{0,B} \nleq D_{2(q-1)}$ (see (\ref{Ex2}) (Family 1)). Thus $x_{0},y_{0},t_{0} \neq 0$ and hence $m_{1}^{\sigma}=t_{0}^{1-\sigma}y_{0}^{\sigma}/x_{0}$ and $w=y_{0}^{1+\sigma}/x_{0}t_{0}^{\sigma}$ provide a further point of $B\cap \mathcal{R}$. So this case is excluded.

Suppose that $z_{0}\neq 0$. Then we may argue as above with $B^{\phi}$ in the role of $B$ thus obtaining $x_{0},y_{0} \neq 0$. Therefore we have proven that $\left\vert B\cap \mathcal{R}\right\vert =1$ (and $H_{0,B} \leq D_{2(q-1)}$) implies $x_{0},y_{0},z_{0},t_{0} \neq 0$. Thus $B_{0}\cap \mathcal{R} = \varnothing$ and hence $m_{1},w \neq 0$. So, $\left\vert B\cap \mathcal{R}\right\vert =1$ if, and only if, there are unique $l,m,w\in GF(q)$,
with $m\neq 0$, such that $m^{2+\sigma }=1/t_{0}$ and $l=z_{0}/t_{0}$
and the ratios $\frac{l^{\sigma +2}+lw+w^{\sigma }}{m_{1}^{\sigma +2}}$ and $%
w/m_{1}^{\sigma }$ are $x_{0}/t_{0}$ and $y_{0}/t_{0}$ respectively. Some computations leads to
\begin{equation}
\left( (x_{0}/t_{0})m_{1}^{2}+(z_{0}/t_{0})(y_{0}/t_{0})\right)
m_{1}^{\sigma }=(y_{0}/t_{0})^{\sigma }m_{1}^{2}+(z_{0}/t_{0})^{\sigma +2}%
\text{.}  \label{ug1}
\end{equation}  
\begin{enumerate}
\item[(I).] If $(x_{0}/t_{0})m_{1}^{2}+(z_{0}/t_{0})(y_{0}/t_{0})=0$, then $m_{1}^{2}=\frac{y_{0}z_{0}}{t_{0}x_{0}}$ as $%
x_{0},t_{0}\neq 0$. On the other hand $%
m_{1}^{2}=\frac{(z_{0}/t_{0})^{\sigma +2}}{(y_{0}/t_{0})^{\sigma }}=\frac{%
z_{0}^{\sigma +2}}{t_{0}^{2}y_{0}^{\sigma }}$ by (\ref{ug1}), hence $\frac{y_{0}z_{0}}{%
t_{0}x_{0}}=m_{1}=\frac{z_{0}^{\sigma +2}}{t_{0}^{2}y_{0}^{\sigma }}$. So, in this case
$\left\vert B\cap \mathcal{R}\right\vert =1$ occurs if and only if
$x_{0}z_{0}^{\sigma +1}=y_{0}^{\sigma +1}t_{0}$, with $x_{0},y_{0},z_{0},t_{0} \neq 0$, which is (1).

\item[(II).] If $(x_{0}/t_{0})m_{1}^{2}+(z_{0}/t_{0})(y_{0}/t_{0})\neq 0$,
then $$m_{1}^{\sigma }=\frac{(y_{0}/t_{0})^{\sigma }(m_{1}^{\sigma })^{\sigma
}+(z_{0}/t_{0})^{\sigma +2}}{(x_{0}/t_{0})(m_{1}^{\sigma })^{\sigma
}+(z_{0}/t_{0})(y_{0}/t_{0})}.$$ 
\begin{enumerate}
\item[(a).] If $(y_{0}/t_{0})^{\sigma
+1}+(x_{0}/t_{0})(z_{0}/t_{0})^{\sigma +1}=0$, then $t_{0}y_{0}^{\sigma
+1}=z_{0}^{\sigma +1}x_{0}$ and hence $x_{0}\neq 0$ and $m_{1}^{\sigma }=%
\frac{z_{0}}{t_{0}^{\sigma +1}y_{0}}$, which is still (1).
\item[(b).] If $(y_{0}/t_{0})^{\sigma +1}+(x_{0}/t_{0})(z_{0}/t_{0})^{\sigma +1}\neq 0$,
then $m_{1}^{\sigma }$ is a point of $PG_{1}(q)$ fixed by 
\[
\zeta :X\rightarrow \frac{(y_{0}/t_{0})^{\sigma }X^{\sigma
}+(z_{0}/t_{0})^{\sigma +2}}{(x_{0}/t_{0})X^{\sigma
}+(z_{0}/t_{0})(y_{0}/t_{0})},
\]%
with $\zeta \in P\Gamma L_{2}(q)$. Clearly $\left\vert B\cap \mathcal{R}%
\right\vert =\left\vert \mathrm{Fix}(\zeta )\right\vert $. It is worth nothing that $x_{0},z_{0} \neq 0$ rule out the possibility for $\zeta$ to fix $\infty,0$ respectively. Hence $\left\vert
B\cap \mathcal{R}\right\vert =1$ if, and only if, $\zeta $ fixes exactly one
point in $PG_{1}(q) \setminus \{0,\infty\}$, and we get (2).
\end{enumerate}
\end{enumerate}
\end{proof}

\bigskip 

\begin{corollary}
\label{DiheD}$H_{0,B}=K\left\langle \phi \right\rangle $ if, and only if, 
\begin{equation}\label{mantua}
B=\left\{ (m_{1}^{\sigma +2}x_{0},m_{1}^{\sigma }y_{0},m_{2}^{-\sigma
}y_{0},m_{2}^{-\sigma -2}x_{0}):m_{1},m_{2}\in GF(q)\right\}.
\end{equation}
\end{corollary}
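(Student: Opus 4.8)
The plan is to reduce the statement to a single condition on $B$ and then check it by a direct substitution in the parametrisation of Lemma \ref{block}. Since $K\le H_{0,B}$ by our choice of $B$, since $H_{0,B}\in\{K,K\langle\phi\rangle\}$ by the discussion following Lemma \ref{solo2}, since $\phi\in\Sigma=H_{0}$, and since $\langle K,\phi\rangle=K\langle\phi\rangle$ is a group, the equality $H_{0,B}=K\langle\phi\rangle$ holds if and only if $\phi$ stabilises $B$. I will also use that $\phi$ acts on $V$ by reversing coordinates, $(v_{1},v_{2},v_{3},v_{4})^{\phi}=(v_{4},v_{3},v_{2},v_{1})$, and that, because $\left\vert B\cap\mathcal{O}\right\vert=q-1$, Proposition \ref{IntOrb} forces $x_{0},y_{0},z_{0},t_{0}\neq 0$ in the description of $B$ supplied by Lemma \ref{block}; this, together with the fact that $u\mapsto u^{\sigma}$ and $u\mapsto u^{\sigma+2}$ are bijections of $GF(q)$, legitimises the changes of parameter below.

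For the direction $(\Leftarrow)$ I would assume $B$ equals (\ref{mantua}), i.e. $z_{0}=y_{0}$ and $t_{0}=x_{0}$ in Lemma \ref{block} (note $x_{0},y_{0}\neq0$, since for $B$ a block Proposition \ref{IntOrb} applied to the tuple $(x_{0},y_{0},y_{0},x_{0})$ forces all four scalars non-zero, condition (1) being trivially satisfied). Reversing the coordinates of a generic vector of $B$ and substituting $n_{1}=m_{2}^{-1}$, $n_{2}=m_{1}^{-1}$ turns $(m_{2}^{-\sigma-2}x_{0},m_{2}^{-\sigma}y_{0},m_{1}^{\sigma}y_{0},m_{1}^{\sigma+2}x_{0})$ into $(n_{1}^{\sigma+2}x_{0},n_{1}^{\sigma}y_{0},n_{2}^{-\sigma}y_{0},n_{2}^{-\sigma-2}x_{0})$, which is again an element of $B$ (the degenerate parameter values, those producing $B_{\infty}=B\cap L(\infty)$ and $B_{0}=B\cap L(0,0)$, being interchanged by $\phi$). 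Hence $B^{\phi}\subseteq B$, and as $\phi$ is an involution, $B^{\phi}=B$; therefore $\phi\in H_{0,B}$ and $H_{0,B}=K\langle\phi\rangle$.

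For the direction $(\Rightarrow)$ I would assume $\phi$ stabilises $B$. By Lemma \ref{block} the vector $(x_{0},y_{0},0,0)$ lies in $B_{\infty}\subseteq B$, so $(0,0,y_{0},x_{0})=(x_{0},y_{0},0,0)^{\phi}\in B$. On the other hand, since $x_{0},y_{0}\neq 0$, every non-zero vector of $B$ whose first two coordinates vanish is obtained from Lemma \ref{block} by putting $m_{1}=0$, hence has the form $(0,0,m_{2}^{-\sigma}z_{0},m_{2}^{-\sigma-2}t_{0})$ with $m_{2}\in GF(q)^{\ast}$. Comparing, there is $m_{2}\in GF(q)^{\ast}$ with $y_{0}=m_{2}^{-\sigma}z_{0}$ and $x_{0}=m_{2}^{-\sigma-2}t_{0}$; writing $c=m_{2}^{-1}$ this reads $z_{0}=c^{-\sigma}y_{0}$ and $t_{0}=c^{-\sigma-2}x_{0}$. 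Substituting into Lemma \ref{block}'s description and reparametrising $m_{2}\mapsto c m_{2}$ then puts $B$ into the form (\ref{mantua}).

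The computational heart of the argument is the one-line substitution in $(\Leftarrow)$; the only part needing genuine care is the bookkeeping in $(\Rightarrow)$ — identifying exactly which vectors of $B$ lie in $B_{0}=B\cap L(0,0)$ and checking that $\phi$ sends the generic vector of $B_{\infty}$ into $B_{0}$ — and remembering to invoke Proposition \ref{IntOrb} at the outset so that $x_{0},y_{0},z_{0},t_{0}\neq 0$, which is what makes the reparametrisations licit. Everything else is routine.
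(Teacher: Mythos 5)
Your proposal is correct and follows essentially the same route as the paper: both directions hinge on observing that $\phi$ sends $(x_{0},y_{0},0,0)\in B_{\infty}$ to $(0,0,y_{0},x_{0})$, forcing $z_{0},t_{0}$ to be proportional to $y_{0},x_{0}$ up to the $K$-reparametrisation, while the converse is a direct check that $\phi$ permutes the vectors of (\ref{mantua}) (via $(m_{1},m_{2})\mapsto(m_{2}^{-1},m_{1}^{-1})$) combined with the maximality of $K\left\langle \phi \right\rangle$ in $H_{0}$. You merely make explicit the computation the paper dismisses with ``clearly'' and replace its $GF(2)$-direct-sum decomposition of $B$ by an equivalent parameter-matching argument; the use of Proposition \ref{IntOrb} to secure $x_{0},y_{0},z_{0},t_{0}\neq 0$ is the same in both.
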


\begin{proof}
Clearly $K\left\langle \phi \right\rangle $ stabilizes $B$ when this one is as in (\ref{mantua}). Actually $%
H_{0,B}=K\left\langle \phi \right\rangle $, since $K\left\langle \phi
\right\rangle $ is maximal in $H_{0}$.

Now assume that $H_{0,B}=K\left\langle \phi \right\rangle $. Then $B$ is as in Lemma \ref{block}. Also $(x_{0},y_{0},z_{0},t_{0})$ fulfills one of the constraints of Proposition \ref{IntOrb}. Note that $%
(0,0,y_{0},x_{0})\in B$, as it is the image of $%
(x_{0},y_{0},0,0)$ under $\phi $. Therefore,
$$B=\left\langle
(x_{0},y_{0},0,0)^{K}\right\rangle _{GF(2)}\oplus \left\langle
(0,0,y_{0},x_{0})^{K}\right\rangle _{GF(2)}$$
and hence $B$ is as in (\ref{mantua}).
\end{proof}

\begin{theorem}
\label{LemSuz}Let $\mathcal{D}$ be a $2$-$(q^{4},q^{2},\lambda )$ design,
with $\lambda \mid q^{2}$, admitting $G=TG_{0}$, where $Sz(q)\trianglelefteq
G_{0}$, as a flag-transitive automorphism group. Let $B$ a block of $\mathcal{D}$ incident with $0$ such that either $H_{0,B} \leq U:K$, then one of the following holds:
\begin{enumerate}
\item $H_{0,B}=Z(U):K$ and $\mathcal{D}$ is a $2$-$(q^{4},q^{2},q )$ design isomorphic to that constructed in Example (\ref{Ex2}) (Family 1);
\item $H_{0,B}=K\left\langle \phi \right\rangle$ and $\mathcal{D}$ is a $2$-$(q^{4},q^{2},q^{2}/2 )$ design isomorphic to that constructed in Example (\ref{Ex2}) (Family 2);
\item $H_{0,B}=K$ and $\mathcal{D}$ is a $2$-$(q^{4},q^{2},q^{2} )$ design isomorphic to that constructed in Example (\ref{Ex2}) (Families 3 or 4);
\end{enumerate}
\end{theorem}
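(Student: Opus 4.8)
The plan is to synthesise the results of this subsection, reading off $\mathcal{D}$ in each of the surviving possibilities for $H_{0,B}$ and matching the block $B$ against the base blocks of the four families of Example~\ref{Ex2}. Throughout I would work inside $H=TH_{0}$ with $H_{0}=(G_{0})^{\infty}\cong Sz(q)$: by Proposition~\ref{NChar}(2) this group acts flag-transitively on $\mathcal{D}$, and $B$ may be chosen incident with $0$ so that $H_{0,B}$ is one of $K$, $Z(U):K$, $U:K$ or $K\langle\phi\rangle$. The three groups between $K$ and $U:K$ are the only ones occurring there because $Z(U):K$ and $(U/Z(U)):K$ are Frobenius, so $1$, $Z(U)$ and $U$ are the only $K$-invariant subgroups of $U$; likewise $K$ and $K\langle\phi\rangle=D_{2(q-1)}$ are the only groups between $K$ and $D_{2(q-1)}$. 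In every case $|B\cap\mathcal{O}|=q-1$, by the count preceding Lemma~\ref{solo2}.

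First I would discard $H_{0,B}=U:K$ via Lemma~\ref{solo2}(1): it forces $B=L(\infty)$, making $\mathcal{D}$ the L\"{u}neburg plane, against $\lambda\geq 2$. If $H_{0,B}=Z(U):K$, then $r=[H_{0}:H_{0,B}]=q(q^{2}+1)$, so $\lambda=q$, and Lemma~\ref{solo2}(2) gives that $\mathcal{D}$ is the design of Example~\ref{Ex2} (Family~1); this is conclusion~(1). If $H_{0,B}=K\langle\phi\rangle$, then $r=[H_{0}:H_{0,B}]=q^{2}(q^{2}+1)/2$, hence $\lambda=q^{2}/2$; by Lemma~\ref{oh} $B$ is a $2h$-dimensional $GF(2)$-subspace, by Lemma~\ref{block} it has the displayed parametric shape, and by Corollary~\ref{DiheD} it is of the form~(\ref{mantua}), that is $(z_{0},t_{0})=(y_{0},x_{0})$. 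Since $|B\cap\mathcal{O}|=q-1$, Proposition~\ref{IntOrb} forces $x_{0},y_{0},z_{0},t_{0}\neq 0$ (the relation $t_{0}y_{0}^{\sigma+1}=z_{0}^{\sigma+1}x_{0}$ being automatic here), so $B$ is a base block of Family~2; as $B^{H_{0}}=B^{G_{0}}$ by the uniqueness of the $G_{0}$-class of $D_{2(q-1)}$ (property (iv) and \cite[Table 8.16]{BHRD}), $\mathcal{D}=(V,B^{G})$ is the design of Family~2, which is conclusion~(2).

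Finally, if $H_{0,B}=K$, then $r=[H_{0}:H_{0,B}]=q^{2}(q^{2}+1)$, so $\lambda=q^{2}$; again Lemmas~\ref{oh} and~\ref{block} put $B$ into parametric shape as a $2h$-dimensional $GF(2)$-subspace, and Proposition~\ref{IntOrb}, using $|B\cap\mathcal{O}|=q-1$, yields $x_{0},y_{0},z_{0},t_{0}\neq 0$ together with either $x_{0}z_{0}^{\sigma+1}=y_{0}^{\sigma+1}t_{0}$ or $x_{0}z_{0}^{\sigma+1}\neq y_{0}^{\sigma+1}t_{0}$ with $\zeta$ fixing exactly one point of $PG_{1}(q)\setminus\{0,\infty\}$. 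In the former subcase $(z_{0},t_{0})\neq(y_{0},x_{0})$, since otherwise Corollary~\ref{DiheD} would force $H_{0,B}=K\langle\phi\rangle$; thus $B$ is a base block of Family~3, and in the latter subcase of Family~4. Invoking $B^{H_{0}}=B^{G_{0}}$ once more, $\mathcal{D}$ is the design of Example~\ref{Ex2} (Families~3 or~4), which is conclusion~(3).

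The task here is organisational rather than conceptual: the delicate point is to line up the constraints on $(x_{0},y_{0},z_{0},t_{0})$ furnished by Proposition~\ref{IntOrb} and Corollary~\ref{DiheD} with the defining conditions of Families~1--4 and to verify $B^{H}=B^{G}$, so that the incidence structure built from a model base block genuinely coincides with $\mathcal{D}$. No computation beyond those already carried out in the section is needed; the statement is the synthesis of Lemmas~\ref{solo2}, \ref{oh}, \ref{block}, Proposition~\ref{IntOrb} and Corollary~\ref{DiheD}.
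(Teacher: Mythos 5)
Your proposal is correct and follows essentially the same route as the paper: rule out $H_{0,B}=U{:}K$ by Lemma \ref{solo2}(1), settle $Z(U){:}K$ by Lemma \ref{solo2}(2), and for $K\langle\phi\rangle$ and $K$ combine Lemmas \ref{oh} and \ref{block} with Proposition \ref{IntOrb} and Corollary \ref{DiheD} to match $B$ to the base blocks of Families 1--4. The extra details you supply (the explicit values of $r$ and $\lambda$ in each case, and the verification $B^{H_{0}}=B^{G_{0}}$) are consistent with what the surrounding section has already established.
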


\begin{proof} Let $B$ a block of $\mathcal{D}$ incident with $0$ such that either $H_{0,B} \leq U:K$ or $H_{0,B}=K\left\langle \phi \right\rangle$. Actually $H_{0,B} \neq U:K$ by Lemma \ref{solo2}(1). Therefore $H_{0,B}$ is isomorphic to one of the group $Z(U):K$, $K$ or $K\left\langle \phi \right\rangle$. In the former case $\mathcal{D}$ is a $2$-$(q^{4},q^{2},q )$ design isomorphic to that constructed in Example (\ref{Ex2}) (Family 1) by Lemma \ref{solo2}(2).

Assume that $H_{0,B}=K\left\langle \phi \right\rangle$. Then $B$ is as in (\ref{mantua}) of Corollary \ref{DiheD}, hence $\mathcal{D}$ is a $2$-$(q^{4},q^{2},q^{2}/2 )$ design isomorphic to that constructed in Example (\ref{Ex2}) (Family 2).

Finally, assume that $H_{0,B}=K$. Then $B$ is as in Lemma \ref{block} with $(x_{0},y_{0},z_{0},t_{0})$ fulfilling one of the constraints of Proposition \ref{IntOrb} and such that $(z_{0},t_{0}) \neq (y_{0},x_{0})$ by Corollary \ref{DiheD}. Hence, $\mathcal{D}$ is a $2$-$(q^{4},q^{2},q^{2} )$ design isomorphic to that constructed in Example (\ref{Ex2}) (Families 3 or 4).       
\end{proof}

\bigskip

\begin{proof}[Proof of Theorem \protect\ref{ClassS}]
Assume that $G_{0}$ is a nearly simple group. Hence, the socle $S$ of $G_{0}/\left( G_{0}\cap Z\right)$, where $Z$ is the center of $GL(V)$, is a non-abelian simple group. Then $S$ is not sporadic by Lemma \ref{NotSpor}. If $S$ is alternating or a Lie type simple group in characteristic $p^{\prime }$, then $\mathcal{D}$ is isomorphic to the $2$-design constructed in Example \ref{Ex0} for $q=2$ by Lemmas \ref{NotFDPM} and \ref{Cross} respectively. Finally, if $S$ is a Lie type simple group in characteristic $p$, then $\mathcal{D}$ is isomorphic to
one of the $2$-designs constructed in Examples \ref{Ex1} or \ref{Ex2} by Proposition \ref{NChar} and Theorem \ref{LemSuz}. This completes the proof.
\end{proof}

\bigskip

\section{The case where $G_{0}\leq (D_{8}\circ Q_{8}).S_{5}$ and $(q,n)=(3,4)$}\label{S3}

This small section is devoted to the analysis of the $2$-$(3^{4},3^{2},3)$
designs admitting a flag-transitive automorphism group $G=TG_{0}$, where $%
G_{0}\leq (R\circ SL_{2}(5)):Z_{2}$ and $R\cong D_{8}\circ Q_{8}$. In order
to do so, we need the information contained in the following lemma.

\begin{lemma}
\label{uniCc}The following hold:

\begin{enumerate}
\item $GL_{4}(3)$ contains a unique conjugacy class of subgroups isomorphic
to $SL_{2}(5)$, $N_{GL_{4}(3)}(SL_{2}(5))\cong (Z_{8}\circ SL_{2}(5)):Z_{2}$
and $SL_{2}(5)<Sp_{4}(3)$.

\item $SL_{2}(5)$ has precisely two orbits of length $30$ on the set of $2$%
-dimensional subspaces of $V_{4}(3)$ and these are permuted transitively by $%
N_{GL_{4}(3)}(SL_{2}(5))$.
\end{enumerate}
\end{lemma}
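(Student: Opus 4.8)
The plan is to establish both statements by a careful study of the subgroup $X = SL_{2}(5)$ inside $GL_{4}(3)$, using the fact (from Aschbacher's theorem / the analysis underlying \cite[Theorem 3.1]{BP}) that a $4$-dimensional faithful $GF(3)$-representation of $SL_{2}(5)$ exists and is unique up to equivalence and Galois twisting. First I would recall that $SL_{2}(5) \cong 2.A_{5}$ has a $2$-dimensional faithful representation over $GF(9)$ (the natural one, since $9 \equiv 4 \pmod 5$ and $5 \mid 9^{2}-1$), and that restricting scalars yields a $4$-dimensional $GF(3)$-representation; conversely any faithful $4$-dimensional $GF(3)$-module for $SL_{2}(5)$ must be of this shape because the Brauer character of a faithful irreducible is not realizable over $GF(3)$ itself. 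This gives absolute irreducibility over $GF(9)$ but not over $GF(3)$, so by standard Schur-index / Galois-descent arguments $X$ is determined up to conjugacy in $GL_{4}(3)$, and its normalizer is the group of $GF(3)$-semilinear maps normalizing the $GF(9)$-structure together with the centralizer. The centralizer of $X$ in $GL_{4}(3)$ is $GF(9)^{\ast} \cong Z_{8}$ acting as scalars over $GF(9)$, and the Galois automorphism $x \mapsto x^{3}$ of $GF(9)$ normalizes $X$ (it permutes the two twists of the module, which are swapped by an outer automorphism of $SL_{2}(5)$, namely conjugation inside $\Sigma L_{2}(9)$); hence $N_{GL_{4}(3)}(X) \cong (Z_{8} \circ SL_{2}(5)):Z_{2}$, with the central product identifying the scalar $-1 \in Z_{8}$ with $-1 \in SL_{2}(5)$. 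For the inclusion $SL_{2}(5) < Sp_{4}(3)$ I would observe that the $GF(9)$-module carries an $SL_{2}(5)$-invariant symplectic (alternating) form — the natural symplectic form on the $2$-dimensional $SL_{2}(9)$-module — whose trace down to $GF(3)$, i.e. $(u,v) \mapsto \mathrm{Tr}_{GF(9)/GF(3)}(\langle u,v\rangle)$, is a non-degenerate alternating $GF(3)$-form preserved by $X$; this lands $X$ inside a copy of $Sp_{4}(3)$, proving (1).

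For part (2), the plan is to use the transitive action of $X = SL_{2}(5)$ on the $1$-spaces of its module and the structure of stabilizers. Since each cyclic subgroup of order $5$ in $X$ acts irreducibly on $V_{4}(3)$ (as $5 \nmid 3^{i}-1$ for $i=1,2,3$ and $5 \mid 3^{4}-1$), and since $-1$ is the unique involution of $X$, the argument already used in the proof of Example \ref{Ex3} shows that the $40$ one-dimensional $GF(3)$-subspaces of $V_{4}(3)$ split into two $X$-orbits of length $20$, and that for a subgroup $K \cong Z_{3}$ of $X$ the fixed space $\mathrm{Fix}(K)$ is $2$-dimensional and $\mathcal{S} = \mathrm{Fix}(K)^{X}$ is a Desarguesian $2$-spread of $V_{4}(3)$ of size $10$, transitively permuted by $X$. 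I would then count: $X$ acts on the $40$ totally isotropic $2$-spaces of the symplectic geometry (with respect to the invariant form of part (1)), and on these $X$-orbits the stabilizer of a member $B$ has order $|X|/|B^{X}|$. The spread $\mathcal{S}$ is one $X$-orbit of length $10$ on totally isotropic $2$-spaces with stabilizer of order $12$ (a copy of $Z_{3}.Z_{4}$, namely $N_{X}(K)$), while on the remaining $30$ totally isotropic $2$-spaces the stabilizer in $X$ of a member is forced to be $\langle \alpha\rangle \cong Z_{4}$ exactly as shown in the proof of Example \ref{Ex3} (the key point being that $Q_{8} \not\leq X_{B}$ because the Sylow $2$-subgroup of $SL_{2}(5)$ cannot fix a totally isotropic $2$-space, and $3 \nmid |X_{B}|$ because membership in $\mathcal{S}$ would force $\beta$ to fix $B$). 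Hence there is a single $X$-orbit of length $30$ on totally isotropic $2$-spaces. To get the claimed second orbit of length $30$ on \emph{all} $2$-spaces, I would similarly analyze the non-degenerate $2$-spaces: the $\binom{4}{2}_{3} - 40 = 90$ non-degenerate $2$-spaces, and among them the $X$-stabilizer of a suitable $B$ has order $4$, giving another orbit of length $30$. The main obstacle, and the step requiring the most care, is pinning down which $2$-spaces have stabilizer exactly $Z_{4}$ versus larger: one must rule out $X_{B} \cong Z_{3}.Z_{4}$ off the spread and $X_{B} \cong Q_{8}$ everywhere, which is done via the explicit matrices $\alpha, \beta, \gamma$ of Example \ref{Ex3} together with the irreducibility of the $5$- and $3$-elements; once the two orbits of length $30$ are exhibited, transitivity of $N_{GL_{4}(3)}(X) = (Z_{8}\circ SL_{2}(5)):Z_{2}$ on the pair follows because the outer $Z_{2}$ (the Galois twist) swaps the two $X$-modules and hence the two orbits, exactly as with the element $\delta$ (or rather its Galois-type coset) in Example \ref{Ex3}. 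I would cite \cite{At} and \cite{AtMod} for the relevant character-theoretic facts and \cite{KL} for the general Aschbacher-type framework, and reuse verbatim the matrix computations of Example \ref{Ex3} where possible.
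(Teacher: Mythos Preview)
Your approach to part~(1) via restriction of scalars from $GF(9)$ is different from the paper's (which argues via maximal subgroups of $GL_{4}(3)$ and the Desarguesian spread) but is essentially sound and perhaps more conceptual; both routes reach the same structural conclusions about the centralizer $Z_{8}$ and the normalizer.

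Part~(2), however, has a genuine gap. You correctly identify the orbit $\mathcal{I}\setminus\mathcal{S}$ of length $30$ on totally isotropic $2$-spaces, but your claim to produce a second length-$30$ orbit among the $90$ non-degenerate $2$-spaces is only a sketch (``the $X$-stabilizer of a suitable $B$ has order $4$''), and more importantly you never show that there are \emph{exactly} two such orbits. The matrices of Example~\ref{Ex3} only treat the totally isotropic block $B=\langle (1,0,0,0),(0,0,0,1)\rangle$, so ``reusing them verbatim'' does not touch the non-degenerate case. A counting check shows the danger: each of the fifteen $Z_{4}$'s in $X$ fixes ten $2$-spaces (the $GF(9)$-lines for the $GF(9)$-structure it defines), and after subtracting the spread contribution this leaves $120$ incidences, which a priori could come from four length-$30$ orbits rather than two. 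The paper resolves this by bringing in the two Hall $2$-spreads preserved by $X$ (each splitting into two $X$-orbits of length $5$, with stabilizer $SL_{2}(3)$), which together with $\mathcal{S}$ and a short exclusion argument force $n_{30}=2$; this ingredient is entirely absent from your proposal.

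Your argument for $N$-transitivity on the two orbits is also flawed: you say the outer $Z_{2}$ ``swaps the two $X$-modules and hence the two orbits'', but part~(1) is precisely the statement that there is a \emph{unique} $GF(3)X$-module up to isomorphism, so there are not two modules to swap. The paper instead shows directly that $N$ cannot preserve $\mathcal{I}\setminus\mathcal{S}$: if it did, a subgroup $C_{1}\leq Z_{8}$ of index at most $2$ would fix every member of $\mathcal{I}\setminus\mathcal{S}$ setwise, and then fixing the $1$-dimensional intersection of two such members forces a nontrivial $GF(9)$-scalar to fix a nonzero $GF(3)$-vector, which is impossible. This both produces the second orbit and proves $N$ permutes the two transitively.
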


\begin{proof}
Let $X$ be a subgroup of $GL_{4}(3)$ isomorphic to $SL_{2}(5)$. It follows
from \cite[Tables 8.8 and 8.9]{BHRD} that either $X$ lies in $\mathcal{C}%
_{3}$-member of $GL_{4}(3)$, and hence $X$ preserves a Desarguesian $2$%
-spread of $V$ by \cite[Theorem 1]{Dye}, or $X<Sp_{4}(3)$. Assume that the
latter occurs. The set $V^{\ast }$ is partitioned into two $X$-orbits each of
length $40$, since each Sylow $2$-subgroup of $SL_{2}(5)$ is isomorphic to $%
Q_{8}$ and its unique involution is $-1$, and since each $5$-subgroup of $X$
acts irreducibly on $V$. Thus, if $K$ is any cyclic subgroup of $X$ of order 
$3$, $K$ fixes precisely four points on each $X$-orbit of length $40$ and
hence $\mathrm{Fix(}K\mathrm{)}$ is a $2$-dimensional subspace of $V$.
Therefore $\mathcal{S}=\mathrm{Fix(}K\mathrm{)}^{X}$ is a $2$-spread of $V$,
since $N_{X}(K)\cong \left\langle -1\right\rangle .S_{3}$ and since $%
X_{x}\cong Z_{3}$ for each non-zero vector of $V$. Also $\mathcal{S}$ is
Desarguesian by \cite[Proposition 5.3 and Corollary 5.5]{Fou2}, since $X$
acts transitively on $\mathcal{S}$. The Desarguesian $2$-spreads of $V$ lie
in a unique $GL_{4}(3)$-orbit by \cite[Theorem I.11.1]{Lu}. Moreover $%
GL_{4}(3)_{\mathcal{S}}\cong \Gamma L_{2}(9)$ and it can be deduced from 
\cite{At}, that $%
GL_{4}(3)_{\mathcal{S}}$ contains a unique conjugate class of subgroups isomorphic to 
$X$. Thus $GL_{4}(3)$ contains a unique conjugacy class of subgroups
isomorphic to $X$.

Let $N=N_{\Gamma L_{2}(9)}(X)$, hence $N\cong (Z_{8}\circ SL_{2}(5)):Z_{2}$
by \cite[Lemma 11.2]{Fou}. Then $N=N_{GL_{4}(3)}(X)$ by \cite%
[Tables 8.8, 8.12 and 8.13]{BHRD}. This proves (1).

Let $X$ be the copy of $SL_{2}(5)$ provided in Example \ref{Ex3}, and denote
by $Y$ the copy $Sp_{4}(3)$ containing $X$. Then $Y$ partitions the set of $%
2 $-dimensional subspaces of $V$ into two orbits of length $40$ and $90$,
say $\mathcal{I}$ and $\mathcal{N}$. Then $\mathcal{I}$ and $\mathcal{N}$
consist of the totally isotropic $2$-dimensional subspaces and then
non-isotropic ones, respectively, with regards to the symplectic form
preserved by $Y$. Moreover, $\mathcal{I}$ is partitioned into two $X$-orbits
of length $30$ and $10$, and the one of length $10$ is a Desarguesian $2$-spread of $%
V$ (see Example \ref{Ex3} and proof therein) that we still denote
by $\mathcal{S}$ . By (1) we know that $N\cong (C\circ X):Z_{2}$, where $%
N=N_{GL_{4}(3)}(X_{0})$ and $C\cong Z_{8}$. If $N$ preserves $%
\mathcal{I \setminus S}$, then there is a subgroup $C_{1}$ of $C$, such that $\left[
C:C_{1}\right] \leq 2$, preserving at least one $2$-subspace contained in $%
\mathcal{I \setminus S}$. Then $C_{1}$ preserves each subspace in $\mathcal{I \setminus S}$,
since $C_{1}$ centralizes $X$ and $X$ acts transitively on $\mathcal{I \setminus S}$.
Let $U_{1},U_{2}\in \mathcal{I \setminus S}$ such that $\dim (U_{1}\cap U_{2})=1$.
Such subspaces do exist since $\left\vert \mathcal{I \setminus S}\right\vert =30$.
Then $C_{1}$ preserves $U_{1}\cap U_{2}$ and hence the involution $-1$,
which lies in $C_{1}$, fixes pointwise $U_{1}\cap U_{2}$, a contradiction.
Then $\left( \mathcal{I \setminus S}\right) ^{N}$ is a union of $s$ orbits under $X$
each length $30$, where $s\geq 2$ and $s$ divides $\left[ N:X\right] =8$.
Furthermore $s-1$ of these orbits are contained in $\mathcal{N}$. Let $%
\mathcal{X}$ be the union of these $s-1$ orbits. Then $\mathcal{X}\subset 
\mathcal{N}$ and $\left\vert \mathcal{X}\right\vert =30(s-1)$.

The group $X$ preserves a Hall $2$-spread $\mathcal{H}$ of $V$ and
partitions $\mathcal{H}$ into two orbits each of length $5$ by \cite[Proposition 5.3 and Corollary 5.5]{Fou2}. Clearly, $\mathcal{H\subset N}$.
Moreover, $GL_{4}(3)_{\mathcal{H}}\cong (D_{8}\circ Q_{8}).S_{5}$ by \cite%
[Theorem I.8.3]{Lu}. It is easy to see that $N_{GL_{4}(3)_{\mathcal{H}%
}}(X)\cong Z_{2}.S_{5}^{-}$ again by \cite[Proposition 5.3 and
Corollary 5.5]{Fou2}. Then $X$ preserves a further Hall $2$-spread $\mathcal{H}%
^{\prime }$ of $V$ and partitions $\mathcal{H}^{\prime }$ into two orbits of
length $5$, since $N_{GL_{4}(3)}\left( Z_{2}.S_{5}^{-}\right) \cong \left(
Z_{2}.S_{5}^{-}\right) :Z_{2}$. Also, $\mathcal{H}^{\prime }\mathcal{\subset
N}$ and hence $\mathcal{X}\subset \mathcal{N}-(\mathcal{H\cup H}^{\prime })$%
. Then $\left\vert \mathcal{N}\right\vert =90$, $\left\vert \mathcal{X}%
\right\vert =30(s-1)$, with $s\geq 2$ and $s\mid 8$, and $\left\vert 
\mathcal{H}^{\prime }\right\vert =\left\vert \mathcal{H}\right\vert =10$,
imply $s=2$ and $\left\vert \mathcal{N}-(\mathcal{H}\cup \mathcal{H}^{\prime
}\cup \mathcal{X})\right\vert =40$.

Suppose that there is a further $X$-orbit $\mathcal{O}$ of length $30$
contained in $\mathcal{N}-(\mathcal{H}\cup \mathcal{H}^{\prime }\cup 
\mathcal{X})$ and such that $\mathcal{O}\notin \left( \mathcal{I \setminus S}\right)
^{N}$. Thus $\mathcal{O}$ is also a $N$-orbit, since $\left\vert \mathcal{N}%
-(\mathcal{H}\cup \mathcal{H}^{\prime }\cup \mathcal{X})\right\vert =40$.
Then we may apply the above argument with $\mathcal{O}$ in the place of $%
\mathcal{I \setminus S}$ thus obtaining that a central cyclic subgroup of $N$ of order 
$4$, containing $-1$, preserves each $2$-dimensional subspace of $V$ contained in $\mathcal{O}$
and derive a contradiction from this. Thus the unique $X$-orbits of length $%
30$ on the set of $2$-dimensional subspaces of $V$ are those lying in $%
\left( \mathcal{I \setminus S}\right) ^{N}$, and we have seen that these are $2$. This
proves (2).
\end{proof}

\begin{theorem}
\label{normIrre}Let $\mathcal{D}$ be a $2$-$(3^{4},3^{2},3)$ design
admitting a flag-transitive automorphism group $G=TG_{0}$, where $G_{0}\leq
(R\circ SL_{2}(5)):Z_{2}$ and $R\cong D_{8}\circ Q_{8}$, then $\mathcal{D}$
is isomorphic to the $2$-design constructed in Example \ref{Ex3}.
\end{theorem}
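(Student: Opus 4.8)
The plan is to first determine the parameters and force $SL_{2}(5)\trianglelefteq G_{0}$, then apply Lemma \ref{inv} to see that the blocks are $2$-dimensional subspaces, identify the set of blocks through the origin with one of the two distinguished $SL_{2}(5)$-orbits provided by Lemma \ref{uniCc}(2), and finally match $\mathcal{D}$ with the design of Example \ref{Ex3}. From the reductions of Section \ref{S2} we have $v=3^{4}$, $k=3^{2}$, $\lambda =3$, hence $m=2$, $f=1$, $r=3(3^{2}+1)=30$ and $b=270$, so $30$ divides $|G_{0}|$. Set $M=(R\circ SL_{2}(5)):Z_{2}$. Since $R\circ SL_{2}(5)$ has $SL_{2}(5)$ as a central factor with elementary abelian complementary quotient $R/Z(R)$, a direct commutator computation gives $(R\circ SL_{2}(5))'=SL_{2}(5)$, so $SL_{2}(5)$ is characteristic in the index-$2$ subgroup $R\circ SL_{2}(5)$ of $M$, hence normal in $M$; moreover $|M/SL_{2}(5)|=32$, so $M/SL_{2}(5)$ is a $2$-group. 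As $30\mid |G_{0}|$, the $2'$-part of $|G_{0}|$ is divisible by $15$ and divides $|SL_{2}(5)\cap G_{0}|$; but $SL_{2}(5)\cap G_{0}$ is a normal subgroup of $SL_{2}(5)$ of order divisible by $15$, whence $SL_{2}(5)\cap G_{0}=SL_{2}(5)$. Therefore $SL_{2}(5)\trianglelefteq G_{0}$.

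Next, $-1$ is the unique involution of $SL_{2}(5)\leq G_{0}$, so $-1\in G_{0}$; and by Lemma \ref{cici}(1) we have $\dim 0^{T_{B}}\geq m-f=1>0$, so $T$ does not act block-semiregularly on $\mathcal{D}$. Hence Lemma \ref{inv} applies and every block of $\mathcal{D}$ is a coset of a $2$-dimensional $GF(3)$-subspace of $V=V_{4}(3)$. In particular the $r=30$ blocks incident with $0$ form a $G_{0}$-orbit $\Delta$ of $2$-dimensional subspaces of size $30$. Since $SL_{2}(5)\trianglelefteq G_{0}$ and $G_{0}\leq N_{GL_{4}(3)}(SL_{2}(5))\cong (Z_{8}\circ SL_{2}(5)):Z_{2}$ by Lemma \ref{uniCc}(1)---and the latter has $SL_{2}(5)$ of $2$-power index---$\Delta$ is a union of $SL_{2}(5)$-orbits permuted transitively by the $2$-group $G_{0}/SL_{2}(5)$. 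Using the list of lengths of $SL_{2}(5)$-orbits on the $130$ two-dimensional subspaces of $V$ extracted from Lemma \ref{uniCc}(2) and its proof (the relevant lengths are $10$ and $30$, and there is no orbit of length $15$), together with the fact that a $2$-group cannot act transitively on an odd number greater than $1$ of such orbits, I would conclude that $\Delta$ is a single $SL_{2}(5)$-orbit of length $30$, i.e. $\Delta\in\{\mathcal{O}_{1},\mathcal{O}_{2}\}$, the two orbits of Lemma \ref{uniCc}(2).

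Finally, because every block of $\mathcal{D}$ is the coset of its unique translate through $0$, the block set of $\mathcal{D}$ is exactly $\{\,v+W:v\in V,\ W\in\Delta\,\}$. By Lemma \ref{uniCc}(2) some $n\in N_{GL_{4}(3)}(SL_{2}(5))$ interchanges $\mathcal{O}_{1}$ and $\mathcal{O}_{2}$, and applying the linear map $n$ to $\mathcal{D}$ (an isomorphism) if necessary, I may assume $\Delta=\mathcal{O}_{1}$. The totally isotropic $2$-subspace $B=\langle (1,0,0,0),(0,0,0,1)\rangle$ of Example \ref{Ex3} satisfies $|B^{SL_{2}(5)}|=30$ (as computed there) and is totally isotropic, so $B^{SL_{2}(5)}$ is precisely the totally isotropic one of the two length-$30$ orbits, i.e. $\mathcal{O}_{1}$. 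Hence the block set of $\mathcal{D}$ equals $\{\,v+W:v\in V,\ W\in\mathcal{O}_{1}\,\}$, which is exactly the block set of the $2$-$(3^{4},3^{2},3)$ design constructed in Example \ref{Ex3}; therefore $\mathcal{D}$ is isomorphic to that design.

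The step I expect to be the main obstacle is the one in the second paragraph: verifying that the $30$ blocks through $0$ constitute a single $SL_{2}(5)$-orbit rather than a union of smaller ones. This forces one to make completely explicit the list of $SL_{2}(5)$-orbit lengths on the $130$ two-dimensional subspaces of $V_{4}(3)$---information that is only implicit in the proof of Lemma \ref{uniCc}(2), through the decompositions $40=30+10$ among the totally isotropic subspaces and $90=30+10+10+40$ among the non-isotropic ones---and to combine it with the observation that $G_{0}/SL_{2}(5)$ is a $2$-group. Everything else is a routine chain of applications of Lemmas \ref{cici}, \ref{inv} and \ref{uniCc}.
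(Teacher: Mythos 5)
There is a genuine gap at the very first step, and it propagates through the rest of your argument. You read $R\circ SL_{2}(5)$ as a central product in which the two factors commute elementwise, and deduce that $(R\circ SL_{2}(5))'=SL_{2}(5)$ is characteristic in $R\circ SL_{2}(5)$ and hence normal in $M=(R\circ SL_{2}(5)):Z_{2}$, so that $M/SL_{2}(5)$ is a $2$-group. This is false. The group $R\cong D_{8}\circ Q_{8}$ acts (absolutely) irreducibly on $V_{4}(3)$, so $C_{GL_{4}(3)}(R)=\left\langle -1\right\rangle$ and no copy of $SL_{2}(5)$ in $GL_{4}(3)$ can centralize $R$; here ``$\circ$'' only records that the two factors are amalgamated over the common centre $\left\langle -1\right\rangle$, while $SL_{2}(5)$ induces $A_{5}\cong\Omega_{4}^{-}(2)$ on $R/Z(R)$. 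Consequently $[R,SL_{2}(5)]\neq 1$; in fact $R\cdot SL_{2}(5)$ is perfect, so its derived subgroup is not $SL_{2}(5)$, and $SL_{2}(5)$ is not normal in $M$. You can see the inconsistency from Lemma \ref{uniCc}(1) itself: $N_{GL_{4}(3)}(SL_{2}(5))\cong (Z_{8}\circ SL_{2}(5)):Z_{2}$ has order $960$, whereas $|M|=3840$, so $M$ cannot normalize $SL_{2}(5)$. Your subsequent inference that $[G_{0}:SL_{2}(5)\cap G_{0}]$ is a $2$-power, and hence that $SL_{2}(5)\trianglelefteq G_{0}$, therefore fails.

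What this costs you is precisely the case the paper has to work to exclude: since any Sylow $5$-subgroup of $G_{0}$ acts irreducibly on $R/Z(R)$ and $R\cap G_{0}\trianglelefteq G_{0}$, one gets the dichotomy $R\leq G_{0}$ or $R\cap G_{0}=\left\langle -1\right\rangle$. In the first case $|G_{0}|\geq 1920>960$, so $SL_{2}(5)$ is certainly not normal in $G_{0}$ and Lemma \ref{uniCc} does not apply; the paper eliminates this case by a separate argument using the five $R$-orbits of length $16$ on $V^{\ast}$ and the fixed-point structure of the non-central involutions of $R$. Your proof needs such an argument. Once you are in the second case, your remaining steps (Lemmas \ref{cici}(1) and \ref{inv} to make blocks $2$-dimensional subspaces, Lemma \ref{uniCc}(2) to identify the $30$ blocks through $0$ with one of the two length-$30$ orbits, and the normalizer swapping them) coincide with the paper's and are essentially sound. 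One smaller caveat: your exclusion of a decomposition of $\Delta$ into two $SL_{2}(5)$-orbits of length $15$ relies on the assertion that no orbit of length $15$ exists, which Lemma \ref{uniCc}(2) and its proof do not literally establish (they only account for orbit lengths $30,10,5,5,5,5$ and a residual set of size $40$ containing no orbit of length $30$); this point deserves a sentence of justification, though the paper is equally terse about it.
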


\begin{proof}
Clearly, $SL_{2}(5)\leq G_{0}$ as $r=30$. Since the action of any Sylow $5$%
-subgroup of $G_{0}$ on $R/Z(R)$ is irreducible, as $5$ is a primitive prime
divisor of $2^{4}-1$, it follows that either $R\leq G_{0}$ or $R\cap
G_{0} = \left\langle -1\right\rangle $. Therefore, either $R\circ
SL_{2}(5)\trianglelefteq G_{0}\leq (R\circ SL_{2}(5)):Z_{2}$, or $%
SL_{2}(5)\trianglelefteq G_{0}\leq \left\langle -1\right\rangle .S_{5}^{-}$,
respectively, by \cite{AtMod}, since $SL_{2}(5)\leq G_{0}$. Also $t\geq
3-f\geq 1$ by Lemma \ref{cici}(1), and hence $t=2$ by Lemma \ref{inv}, since 
$-1\in G_{0}$. Hence, the blocks incident with $0$ are $2$-dimensional
subspaces of $V$.

Suppose that $R\leq G_{0}$. The group $R$ splits $V^{\ast }$ into $5$ orbits
each of length $16$ and $G_{0}$ induces $A_{5}\trianglelefteq G_{0}/R\leq
S_{5}$ in its natural $2$-transitive permutation representation of degree $5$
on the set of these $R$-orbits. Each of the $10$ non-central involutions
fixes exactly $8$ points of $V^{\ast }$, which lie in the same $R$-orbit.
Hence each $R$-orbit consists of the $8$ points fixed by an involution, say $%
\sigma $, and its complementary set fixed by $\epsilon \sigma $, where $%
\epsilon $ denotes the involution $-1$. Since $R\trianglelefteq G_{0}$ and since 
$G_{0}$ acts transitively on the set of $30$ blocks incident with $0$, it
follows that $\left\vert R_{B}\right\vert =16$ where $B$ is any block of $%
\mathcal{D}$ incident with $0$. Then $B^{R}=\left\{ B,B^{\prime }\right\} $
and hence $\left( B\cup B^{\prime }\right) \setminus \left\{ 0\right\} $ is a point-$%
R $-orbit, since it is an $R$-invariant set of cardinality $16$. Let $\gamma$ be non-central involution in $R$ such that $\mathrm{Fix}(\gamma) \subset \left( B\cup B^{\prime }\right) \setminus \left\{ 0\right\}$. Actually, either $\mathrm{Fix}(\gamma)=B$ or $\mathrm{Fix}(\gamma)=B^{\prime}$ since $B,B^{\prime}$ and $\mathrm{Fix}(\gamma)$ are $2$-dimensional subspaces of $V$. This forces $r=10$ since $V^{\ast}$ is partitioned into five $R$-orbits each of length $2$, whereas $r=30$. So, this case is ruled out.

Suppose that $R\cap G_{0}= \left\langle -1\right\rangle $. Then $%
X_{0}\trianglelefteq G_{0}$ with $X_{0}\cong SL_{2}(5)$, since $%
SL_{2}(5)\leq G_{0}$. The group $GL_{4}(3)$ contains a unique conjugacy
class of subgroups isomorphic to $SL_{2}(5)$ by Lemma \ref{uniCc}(1).
Therefore, we may assume that $X_{0}$ is the copy of $SL_{2}(5)$ provided in
Example \ref{Ex3}. Hence $X_{0}$ has exactly two orbits of length $30$ on
the set of $2$-dimensional subspaces by Lemma \ref{uniCc}(2). Let $%
B_{1},B_{2}$ any two representatives of these orbits respectively. At least
one of the incidence structures $(V,B_{i}^{X_{0}})$, $i=1,2$, is the $2$-$%
(3^{4},3^{2},3)$ design defined in Example \ref{Ex3}, say $(V,B_{1}^{X_{0}})$%
. Actually, $(V,B_{2}^{X_{0}})$ is a $2$-$(3^{4},3^{2},3)$ design isomorphic
to $(V_{4}(3),B_{1}^{X_{0}})$, since $N_{GL_{4}(3)}(X_{0})$ switches $%
B_{1}^{X_{0}}$ and $B_{2}^{X_{0}}$ by Lemma \ref{uniCc}(2). Thus $\mathcal{D}
$ is any of these two isomorphic $2$-designs, and we obtain (2).
\end{proof}

\bigskip

\section{The case where $G_{0}$ is a classical group}\label{S4}

In this section we assume that $X_{0}\trianglelefteq G_{0}$, where $%
X_{0}$ denotes one of the groups $SL_{n}(q)$, $Sp_{n}(q)$, $SU_{n}(q^{1/2})$
with $n$ odd, or $\Omega _{n}^{-}(q)$ and we complete the analysis of the
affine type case. A preliminary reduction is obtained by comparing the minimal
primitive permutation representation of suitable subgroups of $X_{0,x}$, where 
$x$ is a non-zero vector of $V_{n}(q)$, with $\lambda \leq q^{n/2}$. This allows us to
reduce the possibilities for $X_{0}$ to be one of the groups $SL_{2}(q)$, $Sp_{4}(q)$, $SU_{3}(q^{1/2})$ or $\Omega_{4}^{-}(q)$. Then we carry out a separate
investigation of these remaining cases. More precisely, we prove the following theorem.

\begin{theorem}
\label{classical}Let $\mathcal{D}$ be a $2$-$(q^{n},q^{n/2},\lambda )$, with 
$\lambda \mid q^{n/2}$, admitting a flag-transitive automorphism group $%
G=TG_{0}$. If $X_{0}\trianglelefteq G_{0}$ and $X_{0}$ is one of the groups
defined above, then one of the following holds:
\begin{enumerate}
\item $X_{0} \cong SL_{2}(q)$ and $\mathcal{D}$ is isomorphic to the $2$-design constructed in Example \ref{Ex4}.
\item $X_{0} \cong SU_{3}(q^{1/2})$, where $q$ is a square, and $\mathcal{D}$ is isomorphic to the $2$-design constructed in Example \ref{Ex5}.
\item $X_{0}\cong Sp_{4}(q)$ and $\mathcal{D}$ is isomorphic to the $2$%
-design constructed in Example \ref{Ex0}.
\end{enumerate}
\end{theorem}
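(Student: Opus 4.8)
The plan is to carry out a uniform reduction of the rank of $X_0$ down to the short list announced just before the statement, and then to treat each of the four surviving possibilities separately. For the reduction, fix a block $B$ incident with $0$ and a non-zero vector $x\in B$. By flag-transitivity and the parameter identities of Section \ref{S2} one has $r=[G_0:G_{0,B}]=\lambda(q^{n/2}+1)$ with $\lambda=p^{f}\leq q^{n/2}$ and (by the convention adopted after Theorem \ref{lambada}) $\lambda\geq2$; since $X_0\trianglelefteq G_0$, the index $[X_0:X_0\cap G_{0,B}]$ divides $r$, hence is less than $q^{n}=|V|$. For each family, the standard lower bounds on indices of proper subgroups of classical groups (see \cite{KL}) show that a subgroup of $X_0$ of index less than $q^{n}$ lies in a maximal parabolic of type $P_1$ (the stabilizer of a $1$-space or of a hyperplane, respectively of an isotropic point) as soon as $n$ is not small; and inside $P_1$ a subgroup of sufficiently small index contains the derived subgroup of the Levi factor, namely a copy of $\SL_{n-1}(q)$, $\Sp_{n-2}(q)$, $\SU_{n-2}(q^{1/2})$ or $\Omega_{n-2}^{-}(q)$. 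Computing the orbits of such a subgroup on $V$ one finds only orbit sizes $1$ (on a fixed complement) and $q^{n-1}-1$, or the obvious analogues, so that a block of size $q^{n/2}$ through $0$ can be a union of such orbits only when $n\leq4$ (and $n=3$ in the unitary case). The residual borderline cases $\SL_3(q),\SL_4(q),\Sp_6(q),\dots$ are then discarded because each would impose an index carrying a non-trivial factor prime to $p$ (or simply too large), incompatible with $\lambda=p^{f}\leq q^{n/2}$. This leaves exactly $X_0\in\{\SL_2(q),\Sp_4(q),\SU_3(q^{1/2}),\Omega_4^{-}(q)\}$.

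For each surviving group I would again fix a block $B$ incident with $0$, choose a maximal subgroup $M$ of $G_0$ with $G_{0,B}\leq M$, and combine the identity $[G_0:M]\,[M:G_{0,B}]=\lambda(q^{n/2}+1)$, the divisibility $\lambda\mid q^{n/2}$ with $\lambda=p^{f}$, Aschbacher's theorem (the relevant tables of \cite{BHRD}), and Lemmas \ref{PP}, \ref{cici} and \ref{inv}. For $X_0=\SL_2(q)$ (so $k=q$, $\lambda\mid q$, $\lambda\geq2$), Dickson's list of subgroups of $\GL_2(q)$ shows that the Borel overgroup forces $\lambda=1$ (the Desarguesian plane, already in Theorem \ref{lambada}), that the dihedral and the exceptional $A_4,S_4,A_5$ overgroups, together with the proper subfield subgroups of index exceeding $q^{1/2}(q+1)$, all violate ``$\lambda=p^{f}$'' because $q\pm1$ is prime to $p$, and that the only survivor is $M\cap\SL_2(q)\cong\SL_2(q^{1/2})$; this forces $q$ to be a square and $\lambda=q^{1/2}$, the orbit structure forces $B$ to be a natural $\SL_2(q^{1/2})$-submodule, and $\mathcal{D}\cong$ Example \ref{Ex4}. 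For $X_0=\Sp_4(q)$ ($k=q^2$, $\lambda\mid q^2$), a pass through the maximal subgroups excludes the two parabolics (they would force $(q+1)\mid\lambda$), the field-extension subgroup $\Sp_2(q^2).2$ and the remaining geometric and $\mathcal{S}$-type subgroups on size or divisibility grounds, and, for $q$ even, the orthogonal subgroups ($O_4^{-}(q)$ by divisibility, $O_4^{+}(q)\cong\Sp_2(q)\wr S_2$ because it cannot stabilize a $q^2$-subset through $0$, by its orbit structure on $V$); the only survivor is $X_{0,B}\cong\Sp_2(q)\times\Sp_2(q)$, whose orbit structure forces $B$ to be a non-degenerate $2$-space, $\lambda=q^2$, and $\mathcal{D}\cong$ Example \ref{Ex0}.

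For $X_0=\SU_3(q^{1/2})$ (set $q_0=q^{1/2}$, so $k=q_0^{3}$, $\lambda\mid q_0^{3}$), the parabolic overgroup and the non-degenerate-point overgroup $\GU_2(q_0)$ are eliminated by combining divisibility with a description of their orbits on $V_3(q)$, and the only survivor is the $\mathcal{C}_5$-subgroup $\SO_3(q_0)$ — whence $p$ is odd — which yields $\lambda=q$ and, on identifying $B$ with the set of real points of a Baer subplane meeting the Hermitian unital in a non-degenerate conic, $\mathcal{D}\cong$ Example \ref{Ex5}. Finally, for $X_0=\Omega_4^{-}(q)\cong\PSL_2(q^2)$ one checks from its list of maximal subgroups that no overgroup of $G_{0,B}$ can have index of the shape $\lambda(q^2+1)$ with $2\leq\lambda=p^{f}\mid q^2$: the parabolic forces $\lambda$ to absorb a divisor of $q^2-1$ prime to $p$, the $\mathcal{C}_3$-subgroup $D_{2(q^2+1)}.2$ and the other small-index subgroups force $\lambda>q^2$, and the subfield subgroups are too small; hence this case does not occur.

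The genuinely delicate points are the elimination of the parabolic overgroups in the unitary and orthogonal cases (and, in the rank reduction, the passage from ``small index in $P_1$'' to ``contains the Levi derived group''): there a bare index count does not suffice, and one must describe explicitly the orbits on $V$ of the unipotent radical together with a maximal torus of the Levi, and then invoke the $2$-design axiom itself — equivalently, the equality of the replication numbers of the two $X_0$-orbits on $V$, exactly as in the verifications of Examples \ref{Ex0} and \ref{Ex5} — in order to exclude them. Once the block $B$ has been pinned down in each of the three remaining configurations, matching $(\mathcal{D},G)$ with Examples \ref{Ex4}, \ref{Ex5} and \ref{Ex0} yields conclusions (1), (2) and (3) of the theorem.
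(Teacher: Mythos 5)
Your overall strategy coincides with the paper's: reduce the rank by showing that a Levi-type subgroup sitting inside a point (or block) stabilizer would have to fix too many blocks, then run through the Aschbacher classes of overgroups of $G_{0,B}$ for each of $\SL_2(q)$, $\Sp_4(q)$, $\SU_3(q^{1/2})$ and $\Omega_4^-(q)$, using the replication-number and subspace lemmas of Section \ref{S2}. The conclusions you reach are the right ones. (A minor slip in the reduction: $r=\lambda(q^{n/2}+1)\leq q^{n/2}(q^{n/2}+1)=q^{n}+q^{n/2}$, which is not less than $q^{n}$; the paper instead bounds $|B^{Y}|\leq\lambda\leq q^{n/2}$ for $Y$ inside $X_{0,x}$ and compares with the minimal permutation degree of $Y$, which avoids this issue.)

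The genuine gap is in the eliminations you justify by divisibility alone. In the $\SL_2(q)$ case you discard the dihedral (torus-normalizer) overgroup ``because $q\pm1$ is prime to $p$,'' but $X_{0,B}\cong Z_{q-1}$ gives $[X_{0}:X_{0,B}]=q(q+1)=\lambda(q+1)$ with $\lambda=q=p^{f}$, which satisfies every arithmetic constraint; the paper kills this case (and the quaternion case $Q_{q-1}$) only by writing $B=\{(\omega^{i}v_{1},\omega^{-i}v_{2})\}\cup\{0\}$ explicitly, using that $B$ is a $\GF(p)$-subspace, and deriving $\omega^{3}=1$. The same problem is more serious for $\Omega_4^-(q)$: you claim the parabolic ``forces $\lambda$ to absorb a divisor of $q^{2}-1$ prime to $p$'' and that the dihedral subgroup forces $\lambda>q^{2}$, but the torus $Z_{(q^{2}-1)/(2,q-1)}$ inside the parabolic has index exactly $q^{2}(q^{2}+1)$, i.e.\ $\lambda=q^{2}$, and the dihedral subgroup likewise yields an admissible $\lambda$; neither is ruled out by any index or divisibility count. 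This is precisely where the paper's Lemma \ref{Omega4no2} has to work hardest: it uses $|B\cap x^{G_{0}}|=q-1$, the $\GF(p)$-subspace structure of $B$ against the orbit lengths of $X_{0,B}$ on singular vectors, the vanishing of $H^{1}(X_{0},V)$ together with \cite[17.10]{Asch2} to handle $T_{B}=1$ in odd characteristic, and a separate commutator argument in even characteristic. Your closing paragraph acknowledges that the parabolic cases are delicate, but the remedy you propose (equality of replication numbers on the two $X_{0}$-orbits) is exactly the constraint $|B\cap x^{G_{0}}|=q-1$ that these configurations already satisfy numerically, so it cannot by itself complete the exclusion; without the explicit block-structure computations the $\Omega_4^-(q)$ case, and hence the theorem, remains unproved.
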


\bigskip

\begin{lemma}
\label{SL}If $X_{0}\cong SL_{n}(q)$, then $n=2$.
\end{lemma}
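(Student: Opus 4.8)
The plan is to suppose $X_0 \cong SL_n(q)$ with $n \geq 3$ and derive a contradiction by exploiting the constraint $\lambda \mid q^{n/2}$ together with the structure of point stabilizers in $SL_n(q)$ acting naturally on $V = V_n(q)$. First I would record the basic numerics: $v = q^n = p^{2m}$, so $n$ is even and $m = hn/2$, while $r = p^f(p^m+1) = \lambda(q^{n/2}+1)$ with $\lambda = q^{n/2}/r' $ for some $r' \mid q^{n/2}$; in particular $r/\lambda = q^{n/2}+1$ and $\lambda \leq q^{n/2}$. Since $G$ acts point-primitively and $G_0 \leq \Gamma L_n(q)$ with $SL_n(q) \trianglelefteq G_0$, the group $G_0$ is transitive on the $q^n-1$ nonzero vectors of $V$ (as $SL_n(q)$ already is), so there is a single nonzero point orbit and Lemma \ref{PP} gives $q^{n/2}+1 = r/\lambda$ divides $|B \cap x^{G_0}| \cdot (\text{something})$; more usefully, for a block $B$ through $0$ and a nonzero $x$, one has $q^n - 1 = (q^{n/2}+1)|B \cap (V \setminus \{0\})| / \lambda$-type relations forcing $|B \setminus \{0\}| = q^{n/2}-1$ is automatic, and the real leverage is that $\lambda \mid q^{n/2}$ is small relative to $|SL_n(q)|$.

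The key step is to analyze $G_{0,B}$ for $B$ a block through $0$. Since $r = [G_0 : G_{0,B}]p^{m-t}$ by Lemma \ref{cici}, and $r = \lambda(q^{n/2}+1)$ with $\lambda \leq q^{n/2}$, the index $[G_0:G_{0,B}]$ divides $q^{n/2}(q^{n/2}+1)$ up to the bounded factor $p^{m-t}$; hence $|G_{0,B}|$ is large — divisible by a substantial part of $|SL_n(q)| = q^{n(n-1)/2}\prod_{i=2}^n(q^i-1)$. Concretely, $|X_{0,B}| = |X_0|/[X_0:X_{0,B}]$ and $[X_0:X_{0,B}]$ divides $r \cdot |Z \cdot \Gamma L_n(q) : SL_n(q)|$ which is $q^{n/2}(q^{n/2}+1)$ times something dividing $(q-1)h$. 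For $n \geq 3$ this index is far too small to be the index of any subgroup of $SL_n(q)$ except when $X_{0,B}$ lies close to a maximal parabolic or a large subfield/classical subgroup. I would invoke Aschbacher's theorem (via \cite{KL}) to list the maximal subgroups $M \geq X_{0,B}$: a maximal parabolic $P_k$ has index $\binom{n}{k}_q \sim q^{k(n-k)}$, which for $n \geq 3$ and $1 \leq k \leq n-1$ already exceeds $q^{n/2}(q^{n/2}+1)$ unless $(n,k)$ is very small, namely $n = 3$, but $SL_3(q)$ has $q^n = q^3$ which is not a square unless $q$ is a square — and even then $m = 3h/2$ is not an integer unless $2 \mid h$, a case I would then handle by checking $r = p^f(p^{3h/2}+1)$ against $|SL_3(q)|$ and the replication/block constraints directly, eliminating it because $p^{3h/2}+1$ cannot be matched (it has a primitive prime divisor of $p^{3h}-1$ by Zsigmondy, forcing $\Phi_{3h}^*(p) \mid |G_{0,B}|$ or a divisibility clash). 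For $n = 4$ the parabolic $P_2$ has index $(q^2+1)(q^2+q+1) \sim q^4$, and here $q^{n/2}+1 = q^2+1$, so $[X_0:X_{0,B}]$ would have to be roughly $(q^2+1)$ times a divisor of $q^2$, i.e.\ around $q^2(q^2+1)$ — but the smallest indices of proper subgroups of $SL_4(q)$ are $(q^4-1)/(q-1)$ (for $P_1$) and $(q^2+1)(q^3-1)/(q-1)$ (for $P_2$), and matching $(q^2+1)(q^3-1)/(q-1) \mid q^2(q^2+1)$ fails since $(q^3-1)/(q-1) = q^2+q+1 \nmid q^2$.

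The main obstacle will be the nonparabolic subgroups — subfield subgroups $SL_n(q_0)$ with $q = q_0^a$, classical subgroups $Sp_n(q)$, $SU_n(q^{1/2})$, $SO_n^\pm(q)$, and the imprimitive/tensor/extraspecial families in the Aschbacher classes $\mathcal{C}_2$–$\mathcal{C}_6$ and $\mathcal{S}$ — any of which could have index close to $q^{n/2}(q^{n/2}+1)$ when $n$ is moderate. For these I would argue as follows: if $X_{0,B} \leq M$ with $M$ a classical subgroup of $SL_n(q)$, then $[X_0:M]$ is divisible by a primitive prime divisor $\Phi_{2m}^*(p) = \Phi_{nh}^*(p)$ of $p^{nh}-1$ unless the subgroup itself contains an element of that order, and a Zsigmondy-type count (as in Lemma \ref{alternative} and \cite[Proposition 5.2.15]{KL}) shows that only $Sp_n(q)$, $SU_n(q^{1/2})$ and $SO_n^-(q)$ retain such an element — precisely the cases already separated out as items (2)–(4) in the list following Lemma \ref{alternative}, hence not the present case. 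For subfield subgroups and the small-dimension exceptional members, $[X_0:X_{0,B}]$ picks up a factor like $q^{n(n-1)/2}(1 - q_0^{\cdots}/q^{\cdots})$ which is not of the form $p^{m-t}\lambda(q^{n/2}+1)$ with $\lambda \leq q^{n/2}$. Finally, having forced $B$ to correspond to a parabolic-type stabilizer, I would use Lemma \ref{PP} with the $SL_n(q)$-orbit of nonzero vectors (which is all of $V \setminus \{0\}$, one orbit) to get a congruence $q^{n/2}+1 \mid (q^n-1)$-related quantity that is consistent only when $n = 2$; since all $n \geq 3$ cases have been excluded, the lemma follows. I expect the bookkeeping over the Aschbacher classes for $n \in \{4, 6, 8\}$ (the only $n$ with $q^n = p^{2m}$ and $\Phi_{2m}^*(p) \mid |SL_n(q)|$ surviving the earlier reductions) to be the delicate part, and I would lean heavily on \cite[Theorem 3.1]{BP} to bound $n$ and $q$ before the case-by-case elimination.
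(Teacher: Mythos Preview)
Your strategy---bound $[X_0:X_{0,B}]$ by a divisor of $r=p^f(q^{n/2}+1)$ and then run through the Aschbacher classes of maximal subgroups of $SL_n(q)$---is sound in principle and would eventually succeed, but it is far more laborious than the paper's argument, and several steps in your sketch are not yet arguments. In particular: (a) the non-parabolic classes $\mathcal{C}_2$--$\mathcal{C}_6$, $\mathcal{S}$ are dismissed by a vague appeal to Zsigmondy divisors, but you would actually need index computations for each family; (b) your plan to ``lean on \cite[Theorem 3.1]{BP} to bound $n$ and $q$'' is a misreading---that theorem lists the overgroups of a Singer-type Sylow and places no bound on $n$ when $SL_n(q)$ itself is the candidate; (c) your $n=3$ treatment is correct in spirit (force $q$ square, then check indices) but is not carried out.

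The paper bypasses the entire Aschbacher enumeration with a single structural observation. For a nonzero vector $x$ one has $X_{0,x}=C{:}Y$ with $C$ elementary abelian and $Y\cong SL_{n-1}(q)$. The group $Y$ acts on the $\lambda$ blocks through $0$ and $x$; since $\lambda\le q^{n/2}$ is below the minimal nontrivial permutation degree $\tfrac{q^{n-1}-1}{q-1}$ of $Y$ for $n\ge 4$ (and $(n,q)=(4,2)$ falls to $SL_4(2)\cong A_8$ handled earlier), $Y$ fixes every such block. Because $Y$ acts irreducibly on $C$ and is not normal in $C{:}Y$, the kernel of the action must be all of $C{:}Y=X_{0,x}$; hence $X_{0,x}\le X_{0,B}$, forcing $\tfrac{q^n-1}{q-1}\mid r=p^f(q^{n/2}+1)$, which fails for $n\ge 4$. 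Only $n=3$ then needs a short direct check against \cite[Tables 8.3--8.4]{BHRD}. This point-stabilizer argument is the idea you are missing: it replaces your global maximal-subgroup search with a local bound coming from the action on the pencil of blocks through a flag.
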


\begin{proof}
Let $x$ be any non-zero vector of $V_{n}(q)$. Then $C:Y\leq X_{0,x}$, where $C$ is an
elementary abelian $p$-group and $Y$ is a isomorphic to $SL_{n-1}(q)$, by 
\cite[Lemma 4.1.13]{KL}. Let $B$ be any block of $\mathcal{D}$ incident with 
$0,x$. Then $P(Y)\leq \left\vert B^{Y}\right\vert \leq q^{n/2}$, where $P(Y)$
denotes the minimal primitive permutation representation of $Y$, since $\lambda \leq q^{n/2}$. If $n\geq 4$
and $(n,q)\neq (4,2)$, then $P(Y)=\frac{q^{n-1}-1}{q-1}$ by \cite[Proposition 5.2.1 and Table 5.2.A]{KL}, which is greater than $q^{n/2}$. So this
case is ruled out. Also, $(n,q)=(4,2)$ is ruled out by Lemma \ref{NotFDPM}, since 
$X_{0}\cong SL_{4}(2)\cong A_{8}$. Thus $Y$ fixes each block of $\mathcal{D}$
incident with $0,x$, and hence it lies in the kernel of the action of $C:Y$ on
the set of blocks of $\mathcal{D}$ incident with $0,x$. This forces $C:Y$ to
fix each block of $\mathcal{D}$ incident with $0,x$. So $\frac{q^{n}-1}{q-1}%
\mid r$, since $X_{0,x}\leq G_{0,\left\langle x\right\rangle }$, which is
impossible as $r=p^{f}(q^{n/2}+1)$ and $n \geq 4$.

Assume that $n=3$. Then $q$ is a square and $[X_{0}:X_{0,B}]\mid
q^{3/2}(q^{3/2}+1)$, which is impossible by \cite[Tables 8.3--8.4]{BHRD}.
Thus $n=2$ since $n>1$ by our assumption.
\end{proof}

\begin{proposition}
\label{SLSp}If $SL_{n}(q)\trianglelefteq X_{0}$, then $n=2$ and $\mathcal{D}$
is isomorphic to the $2$-design of Example \ref{Ex4}.
\end{proposition}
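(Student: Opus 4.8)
The plan is to reduce to $n=2$ by Lemma \ref{SL} and then to identify $\mathcal D$ among the $\SL_2(q)$-configurations. After that reduction $V=V_2(q)$, $X_0=\SL_2(q)\trianglelefteq G_0\leq\GaL_2(q)$ and $\mathcal D$ is a $2$-$(q^2,q,\lambda)$ design with $2\leq\lambda\mid q$; since $\SL_2(q)$ is transitive on $V\setminus\{0\}$, $G$ is point-$2$-transitive, and a flag count (or Lemma \ref{cici}) gives $[G_0:G_{0,B}]=r=\lambda(q+1)$ for any block $B$ through $0$. Hence $[X_0:X_{0,B}]=[X_0G_{0,B}:G_{0,B}]$ divides $\lambda(q+1)$, so it divides $q(q+1)$ and $q-1$ divides $|X_{0,B}|$. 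I would then invoke the subgroup structure of $\SL_2(q)$ (Dickson's theorem): up to conjugacy $X_{0,B}$ is the stabiliser of a $1$-dimensional $GF(q)$-subspace, the normaliser of a split or of a non-split maximal torus, a subfield subgroup $\SL_2(q_0)$ (possibly extended by an involution) with $q=q_0^2$, or one of finitely many exceptional subgroups occurring only for bounded $q$.

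First I would dispose of the three "non-subfield" families. If $X_{0,B}$ fixes a $1$-space $\langle u\rangle_{GF(q)}$, then $G_{0,B}$ (normalising $X_{0,B}\trianglelefteq G_{0,B}$) fixes $\langle u\rangle_{GF(q)}$; as $0\in B$ and $|B|=q=|\langle u\rangle_{GF(q)}|$, while the non-trivial $X_{0,B}$-orbits on $V\setminus\{0\}$ have sizes $q-1$ and $q^2-q$, the set $B\setminus\{0\}$ must equal $\langle u\rangle_{GF(q)}\setminus\{0\}$, so $\mathcal D$ is the Desarguesian affine plane of order $q$, forcing $\lambda=1$, a contradiction. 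For the non-split torus normaliser, $\tfrac{q(q-1)}{2}\mid\lambda(q+1)$ together with $\lambda\mid q$ forces $q\in\{2,3,5\}$, excluded since $q\geq 4$. For the split torus normaliser I would note that when $p$ is odd every $X_{0,B}$-orbit off the two eigen-$1$-spaces of the torus has size $2(q-1)$, so no block of size $q$ through $0$ can arise; when $p=2$ the divisibility leaves $\lambda\in\{q/2,q\}$, the candidate block is $0$ together with one torus-orbit of size $q-1$ — which is not closed under addition — and I would derive a contradiction from the maximality of $D_{2(q-1)}$ in $\SL_2(q)=\PSL_2(q)$ combined with Corollary \ref{p2}, Lemma \ref{cici} and the constraints on $\dim_{GF(2)}\Fix(T_B)$, checking the finitely many residual small $q$ (and the exceptional subgroups) by hand, with the aid of \cite{GAP} where convenient.

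The surviving case is $X_{0,B}\cong\SL_2(q_0)$ with $q=q_0^2$. Here $G_{0,B}$ normalises $X_{0,B}$, so it lies in $N_{\GaL_2(q)}(\SL_2(q_0))\cong(Z_{q_0-1}\circ\SL_2(q_0)).Z_h$ by \cite[Table 8.1]{BHRD}, which is the full stabiliser in $\GaL_2(q)$ of the natural $2$-dimensional $GF(q_0)$-subspace $W$ of the embedding; thus $G_{0,B}=G_0\cap\GaL_2(q)_W$. Since $\SL_2(q_0)$ is transitive on $W\setminus\{0\}$ (a set of size $q-1=|B|-1$) while its only orbits on $V\setminus\{0\}$ of size $q-1$ are the sets $(cW)\setminus\{0\}$ with $c\in GF(q)^\ast$, I obtain $B=cW$, a $2$-dimensional $GF(q_0)$-subspace of $V$, which is semilinearly equivalent to the block of Example \ref{Ex4}. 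It then remains to pin down $\lambda=q^{1/2}$: from $[X_0:X_{0,B}]=[\SL_2(q):\SL_2(q_0)]=q^{1/2}(q+1)$ dividing $\lambda(q+1)$ one gets $q^{1/2}\mid\lambda$, and I would combine $\lambda\mid q$ with the action of $G_0/\SL_2(q)$ on the $q^{1/2}+1$ parallel classes of such subspaces and with the fact that $\SL_2(q)$ has only $(2,q-1)$ conjugacy classes of subgroups $\cong\SL_2(q_0)$ to conclude that the number $\lambda/q^{1/2}$ of $X_0$-orbits on the blocks through $0$ equals $1$. Then $[G_0:G_{0,B}]=q^{1/2}(q+1)=[\SL_2(q):X_{0,B}]$ gives $G_0=\SL_2(q)\,G_{0,B}\leq(Z_{q^{1/2}-1}\circ\SL_2(q)).Z_h$, and $\mathcal D=(V,B^{G})$ is exactly the $2$-$(q^2,q,q^{1/2})$ design of Example \ref{Ex4}. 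The hard part will be the $p=2$ split-torus-normaliser configuration with $T_B\neq1$, where one must lean on Corollary \ref{p2} together with a residual small-$q$ computation, and the final bookkeeping ruling out $\lambda>q^{1/2}$ in the subfield case.
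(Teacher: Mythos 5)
Your overall strategy (reduce to $n=2$ via Lemma \ref{SL}, deduce $[X_0:X_{0,B}]\mid\lambda(q+1)$ so that $q-1$ divides $|X_{0,B}|$, classify $X_{0,B}$ inside $SL_2(q)$, and identify $B$ as a member of the opposite regulus in the subfield case) is the same as the paper's, and your treatment of the Borel, non-split-torus and subfield cases, as well as the final $p$-part argument pinning down $\lambda=q^{1/2}$, matches the paper's steps in substance. However, there is a genuine gap in your case analysis: the divisibility condition $q-1\mid|X_{0,B}|$ also admits $X_{0,B}=Z_{q-1}$ (the split torus itself) and, for $q$ odd, the quaternion-type subgroup $Q_{q-1}$ of the torus normaliser. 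In these cases $B\setminus\{0\}$ can be a single regular $X_{0,B}$-orbit of length $q-1$ lying off the two eigenlines, and neither your orbit-size argument (your claimed orbit lengths $q-1$ and $q^2-q$, resp.\ $2(q-1)$, hold only for the full Borel, resp.\ the full normaliser) nor the "fixes a unique $1$-space" argument applies. The paper eliminates exactly these configurations by an explicit computation showing that additive closure of $B$ forces $\omega+1=\omega^{s}$ and $\omega^{-1}+1=\omega^{-s}$, hence $\omega^{3}=1$ or $\omega^{6}=1$, impossible for $q>4$ (and $q=4$ is absorbed into the subfield case).

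That computation, in turn, presupposes that the blocks are $GF(p)$-subspaces of $V$, which you never establish for $p$ odd. In the paper this is steps (ii)--(iii) of the proof: one first shows that $T$ does not act block-semiregularly on the reduced design $\mathcal{D}_X=(V,B^{X})$ (a non-trivial commutator argument placing the Frobenius kernel of $X_B$ inside a single Sylow $p$-subgroup of $X_0$ and deriving a contradiction), and only then applies Lemma \ref{inv} (using $-1\in SL_2(q)$) for $p$ odd, and Corollary \ref{p2} together with \cite[Tables 8.1--8.2]{BHRD} for $p=2$. You invoke this machinery only in the $p=2$ split-torus sub-case and leave the $p$ odd torus case entirely uncovered. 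Two smaller points: in the subfield case you should also rule out that $B\setminus\{0\}$ is a union of several smaller $SL_2(q^{1/2})$-orbits (this follows since every orbit outside $\bigcup\mathcal{R}'$ has length divisible by $q^{1/2}$ while $|B\setminus\{0\}|\equiv -1 \pmod{p}$), and your claim that the torus orbit is "not closed under addition" for $p=2$ is false precisely when $q=4$, so that case must be routed into the subfield analysis rather than discarded.
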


\begin{proof}
We prove the assertion in a series of steps:

\begin{enumerate}
\item[(i).] \textbf{$\mathcal{D}_{X}\cong (V,B^{X})$, where $V=V_{2}(q)$, $X=T:X_{0}$ and $X_{0}\cong SL_{2}(q)$, is a point-$2$-transitive, flag-transitive $2$-$(q^{2},q,\lambda _{0})$ design, with $\lambda _{0}\mid \lambda $ and $\lambda _{0}\leq \lambda \leq q$.}
\end{enumerate}

Let $X=T:X_{0}$, where $X_{0}\cong SL_{2}(q)$, and let $B$ be any block
incident with $0$. Then $(0,B)^{G}$ is split into $X$-orbits of equal
length, since $X\trianglelefteq G$. Then $\mathcal{D}_{X}\cong (V,B^{X})$ is
a flag-transitive tactical configuration with parameters $(v,k,r_{0},b_{0})$
with $r_{0}\mid r$ and $b_{0}\mid b$. Actually, $\mathcal{D}_{X}$ is a $2$-$%
(q^{2},q,\lambda _{0})$ design, since $X$ acts $2$-transitively on $V$.
Also, as $r_{0}=(q+1)\lambda _{0}$ and $r_{0}$ divides $r$, it follows that $%
\lambda _{0}\mid \lambda $ and hence $\lambda _{0}\leq \lambda \leq q$.

\begin{enumerate}
\item[(ii).] \textbf{$T$ does not act block-semiregularly on $\mathcal{D}%
_{X}$.}
\end{enumerate}

Suppose the contrary. We may use Lemma \ref{cici}(1), with $\mathcal{D}%
_{X}$ and $X$ in the role of $\mathcal{D}$ and $G$ respectively, in order
to get $\lambda _{0}=q$. Thus $\lambda _{0}=\lambda$ and hence $%
\mathcal{D}_{X}=\mathcal{D}$. Since $r=q(q+1)$, it follows that $\left\vert
X_{0,B}\right\vert =q-1$. Then $\left\vert X_{B}\right\vert =q(q-1)$ and
hence $X_{B}$ is a Frobenius group with elementary abelian kernel $N$ and a cyclic complement $X_{0,B}$ by
Lemma \ref{cici}(2). Since $X_{0}$ contains a
unique conjugate class of cyclic subgroups of order $q-1$, we may assume
that $X_{0,B}=\left\langle \gamma \right\rangle $, where $\gamma $ is
represented by $\mathrm{Diag(}\omega ,\omega ^{-1}\mathrm{)}$ and $\omega $
is a primitive element of $GF(q)^{\ast }$.

If $x=(x_{1},x_{2})$ is any non-zero vector of $V$ lying in $B$, then $B=(x_{1},x_{2})^{\left\langle \gamma \right\rangle}\cup
\left\{ (0,0)\right\}$. If $x_{1}=0$, then $B=\left\langle (0,1)\right\rangle $ and hence $T_{B}\neq
1$, whereas $T$ acts block-semiregularly on $\mathcal{D}$ by our assumption.
Thus $x_{1}\neq 0$. Also $x_{2}\neq 0$ by a similar argument.

Let $\varphi \in N$, $\varphi \neq 1$, then there are $\alpha \in X_{0}$ and $\tau \in T$ such that $\varphi=\alpha\tau$. Therefore $N=\left\{ 1,\alpha
^{\gamma ^{j}}\tau ^{\gamma ^{j}}:i=0,...,q-1\right\} $, since $N$ and $X_{0,B}$ are the kernel and a complement of the Frobenius group $X_{B}$ respectively. Thus, $\left( \alpha
\tau \right) \left( \alpha ^{\gamma }\tau ^{\gamma }\right) =\left( \alpha
^{\gamma }\tau ^{\gamma }\right) \left( \alpha \tau \right) $, since $N$ is abelian. It follows that $%
\alpha \alpha ^{\gamma }T=\alpha ^{\gamma }\alpha T$, since $T$ is a normal subgroup of $X$, hence $\left(
\alpha \alpha ^{\gamma }\right) \left( \alpha ^{\gamma }\alpha \right)
^{-1}\in T\cap X_{0}$. Thus $\alpha \alpha ^{\gamma }=\alpha ^{\gamma
}\alpha $, since $T\cap X_{0}=1$. Note that $o(\alpha)=p$, since $o(\varphi)=p$ and since $T$ is a normal subgroup of $X$ such that $T\cap X_{0}=1$. Therefore $\alpha ^{\gamma }$ lies in the same Sylow $p$%
-subgroup of $X_{0}$ containing $\alpha $. Let $S$ be such a Sylow $p$%
-subgroup of $X_{0}$. Then $\gamma \in N_{X_{0}}(S)$, since the intersection
of any two distinct Sylow $2$-subgroups of $X_{0}$ is trivial by \cite[Satz II.8.2.(c)]{Hup}. Therefore, $\gamma $ preserves $\left\langle
(x_{1},x_{2})\right\rangle $, since $\mathrm{Fix}(S)=\mathrm{Fix}(\alpha )=\left\langle
(x_{1},x_{2})\right\rangle $, and hence either $x_{1}=0$ or $x_{2}=0$, a
contradiction.

\begin{enumerate}
\item[(iii).] \textbf{The blocks of $\mathcal{D}_{X}$ are affine $%
GF(p)$-subspaces of $V$.}
\end{enumerate}

The assertion is true for $p$ odd by (ii) and by Lemma \ref{inv}, since $-1\in
X_{0}$. Hence assume that $p=2$. Since $r_{0}=\lambda_{0} (q+1)$ and $\lambda_{0}
=p^{f_{0}}\leq q$, it follows that $\left\vert X_{0,B}\right\vert $ is divisible
by $q-1$. Then $X_{B}/T_{B}$ is isomorphic to a subgroup of $X_{0}$ of
order $\frac{q}{\left\vert T_{B}\right\vert }\left\vert X_{0,B}\right\vert $
by Lemma \ref{cici}(2). Also $q/\left\vert T_{B}\right\vert <q$ by (ii).
Then $X_{B}/T_{B}\cong D_{2(q-1)}$ and $\left\vert T_{B}\right\vert =q/2$ 
by \cite[Tables 8.1--8.2]{BHRD}. However, this is impossible by Corollary \ref{p2}. Thus (iii) holds for $q$ even as well.

\begin{enumerate}
\item[(iv).] \textbf{$\mathcal{D}_{X}$ is isomorphic to the $2$-design
constructed in Example \ref{Ex4}.}
\end{enumerate}

If $B$ is contained in a $1$-dimensional $GF(q)$-subspace of $V$, then $B$
is a $1$-dimensional $GF(q)$-subspace of $V$, since $k=q$. Then $\mathcal{D}%
_{X}\cong AG_{2}(q)$, whereas $\lambda \geq 2$ by our assumption. Thus $B$
is not contained in any $1$-dimensional subspace of $V$ and hence $q \neq p$. Therefore $B$ is a $2$-dimensional $GF(q^{1/2})$-subspace
of $V$ when $q=p^{2}$ by (iii), hence $SL_{2}(q^{1/2})\trianglelefteq
X_{0,B}\leq SL_{2}(q^{1/2}).Z_{(2,q^{1/2}-1)}$ in this case.

Assume that $q\neq p,p^{2}$. Then either $Z_{q-1}\leq X_{0,B}$ or $q$ is
odd and $X_{0,B}\cong Q_{q-1}$, or $q$ is a square and $SL_{2}(q^{1/2})\trianglelefteq
X_{0,B}\leq SL_{2}(q^{1/2}).Z_{(2,q^{1/2}-1)}$ by \cite[Tables 8.1--8.2]{BHRD}, since $q-1$ divides the order of $X_{0,B}$.

Suppose that the former occurs. Since $SL_{2}(q)$ has one conjugacy class of
subgroups isomorphic to $Z_{q-1}$, we may assume that the cyclic group
contained in $X_{0,B}$ is that generated by the element represented by $%
\mathrm{Diag(}\omega ,\omega ^{-1}\mathrm{)}$, where $\omega $ is a
primitive element of $GF(q)^{\ast }$. Let $(v_{1},v_{2})$ be any non-zero
vector of $B$. Then 
\[
B=\left\{ (\omega ^{i}v_{1},\omega ^{-i}v_{2}):i=1,...,q-1\right\} \cup
\left\{ (0,0)\right\} \text{.} 
\]%
Then $(\omega v_{1},\omega ^{-1}v_{2})+(v_{1},v_{2})=(\omega
^{s}v_{1},\omega ^{-s}v_{2})$ for some $s\in \left\{ 1,...,q-1\right\} $, since $B$ is a $GF(p)$-subspace of $V$ by (iii).
Then $\omega +1=\omega ^{s}$ and $\omega ^{-1}+1=\omega ^{-s}$ and hence $%
\omega ^{2}+\omega ^{-2}+1=0$. So $\omega ^{3}=1$ and $q=4$, whereas $q\neq
p^{2}$ by our assumption.

Assume that $X_{0,B}\cong Q_{q-1}$, $q$ odd. Since $SL_{2}(q)$ has two
conjugacy classes of subgroups isomorphic to $Q_{q-1}$, we may assume that%
\[
X_{0,B}=\left\langle \left( 
\begin{array}{cc}
\omega ^{2} & 0 \\ 
0 & \omega ^{-2}%
\end{array}%
\right) ,\left( 
\begin{array}{cc}
0 & -\varepsilon \\ 
\varepsilon ^{-1} & 0%
\end{array}%
\right) \right\rangle \text{,} 
\]%
where $\varepsilon $ is either $1$ or $\omega $. Then 
\[
B=\left\{ (\omega ^{2i}v_{1},\omega ^{-2i}v_{2}),(-\omega ^{2i}\varepsilon
v_{2},\left( \omega ^{2i}\varepsilon \right)
^{-1}v_{1}):i=1,...,(q-1)/2\right\} \cup \left\{ (0,0)\right\} \text{,} 
\]%
for some vector $(v_{1},v_{2})$ with $v_{1},v_{2}\neq 0$, since $X_{0,B}$ acts regularly on $B^{\ast}$. If 
\[
(\omega ^{2}v_{1},\omega ^{-2}v_{2})+(v_{1},v_{2})=(\omega ^{2s}v_{1},\omega
^{-2s}v_{2}) 
\]%
for some $s=1,...,(q-1)/2$, then the above argument implies $\omega ^{6}=1$
and hence $q-1\mid 6$, but this contradicts $q\neq p$, $p$ odd. Then%
\[
(\omega ^{2}v_{1},\omega ^{-2}v_{2})+(v_{1},v_{2})=\left( -\omega
^{2i_{0}}\varepsilon v_{2},\left( \omega ^{2i_{0}}\varepsilon \right)
^{-1}v_{1}\right) 
\]%
for some $i_{0}=1,...,(q-1)/2$, hence $\omega ^{4}+\omega ^{2}+1=0$ as $%
v_{1},v_{2}\neq 0$. So $\omega ^{6}=1$ and we again reach a contradiction.

Finally, assume that $q$ is a square and that $SL_{2}(q^{1/2})\trianglelefteq X_{0,B}\leq
SL_{2}(q^{1/2}).Z_{(2,q^{1/2}-1)}$. Note that this case also encloses $q=p^{2}$ with $p$ a prime. Suppose that $q$ is odd and that $X_{0,B}\cong
SL_{2}(q^{1/2}).Z_{q^{1/2}-1}$. Then $r_{0}=\left[ X_{0}:X_{0,B}\right]
=q^{1/2}\frac{q+1}{q^{1/2}-1}$, hence $r=\lambda \frac{q+1}{q^{1/2}-1}$ since $r/r_{0} \mid q$. However this is impossible since $r=\left[ G_{0}:G_{0,B}\right]=\lambda(q+1)$ with $\lambda \mid q$. Thus $X_{0,B}\cong SL_{2}(q^{1/2})$.  

The group $X_{0,B}$ preserves two $(q^{1/2}+1)$-sets $\mathcal{R},\mathcal{R}^{\prime}$ each of which consisting of $2$-dimensional $GF(q^{1/2})$-subspaces of $V$. Also $\mathcal{R}$ and $\mathcal{R}^{\prime}$ induce a regulus and its opposite in $PG_{3}(q^{1/2})$ respectively, which are the two systems of generators of an hyperbolic quadric (e.g. see \cite[Section 15.3.III]{Hir} or \cite[Section II.13]{Lu}). Hence, $\mathcal{R}$ and $\mathcal{R}^{\prime}$ cover the same subset of points of $V$ and $X_{0,B}$ acts naturally on each member of $\mathcal{R}^{\prime}$. Since the number conjugacy of $SL_{2}(q)$-classes of subgroups isomorphic to $X_{0,B}$ is $(2,q^{1/2}-1)$  by \cite[Table 8.1]{BHRD}, and these are fused in $\Gamma L_{2}(q)$, we may assume that $X_{0,B}$ is the copy of $SL_{2}(q^{1/2})$ preserving  
$$
\mathcal{R}=\left\lbrace \left\langle (1,c) \right\rangle_{GF(q)}: c \in GF(q), \; c^{q^{1/2}+1}=1  \right\rbrace \text{ and } \mathcal{R}^{\prime}=W^Z,   
$$
where $$W=\left\lbrace (x,x^{q^{1/2}}c): x,c \in GF(q), \; c^{q^{1/2}+1}=1\right\rbrace$$ and $Z=Z(GL_{2}(q))$ (see \cite[Theorem II.13.5]{Lu}). Since any Sylow $p$-subgroup of $X_{0,B}$ acts semiregularly on $V \setminus \bigcup \mathcal{R}^{\prime}$, if $B\cap \left( \bigcup \mathcal{R}^{\prime} \right) = \left\{ (0,0)\right\} $ then $q^{1/2}\mid \left\vert B^{\ast
}\right\vert $, whereas $B^{\ast }$ is coprime to $p$ as $k=q$. Therefore $B\cap \left( \bigcup \mathcal{R}^{\prime} \right) \neq \left\{ (0,0)\right\}$ and hence there is $z_{0}$ in $Z$ such that $B\cap W^{z_{0}} \neq \left\{ (0,0)\right\}$. Actually, $B = W^{z_{0}}$ since $k=q$ and since $%
X_{0,B}$ acts transitively on $(W^{z_{0}})^{\ast }$.

\begin{enumerate}
\item[(v).] \textbf{$\mathcal{D}$ is isomorphic to the $2$-design
constructed in Example \ref{Ex4}.}
\end{enumerate}

Since $X_{0,B}\cong SL_{2}(q^{1/2})$ by (iii), and the number of conjugacy $SL_{2}(q)$-classes of subgroups isomorphic to $X_{0,B}$ is $(2,q^{1/2}-1)$, it follows that $[G_{0}:X_{0}]_{p}=[G_{0,B}:X_{0,B}]_{p}$ and hence that $r_{p}=[G_{0}:G_{0,B}]_{p}=[X_{0}:X_{0,B}]_{p}=(r_{0})_{p}$. Therefore $r=r_{0}$, since $r=\frac{\lambda}{\lambda_{0}}r_{0}$ with $\lambda/\lambda_{0} \mid q$, hence $\mathcal{D}=\mathcal{D}_{X}$ and the assertion follows from (iii).
\end{proof}

\begin{lemma}
\label{sympl}If $X_{0}\cong Sp_{n}(q)$, then one of the following holds:

\begin{enumerate}
\item $n=2$ and $\mathcal{D}$ is isomorphic to the $2$-design of Example \ref%
{Ex4}.

\item $n=4$ and $\mathcal{D}$ is isomorphic to the $2$-design of Example \ref%
{Ex0}.
\end{enumerate}
\end{lemma}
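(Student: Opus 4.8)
The plan is to dispatch the ranges $n=2$, $n=4$ and $n\ge 6$ in turn, treating $n\ge 4$ by passing to the subdesign cut out by $T:Sp_{n}(q)$. If $n=2$ then $X_{0}\cong Sp_{2}(q)=SL_{2}(q)$, so $SL_{2}(q)\trianglelefteq X_{0}$ and conclusion~(1) is exactly Proposition~\ref{SLSp}. So assume $n\ge 4$; if $(n,q)=(4,2)$ then $X_{0}\cong Sp_{4}(2)\cong S_{6}$ is nearly simple and $\mathcal{D}$ is isomorphic to the design of Example~\ref{Ex0} by Lemma~\ref{NotFDPM}, so assume in addition $(n,q)\ne(4,2)$. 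Put $X=T:X_{0}$; since $Sp_{n}(q)$ is transitive on the non-zero vectors of $V=V_{n}(q)$, $X$ acts point-$2$-transitively on $V$, and as $X\trianglelefteq G$ the flag-orbit $(0,B)^{G}$ of $G$ splits into $X$-orbits of equal length. Hence, exactly as in step~(i) of the proof of Proposition~\ref{SLSp}, $\mathcal{D}_{X}=(V,B^{X})$ is a point-$2$-transitive, flag-transitive $2$-$(q^{n},q^{n/2},\lambda_{0})$ design with $r_{0}=\lambda_{0}(q^{n/2}+1)$ a divisor of $r$; therefore $\lambda_{0}\mid\lambda\mid q^{n/2}$, so $\lambda_{0}$ is a power of $p$, and $[Sp_{n}(q):X_{0,B}]=\lambda_{0}(q^{n/2}+1)\le q^{n/2}(q^{n/2}+1)$.

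Next I would pin down $X_{0,B}$ using Aschbacher's theorem \cite{KL} (and \cite[Table 8.12]{BHRD} when $n=4$). Let $M$ be a maximal subgroup of $Sp_{n}(q)$ containing $X_{0,B}$. Every parabolic $M$ has index admitting a divisor that is $>1$ and coprime to $p$ --- for instance $[Sp_{n}(q):P_{1}]=(q^{n/2}+1)\cdot\frac{q^{n/2}-1}{q-1}$ with $\frac{q^{n/2}-1}{q-1}\equiv 1\pmod p$ --- and such a divisor cannot divide $\lambda_{0}(q^{n/2}+1)$; hence $M$ is not a parabolic. Running through the remaining Aschbacher classes, every non-parabolic maximal subgroup of $Sp_{n}(q)$ has index exceeding $q^{n/2}(q^{n/2}+1)$, the only exceptions being: when $n=4$, the $\mathcal{C}_{1}$-stabilizer $Sp_{2}(q)\times Sp_{2}(q)$ of a non-degenerate $2$-space $U$ (so $V=U\perp U^{\perp}$, of index $q^{2}(q^{2}+1)$) and the $\mathcal{C}_{2}$-subgroup $Sp_{2}(q)\wr S_{2}$; and a short list of subgroups occurring only for $q\in\{2,3\}$ (the $\mathcal{C}_{6}$- and $\mathcal{S}$-subgroups of $Sp_{4}(3)$, and the $\mathrm{O}_{n}^{\pm}(2)$-type subgroups of $Sp_{n}(2)$ for $n$ a $2$-power, the case $n=6$, $q=2$ being excluded), each of which is removed by checking that its index is not of the form $p^{a}c$ with $p^{a}\mid q^{n/2}$ and $c\mid q^{n/2}+1$, or that its orbit lengths on $V\setminus\{0\}$ are incompatible with $k=q^{n/2}$. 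Consequently $n=4$ and $X_{0,B}\le Sp_{2}(q)\times Sp_{2}(q)$ or $X_{0,B}\le Sp_{2}(q)\wr S_{2}$.

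Now $\left\vert X_{0,B}\right\vert=q^{4}(q^{2}-1)^{2}/\lambda_{0}$ (using $q^{4}-1=(q^{2}-1)(q^{2}+1)$), so $X_{0,B}\le Sp_{2}(q)\times Sp_{2}(q)$ forces $\lambda_{0}=q^{2}$ and $X_{0,B}=Sp_{2}(q)\times Sp_{2}(q)$. The remaining possibility $X_{0,B}\le Sp_{2}(q)\wr S_{2}$ with $X_{0,B}\not\le Sp_{2}(q)\times Sp_{2}(q)$ would force $q$ even and $\lambda_{0}=q^{2}/2$, hence $X_{0,B}=Sp_{2}(q)\wr S_{2}$; but then the $X_{0,B}$-orbits on $V\setminus\{0\}$ have lengths $2(q^{2}-1)$ and $(q^{2}-1)^{2}$, neither equal to $\left\vert B\setminus\{0\}\right\vert=q^{2}-1$, a contradiction. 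So $X_{0,B}=Sp(U)\times Sp(U^{\perp})$ and $\lambda_{0}=q^{2}$; since $X_{0,B}$ acts on $V=U\oplus U^{\perp}$ with orbits $U\setminus\{0\}$, $U^{\perp}\setminus\{0\}$ and the mixed orbit of length $(q^{2}-1)^{2}$, the equality $\left\vert B\setminus\{0\}\right\vert=q^{2}-1$ yields $B=U$ or $B=U^{\perp}$, i.e.\ $B$ is a non-degenerate $2$-space, and $\mathcal{D}_{X}$ is isomorphic to the design of Example~\ref{Ex0}. Finally, $\lambda_{0}=q^{2}$ together with $\lambda\mid q^{2}$ gives $\lambda=\lambda_{0}$, whence $r=r_{0}$, so every block of $\mathcal{D}$ through $0$ lies in $B^{X}$ and $\mathcal{D}=\mathcal{D}_{X}$ is isomorphic to Example~\ref{Ex0}. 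This proves~(2).

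I expect the main obstacle to be the second paragraph: while most Aschbacher classes are cleared at once by the index bound or by the $p$-coprimality of the parabolic indices, the subgroups of $Sp_{n}(q)$ having an exceptionally small transitive permutation representation --- essentially $Sp_{4}(3)$ and the $Sp_{2^{k}}(2)$ --- have to be excluded one at a time, and for $n=4$ one must carefully separate the $\mathcal{C}_{1}$ stabilizer of a non-degenerate $2$-space from the $\mathcal{C}_{2}$ wreath subgroup, via the orbit lengths on $V\setminus\{0\}$, in order to identify the block~$B$.
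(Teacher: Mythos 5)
Your reduction to $n=4$ rests on the claim that, apart from parabolics and a short list of exceptions, every maximal subgroup of $Sp_{n}(q)$ has index exceeding $q^{n/2}(q^{n/2}+1)$, the only orthogonal-type exceptions being ``$\mathrm{O}_{n}^{\pm}(2)$-type subgroups of $Sp_{n}(2)$ for $n$ a $2$-power''. This is where the argument breaks. For \emph{every} even $q$ and every even $n\geq 4$ the group $\mathrm{O}_{n}^{+}(q)$ is a maximal ($\mathcal{C}_{8}$-)subgroup of $Sp_{n}(q)$ of index exactly $\tfrac{1}{2}q^{n/2}(q^{n/2}+1)$; this index both lies below your bound and divides $r_{0}=\lambda_{0}(q^{n/2}+1)$ whenever $\lambda_{0}\in\{q^{n/2}/2,\,q^{n/2}\}$, so it survives your divisibility test. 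Consequently the case $n\geq 6$ is not eliminated (for $q$ even you have not excluded $X_{0,B}\leq \mathrm{O}_{n}^{+}(q)$), and for $n=4$, $q$ even, you have omitted the possibility $X_{0,B}\cong SO_{4}^{+}(q)$ (or a subgroup thereof), which is \emph{not} conjugate to $Sp_{2}(q)\wr S_{2}$ even though it has the same order; the paper must devote a separate geometric argument (the regulus/opposite-regulus analysis) precisely to kill this case. A secondary weakness is that for $n=6$ the class-$\mathcal{S}$ subgroups (e.g.\ $G_{2}(q)<Sp_{6}(q)$) do not all exceed the index bound either and need the divisibility test individually, so the blanket assertion would in any case require a full table check.

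It is worth contrasting with the paper's route for $n\geq 6$, which avoids enumerating maximal subgroups of $Sp_{n}(q)$ altogether: it takes $C{:}Y\leq X_{0,x}$ with $Y\cong Sp_{n-2}(q)$ acting on the $\lambda\leq q^{n/2}$ blocks through $0$ and $x$, and compares $\lambda$ with the minimal primitive permutation degree of $Y$ to force $Y$ (and then $C{:}Y$) to fix every such block, whence $\frac{q^{n}-1}{q-1}\mid r$, a contradiction. If you want to keep your subdesign-plus-Aschbacher strategy, you must at minimum add an argument disposing of $X_{0,B}\leq \mathrm{O}_{n}^{+}(q)$ for $q$ even in all ranks $n\geq 4$; for $n=4$ your orbit-length computation can be adapted (the orbits of $SO_{4}^{+}(q)$ and of $\Omega_{4}^{+}(q)$ on $V\setminus\{0\}$ all have length exceeding $q^{2}-1$), but this needs to be stated and proved, and the higher-rank orthogonal case needs its own treatment. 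The rest of your $n=4$ analysis (the order count forcing $X_{0,B}=Sp_{2}(q)\times Sp_{2}(q)$ and $\lambda_{0}=q^{2}$, the identification $B=U$ or $U^{\perp}$ from the orbit lengths, and the final step $\lambda=\lambda_{0}$ giving $\mathcal{D}=\mathcal{D}_{X}$) is sound and is a clean alternative to the paper's primitive-prime-divisor argument for pinning down $B$.
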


\begin{proof}
Let $x$ be any non-zero vector. Then $C:Y\leq X_{0,x}$, where $C$ is a $p$%
-group and $Y$ is a isomorphic to $Sp_{n-2}(q)$, by \cite[Lemma 4.1.12]{KL}.
Let $B$ be any block of $\mathcal{D}$ incident with $0,x$. Then $P(Y)\leq
\left\vert B^{Y}\right\vert \leq q^{n/2}$, where $P(Y)$ denotes the minimal
primitive permutation representation of $Y$. Assume that $n\geq 6$. Then $%
P(Y)=\frac{q^{n-2}-1}{q-1}$, or $2^{n/2-2}(2^{n/2-1}-1)$ according to
whether $q>2$ or $q=2$, respectively by \cite[Proposition 5.2.1 and
Table 5.2.A]{KL}. Then $q=2$ then $n=6$, since $\left\vert B^{Y}\right\vert \leq
2^{n/2}$ . However, this is impossible, since $\Phi _{6}^{\ast }(2)=1$
contradicts our assumption (see remark before Lemma \ref{alternative}). Thus 
$Y$ fixes each block of $\mathcal{D}$ incident with $0,x$, and hence it lies
in the kernel of the action of $C:Y$ on the set of blocks of $\mathcal{D}$
incident with $0,x$. Therefore $C:Y$ fixes each block of $\mathcal{D}$
incident with $0,x$. So $\frac{q^{n}-1}{q-1}\mid r$, which is impossible
since $r=p^{f}(q^{n/2}+1)$ and $n>2$. Thus $n=2,4$. Case $n=2$ leads to $2$%
-design of Example \ref{Ex4}, since $Sp_{2}(q)\cong SL_{2}(q)$, which is the
assertion (1).

Assume that $n=4$. Suppose that $T$ acts block-semiregularly on $\mathcal{D}$%
. Then $q>2$ by Lemma \ref{NotFDPM}, since $Sp_{4}(2)^{\prime }\cong A_{6}$.
Also $G_{B}$ is isomorphic to a subgroup $J$ of $G_{0}$ of index $q^{2}+1$
by Lemma \ref{cici}(2). Then $\left[ X_{0}:J\cap X_{0}\right] \mid q^{2}+1$, hence $X_{0}\trianglelefteq J$ by \cite[Proposition 5.2.1 and Table
5.2.A]{KL}. Therefore $\left[ G_{0}:J\right] \mid (q-1)\cdot (2,q-1)\cdot \log
_{p}q$, whereas $\left[ G_{0}:J\right] =q^{2}+1$. Thus $T$ does not act
block semiregularly on $\mathcal{D}$, hence $B$ is a $GF(p)$-subspace of $V$
by Lemma \ref{inv}. Moreover, either $X_{0,B}\leq Sp_{2}(q)\wr Z_{2}$, or $X_{0,B}\cong SO^{+}_{4}(q)$ with $q$ even, by \cite[Tables 8.12--8.14]{BHRD}%
, since $\left[ X_{0}:X_{0,B}\right] \mid p^{f}(q^{2}+1)$.

Assume that $X_{0,B}\cong SO^{+}_{4}(q)$ with $q$ even. The group $X_{0,B}$ preserves two $(q+1)$-sets $\mathcal{R},\mathcal{R}^{\prime}$ each of which consists of $2$-dimensional $GF(q)$-subspaces of $V$. Also $\mathcal{R}$ and $\mathcal{R}^{\prime}$ induce a regulus and its opposite in $PG_{3}(q)$ respectively, which are the two systems of generators of an hyperbolic quadric. Hence, $\mathcal{R}$ and $\mathcal{R}^{\prime}$ cover the same subset of points of $V$ and $X_{0,B}$ induces a copy of $PSL_{2}(q)$ in its $2$-transitive action of degree $q+1$ on $\mathcal{R}$ and another copy with the same action on $\mathcal{R}^{\prime}$. Since any Sylow $p$-subgroup of $X_{0,B}$ does not fix points in $V \setminus \bigcup \mathcal{R}$, but it fixes at least a point $B^{\ast}$, and since $k=q^{2}$, it follows that $B \subset \bigcup \mathcal{R}$. Therefore there is $W$ in $\mathcal{R}$ such that $B\cap W \neq \left\{ (0,0,0,0)\right\}$. Actually $B = W$, since $k=q^{2}$ and since $%
X_{0,B,W}$ induces a transitive group on $W^{\ast }$. So $%
X_{0,B}$ preserves $W$, whereas $X_{0,B}$ induces a transitive group on the $q+1$ members of $\mathcal{R}$. Hence, this case is ruled out.

Assume that $K\trianglelefteq X_{0,B}$, where $K\cong Sp_{2}(q)\times Sp_{2}(q)$,
and $\lambda =q^{2}$. Hence $K$ preserves the sum decomposition $%
V=V_{1}\oplus V_{1}^{\perp}$ where $V_{1}$ is a non-degenerate with respect to
the symplectic form preserved by $X_{0}$. It follows that $K\leq GL_{m}(p)$,
since $B$ is a $GF(p)$-subspace of $V$ and since $q^{2}=p^{m}$. Assume that $%
p^{m}-1$ has primitive prime divisor $w\,$. Then $w\mid \left\vert
K(B)\right\vert $, since $(q+1)^{2}\mid \left\vert K\right\vert $ and since $%
K\leq GL_{m}(p)$. Then either $Sp_{2}(q)\times 1\leq K(B)$ or $1\times
Sp_{2}(q)\leq K(B)$, since $K(B)\trianglelefteq K$. Therefore either $%
B=V_{1} $ or $B=V_{1}^{\perp}$ respectively, and hence $\mathcal{D}$ is isomorphic
to the $2$-design of Example \ref{Ex0}, which is the assertion (2).

Assume that $p^{m}-1$ does not have primitive prime divisors. Then $m=2$ and $q=p$ is a Mersenne prime by \cite[Theorem
II.6.2]{Lu}, since $q^{2}=p^{m}$. Then $K^{B}\leq GL_{2}(p)$ and hence either $Sp_{2}(p)\times 1\leq
K(B)$ or $1\times Sp_{2}(p)\leq K(B)$. Thus either $B=V_{1}$ or $B=V_{1}^{\perp}$
respectively, and we again obtain the assertion (2).
\end{proof}

\begin{lemma}
If $X_{0}\cong SU_{n}(q^{1/2})$, $q$ square, then $n=3$.
\end{lemma}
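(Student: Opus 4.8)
The plan is to follow the strategy of Lemmas~\ref{SL} and~\ref{sympl}, but working with vector orbits in the natural module rather than with blocks through a point. Since $n$ is odd and $n>1$ by our standing assumptions, it suffices to rule out $n\geq 5$. Write $q_{0}=q^{1/2}$, so that $q=q_{0}^{2}$, $k=q^{n/2}=q_{0}^{n}$, and recall that $\lambda=p^{f}$ divides $k$. I would fix a non-isotropic vector $x$ of $V=V_{n}(q)$ with respect to the non-degenerate Hermitian form preserved by $X_{0}$; then $V=\left\langle x\right\rangle \perp x^{\perp}$, the form is non-degenerate on the hyperplane $x^{\perp}$, and $X_{0,x}=\{g\in X_{0}:gx=x\}$ is precisely the special unitary group of $x^{\perp}$. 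Thus $X_{0,x}\cong SU_{n-1}(q_{0})$, it fixes $\left\langle x\right\rangle$ pointwise, it preserves the decomposition $V=\left\langle x\right\rangle \oplus x^{\perp}$, and it acts naturally on $x^{\perp}\cong V_{n-1}(q_{0}^{2})$.

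The first step is to show that $X_{0,x}$ fixes at least one block of $\mathcal{D}$ incident with $0$ and $x$. Since $n\geq 5$, the group $SU_{n-1}(q_{0})$ is quasisimple, and it permutes the $\lambda\leq q_{0}^{n}$ blocks incident with $0$ and $x$; hence every non-trivial orbit of $X_{0,x}$ on these blocks has length at least $P(SU_{n-1}(q_{0}))$, the minimal degree of a faithful transitive permutation representation of $SU_{n-1}(q_{0})$. By \cite[Proposition 5.2.1 and Table 5.2.A]{KL} one has $P(SU_{n-1}(q_{0}))\geq \frac{(q_{0}^{n-1}-1)(q_{0}^{n-2}+1)}{q_{0}^{2}-1}>q_{0}^{n}\geq \lambda$ whenever $n\geq 7$, or $n=5$ and $q_{0}\geq 4$; in all these cases $X_{0,x}$ fixes every block incident with $0$ and $x$. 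For the two residual cases $n=5$, $q_{0}\in\{2,3\}$, the non-trivial orbit lengths of $SU_{4}(q_{0})$ are, by the subgroup lists of $SU_{4}(2)\cong PSp_{4}(3)$ and of $SU_{4}(3)$ in \cite{At}, bounded below by $27$ and by $112$ respectively; since $\lambda$ is a $p$-power dividing $q_{0}^{5}$, inspecting the possible orbit-length partitions of $\lambda$ shows that $X_{0,x}$ still fixes a block incident with $0$ and $x$ (indeed at least five of them when $\lambda=q_{0}^{5}$, and all of them otherwise).

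Let $B$ be such a fixed block, so $X_{0,x}\leq X_{0,B}$ and $B$ is a union of $X_{0,x}$-orbits on $V$. Since $x\in B$ and $X_{0,x}$ fixes $\left\langle x\right\rangle$ pointwise, $\left\langle x\right\rangle \subseteq B$. On the other hand, because $X_{0,x}$ preserves $V=\left\langle x\right\rangle \oplus x^{\perp}$ and acts trivially on $\left\langle x\right\rangle$, every $X_{0,x}$-orbit on $V$ not contained in $\left\langle x\right\rangle$ has length equal to the length of a non-trivial $SU_{n-1}(q_{0})$-orbit on $x^{\perp}\setminus\{0\}$. The orbits of $SU_{n-1}(q_{0})$ on the non-zero vectors of $V_{n-1}(q_{0}^{2})$ are the set of isotropic vectors (one orbit, of length $(q_{0}^{n-1}-1)(q_{0}^{n-2}+1)$) and, for each non-zero norm, the set of non-isotropic vectors of that norm (of length $q_{0}^{n-2}(q_{0}^{n-1}-1)$); the minimum of these lengths is $q_{0}^{n-2}(q_{0}^{n-1}-1)\geq q_{0}^{2n-4}>q_{0}^{n}=|B|$ for $n\geq 5$. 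Hence $B$ contains no $X_{0,x}$-orbit outside $\left\langle x\right\rangle$, forcing $B=\left\langle x\right\rangle$ and therefore $q_{0}^{n}=|B|=q_{0}^{2}$, which contradicts $n\geq 5$. Consequently $n=3$.

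The only delicate point is the reduction in the residual cases $n=5$, $q_{0}\in\{2,3\}$, where the generic bound of \cite[Table 5.2.A]{KL} no longer forces the action of $X_{0,x}$ on the blocks through $0$ and $x$ to be trivial, so one must instead invoke the explicit subgroup structure of $SU_{4}(2)$ and $SU_{4}(3)$ together with the fact that $\lambda$ is a $p$-power; everything else is a direct orbit-length computation in the natural module for $SU_{n-1}(q_{0})$.
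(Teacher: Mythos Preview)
Your argument is correct and takes a genuinely different route from the paper's. The paper works with an \emph{isotropic} vector $x$, inside whose stabiliser sits $C{:}Y$ with $Y\cong SU_{n-2}(q^{1/2})$; it shows (for $n>7$) that $C{:}Y$ fixes every block through $0,x$, forcing $\frac{(q^{n/2}+1)(q^{n/2-1}-1)}{q-1}\mid r$ and contradicting $r=p^{f}(q^{n/2}+1)$, while $n=5,7$ are handled separately via Lemma~\ref{PP} and the containment $SU_{n-1}(q^{1/2})\trianglelefteq X_{0,B}\leq GU_{n-1}(q^{1/2})$. You instead take a \emph{non-isotropic} vector $x$, so that $X_{0,x}\cong SU_{n-1}(q_{0})$ is already as large as possible; once this group fixes a single block $B$, the clean observation that every $SU_{n-1}(q_{0})$-orbit on $V\setminus\langle x\rangle$ has length at least $q_{0}^{n-2}(q_{0}^{n-1}-1)>q_{0}^{n}=|B|$ forces $B\subseteq\langle x\rangle$, an immediate size contradiction. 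This endgame is uniform across all $n\geq 5$ and avoids the paper's separate treatment of $n=5,7$; the price is the small residual check for $(n,q_{0})\in\{(5,2),(5,3)\}$ in the first step, which is indeed settled by observing that $\lambda=q_{0}^{5}$ admits no partition into indices of proper subgroups of $SU_{4}(q_{0})$ (for $q_{0}=2$ the only index $\leq 32$ is $27$; for $q_{0}=3$ no pair of indices in $[112,131]$ sums to $243$, and there is no subgroup of index $243$).

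One small correction: the assertion ``$\langle x\rangle\subseteq B$'' does not follow from $x\in B$ together with $X_{0,x}$ fixing $\langle x\rangle$ pointwise---each point of $\langle x\rangle$ is a singleton $X_{0,x}$-orbit, but nothing forces all of them into $B$. Fortunately you do not need this inclusion: the orbit-length argument already yields $B\subseteq\langle x\rangle$, whence $q_{0}^{n}=|B|\leq |\langle x\rangle|=q_{0}^{2}$, which is the desired contradiction for $n\geq 5$.
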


\begin{proof}
Assume that $X_{0}\cong SU_{n}(q^{1/2})$, $q$ square. If $x$ is a non-zero
isotropic vector of $V$, then $C:Y\leq X_{0,x}$, where $C$ is a $p$-group and $Y$
is a isomorphic to $SU_{n-2}(q^{1/2})$, by \cite[Lemma 4.1.12]{KL}. Let $B$
be any block of $\mathcal{D}$ incident with $0,x$. Then $P(Y)\leq \left\vert
B^{Y}\right\vert \leq q^{n/2}$, where $P(Y)$ denotes the minimal primitive
permutation representation of $Y$. Assume that $n>7$. Then%
\[
q^{n/2}\geq \lambda \geq \left\vert B^{Y}\right\vert >\frac{\left(
q^{(n-2)/2}+1\right) \left( q^{(n-2)/2-1}-1\right) }{q-1}
\]%
by \cite[Proposition 52.1 and Table 5.2.A]{KL}, since $n$ is odd, but the previous inequality has no solutions. Thus $Y$ fixes each block of $\mathcal{D}$ incident
with $0,x$, and hence it lies in the kernel of the action of $C:Y$ on the
set of blocks of $\mathcal{D}$ incident with $0,x$. Therefore, $C:Y$ fixes
each block of $\mathcal{D}$ incident with $0,x$. So $C:Y\leq X_{0,B}$ and
hence $\frac{\left( q^{n/2}+1\right) \left( q^{n/2-1}-1\right) }{q-1}\mid r$%
, which is impossible since $r=p^{f}(q^{n/2}+1)$ and $n>7$.

Assume that $n=5,7$. Then $X_{0,B}\leq GU_{n-1}(q^{1/2})<X_{0}$ by \cite%
[Tables 8.20--8.21 and 8.37--8.38]{BHRD}, since $\left[ X_{0}:X_{0,B}\right]
\mid p^{f}(q^{n/2}+1)$. Moreover, $p^{f}\geq q^{(n-1)/2}$ and $%
SU_{n-1}(q^{1/2})\trianglelefteq X_{0,B}$. Since the non-zero
isotropic vectors of $V$ lie in a unique $G_{0}$-orbit of length $%
(q^{n/2}+1)(q^{n/2-1}-1)$, it follows that $\left\vert B\cap
x^{G_{0}}\right\vert =q^{n/2-1}-1$ by Lemma \ref{PP}. Hence the copy of $%
SU_{n-1}(q^{1/2})$ inside $X_{0,B}$ fixes $B\cap x^{G_{0}}$ pointwise, since
the minimal primitive permutation representation of $SU_{n-1}(q^{1/2})$ is
greater than $q^{n/2-1}-1$ again by \cite[Proposition 52.1 and Table
5.2.A]{KL}. So, $SU_{n-1}(q^{1/2})\leq X_{0,x}$ with $x$ a non-zero isotropic
vector of $V$, but this contradicts \cite[Tables 8.20 and 8.37]{BHRD}. Thus $%
n=3$, which is the assertion.
\end{proof}

\begin{lemma}
\label{SU3}If $X_{0}\cong SU_{n}(q^{1/2})$, then $n=3$ and $\mathcal{D}$ is
a $2$-$(q^{3},q^{3/2},q)$ design isomorphic to that constructed in Example %
\ref{Ex4}.
\end{lemma}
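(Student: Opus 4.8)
The plan is to determine the block stabiliser $X_{0,B}$ inside $\SU_{3}(q_{0})$ and then to recognise $\mathcal{D}$ as the design of Example \ref{Ex5}. By the preceding lemma $n=3$, so $V=V_{3}(q)$ with $q$ a square; put $q_{0}=q^{1/2}$, so that $2m=3h$, $X_{0}\cong \SU_{3}(q_{0})$, and $\mathcal{D}$ is a $2$-$(q^{3},q_{0}^{3},\lambda )$ design with $r=p^{f}(q_{0}^{3}+1)$ and $\lambda =p^{f}\leq q_{0}^{3}$. Since $N_{\GaL_{3}(q)}(X_{0})$ preserves the Hermitian curve $\mathcal{H}$ of $\PG_{2}(q)$ attached to the unitary form, the $G_{0}$-orbits on $V^{\ast }$ coincide with the two $X_{0}$-orbits, namely the set $\mathcal{O}_{1}$ of non-zero isotropic vectors (of length $(q_{0}^{3}+1)(q-1)$) and the set $\mathcal{O}_{2}$ of non-zero non-isotropic vectors (of length $q_{0}^{2}(q_{0}-1)(q_{0}^{3}+1)$). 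Hence Lemma \ref{PP} gives $\left\vert B\cap \mathcal{O}_{1}\right\vert =q-1$ and $\left\vert B\cap \mathcal{O}_{2}\right\vert =q_{0}^{2}(q_{0}-1)$ for any block $B$ incident with $0$.

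To pin down $X_{0,B}$, note first that $[G_{0}:X_{0}]$ divides $(q_{0}-1)h$, which is coprime to $\Phi _{2m}^{\ast }(p)$ (the latter is coprime to $q_{0}-1=p^{h/2}-1$, and to $h$ since $\Phi _{2m}^{\ast }(p)\equiv 1\pmod{2m}$); as $\Phi _{2m}^{\ast }(p)\mid r$ by Lemma \ref{alternative} and $[X_{0}:X_{0,B}]\mid r\mid q_{0}^{3}(q_{0}^{3}+1)$, it follows that $(q_{0}^{2}-1)\mid \left\vert X_{0,B}\right\vert $. Running through \cite[Tables 8.5--8.6]{BHRD}, $X_{0,B}$ then lies in a maximal subgroup of $\SU_{3}(q_{0})$ of one of the types: a parabolic (isotropic point stabiliser), a subgroup of type $\GU_{2}(q_{0})$ (non-isotropic point stabiliser), a subgroup of type $\SO_{3}(q_{0})$ (only for $p$ odd), a subfield subgroup, the torus normaliser $(Z_{q_{0}+1})^{2}:S_{3}$, or one of finitely many $\mathcal{S}$-subgroups; the last three bound $q_{0}$ via the divisibilities above and are disposed of with \cite{At}. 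In the parabolic case $X_{0,B}\leq X_{0,\langle c\rangle }$ with $c$ isotropic forces $[X_{0,\langle c\rangle }:X_{0,B}]$ to be a power of $p$, so $X_{0,B}$ contains the full cyclic torus $Z_{q_{0}^{2}-1}$ together with a non-trivial normal $p$-subgroup fixing $\langle c\rangle $ pointwise (the exceptional case $X_{0,B}=Z_{q_{0}^{2}-1}$, i.e. $\lambda =q_{0}^{3}$, is treated directly); since $\left\vert B\cap \mathcal{O}_{1}\right\vert =q_{0}^{2}-1$ is coprime to $p$ this gives $\langle c\rangle ^{\ast }\subseteq B$, and then examining the torus action on $B\setminus \langle c\rangle $ together with $\lambda \geq 2$ yields a contradiction along the lines of Lemma \ref{jedan} and step (ii) of Proposition \ref{SLSp}. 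In the $\GU_{2}(q_{0})$ case $X_{0,B}$ contains a normal $\SL_{2}(q_{0})$; comparing its orbit lengths on $V^{\ast }$ with $\left\vert B\cap \mathcal{O}_{i}\right\vert $ and using that $\left\vert B^{\ast }\right\vert =q_{0}^{3}-1$ is coprime to the $p$-power orbit lengths produced by a Sylow $p$-subgroup of $\SL_{2}(q_{0})$ forces $\SL_{2}(q_{0})$ to fix a non-isotropic vector, against \cite[Tables 8.5--8.6]{BHRD}. Hence $X_{0,B}\cong \SO_{3}(q_{0})$, so $p$ is odd and $h$ is even, and $[X_{0}:X_{0,B}]=q_{0}^{2}(q_{0}^{3}+1)$ together with $r=p^{f}(q_{0}^{3}+1)$ gives $\lambda =q$ and $r=q(q_{0}^{3}+1)$.

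For the identification, Lemma \ref{cici}(1) gives $t\geq m-f=h/2\geq 1$, so $T$ is not block-semiregular; when the scalar $-1$ lies in $G_{0}$, Lemma \ref{inv} makes $B$ a $GF(p)$-subspace of $V$, the residual case $G_{0}=\SU_{3}(q_{0})$ being recovered from the $p$-part identity below. Now $\SO_{3}(q_{0})$ is, up to $\SU_{3}(q_{0})$-conjugacy, the stabiliser of a $GF(q_{0})$-structure $B_{0}$ of $V$ on which the restriction of the Hermitian form is the bilinear form of a non-degenerate quadratic form $Q$, whose non-zero zeros are exactly the isotropic vectors of $V$ lying in $B_{0}$, a set of size $(q_{0}+1)(q_{0}-1)=q-1$ matching $\left\vert B\cap \mathcal{O}_{1}\right\vert $. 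Since a regular $\SO_{3}(q_{0})$-orbit on $V^{\ast }$ has length $q_{0}^{3}-q_{0}$, which exceeds $\max \{q-1,\,q_{0}^{2}(q_{0}-1)\}$, no such orbit lies in $B$, and a short orbit count against $\left\vert B\right\vert =q_{0}^{3}$, $\left\vert B\cap \mathcal{O}_{1}\right\vert =q-1$ and $\left\vert B\cap \mathcal{O}_{2}\right\vert =q_{0}^{2}(q_{0}-1)$ identifies $B$ with one of the $q_{0}+1$ $GF(q_{0})$-subspaces fixed by $X_{0,B}$, each inducing a Baer subplane of $\PG_{2}(q)$ meeting $\mathcal{H}$ in a non-degenerate conic, as in Example \ref{Ex5}. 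As $\SU_{3}(q_{0})$ has $(3,q_{0}+1)$ conjugacy classes of such subgroups, fused in $\GaU_{3}(q_{0})$ but not in $(Z_{q_{0}-1}\times \SU_{3}(q_{0})).Z_{h}$, arguing as in step (v) of the proof of Proposition \ref{SLSp} (so that $r_{p}=[X_{0}:X_{0,B}]_{p}$ and $B^{G}=B^{G_{0}}$) gives $\mathcal{D}=(V,B^{G})$ isomorphic to the design of Example \ref{Ex5}. The main obstacle is the elimination of the parabolic and $\GU_{2}(q_{0})$ cases in the second paragraph: for suitable $f$ their indices in $X_{0}$ are numerically compatible with $p^{f}(q_{0}^{3}+1)$, so only the intersection numbers with $\mathcal{O}_{1}$, $\mathcal{O}_{2}$, the coprimality of $\left\vert B^{\ast }\right\vert $ with $p$, and the hypothesis $\lambda \geq 2$ can rule them out.
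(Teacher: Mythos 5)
Your overall strategy --- computing $\left\vert B\cap \mathcal{O}_{1}\right\vert =q-1$ and $\left\vert B\cap \mathcal{O}_{2}\right\vert =q_{0}^{2}(q_{0}-1)$ from Lemma \ref{PP}, forcing $(q_{0}^{2}-1)\mid \left\vert X_{0,B}\right\vert $, running through the maximal subgroups of $SU_{3}(q_{0})$ in \cite{BHRD} and isolating $SO_{3}(q_{0})$ --- is the paper's, but two of your eliminations do not stand up. The most serious is the $GU_{2}(q_{0})$ case: you propose to force the normal $SU_{2}(q_{0})\cong SL_{2}(q_{0})$ to fix a non-isotropic vector and to read this as a contradiction against \cite{BHRD}. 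It is not a contradiction: the copy of $SU_{2}(q_{0})$ stabilising a non-degenerate $2$-space $W$ fixes the anisotropic line $W^{\perp }$ pointwise (determinant $1$ on $W$ forces the scalar on $W^{\perp }$ to be $1$), and in fact $SU_{2}(q_{0})$, of order $q_{0}(q_{0}^{2}-1)$, \emph{is} the full stabiliser in $SU_{3}(q_{0})$ of a non-isotropic vector. Moreover the fixed vectors in $B$ of a Sylow $p$-subgroup of $SU_{2}(q_{0})$ lie in $\left\langle y\right\rangle \oplus W^{\perp }$ with $\left\langle y\right\rangle $ isotropic, so your coprimality count does not even localise them to non-isotropic vectors. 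The paper eliminates this case by a much longer argument: it first forces $\left\vert T_{B}\right\vert =q_{0}^{3}$, shows $B$ meets each of the $q_{0}+1$ isotropic lines fixed by the Sylow $p$-subgroups of $SU_{2}(q_{0})$ in a full $GF(q_{0})$-line, and then derives a contradiction from $B\subset \left\langle y_{1},y_{2}\right\rangle _{GF(q)}$ and the $X_{0,B}$-orbit lengths there.

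Second, the parabolic sub-case $X_{0,B}\cong Z_{q_{0}^{2}-1}$ (i.e.\ $\lambda =q_{0}^{3}$) is dispatched with ``treated directly'', but it is one of the hardest points of the proof: after reducing to $T_{B}=1$ one needs $H^{1}(SU_{3}(q_{0}),V)=0$ and conjugacy of complements to identify $X_{B}$ with a conjugate of $X_{0,\left\langle x\right\rangle }$, whence a block through $0$ would consist entirely of isotropic or entirely of non-isotropic vectors; nothing in your sketch supplies this. Smaller issues: the sporadic cases $q_{0}=5,7,13$ cannot be disposed of with \cite{At} alone (the paper needs Lemma \ref{cici}(2), the structure of $0^{T_{B}}$, and a homology-triangle argument in $PG_{2}(25)$); and in the final identification $-I\notin SU_{3}(q_{0})$ for $p$ odd, so Lemma \ref{inv} is not automatically available --- the paper instead obtains $B=0^{T_{B}}$ from the fact that $\Omega _{3}(q_{0})$ admits no non-trivial module of dimension less than $3$, and $B$ is the \emph{unique} (not one of $q_{0}+1$) $3$-dimensional $GF(q_{0})$-subspace preserved by $SO_{3}(q_{0})$.
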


\begin{proof}
Set $q=s^{2}$, then $X_{0}\cong SU_{3}(s)$, $V=V_{3}(s^{2})$, $r= \lambda (s^{3}+1)$ and $\lambda \mid s^{3}$. The set of blocks incident with $0$ is
split into $X_{0}$-orbits of the same length as $X_{0}\trianglelefteq G_{0}$, hence $\left[ X_{0}:X_{0,B}\right]
\theta =r$ where $B$ is any block incident with $0$ and $\theta $ is a positive divisor of $r$. Thus $\theta \mid
\left( s+1\right) \cdot 2\log _{p}(s)$, since $\theta \mid \frac{s^{2}-1}{%
(3,s+1)} \cdot \left\vert \mathrm{Out(}PSU_{3}(s)\mathrm{)}\right\vert $ and $%
\left\vert \mathrm{Out(}PSU_{3}(s)\mathrm{)}\right\vert =(3,s+1)\cdot 2\log
_{p}(s)$. Thus%
\begin{equation}
\left\vert X_{0,B}\right\vert =\theta \frac{s^{3}}{\lambda }(s^{2}-1)\text{
with }\theta \mid \left( s+1\right) \cdot 2\log _{p}(s).  \label{ICSB}
\end{equation}%
and by \cite%
[Tables 8.5--8.6]{BHRD} one of the following holds:
\begin{enumerate}
\item  $X_{0,B}\leq X_{0,\left\langle x\right\rangle }$ for some
isotropic $1$-dimensional $GF(s^{2})$-subspace $\left\langle x\right\rangle$ of $V$;
\item $X_{0,B}\leq (Z_{s+1} \times Z_{s+1}).S_{3}$, with $s=5,7$, and $\lambda=s^3$;
\item $SU_{2}(s)\trianglelefteq X_{0,B}\leq GU_{2}(s)$;
\item $SO_{3}(s)\trianglelefteq X_{0,B}\leq Z_{(3,s+1)}\times SO_{3}(s)$ with $s$ odd;
\item $X_{0,B}\leq 3_{+}^{1+2}:Q_{8}$, with $s=5$, and $\lambda=5^3$;
\item $X_{0,B} \cong PSL_{2}(7)$ for $s=13$, $\theta =1$ and $\lambda = 13^3$.
\end{enumerate}

Now, we are going to show that (1)--(3) and (5)--(6) are ruled out and that (4) leads to a $2$-design isomorphic to that constructed in Example \ref{Ex4}.

It is well known that $G$ is a rank $3$ group and that the non-zero isotropic, and
non-isotropic vectors of $V$ are the $G_{0}$-orbits partitioning $V^{\ast}$ and have have length $(s^{3}+1)(s^{2}-1)$ and $s^{2}(s^{3}+1)(s-1)$ respectively. Let $x_{1},x_{2}$ be representatives of such orbits, then $\left \vert B \cap x_{1}^{G_{0}} \right \vert =s^{2}-1$ and $\left \vert B \cap x_{2}^{G_{0}} \right \vert =s^{2}(s-1)$ by Lemma \ref{PP}.

\begin{enumerate}
\item[(i).] \textbf{Case (1) cannot occur.} 
\end{enumerate}
Assume that $X_{0,B}\leq X_{0,\left\langle x\right\rangle }$ for some
isotropic $1$-dimensional $GF(s^{2})$-subspace $\left\langle x\right\rangle $
of $V$. Then $X_{0,B}\cong Z_{s^{2}-1}$, or $X_{0,B}\cong Z(U):Z_{s^{2}-1}$
or $X_{0,B}\cong U:Z_{s^{2}-1}$, where $U$ is a Sylow $p$-subgroup of $X_{0}$, since $%
Z(U):Z_{s-1}$ and $U/Z(U):Z_{s^{2}-1}$ are Frobenius groups. In each case $X_{0}$ contains a unique conjugate class of subgroups
isomorphic to $X_{0,B}$. Thus $\left[ G_{0}:X_{0}\right] =\left[
G_{0,B}:X_{0,B}\right]$ and hence $r=\left[ G_{0}:G_{0,B}\right] =\left[
X_{0}:X_{0,B}\right] $. Therefore $X=T:X_{0}$ acts flag-transitively on $%
\mathcal{D}$.

Assume that $X_{0,B}\cong U:Z_{s^{2}-1}$. Since $X_{0,B}$ is maximal in $X_{0}$, it follows from Lemma \ref{cici}(2) that $B$ is a $GF(p)$-subspace of $V$. Since $U$ preserves an isotropic $1$-dimensional subspace of $V$ and acts semiregularly on the set of the remaining non-zero isotropic vectors and since $\left \vert B \cap x_{1}^{G_{0}} \right \vert =s^{2}-1$, it results that $B \cap x_{1}^{G_{0}}=\left\langle z\right\rangle^{\ast}_{GF(s^2)}$, where $\left\langle z\right\rangle_{GF(s^2)}$ is the isotropic $1$-dimensional subspace of $V$ preserved by $U$.

Since $\left \vert B \cap x_{2}^{G_{0}} \right \vert =s^{2}(s-1)$ and $\left \vert U \right \vert =s^{3}$, the stabilizer in $U$ of a non-isotropic vector $u$ in $B$ is non-trivial. Then $\left\langle u\right\rangle_{GF(s^2)}$ lies in the line of $PG_{2}(s^{2})$ tangent in $\left\langle z\right\rangle_{GF(s^2)}$ to the Hermitian unital preserved by $X_{0}$. Therefore $\left \vert u^{X_{0,B}}\right \vert =s^{2}(s+1)$ by \cite[Satz II.10.12]{Hup}, whereas $u^{X_{0,B}} \subseteq B \cap x_{2}^{G_{0}}$ with $\left \vert B \cap x_{2}^{G_{0}} \right \vert =s^{2}(s-1)$. So, this case is ruled out.    

Assume that $X_{0,B}\cong Z(U):Z_{s^{2}-1}$. Therefore $r=s^{2}(s^{3}+1)$%
. Thus $\left\vert T_{B}\right\vert \geq s$ by Lemma \ref{cici}(1), since $%
p^{m}=q^{3/2}=s^{3}$. Now $X_{B}/T_{B}$ is isomorphic to a subgroup of $X_{0}$ and contains a copy of $X_{0,B}$ as subgroup of index $s^{3}/\left\vert T_{B}\right\vert 
$ by Lemma \ref{cici}(2). Thus either $\left\vert T_{B}\right\vert =s$ or $\left\vert T_{B}\right\vert =s^{3}$, since $X_{0,B}\cong Z(U):Z_{s^{2}-1}$, since $U:Z_{s^{2}-1}$ is the unique maximal subgroup of $X_{0}$ containing $X_{0,B}$ and since $U/Z(U):Z_{s^{2}-1}$ is a
Frobenius group. In both cases there is a non-trivial subgroup of order a divisor of $\frac{s+1}{(2,s+1)}$ fixing at least a non-zero vector in $0^{T_{B}}$, but this is impossible for $s \neq 3$ (e.g. see \cite[Satz II.10.12]{Hup}). So $s=3$ and $X_{0,B}\cong Z_{3}:Z_{8}$, but there is a cyclic groups of order at least $4$ fixing a point in $0^{T_{B}}$ and this case is excluded.

Assume that $X_{0,B}\cong Z_{s^{2}-1}$. We may argue as in the previous case and we get that $\left\vert
T_{B}\right\vert$ is either $1$ or $s$ or $s^{3}$. In the two latter cases is a non-trivial subgroup of order a divisor of $\frac{s+1}{(2,s+1)}$ fixing at least a non-zero vector in $0^{T_{B}}$ and we again reach a contradiction as above. Thus $\left\vert T_{B}\right\vert =1$ and hence $%
X_{B}\cong X_{0,\left\langle x\right\rangle }$ by Lemma \ref{cici}(2) and by \cite%
[Tables 8.5--8.6]{BHRD}. It follows from \cite[Theorem 2.14.(i)]{KaLib3} that $H^{1}(X_{0},V)=0$, also $\left\vert
H^{1}(X_{0,\left\langle x\right\rangle },V)\right\vert \leq \left\vert
H^{1}(X_{0},V)\right\vert $ by \cite[2.3.(g)]{CPS}, as $\left\langle x\right\rangle $ is isotropic. Then $X_{B}=X_{0,\left\langle x\right\rangle }^{\tau
}$ for some $\tau \in T$ by \cite[17.10]{Asch2}, hence $X_{0,\left\langle
x\right\rangle }$ acts transitively on $B^{\tau ^{-1}}$. Then all the vectors contained in $B^{\tau ^{-1}}
$ are either non-zero isotropic or non-isotropic. However, this is impossible since $X$ acts flag-transitively on $\mathcal{D}$ and each block incident with $0$ must contain (non-zero) isotropic vectors as well as non-isotropic ones.

\begin{enumerate}
\item[(ii).] \textbf{Cases (2), (5) and (6) cannot occur.} 
\end{enumerate}
Assume that (2) occurs. If $s=5$, from (\ref{ICSB}) we derive that either $X_{0,B} \cong S_{4}$ or $3_{+}^{1+2} \leq X_{0,B}$. In the latter case $X_{0,B}$ contains the center of $SU_{3}(5)$, which fixes at least two points on $B$ since $k=125$ is equivalent to $2$ modulo $3$, and we reach a contradiction. For the same reason $B$ cannot be a $GF(5)$-subspace of $V$ when $X_{0,B} \cong S_{4}$. On the other hand, $X_{B}/T_{B}$ is isomorphic to a subgroup of $X_{0}$ containing an isomorphic copy of $X_{0,B}$ as a subgroup of index $5^{3}/\left\vert T_{B} \right \vert$ by Lemma \ref{cici}(2). It follows from \cite{At} that $\left\vert T_{B} \right \vert = 5^{2}$, since $B$ cannot be a $GF(5)$-subspace of $V$. Then $X_{0,B}$ acts on $W$, where $W=0^{T_{B}}$, which is not a $1$-dimensional $GF(25)$-subspace of $V$, since $X_{0,B}$ does not fix points of $PG_{2}(25)$. Hence $W$ intersects six points of $PG_{2}(25)$ each of them in a non-zero vector of $V$. Also, since there are no subgroups of $GL_{2}(5)$ isomorphic to $S_{4}$, it follows that $X_{0,B}(W) \neq 1$. Hence $Z_{2} \times Z_{2} \leq X_{0,B}(W)$, since $X_{0,B} \cong S_{4}$. Then $Z_{2} \times Z_{2}$ fixes six points of $PG_{2}(25)$, namely those intersecting $W$ in a non-zero vector of $V$, but this is not the case since $Z_{2} \times Z_{2}$ acts faithfully on $PG_{2}(25)$ as a groups of homologies in a triangular configuration. Thus $s=5$ is ruled out.

Suppose that $s=7$. Hence $(Z_{4} \times Z_{4}).S_{3} \leq X_{0,B}$, since $48$ divides the order of $X_{0,B}$ by (\ref{ICSB}). By Lemma \ref{cici}(2) $X_{B}/T_{B}$ is isomorphic to a subgroup of $X_{0}$ containing an isomorphic copy of $X_{0,B}$ as a subgroup of index $7^{3}/\left\vert T_{B} \right \vert$. It follows from \cite{At} that $\left\vert T_{B} \right \vert \geq 7^{2}$, since $(Z_{4} \times Z_{4}).S_{3} \leq X_{0,B}$. If $\left\vert T_{B} \right \vert = 7^{2}$, the admissible candidates of maximal subgroups of $X_{0}$ containing an isomorphic copy of $X_{B}/T_{B}$ are either $SL_{2}(7):Z_{8}$ or $PGL_{2}(7)$ by \cite{At}, however none of these contains a subgroup isomorphic to $(Z_{4} \times Z_{4}).S_{3}$. Therefore $\left\vert T_{B} \right \vert = 7^{3}$ and hence $B$ is a $3$-dimensional $GF(7)$-subspace of $V$. So $X_{0,B} \leq PGL_{2}(7)$ by \cite{At}, and we again reach a contradiction as $PGL_{2}(7)$ does not contains subgroups isomorphic to $(Z_{4} \times Z_{4}).S_{3}$, whereas $X_{0,B}$ does it. This completes the exclusion of (2).   

Assume that (5) occurs. From (\ref{ICSB}) we derive that either $X_{0,B} \cong Z_{3} \times Q_{8}$ or that $X_{0,B} \cong 3_{+}^{1+2}:Q_{8}$. In each case $X_{0,B}$ contains the center of $SU_{3}(5)$ and we reach a contradiction as in case (2).

Assume that (6) occurs. By Lemma \ref{cici}(2) $X_{B}/T_{B}$ is isomorphic to a subgroup of $X_{0}$ containing an isomorphic copy of $X_{0,B}$ as a subgroup of index $13^{3}/\left\vert T_{B} \right \vert$, it follows that $\left\vert T_{B} \right \vert = 13^{3}$, hence $B$ is a $3$-dimensional $GF(13)$-subspace of $V$, since $X_{0,B} \cong PSL_{2}(7)$. Then $X_{0,B}$ lies in a $\mathcal{C}_{5}$-member of $X_{0}$, but this is impossible by \cite[Table 8.5]{BHRD}. 

\begin{enumerate}
\item[(iii).] \textbf{Case (3) cannot occur.} 
\end{enumerate}
Assume that $SU_{2}(s)\trianglelefteq X_{0,B}\leq GU_{2}(s)$. Then $r=\left[
G_{0}:G_{0,B}\right] =\left[ X_{0}:X_{0,B}\right] $ by \cite[Table
8.5]{BHRD}, hence $X=T:X_{0}$ acts flag-transitively on $\mathcal{D}$. Moreover, $r=s^{2}(s^{3}+1)$ and therefore $X_{0,B}\cong SU_{2}(s)$. By Lemma \ref{cici}(2) $X_{B}/T_{B}$ is isomorphic to a subgroup of $X_{0}$ containing an isomorphic copy of $X_{0,B}$ as a subgroup of index $s^{3}/\left\vert T_{B} \right \vert$, it follows that $\left\vert T_{B} \right \vert = s^{3}$ and hence $B$ is a $GF(p)$-subspace of $V$.

Each Sylow $p$-subgroup of $X_{0,B}$ fixes a unique isotropic $1$-dimensional $GF(s^{2})$-subspace of $V$ and distinct Sylow $p$-subgroups of $X_{0,B}$ fix distinct isotropic $1$-dimensional $GF(s^{2})$-subspaces of $V$. Hence $B$ intersects at least $s+1$ isotropic $1$-dimensional $GF(s^{2})$-subspaces of $V$, say $\left\langle y_{i} \right\rangle_{GF(s^{2})}$, with $i=1,...,s+1$, each of them in at least a non-zero vector, that we may assume to be $y_{i}$. Moreover, $X_{0,B}$ permutes $2$-transitively the set $\{\left\langle y_{i} \right\rangle _{GF(s^{2})} : \; i=1,...,s+1 \}$. Since $X_{0,B,\left\langle y_{i} \right\rangle_{GF(s^{2})}}$ partitions the non-zero vectors of $\left\langle y_{i} \right\rangle _{GF(s^{2})}$ into $s+1$ orbits each of length $s-1$, since $\left \vert B \cap x_{1}^{X_{0}} \right \vert =s^{2}-1$ and since $B \cap \left\langle y_{i} \right\rangle _{GF(s^{2})}$ is a $GF(p)$-subspace of $V$, it results that $B \cap \left\langle y_{i} \right\rangle_{GF(s^{2})}=\left\langle y_{i} \right\rangle_{GF(s)}$ for each $i=1,...,s+1$ and that $B \cap x_{1}^{X_{0}} = \bigcup ^{s+1}_{i=1} \left\langle y_{i} \right\rangle ^{\ast}_{GF(s)}$.

If $B \cap x_{1}^{X_{0}}  \not \subset \left\langle y_{1},y_{2} \right\rangle^{\ast}_{GF(s^{2})}$, then there is $3 \leq i_{0} \leq s+1$ such that $B=\left\langle y_{1},y_{2},y_{i_{0}} \right\rangle_{GF(s)}$, as $k=s^{3}$. Then $X_{0,B}$ lies in a $\mathcal{C}_{5}$-member of $X_{0}$, but this is impossible by \cite[Table 8.5]{BHRD}. Thus $B \cap x_{1}^{X_{0}} \subset \left\langle y_{1},y_{2} \right\rangle^{\ast}_{GF(s^{2})}$ and hence $B \cap x_{1}^{X_{0}}= \left\langle y_{1},y_{2} \right\rangle^{\ast}_{GF(s)}$.

It is not difficult to see that $X_{0,B}$ preserves the decomposition $V=\left\langle y_{1},y_{2} \right\rangle_{GF(s^{2})} \oplus\left\langle y_{1},y_{2} \right\rangle^{\perp}_{GF(s^{2})} $ and that each $X_{0,B}$-orbit on $V \setminus (\left\langle y_{1},y_{2} \right\rangle_{GF(s^{2})} \cup\left\langle y_{1},y_{2} \right\rangle^{\perp}_{GF(s^{2})})$ is of length $s(s^{2}-1)$. Hence $B \subset (\left\langle y_{1},y_{2} \right\rangle_{GF(s^{2})} \cup\left\langle y_{1},y_{2} \right\rangle^{\perp}_{GF(s^{2})})$, since $k=s^3$ and since $ \left \vert B \cap \left\langle y_{1},y_{2} \right\rangle_{GF(s^{2})} \right \vert \geq s^{2}$. Actually, $B \subset \left\langle y_{1},y_{2} \right\rangle_{GF(s^{2})}$ since $B$ is a $GF(p)$-subspace of $V$ sharing at least $s^{2}$ vectors with $\left\langle y_{1},y_{2} \right\rangle_{GF(s^{2})}$. The $X_{0,B}$-orbits in $\left\langle y_{1},y_{2} \right\rangle_{GF(s^{2})} \setminus \left\langle y_{1},y_{2} \right\rangle_{GF(s)}$ have length $s(s^{2}-1)$ and each of these must be disjoint from $B$, so we reach a contradiction since $k=s^{3}$ and since $\left\langle y_{1},y_{2} \right\rangle_{GF(s)} \subset B$. Thus, this case is excluded.

\begin{enumerate}
\item[(iv).] \textbf{Lemma's statement holds.} 
\end{enumerate}
Assume that $SO_{3}(s)\trianglelefteq X_{0,B}\leq
Z_{(3,s+1)}\times SO_{3}(s)$ with $s$ odd. Suppose that $s \equiv 2 \pmod{3}$ and that $X_{0,B}\cong Z\times SO_{3}(s)$, where $Z \cong Z_{3}$. Then $Z$ fixes at least a non-zero vector of $B \cap x_{2}^{G_{0}}$, as $\left\vert B \cap x_{2}^{G_{0}}\right\vert = s^{2}(s-1)$, and we reach a contradiction as $Z$ is the center of $SU_{3}(s)$. Thus $X_{0,B} \cong SO_{3}(s)$. 

Since $\left\vert G_{0,B}\right\vert
_{p}=s\cdot \frac{\left( \log _{p}(s)\right) _{p}}{a}$ for some positive divisor $a$ of $\left( \log _{p}(s)\right) _{p}$ by (\ref{ICSB}), hence $\lambda =s^{2} \cdot a$. Then $T_{B}\neq 1$ by Lemma \ref{cici}(1), since $p^{m}=q^{3/2}=s^{3}$ and since $a<s$. Then $%
\Omega _{3}(s)$ acts on $0^{T_{B}}$. Clearly $\Omega _{3}(s)$ does not fix a non-zero vectors in $%
0^{T_{B}}$ by \cite[Table 8.5]{BHRD}. Thus $\left\vert 0^{T_{B}}\right\vert \geq s^{3}$ by \cite[Proposition
5.4.11]{KL}. Therefore $B=0^{T_{B}}$, as $k=s^{3}$ and $0^{T_{B}}\subseteq B$, hence $B$ is a $GF(p)$-subspace of $V$.

$X_{0,B}$ acts faithfully on $PG_{2}(s^{2})$ and one $X_{0,B}$-orbit on $PG_{2}(s^{2})$ is $\mathcal{H}\cap \mathcal{C}$, where $\mathcal{H}$ is the Hermitian unital of order $s$ preserved by $G_{0}$ and where $\mathcal{C}$ is a conic preserved by $X_{0,B}$. Each Sylow $p$-subgroup of $\Omega _{3}(s)$ fixes each non-zero vector of
a point $\left\langle y_{0}\right\rangle _{GF(s^{2})}$ of $\mathcal{H}\cap 
\mathcal{C}$ and acts regularly on $(\mathcal{H}\cap \mathcal{C})\backslash
\{\left\langle y_{0}\right\rangle _{GF(s^{2})}\}$. Moreover, the $X_{0,B}$-orbits on $\mathcal{H}\setminus 
\mathcal{C}$ have length $s(s^{2}-1)/2$. Therefore either $B\cap
x^{G_{0}}=\left\langle y_{0}\right\rangle _{GF(s^{2})}^{\ast }$, or $%
\left\vert B\cap \left\langle y\right\rangle _{GF(s^{2})}\right\vert >0$ for
each $\left\langle y\right\rangle _{GF(s^{2})}\in \mathcal{H}\cap \mathcal{C}
$, since $\left\vert B\cap x_{1}^{G_{0}}\right\vert =s^{2}-1$. If $B\cap
x_{1}^{G_{0}}=\left\langle y_{0}\right\rangle _{GF(s^{2})}^{\ast }$ then $%
SO_{3}(s) \leq X_{0,\left\langle y_{0}\right\rangle
_{GF(s^{2})}}$, which is impossible.
Therefore, $\left\vert B\cap \left\langle y\right\rangle
_{GF(s^{2})}\right\vert >0$ for each $\left\langle y\right\rangle
_{GF(s^{2})}\in \mathcal{H}\cap \mathcal{C}$. Moreover, since $\Omega
_{3}(s)_{\left\langle y\right\rangle _{GF(s^{2})}}$ is a Frobenius group with cyclic complement of order $(s-1)/2$ acting semiregularly on $%
\left\langle y\right\rangle _{GF(s^{2})}^{\ast }$, and since $B$ is a $GF(p)$%
-subspace of $V$, it follows that $B\cap \left\langle y\right\rangle
_{GF(s^{2})}$ is a $GF(s)$-subspace of $\left\langle y\right\rangle
_{GF(s^{2})}$. Thus $B$ is a $3$-dimensional $GF(s)$-subspace of $V$, since $\mathcal{C}$ is a conic, and
hence $B$ induces a Baer subplane of $PG_{2}(s^{2})$. Moreover, $B$ is the unique $3$-dimensional $GF(s)$-subspace of $V$ preserved by $X_{0,B}$. Thus, we assume that $B$ is as in Example \ref{Ex5}, since the $(3,s+1)$ conjugacy $X_{0}$-classes of subgroup isomorphic to $X_{0,B}$ are fused in $\mathrm{Aut}(X_{0}) \cong \Gamma U_{3}(s)$ (see \cite[Table 8.5]{BHRD}). Therefore, $\mathcal{D}$ is
a $2$-$(q^{3},q^{3/2},q)$ design isomorphic to that constructed in Example %
\ref{Ex4}. This completes the proof.
\end{proof}

\begin{lemma}
\label{Omegan}If $X_{0}\cong \Omega _{n}^{-}(q)$, then $n=4$.
\end{lemma}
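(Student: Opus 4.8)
The plan is to run the same kind of stabilizer–chain argument that was used in Lemmas \ref{SL} and \ref{sympl}. Let $x$ be a non-zero vector of $V=V_n(q)$. Since $n$ is even and $X_0\cong\Omega_n^-(q)$, if $x$ is non-singular then $C:Y\leq X_{0,x}$ with $C$ a $p$-group and $Y\cong\Omega_{n-1}(q)$ (or $\Omega_{n-2}^\pm(q)$ if $x$ is singular), by \cite[Lemma 4.1.12]{KL}. For a block $B$ incident with $0,x$ we have $P(Y)\leq\left\vert B^Y\right\vert\leq q^{n/2}=\lambda$. The point is that for $n\geq 6$ the minimal primitive permutation representation $P(Y)$ of $\Omega_{n-1}(q)$ (respectively $\Omega_{n-2}^\pm(q)$), as recorded in \cite[Proposition 5.2.1 and Table 5.2.A]{KL}, exceeds $q^{n/2}$; I would write out the relevant bounds (they are essentially $(q^{(n-2)/2}\pm 1)(q^{(n-4)/2}\mp 1)/(q-1)$-type expressions) and check the inequality fails, so that $Y$ must fix every block incident with $0,x$. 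As in the previous lemmas this forces $C:Y$ into $X_{0,B}$, hence $\left\vert\Omega_n^-(q):\Omega_n^-(q)_{\langle x\rangle}\right\vert$ divides $r=p^f(q^{n/2}+1)$; but that index is of the order of $q^{n-1}$ or larger, contradicting $r=p^f(q^{n/2}+1)$ once $n\geq 6$. This disposes of all $n\geq 6$.

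The remaining work is to rule out $n=5$; but $n$ must be even here (we are in the $\Omega_n^-(q)$ case of the Aschbacher list, where the minus-type orthogonal group acts on a space of even dimension $n=2m/h$), so in fact there is nothing to check there. What does need care is the small value $n=4$ not being excluded — indeed the lemma only asserts $n=4$, so no contradiction should be derived in that case; the role of this lemma is simply to pin $n$ down. I would therefore state explicitly that $\Omega_4^-(q)\cong\mathrm{PSL}_2(q^2)$ (up to centre and the usual isogeny), which is why $n=4$ survives and is handled separately in the sequel, whereas for $n\geq 6$ the group $\Omega_n^-(q)$ is not isomorphic to a linear group over a larger field and the $P(Y)$-bound bites.

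One subtlety I would be careful about: the $(n,q)$ with $\Phi_{2m}^\ast(p)=1$ or with exceptional isomorphisms among small orthogonal groups. By the blanket hypothesis in force since before Lemma \ref{alternative} we have $m>1$ and $(p,2m)\neq(2,6)$, so $\Phi_{2m}^\ast(p)>1$; and the case $(q,n)=(3,4)$ is excluded here because it is case (5) of the Aschbacher list, already treated in Section \ref{S3}. For $n=6$ with $q=2$ one must check that $\Omega_6^-(2)\cong\mathrm{PSU}_4(2)$ does not sneak in — but $n=6$, $q=2$ forces $2m=6$, i.e. $(p,2m)=(2,6)$, which is excluded by hypothesis. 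So the only genuine computation is the comparison of $P(\Omega_{n-1}(q))$ or $P(\Omega_{n-2}^\pm(q))$ with $q^{n/2}$ for $n\geq 6$ and $q\geq 2$, which is a finite and routine check using \cite[Table 5.2.A]{KL}.

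The main obstacle, such as it is, will be making sure the minimal-degree table is applied to the correct orthogonal group (the parity of the dimension of the stabilized subspace depends on whether $x$ is singular or not, and one must take the smallest $P(Y)$ over both possibilities) and confirming that no small-dimensional exceptional isomorphism produces a group with an unexpectedly small primitive representation that would undermine the inequality; the blanket assumptions $m>1$, $(p,2m)\neq(2,6)$ and the prior removal of $(q,n)=(3,4)$ are exactly what is needed to close those gaps.
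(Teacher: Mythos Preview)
Your overall strategy matches the paper's for $n>6$, but there is a genuine gap at $n=6$. For $n=6$ the stabilizer subgroup is $Y\cong\Omega_{4}^{-}(q)\cong PSL_{2}(q^{2})$, whose minimal primitive degree is $q^{2}+1$ (for $q^{2}>11$, with only smaller exceptions). Since $q^{2}+1\leq q^{3}=q^{n/2}$ for every $q\geq 2$, the inequality $P(Y)>q^{n/2}$ that you claim is ``a finite and routine check'' simply fails here, and the bootstrapping argument does not force $Y\leq X_{0,B}$. Your observation that $(p,2m)=(2,6)$ is excluded handles only $q=2$; for every $q>2$ the case $n=6$ remains open in your sketch. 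The paper closes it by a separate and non-trivial argument: one shows $[X_{0}:X_{0,B}]\mid p^{f}(q^{3}+1)$, then uses \cite[Tables 8.33--8.34]{BHRD} to pin down $X_{0,B}$ inside a maximal subgroup of type $\Omega_{5}(q).Z_{2}$ or $Sp_{4}(q)$, forces $B$ to be a $GF(p)$-subspace via Lemma~\ref{cici}, and finally obtains a contradiction from a primitive prime divisor of $p^{4m/3}-1$ dividing $\left\vert X_{0,B}\right\vert$ but not $\left\vert GL_{m}(p)\right\vert$. This is real work that your proposal omits.

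Two smaller points. You do not address $n=2$; the paper rules it out by noting $\Omega_{2}^{-}(q)\leq\Gamma L_{1}(q^{2})$, which contradicts the minimality of $n$. And your remark that $(q,n)=(3,4)$ ``is case (5) of the Aschbacher list'' is not correct: case~(5) is the extraspecial-normalizer case $G_{0}\leq (D_{8}\circ Q_{8}).S_{5}$, not the $\Omega_{4}^{-}$ case; the point is moot since $n=4$ is the conclusion, but the reasoning given is wrong.
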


\begin{proof}
Assume that $X_{0}\cong \Omega _{n}^{-}(q)$. Then $C:Y\leq X_{0,x}$, where $%
C $ is a $p$-group and $Y$ is a isomorphic to $\Omega _{n-2}^{-}(q)$, by 
\cite[Lemma 4.1.12]{KL}. Let $B$ be any block of $\mathcal{D}$ incident with 
$0,x$. Then $P(Y)\leq \left\vert B^{Y}\right\vert$, where $P(Y)$
denotes the minimal primitive permutation representation of $Y$. Hence
\begin{equation}
q^{n/2}\geq \lambda \geq \left\vert B^{Y}\right\vert >\frac{\left(
q^{(n-2)/2}+1\right) \left( q^{(n-2)/2-1}-1\right) }{q-1}  \label{ohm}
\end{equation}%
by \cite[Proposition 52.1 and Table 5.2.A]{KL}, and we reach a contradiction for $n>6$. Thus $Y$ fixes each block of $\mathcal{D}$ incident with $0,x$ for $n>6$,
and hence it lies in the kernel of the action of $C:Y$ on the set of blocks of $%
\mathcal{D}$ incident with $0,x$. Therefore, $C:Y$ fixes each block of $%
\mathcal{D}$ incident with $0,x$. So $\frac{\left( q^{n/2}+1\right) \left(
q^{n/2-1}-1\right) }{q-1}\mid r$, which cannot occur since $%
r=p^{f}(q^{n/2}+1)$ and $n>6$. Thus $n\leq 6$.

Assume that $n=6$. Then $\left[ X_{0}:X_{0,B}\right] \mid p^{f}(q^{3}+1)$
and hence $p^{f}\mid q^{3}$. Then either $X_{0,B}\leq M$, where $M$ is
either $\Omega _{5}(q).Z_{2}$ or $Sp_{4}(q)$ according to whether $q$ is odd
or even, respectively, by \cite[Tables 8.33--8.34]{BHRD}. Since $\left[
X_{0}:M\right] =q^{2}\frac{q^{3}+1}{(2,q-1)}$, it follows that $\left[
M:X_{0,B}\right] \mid \frac{p^{f}}{q^{2}}(2,q-1)$ and hence $M^{\prime} \leq X_{0,B} \leq M^{\prime}.Z_{(2,q-1)}$ by \cite[Proposition 52.1 and Table 5.2.A]{KL}, since $r=p^{f}(q^{3}+1)$. Thus $\lambda
=q^{2}$.

The group $G_{B}/T_{B}$ is isomorphic to a subgroup $J$ of $G_{0}$ such
that $[G_{0}:J]=\frac{p^{t}}{q}(q^{3}+1)$ and $J$ contains an isomorphic
copy of $G_{0,B}$ by Lemma \ref{cici}(2). Then $\left[ X_{0}:J\cap X_{0}%
\right] \mid \frac{p^{t}}{q}(q^{3}+1)$ and arguing as above with $J\cap X_{0}$ in the role of $X_{0,B}$
we deduce that $k=p^{t}=q^{3}$. Therefore, $B$ is a $GF(p)$-subspace of $V$ and hence $%
(X_{0,B})^{B}\leq GL_{m}(p)$. Since the order of $X_{0,B}$ is divisible by a
primitive prime divisor of $p^{4m/3}-1$, being $q^{6}=p^{2m}$, whereas the order of $GL_{m}(p)$ is
not, it follows that $\left( X_{0,B}\right) ^{\prime }$ fixes $B$ pointwise.
So $B\subseteq \left\langle z\right\rangle _{GF(q)}$ for some non-singular
vector $z$ of $V$, and we reach a contradiction since $k=q^{3}$.

Finally, $n=2$ cannot occur. Indeed, in this case $X_{0}\cong \Omega
_{2}^{-}(q)\cong D_{2\frac{q+1}{(2,q-1)}}\leq \Gamma L_{1}(q^{2})$ and it
contradicts the definition of $q$.
\end{proof}

\begin{lemma}
\label{Omega4no2}$X_{0}$ is not isomorphic to $\Omega _{n}^{-}(q)$, $n\geq 2$%
.
\end{lemma}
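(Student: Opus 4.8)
The plan is to combine Lemma~\ref{Omegan} with the exceptional isomorphism $\Omega_{4}^{-}(q)\cong PSL_{2}(q^{2})$ and a divisibility argument on $r$. By Lemma~\ref{Omegan} it suffices to rule out $n=4$, so I would assume $X_{0}\cong\Omega_{4}^{-}(q)$ acts irreducibly on $V=V_{4}(q)$ with $q=p^{h}$ and $q^{4}=p^{2m}$, whence $2m=4h$. Since $PSL_{2}(q^{2})$ is simple it embeds faithfully in $PGL_{4}(q)$, so $Z(X_{0})=1$, $-1\notin X_{0}$, and $G_{0}$ normalises $X_{0}$ with $[G_{0}:X_{0}]$ dividing $(q-1)\cdot|\Out(PSL_{2}(q^{2}))|=(q-1)\gcd(2,q-1)\cdot 2h$, which is smaller than $q^{2}+1$. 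I would also record the orbit structure on $V^{\ast}$: the $(q^{2}+1)(q-1)$ singular vectors form a single $G_{0}$-orbit and the $q(q^{2}+1)(q-1)$ non-singular ones form one or two orbits, so that Lemma~\ref{PP} gives $|B\cap\{\text{singular}\}|=q-1$ for every block $B$ through $0$; recall $r=\lambda(q^{2}+1)$ with $\lambda=p^{f}\mid q^{2}$.

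First I would pin down $X_{0,B}$. Because $[X_{0}:X_{0,B}]\mid r=p^{f}(q^{2}+1)$ with $\lambda\mid q^{2}$, the part of $[X_{0}:X_{0,B}]$ coprime to $p$ divides $q^{2}+1$; dividing $|X_{0}|=q^{2}(q^{2}-1)(q^{2}+1)/\gcd(2,q-1)$ by this forces $(q^{2}-1)/\gcd(2,q-1)$ to divide $|X_{0,B}|$. Moreover $\Phi_{2m}^{\ast}(p)=\Phi_{4h}^{\ast}(p)>1$ (Lemma~\ref{alternative}) divides $q^{2}+1$, hence $r$, hence $[X_{0}:X_{0,B}]$, since $\Phi_{4h}^{\ast}(p)\equiv 1\pmod{4h}$ is coprime to $2h$, to $q-1$ and to $q^{2}-1$. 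Feeding these divisibilities into Dickson's classification of the subgroups of $PSL_{2}(q^{2})$ (together with a Zsigmondy argument to discard proper subfield subgroups $PSL_{2}(p^{b})$ with $h\nmid b$) I expect to be left with exactly: $X_{0,B}=X_{0}$; $X_{0,B}$ inside a Borel subgroup $[q^{2}]:Z_{(q^{2}-1)/\gcd(2,q-1)}$; $X_{0,B}$ inside the dihedral normaliser $D_{2(q^{2}-1)/\gcd(2,q-1)}$ of a split torus; $PSL_{2}(q)\trianglelefteq X_{0,B}\leq PGL_{2}(q)$; or, only in the finitely many cases with $(q^{2}-1)/\gcd(2,q-1)\leq 60$, one of $A_{4},S_{4},A_{5}$.

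Next I would eliminate each possibility. If $X_{0,B}=X_{0}$ then $r$ divides $[G_{0}:X_{0}]<q^{2}+1\leq r$, a contradiction. If $X_{0,B}\leq D_{2(q^{2}-1)/\gcd(2,q-1)}$, then $(q^{2}-1)/\gcd(2,q-1)\mid|X_{0,B}|$ squeezes $[X_{0}:X_{0,B}]$ into $\{q^{2}(q^{2}+1),\,q^{2}(q^{2}+1)/2\}$; combined with $[X_{0}:X_{0,B}]\mid\lambda(q^{2}+1)$, with $\gcd(q^{2},q^{2}+1)=1$ and $\lambda\mid q^{2}$, this forces $\lambda\in\{q^{2},q^{2}/2\}$, and then a contradiction is extracted from the geometry: $X_{0,B}$ stabilises a hyperbolic line $\ell=\langle z_{1},z_{2}\rangle$ of $V$ with $z_{1},z_{2}$ singular, its perp $\ell^{\perp}$ is anisotropic (hence singular-free), and the orbit lengths of $X_{0,B}$ on the singular vectors are incompatible with $|B\cap\{\text{singular}\}|=q-1$ and with the bound $m-t\leq f$ of Lemma~\ref{cici}. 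The Borel case is handled in the same spirit: there $X_{0,B}$ fixes a singular point $\langle z\rangle$, so $X_{0,B}\leq X_{0,\langle z\rangle}$, and one again plays off $|B\cap\{\text{singular}\}|=q-1$ against $\lambda\mid q^{2}$. The case $PSL_{2}(q)\trianglelefteq X_{0,B}\leq PGL_{2}(q)$ means $X_{0,B}$ stabilises a non-singular point $\langle z\rangle$, so $X_{0,B}$ lies in the full stabiliser of such a point (which induces $\Omega_{3}(q)\cong PSL_{2}(q)$ on $z^{\perp}$), and is dispatched exactly as the analogous $SU_{3}$ configuration in Lemma~\ref{SU3}, using the intersection numbers of Lemma~\ref{PP}. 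Finally the sporadic possibilities for small $q$ — in particular $\Omega_{4}^{-}(2)\cong A_{5}$ on $V_{4}(2)$ and $\Omega_{4}^{-}(3)\cong A_{6}$ on $V_{4}(3)$, both already excluded in Lemma~\ref{NotFDPM} — are knocked out by the explicit relation $r=[X_{0}:X_{0,B}]\cdot\theta$ with $\theta\mid[G_{0}:X_{0}]$, which fails numerically. With every branch contradicted, $X_{0}\not\cong\Omega_{4}^{-}(q)$, and Lemma~\ref{Omegan} completes the proof.

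The hard part will be the Borel and split-torus-normaliser branches: there the purely numerical constraints ($[X_{0}:X_{0,B}]\mid r$ and $\lambda\mid q^{2}$) are satisfiable, so the contradiction must come from the geometry of the $4$-dimensional minus-type orthogonal module — the way a block invariant under a torus $K\leq X_{0,B}$ can meet the ovoid of singular points — together with Lemma~\ref{cici}. I would also have to keep a careful eye on the small fields $q=2,3$ and on $q=8$, where $p^{2h}-1$ has no primitive prime divisor and the Zsigmondy step in the subfield analysis needs a separate treatment.
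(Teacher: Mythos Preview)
Your overall plan is the same as the paper's: reduce to $n=4$ via Lemma~\ref{Omegan}, list the possible $X_{0,B}$ using the subgroup structure of $\Omega_{4}^{-}(q)\cong PSL_{2}(q^{2})$ (the paper cites \cite[Table 8.17]{BHRD} rather than Dickson, but this is the same information), and eliminate each case using $\left\vert B\cap x^{G_{0}}\right\vert=q-1$ together with Lemma~\ref{cici}. Your case list and the easy eliminations (the $X_{0,B}=X_{0}$ case, the small-$q$ cases via Lemma~\ref{NotFDPM}) are fine.

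There is, however, a genuine gap in the Borel case, which you correctly flag as the hard part but for which you do not supply a mechanism. When $X_{0,B}$ is cyclic of order $\frac{q^{2}-1}{(2,q-1)}$ and $\lambda=q^{2}$, the constraints you list are all satisfied, and ``playing off $\left\vert B\cap\{\text{singular}\}\right\vert=q-1$ against $\lambda\mid q^{2}$'' does not by itself produce a contradiction. The paper splits this into $\left\vert T_{B}\right\vert=q^{2}$ and $T_{B}=1$. For $\left\vert T_{B}\right\vert=q^{2}$ one analyses $B\cap\mathrm{Fix}(Y)$, where $Y$ is the subgroup of $X_{0,B}$ of order $\frac{q+1}{(2,q+1)}$, and shows that the resulting cardinality is incompatible with $B$ being a $GF(p)$-subspace. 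For $T_{B}=1$ the argument is more delicate and is different for the two parities: for $q$ odd the paper invokes $H^{1}(X_{0},V)=0$ (via \cite{KaLib3} and \cite{CPS}) to conjugate $X_{B}$ into $X_{0,\langle x\rangle}$ and then reads off a contradiction from the orbit lengths on $B$; for $q$ even a specific commutator computation inside the Frobenius group $K{:}Y$ forces an element of $T$ into $X_{B}$. None of this is visible in your sketch, and these are exactly the ideas you would need.

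A smaller point: your proposed elimination of the case $PSL_{2}(q)\trianglelefteq X_{0,B}\leq PGL_{2}(q)$ by analogy with Lemma~\ref{SU3} is misdirected, since in Lemma~\ref{SU3} the $SO_{3}(s)$ branch is precisely the one that \emph{succeeds} and gives a design. The paper's argument here is a one-line application of minimal degree: since $\left\vert B\cap x^{G_{0}}\right\vert=q-1$ is below the minimal faithful permutation degree of $PSL_{2}(q)$ (for $q\neq 9$; and $6>q-1$ when $q=9$), $PSL_{2}(q)$ fixes a singular vector, contradicting the parabolic structure.
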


\begin{proof}
$X_{0}\cong \Omega _{4}^{-}(q)$ by Lemma \ref{Omegan}. Since $r=\lambda
(q^{2}+1)$, where $\lambda \mid q^{2}$, and since $X_{0}\trianglelefteq
G_{0} $, it follows that $\left[ X_{0}:X_{0,B}\right] \theta =\lambda
(q^{2}+1)$ for some positive integer $\theta$. On the other hand $\theta \mid \lbrack G_{0}:X_{0}]$ and hence $%
\theta \mid (q-1)\cdot (2,q-1)\cdot m$, as $q^{2}=p^{m}$. Thus $\theta \mid
(2m,\lambda (q^{2}+1))$ and hence 
\begin{equation}
\left\vert X_{0,B}\right\vert =\frac{q^{2}}{\lambda }\frac{q^{2}-1}{(2,q-1)}%
\theta \text{.}  \label{ord}
\end{equation}%
Therefore, by \cite[Table 8.17]{BHRD} one of the following holds:
\begin{enumerate}
\item $X_{0,B}\leq X_{0,\left\langle x\right\rangle _{GF(q)}}$, where $x$ is a non-zero singular vector of $V$;
\item $PSL_{2}(q)\trianglelefteq X_{0,B}\leq $ $PGL_{2}(q)$;
\item $X_{0,B}\cong D_{2\frac{(q^{2}-1)}{(2,q-1)}}$;
\item $X_{0,B}\leq A_{5}$ and $q=3$.
\end{enumerate}
 We are going to prove that none of the cases (1)--(4) occurs.
\begin{enumerate}
\item[(i).] \textbf{Case (1) and (4) cannot occur.}
\end{enumerate}
Assume that $X_{0,B}\leq X_{0,\left\langle x\right\rangle _{GF(q)}}$. Then either $X_{0,B}= X_{0,\left\langle x\right\rangle _{GF(q)}}$ or $X_{0,B}$ is cyclic of order $\frac{q^{2}-1}{(2,q-1)}$ by (\ref{ord}), since $%
X_{0,\left\langle x\right\rangle _{GF(q)}}$ is a Frobenius group of order $%
q^{2}\frac{q^{2}-1}{(2,q-1)}$ with a cyclic complement of order $\frac{%
q^{2}-1}{(2,q-1)}$. In both cases $X_{0}$ contains a unique conjugacy class of subgroups isomorphic to $X_{0,B}$. Therefore $X=TX_{0}$ acts flag-transitively on $\mathcal{D}$. So $X_{0,B} \neq X_{0,\left\langle x\right\rangle _{GF(q)}}$ since $\lambda \geq 2$. Thus $X_{0,B}$ is cyclic of order $\frac{q^{2}-1}{(2,q-1)}$ and hence $r=q^{2}(q^{2}+1)$ and $\lambda =q^{2}$.

Since $X_{B}/T_{B}$ is isomorphic to a subgroup of $X_{0}$ containing $X_{0,B}$ as subgroup of
index $q^{2}/\left\vert T_{B}\right\vert $ by Lemma \ref{cici}(2), it follows that $X_{B}/T_{B}$ is a Frobenius group with cyclic complement of order $%
\frac{q^{2}-1}{(2,q+1)}$ by \cite[Table 8.17]{BHRD}. Thus, either $T_{B}=1$ or $%
\left\vert T_{B}\right\vert =q^{2}$. 

The $X_{0,B}$-orbits on the set of non-zero singular vectors of $V$ have length $\frac{q^{2}-1}{(2,q+1)}$ except for two ones of have length $q-1$, which are $\left\langle y_{1}\right\rangle
_{GF(q)}^{\ast }$ and $\left\langle y_{2}\right\rangle _{GF(q)}^{\ast }$ for some suitable non-zero singular vectors  $y_{1},y_{2}$ of $V$. On the other hand $\left\vert B\cap x^{G_{0}}\right\vert =q-1$, where $x$ is a non-zero singular vector of $V$, by Lemma \ref{PP}. Thus $ \left \vert B \cap \left\langle y_{i}\right\rangle _{GF(q)} \right \vert >1$ for $i=1$ or $2$. We may assume that $i=1$. Hence $B\cap x^{G_{0}}= \left\langle y_{1}\right\rangle _{GF(q)} ^{\ast }$, since $\left\langle y_{1}\right\rangle_{GF(q)}^{\ast }$ is a $X_{0,B}$-orbit. 

Assume that $\left\vert T_{B}\right\vert =q^{2}$. Then $B$ is a $GF(p)$-subspace of $V$. The cyclic subgroup $Y$ of $X_{0,B}$ of order $\frac{q+1}{(2,q+1)}$ fixes $B\cap x^{G_{0}}$
pointwise. Thus $\left\vert B\cap \mathrm{Fix}(Y)\right\vert \geq q$%
, where $\mathrm{Fix}(Y)$ is a $2$-dimensional $GF(q)$-subspace of $%
V$. Suppose $\left\vert B\cap \mathrm{Fix}(Y)\right\vert = q$. Then $q>3$, since $X_{0}\cong \Omega
_{4}^{-}(3)\cong PSL_{2}(9)\cong A_{6}$ and this case is ruled out by
Lemma \ref{NotFDPM}. Then there is a non-trivial subgroup $Y_{1}$ of $Y$ fixing a vector in $B \setminus  \mathrm{Fix}(Y)$, since $\left\vert B\setminus \mathrm{Fix}(Y)\right\vert = q(q-1)$. However, this is impossible as $\mathrm{Fix}(Y_{1})=\mathrm{Fix}(Y)$. Therefore $\left\vert B\cap \mathrm{Fix}(Y)\right\vert > q$. Clearly, $X_{0,B}$ acts on $\mathrm{Fix}(Y)$. In particular $X_{0,B}$ induces a cyclic group of order $q-1$ acting regularly on $\mathrm{Fix}(Y)$, and $\left\langle y_{1}\right\rangle _{GF(q)} ^{\ast }, \left\langle y_{2}\right\rangle _{GF(q)} ^{\ast }$ are the unique $X_{0,B}$-orbits consisting of non-zero singular vectors lying in $\mathrm{Fix}(Y)$. Hence $\left\vert B\cap \mathrm{Fix}(Y)\right\vert = q+a(q-1)$ for some positive integer $a$ such that $a \leq q$. On the other hand $\left\vert B\cap \mathrm{Fix}(Y)\right\vert=p^{e}$, with $q<p^e \leq q^{2}$, as both $B$ and $\mathrm{Fix}(Y)$ are $GF(p)$-subspaces of $V$. So $a=q$ and hence $B=\mathrm{Fix}(Y)$, since $k=q^{2}$. However this is impossible since $\left\vert B\cap x^{G_{0}}\right\vert =q-1$, whereas $\left\vert \mathrm{Fix}(Y)%
\cap x^{G_{0}}\right\vert =2(q-1)$.

Assume that $T_{B}=1$ and that $q$ is odd. As above, $q>3$ by
Lemma \ref{NotFDPM} (this fact is independent of the assumptions on the order of $T_{B}$).  Then $H^{1}(X_{0},V)=0$ by 
\cite[Theorem 2.14.(i)]{KaLib3}, also $\left\vert H^{1}(X_{0,\left\langle
x\right\rangle },V)\right\vert \leq \left\vert H^{1}(X_{0},V)\right\vert $
by \cite[2.3.(g)]{CPS}, since $\left\langle
x\right\rangle$ is singular. Then $X_{B}=X_{0,\left\langle x\right\rangle }^{\tau
}$ for some $\tau \in T$ by \cite[17.10]{Asch2}, hence $X_{0,\left\langle
x\right\rangle }=X_{B^{\tau ^{-1}}}$. Therefore the Sylow $p$-subgroup of $%
X_{0,\left\langle x\right\rangle }$ acts regularly on $B^{\tau ^{-1}}$, hence the actions of any cyclic subgroup of $X_{0,\left\langle
x\right\rangle }$ on $B^{\tau ^{-1}}$ and on the Sylow $p$-subgroup of $%
X_{0,\left\langle x\right\rangle }$ are equivalent by \cite[Proposition 4.2]{Pass}. Thus, each cyclic subgroup of $X_{0,\left\langle
x\right\rangle }$ fixes a point in $B^{\tau ^{-1}}$ and partitions the remaining points of $B^{\tau ^{-1}}$ into $2$ orbits each of length $\frac{q^{2}-1}{2}$. A similar conclusion holds for $B$, since $X$ acts block-transitively on $\mathcal{D}$. However, this is impossible since $\left\vert B\cap x^{G_{0}}\right\vert =q-1$.

Assume that $T_{B}=1$ and that $q$ is even. Let $K$ be the Sylow $2$-subgroup of $X_{0,\left\langle y_{1}\right\rangle_{GF(q)}}$. Therefore $K$ fixes $\left\langle y_{1}\right\rangle _{GF(q)}$ pointwise, hence $B^{K}$ is the set of the $\lambda=q^{2}$ blocks of $\mathcal{D}$ incident with $0$ and $y_{1}$. Furthermore, each of them contains $\left\langle y_{1}\right\rangle_{GF(q)}$. Thus $K$ centralizes $\tau$, where $\tau \in T_{\left\langle y_{1}\right\rangle _{GF(q)}}$, and hence $\tau$ permutes the blocks in $B^{K}$. Then there is $\gamma \in K$ such that $\gamma\tau \in X_{B}$. Let $\eta$ in $Y$, where $Y$ is defined as above, then $(\gamma\tau)^{\eta} \in X_{B}$. Also, $(\gamma\tau)^{\eta}=\gamma^{\eta}\tau$, with $\gamma^{\eta} \in K$ since $\tau \in T_{\left\langle y_{1}\right\rangle _{GF(q)}}$, $Y$ fixes $\left\langle y_{1}\right\rangle _{GF(q)}$ pointwise and $Y$ normalizes $K$. Then $\gamma\gamma^{\eta}=\gamma\tau\gamma^{\eta}\tau \in X_{B}$, since $\tau$ centralizes $\gamma^{\eta}$ and since $o(\tau)=2$. Consequently $\gamma\gamma^{\eta} \in K \cap X_{0,B}$, so $\gamma^{\eta}=\gamma$ since $K$ is an elementary abelian $2$-group and since $X_{0,B}$ is cyclic of (odd) order $q^{2}-1$. Therefore $\gamma=1$ by \cite[Proposition 17.2]{Pass}, since $K:Y$ is Frobenius group. So $\tau =\gamma \tau \in X_{B}$, but this contradicts $T_{B}=1$. Thus (1) and (4) are ruled out.

\begin{enumerate}
\item[(ii).] \textbf{Case (2) cannot occur.}
\end{enumerate}  
Assume that\ $PSL_{2}(q)\trianglelefteq X_{0,B}\leq $ $PGL_{2}(q)$. Since $%
\left\vert B\cap x^{G_{0}}\right\vert =q-1$ and since the minimal
primitive permutation representation of $PSL_{2}(q)$ is at least $q$ for $%
q\neq 9$ and $6$ for $q=9$ by \cite[Theorem 5.2.2]{KL}, it follows that $%
PSL_{2}(q)$ fixes at least a point in $B\cap x^{G_{0}}$ and hence it lies in the
stabilizer of a singular $1$-dimensional subspace of $V$, a contradiction.

\begin{enumerate}
\item[(iii).] \textbf{Case (3) cannot occur.}
\end{enumerate}
Assume that $X_{0,B}\cong D_{2\frac{(q^{2}-1)}{(2,q-1)}}$. Since $X_{0}$ has a unique conjugacy class of subgroups isomorphic to $X_{0,B}$ by \cite[Table 8.17]{BHRD}, it follows that $B^{G_{0}}=B^{X_{0}}$. Hence $X=TX_{0}$ acts flag-transitively on $\mathcal{D}$. The group $X_{B}/T_{B}$ is isomorphic to
subgroup of $X_{0}$ containing $X_{0,B}$ as subgroup of index $%
q^{2}/\left\vert T_{B}\right\vert $ by Lemma \ref{cici}(2). Since $X_{0,B}$
is maximal in $X_{0}$, it follows that $\left\vert T_{B}\right\vert =q^{2}$.
Thus $B$ is a $GF(p)$-subspace of $V$.

The $X_{0,B}$-orbits
on the set of non-zero singular vectors $GF(q)$-subspaces $V$ are either $\left\langle y_{1}\right\rangle _{GF(q)}^{\ast }\cup
\left\langle y_{2}\right\rangle _{GF(q)}^{\ast }$, where $y_{1},y_{2}$ have the same meaning as in (i), or they have length $q^{2}-1$.
Since $\left\vert B\cap x^{G_{0}}\right\vert =q-1$ and since $X_{0,B}$ contains involutions
switching $\left\langle y_{1}\right\rangle _{GF(q)}$ and $\left\langle
y_{2}\right\rangle _{GF(q)}$, it follows that $%
\left\vert B\cap x^{G_{0}}\cap \left\langle y_{i}\right\rangle
_{GF(q)}^{\ast }\right\vert =(q-1)/2$ for each $i=1,2$. Thus $\left\vert B\cap \left\langle
y_{i}\right\rangle _{GF(q)}\right\vert =(q+1)/2$, whereas $B\cap
\left\langle y_{i}\right\rangle _{GF(q)}$ is a $GF(p)$-subspace of $V$. So $X_{0}\cong \Omega _{4}^{-}(q)$ is excluded and the assertion follows.
\end{proof}

\bigskip
Now, we are in position to prove Theorem \ref{classical}.
\bigskip
\bigskip
\begin{proof}[Proof of Theorem \protect\ref{classical}.]
Suppose that $X_{0}$ is one of the groups $SL_{n}(q)$, $SU_{n}(q^{1/2})$
with $n$ odd, $Sp_{n}(q)$ or $\Omega _{n}^{-}(q)$. The Latter is ruled out
in Lemma \ref{Omega4no2}. In the remaining cases assertion (1), (2), and
(1) and (3) follows from Proposition \ref{SLSp}, Lemma \ref{SU3}
and Lemma \ref{sympl} respectively.
\end{proof}

\section{Completion of the proof of Theorem \ref{main}} \label{t5}
This small final section is devoted to the completion of the the proof of Theorem \ref{main}.

\begin{proof}[Proof of Theorem \protect\ref{main}.]
Let $\mathcal{D}$ be a $2$-$(p^{2m},p^{m},\lambda )$, with $\lambda \mid
p^{m}$, admitting a flag-transitive automorphism group $G=TG_{0}$. If $\lambda =1$, then assertions (1)--(5) follows from Theorem \ref{lambada}. Hence, assume that $\lambda \geq 2$. If $m>1$ and $(p,2m)\neq (2,6)$, then assertion (1), (6)--(8), (8) for $q=2$ and (9)--(10), and (11) follow from Lemma \ref{jedan} and Theorems \ref{classical}, \ref{ClassS} and \ref{normIrre} respectively. Thus, in order to complete the proof, we need to tackle the remaining cases $m=1$ or $(p,2m)=(2,6)$. The former implies that either $G_{0}\leq  \Gamma L_{1}(p^{2})$ or $SL_{2}(p) \trianglelefteq G_{0}$, since $G_{0}\leq GL_{2}(p)$ and $r=p(p+1)$ divides $\left\vert G_{0}\right\vert $. The former yields assertion (1) by Lemma \ref{jedan}, whereas we may use the arguments
in the proof of Proposition \ref{SLSp} to rule out the latter.

Finally, assume that $(p,2m)=(2,6)$. Then $G_{0}\leq SL_{6}(2)$ and $r=2^{f}\cdot 3^{2}$ with $f=1,2,3$. We may use
the same argument of Lemma \ref{SL} to rule out the case $G_{0}=SL_{6}(2)$.
Then $G_{0}\leq M$, where $M$ is a maximal subgroup of $SL_{6}(2)$. Then $M$
lies in a member $\mathcal{C}_{i}(SL_{6}(2))$, where $i=1,2,3$ or $8$,
by \cite[Tables 8.24--8.25]{BHRD}. Since $G_{0}$ is maximal in $G$ (recall that G acts point-primitively on $\mathcal{D}$), and $%
G=TG_{0}$, it follows that $G_{0}$ acts irreducibly on $V$. Hence $i\neq 1
$.

Suppose that $i=2$. Then $G_{0}$ preserves the sum decomposition $%
V=V_{1}\oplus V_{2}$, where $\dim V_{1}=\dim V_{2}=3$ and $M\cong
SL_{3}(2)\wr Z_{2}$ again by \cite[Table 8.24]{BHRD}. One $M$-orbit is $V_{1}^{\ast }\cup V_{2}^{\ast }$, which is of length $14$, is a union of $G_{0}$-orbits. However,
this is impossible since each $G_{0}$-orbit is of length divisible $3^{2}$
by Lemma \ref{PP}, being $r=2^{f}\cdot 3^{2}$.

Suppose that $i=3$. Then either $M\cong \Gamma L_{2}(8)$ or $M\cong \Gamma
L_{3}(4)$. Suppose that the former occurs. Since $18$ divides the order of $%
G_{0}$, and since the unique subgroup of order $3^{2}$ which do not fix
points are cyclic by \cite{At}, it follows that $G_{0}$ contains a cyclic
group $C$ of order $3^{2}$.

If $G_{0}\cap Z(GL_{2}(8))\neq 1$ then $%
Z_{7}\cong Z(GL_{2}(8))\leq G_{0}$, hence $Z(GL_{2}(8))$ preserves each
block incident with $0$, since $r$ is coprime to $7$ and since $G_{0}$ acts
transitively on the set of blocks incident with $0$. Then $B\cap B^{\prime }$
is preserved by $Z_{7}$, where $B^{\prime }$ is any block of $\mathcal{D}$
distinct from $B$ such that $0,x\in B\cap B^{\prime }$, with $x\neq 0$, as $%
\lambda \geq 2$. However, this is impossible since $k=2^{3}$. Thus $%
G_{0}\cap Z(GL_{2}(8))=1$.

The group $\Gamma L_{2}(8)$ contains two conjugacy classes of cyclic
subgroups of order $9$, say $\mathcal{K}_{1}$ and $\mathcal{K}_{2}$. If $%
C\in \mathcal{K}_{1}$ then $C$ is a subgroup of a Singer cycle and hence $%
N_{M}(C)\cong Z_{63}.Z_{2}.Z_{3}$, whereas if $C\in \mathcal{K}_{2}$ then $%
N_{M}(C)\cong Z_{9}.Z_{3}$. If $C$ is not normal in $G_{0}$ then $%
SL_{2}(8)\trianglelefteq G_{0}$ regardless $C\in \mathcal{K}_{1}$ or $C\in 
\mathcal{K}_{2}$, since $G_{0}\cap Z(GL_{2}(8))=1$. However, this case cannot occur by Proposition \ref{SLSp}. Therefore, $C\trianglelefteq G_{0}$
and since $18$ divides the order of $G_{0}$, it follows that $C\in \mathcal{K%
}_{1}$ and hence $G_{0}\leq N_{M}(C)$. Moreover, since $G_{0}\cap
Z(GL_{2}(8))=1$, it follows that $D_{18}\trianglelefteq G_{0}\leq
D_{18}.Z_{3}$. Therefore, $\lambda =2$ and hence $B$ is a $3$-dimensional $%
GF(2)$-subspace of $V$ by Corollary \ref{p2}. Then $D_{18}\trianglelefteq G_{0}\leq
D_{18}.Z_{3}$ leads to a $2$-$(2^{6},2^{3},2)$
design isomorphic to that of Line 1 in Table \ref{t0} by using \textsf{GAP} \cite{GAP}. This corresponds to case (1) for $q=2^{3}$ and $\lambda =2$.

Assume that $M\cong \Gamma L_{3}(4)$. Then $SL_{3}(4)\nleq G_{0}$ by
Proposition \ref{SLSp}. Since $G_{0}$ acts irreducibly on $V$ and its order
is divisible by $18$, it follows that either $G_{0}$ preserves $V_{3}(2)$,
or $G_{0}\leq \Gamma U_{3}(2)$ or $G_{0}\leq Z_{3}.S_{6}$ by \cite[Tables 8.3--8.4]{BHRD}. The first one is ruled out by Lemma \ref{PP}, whereas if $%
G_{0}\leq \Gamma U_{3}(2)$ then $\mathcal{D}$ is isomorphic to one of the $2$%
-designs listed in Table \ref{t0} by using \textsf{GAP} \cite{GAP}, and these correspond to cases (12)--(13).

Assume that $G_{0}\leq Z_{3}.S_{6}$. Let $S$ be a Sylow $3$-subgroup of $G_{0}$. Since no proper subgroups of $S$ have all
orbits on $V^{\ast }$ of lengths divisible by $3^{2}$, it follows that $S\leq G_{0}$
by Lemma \ref{PP}. Therefore either $G_{0}\leq N_{M}(S)=S:D_{8}$ or $%
Z_{3}.A_{6}\trianglelefteq G_{0}\leq Z_{3}.S_{6}$. Assume that the former
occurs. Note that $N_{M}(S)$ splits $V^{\ast }$ in one orbit of length $%
27$ and two orbits of length $18$. The first one is also a $G_{0}$-orbit as $%
S\leq G_{0}$, each of the remaining two $N_{M}(S)$-orbits is a union of $G_{0}$%
-orbits. Let $B$ be any block incident with $0$. Since $r=9\lambda $ with $%
\lambda =2,4,8$, and since the intersection of $B$ with any of $G_{0}$-orbit
of length different from $27$ is at most $2$ by Lemma \ref{PP}, it follows that
there is $\sigma \in G_{0}$, with $o(\sigma )=3$, such that $\left\vert 
\mathrm{Fix(}\sigma \mathrm{)}\cap B\right\vert =4$. Actually, $\mathrm{Fix(}%
\sigma \mathrm{)}\subset B$. Since $S\trianglelefteq G_{0}$, $r=9\lambda $
with $\lambda =2,4,8$, and the order of $S$ is $27$, it follows that $\left\langle \sigma \right\rangle
\trianglelefteq G_{0,B}$, and hence $G_{0,B}$ preserves $\mathrm{Fix(}\sigma 
\mathrm{)}$. Therefore, $B$ is a $3$-dimensional subspace of $V$ by
Corollary \ref{p2}. However this is impossible by \cite[Theorem 3.3.1]{Go}.
Thus $Z_{3}.A_{6}\trianglelefteq G_{0}\leq Z_{3}.S_{6}$, hence $G_{0}$
splits $V^{\ast }$ into two orbits of length $27$ and $36$ by \cite%
{AtMod}, and if $x_{1}$ and $x_{2}$ are representatives of these orbits and $%
B$ is any block incident with $0$, it follows that $\left\vert B\cap
x_{1}^{G_{0}}\right\vert =3$ and $\left\vert B\cap x_{2}^{G_{0}}\right\vert =4$ by
Lemma \ref{PP}. Thus $\left( G_{0,B}\right) ^{\prime }$ fixes $B$ pointwise,
since $A_{5}\trianglelefteq G_{0,B}\leq S_{5}$ by \cite{AtMod}, hence $%
\left( G_{0,B}\right) ^{\prime }\leq G_{0,y_{i}}$ for $y_{i}\in B\cap
x_{i}^{G}$, $i=1,2$, which is not the case.

Suppose that $i=8$. Then $M\cong Sp_{6}(2)$. The argument used in the proof
of Lemma \ref{sympl} rules out the case $G_{0}=M$. Then $G_{0}$ lies in a
maximal subgroup $K$ of $Sp_{6}(2)$, hence $K$ is one of the groups listed in 
\cite[Tables 8.28--8.29]{BHRD}. Actually, either $K\cong \Gamma L_{2}(8)$ or 
$K\cong SO_{6}^{\pm }(2)$ or $K\cong G_{2}(2)$ since $G_{0}$ acts
irreducibly on $V$.

If $K\cong \Gamma L_{2}(8)$, then the above argument leads to $D_{18}\trianglelefteq G_{0}\leq
D_{18}.Z_{3}$ and to a $2$-$(2^{6},2^{3},2)$ design isomorphic to that of Line 1 in Table \ref{t0}. Hence (1) holds in this case.
 
Assume that $K\cong SO_{6}^{+}(2)$. Then the $K$-orbits on $V^{\ast }$ are
union of $G_{0}$-orbits. Since each $G_{0}$-orbit is of length divisible $%
3^{2}$ by Lemma \ref{PP}, it follows that each $K$-orbit on $V^{\ast }$ is
divisible by $3^{2}$. However this is impossible, since the $K$-orbits have
length $35$ and $28$ by \cite{At}. So, this case is ruled out.

Assume that $K\cong SO_{6}^{-}(2)$. Since $G_{0}$ is irreducible on $V$, it
follows that $\Omega _{6}^{-}(2)\trianglelefteq G_{0}$ by \cite{BHRD},
Tables 8.33--8.34. However, the argument used in the proof of Lemma \ref%
{Omega4no2} excludes this case as well.

Assume that $K\cong G_{2}(2)$. Then either $G_{0}\leq N_{K}(U)=S:SD_{16}$,
where $U$ is a Sylow $3$-subgroup of $K$, or $G_{0}=K^{\prime } \cong PSU_{3}(3)$ or $G_{0}=K$
by \cite{At}. In the first two cases $\mathcal{D}$ is isomorphic to one of
the $2$-designs listed in Table \ref{t0} again by using \textsf{GAP} \cite%
{GAP}, which correspond to case (1), (12) and (13). If $G_{0}=K$ we may proceed as in Proposition %
\ref{NChar} thus obtaining that $\mathcal{D}$
isomorphic to the $2$-$(2^{6},2^{3},2^{3})$ design in Line 5 of Table \ref{t0}, which corresponds to (10) for $q=2$. This completes the proof
\end{proof}

\end{document}